\numberwithin{equation}{section}
\newcommand{\diff}{\,\mathrm{d}}
\newcommand{\diffns}{\mathrm{d}}
\newtheorem{defi}{Definition}[section]
\newtheorem{thm}[defi]{Theorem}
\newtheorem{lemm}[defi]{Lemma}
\newtheorem{remark}[defi]{Remark}
\newtheorem{cor}[defi]{Corollary}
\newtheorem{assum}[defi]{Assumption}
\newtheorem{prop}[defi]{Proposition}
\newcommand{\supp}{\operatorname{supp}}
\DeclareMathOperator{\sgn}{sgn}
\DeclareMathOperator{\essup}{essup}
\DeclareMathOperator{\Tr}{Tr}
\begin{document}

		\title[Density Analysis for coupled FBSDEs with non-Lipschitz drifts and Applications]{Density Analysis for coupled forward-backward SDEs with non-Lipschitz drifts and Applications}
		\author{Rhoss Likibi Pellat}
		\address{Department of Mathematics,University of Ghana, P.O.Box LG 62 Legon Accra, Ghana.African Institute for Mathematical Sciences, Ghana}
		\email{rhoss@aims.edu.gh}
		\author{Olivier Menoukeu Pamen}
		\address{African Institute for Mathematical Sciences, Ghana}
		\address{Institute for Financial and Actuarial Mathematics, Department of Mathematical Sciences, University of Liverpool, L69 7ZL, United Kingdom}
		\email{menoukeu@liverpool.ac.uk}
		\thanks{R. Likibi Pellat was supported by DAAD under the programme PhD-study at the African Institute for Mathematical Sciences, Ghana. O. Menoukeu Pamen acknowledges the funding provided by the Alexander von Humboldt Foundation, under the program financed by the German Federal Ministry of Education and Research entitled German Research Chair No 01DG15010.}
		
		
		\date{}
		
		\keywords{Forward-Backward SDEs; Quasi-linear PDE; Non-smooth drifts; Regularity of Density; quadratic drivers; Integration with Local time; Malliavin calculus.}
		
		\subjclass[2010]{Primary: 60H10, 35K59, Secondary: 35K10, 60H07, 60H30} 
		
		\maketitle
		\begin{abstract} We explore the existence of a continuous marginal law with respect to the Lebesgue measure for each component $(X,Y,Z)$ of the solution to coupled quadratic forward-backward stochastic differential equations (QFBSDEs) {for which the drift coefficient of the forward component is either bounded and measurable or H\"older continuous. Our approach relies on a combination of the existence of a weak {\it decoupling field} (see \cite{Delarue2}), the integration with respect to space time local time (see \cite{Ein2006}), the analysis of the backward Kolmogorov equation associated to the forward component along with an It\^o-Tanaka trick (see \cite{FlanGubiPrio10})}. The framework of this paper is beyond all existing papers on density analysis for Markovian BSDEs and constitutes a major refinement of the existing results. We also derive a comonotonicity theorem for the control variable in this frame and thus extending the works \cite{ChenKulWei05}, \cite{DosDos13}.
			
		{As applications of our results, we first analyse the regularity of densities of solution to coupled FBSDEs. In the second example, we consider a regime switching term structure interest rate models (see for e.g., \cite{ChenMaYin17}) for which the corresponding FBSDE has discontinuous drift. Our results enables us to: firstly study classical and Malliavin differentiability of the solutions for such models, secondly the existence of density of such solutions. Lastly we consider a pricing and hedging problem of contingent claims on  non-tradable underlying, when the dynamic of the latter is given by a regime switching SDE (i.e., the drift coefficient is allowed to be discontinuous). We obtain a representation of the derivative hedge as the weak derivative of the indifference price function, thus extending the result in \cite{ArImDR10}.}
		\end{abstract}

		\section{Introduction}
		The primary goal of this paper is the analysis of the densities of the solutions for coupled quadratic forward-backward stochastic differential equations (QFBSDEs) with {singular drift coefficients (see Assumption \ref{assum1} and \ref{assum53}). The key points of our method are: the regularity of the {\it weak decoupling field} (solution in the weak sense of the associated quasi-linear PDE) the space time integration with respect to the local time of the Brownian motion (case of discontinuous drifts) and the analysis of the backward Kolmogorov equation associated to the forward SDE (case of H\"older continuous drift)}. From what we know, the first result on the existence of the density of solutions to Markovian BSDEs can be traced back to Antonelli \& Kohatsu-Higa (\cite{F.Antonelli}). On the contrary, the analysis of densities of solutions to SDEs is well understood (see example \cite{Kusuoka10, Nua06} for Lipschitz continuous coefficients; \cite{BanosKruhner17, Romito18} for rough coefficients and \cite{BanosNilssen16, MenozziPesceZhang21,OliveraTudor19} for unbounded and measurable drifts). The authors in \cite{F.Antonelli} proved that, the smoothness properties of the solution to the forward equation transferred to the existence of an absolutely continuous law with respect to the Lebesgue measure  of the first component $Y$ of the solution of the backward equation. 
		
		The study of the marginal densities for solutions to BSDEs is becoming attractive due to its wide range of applications in financial mathematics, as for instance pricing problems (see \cite{Mastrolia17, Mastrolia}), or more recently in biology for the stochastic gene expression model  (see  \cite{Mastrolia17, ChertoShama22}). In particular, explicitly describing the tails of the density of the control variable, can improve the rate of convergence for the numerical approximation of quadratic BSDEs (see \cite{Mastrolia}). Notice that, the Gaussian-type estimates of the law of the control variable $Z$ was first addressed in \cite{AbouraBourguin} for linear drivers. These results were extended in the nonlinear quadratic framework in \cite{Mastrolia}. More precisely, the authors in \cite{Mastrolia} consider the following BSDE
		\begin{align}\label{eq 1}
			Y_t = \phi(X_T) + \int_t^T f(s,X_s,Y_s,Z_s)\mathrm{d}s -\int_t^T Z_s\mathrm{d}W_s,
		\end{align}
		when the terminal value and the driver are both functional of a diffusion process $X$ which satisfy: 
		\begin{align}\label{eq 2}
			X_t = x + \int_0^t b(s,X_s)\mathrm{d}s +\int_0^t \sigma(s,X_s) \mathrm{d}W_s
		\end{align}
		{with $b, \sigma, \phi, f$  deterministic functions satisfying their standing Assumptions (X) (see \cite[page 2823]{Mastrolia}). Assuming that $f$ is of quadratic growth (in the sense of \cite{Kobylansky}) and twice continuously differentiable, the author\textcolor{blue}{s} claim that the solution $(X,Y,Z)$ of the  FBSDE \eqref{eq 1}-\eqref{eq 2} is twice Malliavin differentiable. They then use this fact to analyse the density of the control variable $Z$. We do not however understand why  the solution $X$ of the forward \eqref{eq 2} belongs to $\mathbb{D}^{2,p}$ for $p\geq 1$, under only Lipschitz assumption of its coefficients $b$ and $\sigma$. We believe that, some assumptions that guarantee the second order Malliavin differentiability of the system \eqref{eq 1}--\eqref{eq 2} are provided in \cite[Theorem 4.1]{ImkDos}.}

		The first result {on the analysis} for coupled FBSDEs was recently obtained in \cite{Olivera}. As opposed to the traditional approach, the authors exploited the regularity properties of the solutions to the associated  quasi-linear parabolic PDE to derive the existence, Gaussian-type bounds and the tail estimates of the law of each component $(X,Y,Z)$ solution to a scalar FBSDE, on the entire real line.{ We do, however, point out an inaccuracy in their statement Theorem 3.1 about the density estimates for the law of the forward component $X_t$. An inspection of its proof suggests that additional assumptions are required to obtain the sought estimates. Indeed, the Lamperti transformation utilized by the authors (see their subsection 2.6) to obtain the latter requires more regularity on the coefficients. For example, the drift function must be at least Lipschitz continuous in all variables, whereas the diffusion coefficient must be non-degenerate and admit a finite derivative in time and a bounded second order derivative in space.}
		Let us also mention that, the analysis of densities for BSDEs driven by Gaussian processes (fractional Brownian motion in particular) was addressed in \cite{FanWu21} whereas the multidimensional setting was treated in \cite{ChertoShama22}.


		While the globally Lipschitz condition is a necessary requirement to show the existence of the density of $Y$ in the above works, we do not need this assumption in the present paper. We use the decoupling field along with the {regularisation effect of the Brownian motion} to show that the solution to the forward SDE has a density, {despite the roughness of the drift coefficient(and even the discontinuity of the drift)}. The decoupling field once more enables us to transfer the smoothing properties from the SDEs to the BSDEs.

		

		{ We consider the following coupled forward-backward system
			\begin{align}\label{eq 3}
				\begin{cases}
					X_t = x + \int_0^t b(s,X_s,Y_s,Z_s)\mathrm{d}s + \int_0^t \sigma(s,X_s) \mathrm{d}W_s,\\
					Y_t = \phi(X_T) + \int_t^T f(s,X_s,Y_s,Z_s)\mathrm{d}s -\int_t^T Z_s\mathrm{d}W_s,
				\end{cases}
			\end{align}
			{assuming that the coefficients are H\"{o}lder continuous and that the diffusion $\sigma$ is regular enough (see Assumption \ref{assum53}, therein), we first improve the result on the well posedness of forward-backward SDEs established in Section \ref{solvability} by proving that the system \eqref{eq 3} admits a unique solution on any arbitrarily prescribed Brownian set-up even when the forward component lies in a multiple dimension frame. Moreover, the solution process $(X,Y,Z)$ has a continuous version which is differentiable with respect to the initial state of the forward component and also Malliavin regular. As a matter of fact, the latter leads to the study of existence and regularity of densities of the marginal laws of each component solution to the system \eqref{eq 3}.} 
			
			We will briefly describe our strategy below. Observe that the circular dependence of solutions of both equations and the roughness of the coefficients involved, pose a significant challenge for solving  FBSDE \eqref{eq 3}. {Via the weak decoupling field argument obtained in Section \ref{solvability} , we deduce that the forward equation in \eqref{eq 3} reduces to the following SDE:
				\begin{align}\label{eq 1.4}
					X_t = x + \int_{0}^{t} \tilde{b}(s,X_s)\mathrm{d}s + \int_0^t \sigma(s,X_s) \mathrm{d}W_s,
				\end{align}
				$\tilde{b}(t,x):=b(t,x,v(t,x),\nabla_xv(t,x))$ and $v$ is solution to the associated PDE \eqref{eqmainP}. Unfortunately, the transformed drift $\tilde b$ is only $C_b^{\theta}$ with $\theta \in (0,1)$, thus non differentiable and does not satisfy the standard requirement of most of the scholars. Hence, locating certain results in the literature that deal with the differentiability (Malliavin or classical sense) of $X$ solution to \eqref{eq 1.4} as well as the existence and smoothness of its density, are not straightforward. To overcome this issue of roughness of the drift, we use a version of the It\^o-Tanaka trick (or Zvonkin transform) developed in \cite{FlanGubiPrio10,FlanGubiPrio} to establish a one to one correspondence between the solution to \eqref{eq 1.4} and to the solution to the following SDE
				\begin{align}
					\tilde X_t = \zeta + \displaystyle \int_{0}^{t} \tilde{b}_1(s,\tilde X_s)\mathrm{d}s + \int_{0}^{t} \tilde{\sigma}_1(s,\tilde X_s)\mathrm{d}W_s.
				\end{align}
				Oberve that the above equation has better regular coefficients and therefore the solution $	\tilde X_t$  admits an absolute continuous density with respect to the Lebesgue measure. The latter property is then transferred to the solution $X$ for the SDE \eqref{eq 1.4} via the above correspondence between both solutions. Therefore the density property can be at last transfer to the $(Y,Z)$ solution to the backward SDEs in \eqref{eq 3}, thanks to the non-linear Feynman-Kac relation \eqref{FK}.  Although this technique was used in \cite{OliveraTudor19} and \cite{Romito18} to establish the existence of density of solutions to SDEs when the drift coefficient is beyond the Cauchy-Lipschitz framework, as far as we know it has not yet been implemented in the context of fully coupled FBSDEs \eqref{eq 3} (or even for decoupled FBSDEs) with rough coefficients.}
			

		Below is a summary of this paper's main contributions:
		
		1.  {We extend the existence and uniqueness results in \cite{RhOlivOuk} (and \cite{Delarue2}) to the case of more general  quadratic drivers. In particular, in one dimensional case and when $\sigma$ is $\alpha\text{-H\"older}$ continuous with $\alpha \geq 1/2$, we show the existence of a unique strong  solution of the QFBSDEs (see Theorem \ref{th3.3}).}
		
		2. {We derive comonotonicity property  of coupled QFBSDE when  the drift is merely bounded and measurable in the forward variable. More  precisely, we provide conditions that guarantee the non-negativity of the control process Z component solution to the coupled FBSDE \eqref{eqmainF} with discontinuous drift, thus extending the results derived in \cite{ChenKulWei05, DosDos13}.}
		
		3. {We show that the solution $(X,Y)$ of the system \eqref{eq 3}, has an absolutely continuous distribution with respect to the Lebesgue measure. 
			We also  provide Gaussian type bounds and tail estimates that are satisfied by probability  densities of $X$ and $Y$ respectively. Finally, we prove that the control variable $Z$ has an absolute continuous law with respect to the Lebesgue measure on $\mathbb{R}$. The second order Malliavin differentiability result (Theorem \ref{secondMal}, therein) which is important in this analysis, is novel for coupled FBSDEs with quadratic drivers  and Lipschitz continuous drifts.}
		
		4. {We refine the results obtained in the third point by assuming that the drift coefficient is discontinuous and the diffusion is identity. More explicitly, we obtain a density result for $X$ and $Y$ when the drift $b$ and the driver $f$ are only bounded and measurable in space and time.}

		{We consider several applications of the results described above. First, we prove that the density of each components solution $(X,Y,Z)$ to \eqref{eq 3} is H\"older-continuous, when the diffusive coefficient is identity and the drift is weakly differentiable in $x$, Lipschitz continuous in $y$ and $z$. To our knowledge, such a smoothing result on the density of solutions to coupled FBSDEs has not been derived before. As such, our result constitutes a major improvement of those in \cite{AbouraBourguin,F.Antonelli, Mastrolia, Olivera}. Moreover, an explicit representation of the density of the control $Z$ (resp. $X$ and $Y$) in the sense of \cite{Dung20} is also obtained, that is, if $\rho_{Z_t}$ denotes the density of $Z_t$, then for any point $x_0$ in the interior of $\supp (\rho_{Z_t})$ 
			\[ \rho_{Z_t}(x) = \rho_{Z_t}(x_0) \exp\Big( -\int_{x_0}^x \mathbb{E}\Big[ \delta\Big(\int_0^T D_sZ_t \mathrm{d}s\Big)^{-1}\big|Z_t =z\Big]\mathrm{d}z\Big), \, x \in \supp (\rho_{Z_t}).\] }
		
		{Second, we illustrate that our results on densities of coupled forward-backward SDEs with drift that displays discontinuity in its forward component $x$, remain valid even if the discontinuity of the drift now occurs in its backward component $y$. This framework of FBSDEs was explored in \cite{ChenMaYin17}, and is motivated by a practical challenge in regime-switching term structure interest models. More precisely, we consider a regime switching term structure interest rate model in which the mean reverting term depends on an exogenous factor and switches according to the level of the factor. The dynamics of the factor is given by a BSDE and this leads to a system FBSDE with discontinuity in the drift of the forward. Existence and uniqueness results of such models were studied in \cite{ChenMaYin17}. Our results enables us to: firstly study both classical and Malliavin differentiability of the solution for such models, secondly the existence of density of such solutions.}
		
		{Finally, we investigate a classical pricing and hedging problem of contingent claims on  non-tradable underlying as discussed in \cite{ArImDR10,HuImkMuller}. The probabilistic method --as opposed to the analytic one in \cite{AnkImkPo,Davis06,Hender02}, and references therein-- describes the price and the optimal hedging in terms of forward-backward SDEs with quadratic drivers. In these papers, the forward component, characterising the non-tradable underlying, is provided by a diffusion process with smooth coefficients. In addition, it is assumed that the payoff of the contingent claim is bounded, Lipschitz continuous and twice continuously differentiable with Lipschitz continuous derivatives (see for e.g., \cite{ArImDR10,HuImkMuller}). These conditions ensure the classical and Malliavin differentiability of the solution to the generated quadratic FBSDEs. We consider an extended Black and Scholes model where the non-tradable stock pays a dividend yield that switches to a higher level when the stock value passes a certain threshold. Such assumption leads to a discontinuity in the drift coefficient in the dynamics of the stock.  In addition we can also consider contingent claims with only bounded and Lipschitz continuous payoffs. Thus the results obtained in \cite{ArImDR10} cannot be applied in this situation. Under these modest assumptions, we demonstrate that the price function is weakly differentiable with respect to the starting value of the non-tradable asset, and we establish an explicit representation of the derivative hedge in terms of the price gradient. Furthermore, we prove that the price process is also Malliavin differentiable and thus improving existing results in this direction.}

		{The rest of the paper is outlined as follows:} some notations and a non-exhaustive presentation on the theory of Malliavin calculus and density analysis are given in Section \ref{preli}. The Section \ref{solvability} is devoted to the solvability of coupled FBSDE \eqref{eqmainF} via its corresponding quasi-linear parabolic PDE \eqref{eqmainP}. In Section  \ref{comonotonicity}, we  use the results obtained in Section \ref{solvability} to derive a comonotonicity theorem for the system \eqref{eqmainF}. Section \ref{section5} is the main core of this paper, we provide conditions under which the law of solutions to coupled quadratic FBSDEs with H\"older continuous drift and with bounded and measurable drift admit respectively an absolute continuous density. Finally,  Section \ref{section7} deals with some applications on the regularity of densities for solution to coupled FBSDEs and the study of a classical pricing and hedging problems .

		.

		\section{Preliminaries}\label{preli}
		In this section, we introduce the  framework which will serve throughout this paper.
		We work on a filtered probability space $(\Omega,\mathfrak{F},\{\mathfrak{F}_t\}_{t\geqslant 0},\mathbb{P})$, on which a $d\text{-dimensional}$ Brownian motion $\{W_t\}_{t\geq 0}$ is defined and $\mathbb{F} =\{\mathfrak{F}_t\}_{t\geqslant 0}$ is the standard filtration generated by $\{W_t\}_{t\geq 0}$  augmented by all $\mathbb{P}\text{-null}$ sets of $\mathfrak{F}.$ The expectations will be denoted by $\mathbb{E}$ and $\mathbb{E}^{\mathbb{Q}}$ respectively under $\mathbb{P}$ and under any other probability measure $\mathbb{Q}$. The maturity time $T \in (0,\infty)$ is fixed  and $d \in \mathbb{N},$ $p\in [2,\infty)$. 
		\subsection{Spaces of stochastic processes and spaces of functions} {Below we define some known space of stochastic processes and functions.}
		
		\begin{itemize}
			\item $L^p$ denotes the space of $\mathfrak{F}_T\text{-adapted}$ random variables  $X$ such that $\Vert X \Vert^p_{\tiny L^p }:= \mathbb{E}|X|^p < \infty$
			\item $L^{\infty}$ denotes the space of bounded random variables with norm $\Vert X \Vert_{\tiny L^{\infty} }:= \essup_{\omega\in \Omega}|X(\omega)|$.
			
			\item$\mathcal{S}^p(\mathbb{R}^d)$ is the space of all adapted continuous $\mathbb{R}^d$-valued processes $X$ such that $\Vert X \Vert^p_{\tiny \mathcal{S}^p(\mathbb{R}^d) }:= \mathbb{E}\sup_{t\in [0,T]} |X_t|^p < \infty$
			\item $\mathcal{S}^{\infty}(\mathbb{R}^d)$ the space of continuous $\{ \mathfrak{F}_t \}_{0\leq t\leq T} $-adapted processes $Y:\Omega\times[0,T]\rightarrow \mathbb{R}^d$ such that $\Vert Y \Vert_{\tiny \infty }:= \essup_{\omega\in \Omega} \sup_{t\in [0,T]} |Y_t(\omega)| < \infty.$
			\item $\mathcal{H}^p(\mathbb{R}^d)$ stands for the space of all predictable $\mathbb{R}^d$-valued processes $Z$  such that $\Vert Z \Vert^p_{\tiny \mathcal{H}^p(\mathbb{R}^d) }:= \mathbb{E}(\int_0^T |Z_s|^2\mathrm{d}s)^{p/2}  < \infty$. 
			\item  $C^{1,2}_b([0,T]\times\mathbb{R}^d,\mathbb{R})$ is the usual space of maps $v:[0,T]\times\mathbb{R}^d\rightarrow \mathbb{R}$ that are continuously differentiable in the first variable and twice continuously differentiable w.r.t to its second variable with bounded derivatives.
			\item $W^{1,2,d+1}_{\text{loc}}([0,T[\times\mathbb{R}^d,\mathbb{R})$ denotes the Sobolev space of classes of functions $v: [0,T[\times\mathbb{R}^d \rightarrow \mathbb{R},$ such that 
			$|v|,|\partial_t v|, |\nabla_x v|, |\nabla_{x,x}^2 v| \in L^{d+1}_{\text{loc}}([0,T[\times\mathbb{R}^d,\mathbb{R}).$
			\item  Fix $\beta \in (0,1]$, $C^{0,\beta}$ is the space of H\"older continuous functions $\phi : [0,T]\times \mathbb{R}^d \rightarrow \mathbb{R}$ endowed with the norm
			\[  \|\phi\|_{C^{0,\beta}} =  \sup_{x\neq y\in \mathbb{R}^d} \frac{|\phi(x) -\phi(y)|}{|x-y|^\beta} \]
		\end{itemize}
		\subsection{The BMO space}
		The space BMO($\mathbb{P}$) is the Banach space of continuous and square integrable martingales $M$ starting at $0$ under the probability measure $\mathbb{P}$ endowed with the norm
		$$ \Vert M\Vert_{\text{BMO}(\mathbb{P})} = \sup_{\tau \in [0,T]} \Vert \mathbb{E} [\langle M \rangle_T  - \langle M \rangle_{\tau}]/\mathfrak{F}_{\tau} \Vert_{\infty}^{1/2},$$ 
		where the supremum is taken over all stopping times $\tau \in [0,T].$ While, $\mathcal{H}_{\text{BMO}}$ stands for the space of $\mathbb{R}^d\text{- valued}$ $\mathcal{H}^p\text{-integrable} $ predictable processes $(Z_t)_{t\in [0,T]}$ such that $Z*B := \int_0 Z_s\mathrm{d}B_s$ belongs to  $\text{BMO}(\mathbb{P}).$ We define $\Vert Z \Vert_{\mathcal{H}_{\text{BMO}}}:= \Vert \int Z\mathrm{d}B \Vert_{\text{BMO}(\mathbb{P})}$.
	For every martingale $M \in \text{BMO}(\mathbb{P})$, the exponential Dol\'eans-Dade process $(\mathcal{E}(M)_t)_{0\leq t\leq T}$ is defined by
	\[ \mathcal{E}(M)_t := \exp \Big(M_t -\frac{1}{2}\langle M \rangle_t\Big)  , \]
	such that $\mathbb{E}(\mathcal{E}(M)_T) = 1$. Moreover, the measure $\mathbb{Q}$ with density 
	\[  \frac{\mathrm{d}\mathbb{Q}}{\mathrm{d}\mathbb{P}}\Big|_{\mathfrak{F}_t}:= \mathcal{E}(M)_t,  \]
	defines an equivalent probability measure to $\mathbb{P}$. The continuous martingale $M \in \text{BMO}(\mathbb{P})$ satisfies the following properties: (We refer the reader to \cite{Kazamaki} for more details on the subject.)\leavevmode
	
	%
	%
	%

	\subsection{Basic notions on Malliavin calculus and density analysis}
	Here, we briefly introduce some elements in Malliavin calculus and the analysis of densities. We refer any interested reader to \cite{Nua06}, for clear exposition on the subject.
	
	Let $\mathcal{S}$ be the space of random variables $\xi$ of the form 
	\[ \xi = F\Big( (\int_{0}^{T} h_s^{1,i}\mathrm{d}W_s^1)_{1\leq i\leq n},\cdots, (\int_{0}^{T} h_s^{d,i}\mathrm{d}W_s^d)_{1\leq i\leq n}   \Big),  \]
	where $F\in C_b^{\infty}(\mathbb{R}^{n\times d}), h^1,\ldots,h^n \in L^2([0,T];\mathbb{R}^d)$ and $n\in \mathbb{N}.$ For simplicity, we assume that all $h^j$ are written as row vectors. For $\xi \in \mathcal{S},$ let $D$ be the operator defined by $D=(D^1,\cdots,D^d):\mathcal{S}\rightarrow L^2(\Omega\times [0,T])^d$ by 
	\[ 
	D^i_{\theta}\xi := \sum_{j=1}^{n}\frac{\partial F}{\partial x_{i,j}} \Big( \int_{0}^{T} h_t^{1}\mathrm{d}W_t,\cdots,\int_{0}^{T} h_t^{n}\mathrm{d}W_t   \Big)h_{\theta}^{i,j}, 0\leq \theta \leq T, 1\leq i \leq d, 
	\]
	and for $k\in \mathbb{N}$ and $\theta=(\theta_1,\cdots,\theta_k)\in [0,T]^k$ its $k\text{-fold}$ iteration is given by
	\[ D^{(k)} = D^{i_1}\cdots  D^{i_k}_{1\leq i_1,\cdots,i_k\leq d}.  \]
	For $k\in \mathbb{N}$ and $p\geq 1,$ let $\mathbb{D}^{k,p}$ be the closure of $\mathcal{S}$ with respect to the norm 
	\[ \| \xi \|^p_{k,p} = \| \xi \|^p_{L^p} + \sum_{i=1}^{k} \| |D^{(i)}\xi|\|^p_{(\mathcal{H}^p)^i}  .\]
	The operator $D^{(k)}$ is a closed linear operator on the space $\mathbb{D}^{k,p}.$ Observe that if $\xi \in \mathbb{D}^{1,2} $ is $\mathfrak{F}_t\text{-measurable}$ then $D_{\theta}\xi = 0$ for $\theta\in (t,T].$ Further denote $\mathbb{D}^{k,\infty} =\bigcap_{p>1}\mathbb{D}^{k,p}.$
	
	{The following result is the chain rule for Malliavin derivative (see  for example \cite[Proposition 1.2.4]{Nua06}). 
		\begin{prop}
			Let $\varphi: \mathbb{R}^d\rightarrow\mathbb{R}$ be a globally Lipschitz continuous  function. Suppose $\xi = (\xi^1,\cdots,\xi^d)$ is a random vector whose components belong to the space $\mathbb{D}^{1,2}$. Then $\varphi(\xi)\in \mathbb{D}^{1,2} $ and there exists a random vector $\varpi= (\varpi^1,\cdots,\varpi^d)$ bounded by the Lipschitz constant of $\varphi$ such that $D\varphi(\xi) = \sum_{i=1}^d \varpi^iD\xi^i$. In particular, if the gradient of $\varphi$ denoted by $\varphi'$ exists, then $\varpi = \varphi'(\xi).$ 
	\end{prop}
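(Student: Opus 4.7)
The plan is to follow the classical approximation scheme: first establish the identity for smooth test functions, then extend to Lipschitz $\varphi$ by mollification and a weak compactness argument, exploiting the closability of the operator $D$.

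First I would handle the smooth case. If $\varphi \in C^1_b(\mathbb{R}^d)$ and $\xi\in \mathcal{S}^d$ is a smooth random vector, the classical chain rule in finite dimensions combined with the explicit formula defining $D$ on $\mathcal{S}$ immediately yields $D\varphi(\xi) = \sum_{i=1}^d \partial_i\varphi(\xi)\, D\xi^i$. By the density of $\mathcal{S}$ in $\mathbb{D}^{1,2}$ and the closability of $D$, this identity extends to any $\xi$ whose components lie in $\mathbb{D}^{1,2}$, provided $\varphi\in C^1_b$.

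Next I would treat the Lipschitz case. Let $L$ be the Lipschitz constant of $\varphi$ and let $(\rho_n)_{n\geq 1}$ be a standard mollifier sequence on $\mathbb{R}^d$. Set $\varphi_n := \varphi \ostar \rho_n \in C^\infty_b$; each $\varphi_n$ is Lipschitz with constant bounded by $L$, and $\varphi_n \to \varphi$ uniformly on compacts. By the smooth case,
\begin{equation*}
D\varphi_n(\xi) = \sum_{i=1}^d \partial_i\varphi_n(\xi)\, D\xi^i.
\end{equation*}
Because $|\partial_i\varphi_n|\leq L$, the family $\{\partial_i\varphi_n(\xi)\}_n$ is bounded in $L^\infty(\Omega)\subset L^2(\Omega)$, and the right-hand side is bounded in $L^2(\Omega;L^2([0,T];\mathbb{R}^d))$ uniformly in $n$ (using $\xi^i\in \mathbb{D}^{1,2}$). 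Simultaneously, $\varphi_n(\xi)\to \varphi(\xi)$ in $L^2(\Omega)$ by the uniform local convergence together with the Lipschitz bound $|\varphi_n(\xi)-\varphi(\xi)| \le C(1+|\xi|)$.

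Then I would pass to the limit using weak compactness. Since $(\partial_i\varphi_n(\xi))_n$ is bounded in $L^2(\Omega)$, after extracting a subsequence we get $\partial_i\varphi_n(\xi) \rightharpoonup \varpi^i$ weakly in $L^2(\Omega)$, with $|\varpi^i|\leq L$ almost surely (weak limits preserve pointwise essential sup bounds via Mazur's lemma). Consequently $\sum_i \partial_i\varphi_n(\xi)D\xi^i$ converges weakly in $L^2(\Omega\times[0,T])$ to $\sum_i \varpi^i D\xi^i$. Combining the strong convergence $\varphi_n(\xi)\to\varphi(\xi)$ in $L^2$ with the weak convergence of the derivatives and the closability of $D$, we conclude that $\varphi(\xi)\in\mathbb{D}^{1,2}$ with $D\varphi(\xi) = \sum_{i=1}^d \varpi^i D\xi^i$. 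If $\varphi$ is differentiable, then by the pointwise convergence $\partial_i\varphi_n\to \partial_i\varphi$ at every point of differentiability and dominated convergence, the whole sequence converges and $\varpi^i = \partial_i\varphi(\xi)$.

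The main obstacle is the identification of the weak limit $\varpi$ when $\varphi$ fails to be differentiable on a set of positive Lebesgue measure: in that case $\varpi$ is not canonically given as $\varphi'(\xi)$, but one must still guarantee that the weak limit is well defined and bounded by $L$. This is handled via Mazur's lemma to convert weak convergence into almost-sure convergence of convex combinations, preserving the $L^\infty$ bound. Closability of $D$ then does the rest, securing both the membership $\varphi(\xi)\in\mathbb{D}^{1,2}$ and the chain-rule identity.
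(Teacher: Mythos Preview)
The paper does not prove this proposition at all: it is stated as a quoted background result, with an explicit reference to \cite[Proposition~1.2.4]{Nua06}. So there is no ``paper's own proof'' to compare against; your write-up is essentially the standard argument found in Nualart's book, and it is correct in outline.

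One small technical remark: the step where you pass from weak convergence of $D\varphi_n(\xi)$ to $\varphi(\xi)\in\mathbb{D}^{1,2}$ is not quite ``closability of $D$'' in the usual sense (closability refers to strong convergence of both $F_n$ and $DF_n$). What you actually need is the fact that if $F_n\to F$ in $L^2(\Omega)$ and $\sup_n\|F_n\|_{1,2}<\infty$, then $F\in\mathbb{D}^{1,2}$ and $DF_n\rightharpoonup DF$ weakly in $L^2(\Omega;\mathcal{H})$; this is Lemma~1.2.3 in \cite{Nua06} and follows from the reflexivity of the graph norm. With that lemma in hand, your mollification and weak-$\ast$ compactness argument in $L^\infty$ (which is what gives the bound $|\varpi^i|\le L$) goes through cleanly.
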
}
	For a stochastic process $\mathcal{X} \in \text{Dom}(\delta)$ (not necessarily adapted to $\{\mathfrak{F}_t\}_{t\in [0,T]}$) we define the {Skorohod} integral of $\mathcal{X}$ by 
	\begin{align}\label{Skoro}
		\delta(\mathcal{X}) = \int_0^T \mathcal{X}_t\delta W_t,
	\end{align}
	which is an anticipative stochastic integral. It turns out that all $\{\mathfrak{F}_t\}_{t\in [0,T]}\text{-adapted}$ processes in $L^2(\Omega\times[0,T])$ are in the domain of $\delta$ and can be written as \eqref{Skoro}. Thus, the Skorohod integral can be viewed as an extension of the It\^{o} integral to non-adapted integrand. The following equation known as the duality formula, gives the dual relation between the Malliavin derivative and the Skorohod integral:
	\[  \mathbb{E}\Big[\xi\int_0^T \mathcal{X}_t\delta W_t\Big]= \mathbb{E}\Big[ \int_0^T \mathcal{X}_t D_t\xi \mathrm{d}t  \Big]  \]
	for any $\xi \in \mathbb{D}^{1,2}$ and $\mathcal{X} \in \text{Dom}(\delta)$. 
	
	For $F = (F^1,\cdot,F^d) \in (\mathbb{D}^{1,2})^d,$ we define the Malliavin covariance  $d\times d\text{-matrix}$ $\Gamma_{F}$ by 
	\[ 
	\Gamma_{F}^{ij}:= \langle DF^i,DF^j \rangle_{L^2[0,T]} . 
	\]
	The random vector $F$ is said to be non-degenerate if for all $p \geq 1$
	\[  
	\mathbb{E}\Big[ \Big(\det \Gamma_{F}\Big)^{-p}\Big] < \infty . 
	\]
	
	For $k\in \mathbb{N}, p\geq 1,$ denote by $\mathbb{L}_{k,p}(\mathbb{R}^m)$ the set of $\mathbb{R}^d\text{-valued}$ progressively measurable processes $\zeta=(\zeta^1,\cdots,\zeta^m)$ on $[0,T]\times \Omega$ such that 
	\begin{itemize}
		\item[(i)] For Lebesgue a.a. $t \in [0,T], \zeta(t,\cdot)\in (\mathbb{D}^{k,p})^m; $
		\item[(ii)] $(t,\omega)\rightarrow D_{s}^k \zeta(t,\omega) \in (L^2([0,T]^{1+k}))^{d\times m}$ admits a progressively measurable version;
		\item[(iii)] $\|\zeta \|_{k,p}^p = \| |\zeta| \|^p_{\mathcal{H}^p} + \sum_{i=1}^{k} \| |D^{(i)}\zeta|\|^p_{(\mathcal{H}^p)^{1+i}} < \infty.$
	\end{itemize}
	For example if a process $\zeta\in \mathbb{L}_{2,2}(\mathbb{R})$, we have 
	\begin{align*}
		\| \zeta \|_{\mathbb{L}_{1,2}}^2 &= \mathbb{E}\Big[ \int_{0}^{T}|\zeta_t|^2\mathrm{d}t + \int_{0}^{T}\int_{0}^{T}|D_{s}\zeta_t|^2\mathrm{d}s\mathrm{d}t  \Big],\\
		\| \zeta \|_{\mathbb{L}_{2,2}}^2 &= \| \zeta \|_{\mathbb{L}_{1,2}}^2 + \mathbb{E}\Big[ \int_{0}^{T}\int_{0}^{T}\int_{0}^{T}|D_{s'}D_{s}\zeta_t|^2\mathrm{d}s'\mathrm{d}s\mathrm{d}t  \Big].
	\end{align*}
	Moreover, the following follows from Jensen's inequality 
	\[  \mathbb{E}\Big[\Big(\int_{0}^{T}\int_{0}^{T} |D_s\zeta_t|^2\mathrm{d}s\mathrm{d}t\Big)^{\frac{p}{2}}\Big] \leq T^{\frac{p}{2}-1}\int_{0}^{T} \|D_s\zeta_t\|_{\mathcal{H}^p}^p\mathrm{d}s, \forall p\geq 2, \]
	and provide an alternative way to control the term of the left-hand side.
	
	Below, we recall the criterion for absolute continuity of the law of a random variable $F$ with respect to the Lebesgue measure.
	
	\begin{thm}[Bouleau-Hirsch, Theorem 2.1.2 in \cite{Nua06}]\label{criterion} Let $F$ be in $\mathbb{D}^{1,2}.$ Assume that $\| DF \|_{\mathcal{H}} > 0, \,\mathbb{P}\text{-a.s.}.$ Then $F$ has a probability distribution which is absolutely continuous with respect to the Lebesgue measure on $\mathbb{R}.$ 
	\end{thm}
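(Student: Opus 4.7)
The plan is to follow the classical Bouleau--Hirsch strategy, which reduces absolute continuity to an integration-by-parts identity inherited from the chain rule for $D$. More precisely, it suffices to prove that for every bounded Borel function $\phi:\mathbb{R}\to[0,\infty)$ with $\int_{\mathbb{R}}\phi(x)\diff x=0$ one has $\mathbb{E}[\phi(F)]=0$; indeed this implies that the law of $F$ charges no Lebesgue-null Borel set, and hence is absolutely continuous with respect to Lebesgue measure. Given such a $\phi$, the first step is to mollify: set $\phi_n=\phi\ast\rho_n$ for a standard mollifier $\rho_n$, and define $\Phi_n(x):=\int_0^x\phi_n(y)\diff y$, so that $\Phi_n$ is Lipschitz with derivative $\phi_n$, the sequence $\phi_n$ remains uniformly bounded by $\|\phi\|_\infty$ and converges to $0$ almost everywhere, while $\Phi_n\to 0$ uniformly on every compact set.

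Next I would exploit the Malliavin chain rule stated just before the theorem: since $F\in\mathbb{D}^{1,2}$ and $\Phi_n$ is Lipschitz, $\Phi_n(F)\in\mathbb{D}^{1,2}$ with $D(\Phi_n(F))=\phi_n(F)\,DF$. Taking the inner product with $DF$ in $\mathcal{H}=L^2([0,T];\mathbb{R}^d)$ yields the pointwise identity
\[
\phi_n(F)\,\|DF\|_{\mathcal{H}}^{\,2}=\langle D(\Phi_n(F)),DF\rangle_{\mathcal{H}}.
\]
To invert $\|DF\|_{\mathcal{H}}^{\,2}$ without knowing that its reciprocal has nice integrability, I would introduce for $0<\varepsilon<M$ a smooth cutoff $\chi_{\varepsilon,M}:[0,\infty)\to[0,1]$ equal to $1$ on $[\varepsilon,M]$ and supported in $[\varepsilon/2,2M]$, and multiply the identity by $\chi_{\varepsilon,M}(\|DF\|_{\mathcal{H}}^{\,2})\,\|DF\|_{\mathcal{H}}^{-2}$ to obtain a bounded quantity on both sides. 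Taking expectations and invoking the duality relation between $D$ and the Skorohod integral $\delta$ rewrites the right-hand side as $\mathbb{E}[\Phi_n(F)\,\delta(u_{\varepsilon,M})]$ for a Skorohod-integrable process $u_{\varepsilon,M}$ built from $DF$ and the cutoff.

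I would then let $n\to\infty$: since $\Phi_n\to 0$ uniformly on compacts and $\Phi_n(F)$ is uniformly bounded on the event $\{\|DF\|_{\mathcal{H}}^{\,2}\in[\varepsilon/2,2M]\}$, dominated convergence forces $\mathbb{E}[\Phi_n(F)\,\delta(u_{\varepsilon,M})]\to 0$, and therefore $\mathbb{E}\bigl[\phi(F)\,\chi_{\varepsilon,M}(\|DF\|_{\mathcal{H}}^{\,2})\bigr]=0$ after passing to a subsequence so that $\phi_n(F)\to\phi(F)$ almost surely. Sending $\varepsilon\downarrow 0$ and $M\uparrow\infty$ and using the standing hypothesis $\|DF\|_{\mathcal{H}}>0$ almost surely yields $\mathbb{E}[\phi(F)]=0$, as required. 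The main obstacle is the integration-by-parts step: showing that the cutoff-regularised process $u_{\varepsilon,M}$ actually lies in $\mathrm{Dom}(\delta)$ is delicate because one is only given $F\in\mathbb{D}^{1,2}$, not $\mathbb{D}^{2,p}$. I would handle this by first establishing the conclusion when $F\in\mathbb{D}^{2,p}$ (so that $DF$ and the smooth cutoff built from $\|DF\|_{\mathcal{H}}^{\,2}$ are in the domain of $\delta$), and then transferring the density property to a general $F\in\mathbb{D}^{1,2}$ by approximation in the $\|\cdot\|_{1,2}$ norm, exploiting the lower semicontinuity of $\|DF\|_{\mathcal{H}}$ to preserve the non-degeneracy along the approximating sequence.
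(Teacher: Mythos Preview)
The paper does not prove this statement; Theorem~\ref{criterion} is simply quoted from \cite{Nua06} as background in the preliminaries, so there is no in-paper proof to compare against.

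Your argument has a genuine circular gap. You test against bounded nonnegative Borel $\phi$ with $\int_{\mathbb{R}}\phi\,\mathrm{d}x=0$; such a $\phi$ vanishes Lebesgue-a.e., so the mollification $\phi_n=\phi\ast\rho_n$ is \emph{identically} zero as a function (for every $x$, $\int\phi(y)\rho_n(x-y)\,\mathrm{d}y=0$). Hence $\Phi_n\equiv 0$ and every identity you derive involving $\phi_n$ or $\Phi_n$ is the tautology $0=0$. The step ``pass to a subsequence so that $\phi_n(F)\to\phi(F)$ almost surely'' is then exactly the assertion that $\mathbb{P}(F\in\{\phi\neq 0\})=0$, i.e.\ that the law of $F$ does not charge the Lebesgue-null set $\{\phi\neq 0\}$, which is precisely what you are trying to prove. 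There is no way to recover $\mathbb{E}[\phi(F)]$ from the sequence $\mathbb{E}[\phi_n(F)]\equiv 0$ without this circularity.

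The duality-with-$\delta$ route you sketch works when one has quantitative non-degeneracy (e.g.\ $\|DF\|_{\mathcal{H}}^{-1}\in L^p$), yielding a bound $|\mathbb{E}[g'(F)]|\leq C\|g\|_\infty$ and hence a bounded density; this is essentially Proposition~2.1.1 in \cite{Nua06}. Under the bare hypothesis $\|DF\|_{\mathcal{H}}>0$ a.s., the standard Bouleau--Hirsch proof instead approximates $F$ by cylindrical $F_n$ measurable with respect to finitely many Gaussian coordinates, invokes a finite-dimensional absolute-continuity result (a $W^{1,p}$ function on $\mathbb{R}^n$ with a.e.\ nonvanishing gradient pushes the Gaussian measure forward to an absolutely continuous measure), and passes to the limit using $DF_n\to DF$ in $L^2(\Omega;\mathcal{H})$ together with a truncation on $\{\|DF\|_{\mathcal{H}}>\varepsilon\}$. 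Your proposed transfer ``prove it for $F\in\mathbb{D}^{2,p}$ and approximate in $\|\cdot\|_{1,2}$'' does not work as stated either: absolute continuity of laws is not preserved under $\mathbb{D}^{1,2}$-limits without extra structure, and lower semicontinuity of $\|DF\|_{\mathcal{H}}$ alone does not supply it.
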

	In one dimension, it is possible to give a representation of the density in the sense of Nourdin and Viens. If $\rho_F$ denotes the density of a random variable $F \in \mathbb{D}^{1,2}$ we set 
	\begin{equation}
		h_F(x):= \int_{0}^{\infty} e^{-\theta}\mathbb{E}\left[ \mathbb{E}^{*}[ \langle DF,\tilde{DF}^{\theta} \rangle_{\mathcal{H}}]|F-\mathbb{E}(F)=x \right]\mathrm{d}\theta ,
	\end{equation}
	where for any random variable $X$, we define $\tilde{X}^{\theta}:= X(e^{-\theta}W + \sqrt{1-e^{-2\theta}}W^{*})$ with $W^{*}$ an independent copy of $W$ given on a probability space $(\Omega^{*},\mathfrak{F}^{*},\mathbb{P}^{*}),$ and $\mathbb{E}^{*}$ denotes the expectation under $\mathbb{P}^{*}.$ We have
	\begin{cor}[Nourdin-Viens' formula] Under assumptions in Theorem \ref{criterion}, the support  of $\rho_F,$ denoted by $\supp(\rho_F)$ is a closed interval of $\mathbb{R}$ and for all $x \in \supp(\rho_F)$
		\[ \rho_F(x):= \frac{\mathbb{E}(|F-\mathbb{E}(F)|)}{2h_F(x-\mathbb{E}(F))}\exp\Big( -\int_{0}^{x-\mathbb{E}(F)}\frac{\theta\mathrm{d}\theta}{h_F(\theta)}    \Big).   \]	
	\end{cor}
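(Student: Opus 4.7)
The plan is to reduce to the centered case, establish a Stein-type integration by parts identity that turns the problem into a first-order ODE for the density, and then solve that ODE explicitly. Set $G := F - \mathbb{E}(F)$, so that $G \in \mathbb{D}^{1,2}$, $DG = DF$, and by Theorem \ref{criterion} the random variable $G$ admits a density $\rho_G$ satisfying $\rho_F(x) = \rho_G(x - \mathbb{E}(F))$. It therefore suffices to prove the formula for $G$ and translate at the end.

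The first key step is the integration by parts identity
\[
\mathbb{E}[g(G)\,G] = \mathbb{E}[g'(G)\,h_F(G)]
\]
valid for every $g \in C^1_b(\mathbb{R})$. To derive it I would use the inverse Ornstein--Uhlenbeck generator $L^{-1}$: since $G$ is centered, $G = \delta(-D L^{-1} G)$, so the Malliavin duality formula yields
\[
\mathbb{E}[g(G)\,G] = \mathbb{E}[g(G)\,\delta(-D L^{-1} G)] = \mathbb{E}[\langle D g(G), -D L^{-1} G \rangle_{\mathcal{H}}] = \mathbb{E}[g'(G)\,\langle DG, -D L^{-1} G \rangle_{\mathcal{H}}].
\]
The Mehler formula gives $-D L^{-1} G = \int_0^\infty e^{-\theta} P_\theta DG\,\mathrm{d}\theta$ with $P_\theta DG = \mathbb{E}^*[\widetilde{DG}^\theta]$, so conditioning on $G=x$ inside the last expectation reproduces the function $h_F$ of the statement. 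In particular $\mathbb{E}[\langle DG, -DL^{-1}G\rangle_{\mathcal{H}} \mid G = \cdot] = h_F(\cdot)$ a.s.

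The second step is to turn this identity into a distributional ODE for $\rho_G$. Testing against smooth compactly supported $g$, the identity reads
\[
\int_{\mathbb{R}} g(x)\,x\,\rho_G(x)\,\mathrm{d}x = \int_{\mathbb{R}} g'(x)\,h_F(x)\,\rho_G(x)\,\mathrm{d}x,
\]
which means $(h_F(x)\rho_G(x))' = -x\rho_G(x)$ in the sense of distributions. Setting $\phi(x) := h_F(x)\rho_G(x)$, integrating $\phi'(y) = -y\rho_G(y)$ from $x$ to $+\infty$ and using the decay $\phi(\pm\infty) = 0$ together with $\int_0^\infty y\rho_G(y)\,\mathrm{d}y = \tfrac{1}{2}\mathbb{E}|G|$ (a consequence of $\mathbb{E}(G)=0$) gives $\phi(0) = \tfrac{1}{2}\mathbb{E}|G|$. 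Solving the separable equation $\phi'/\phi = -x/h_F(x)$ then yields
\[
\rho_G(x) = \frac{\mathbb{E}|G|}{2\,h_F(x)}\exp\Bigl(-\int_0^x \frac{\theta\,\mathrm{d}\theta}{h_F(\theta)}\Bigr),
\]
and translating $x \mapsto x-\mathbb{E}(F)$ gives the claimed expression for $\rho_F$.

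The main obstacle, and the delicate point, is the strict positivity $h_F(x) > 0$ on the interior of $\supp(\rho_G)$, without which both the ODE and the final exponential formula are ill-posed. The Bouleau--Hirsch hypothesis $\|DG\|_{\mathcal{H}} > 0$ a.s. ensures that $\langle DG, -DL^{-1}G\rangle_{\mathcal{H}}$ is a nonnegative random variable that is strictly positive $\mathbb{P}$-a.s., but transferring this to the conditional expectation $h_F$ pointwise on $\supp(\rho_G)$ requires care; I would follow the Nourdin--Viens argument, showing that if $h_F(x_0) = 0$ at some $x_0$ in the interior of $\supp(\rho_G)$, then $\phi(x_0) = 0$ while $\phi'$ does not vanish identically near $x_0$, contradicting the sign structure forced by $\phi' = -x\rho_G$ and $\rho_G > 0$ near $x_0$. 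The same observation forces $\supp(\rho_G)$ to be a (closed) interval: any gap in the support would be incompatible with the explicit exponential form derived above, since the ODE propagates strict positivity of $\rho_G$ across the interior.
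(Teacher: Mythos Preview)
The paper does not actually prove this corollary: it is stated in the preliminaries section as a known result (the Nourdin--Viens formula), with no proof given. Your proposal reproduces the standard Nourdin--Viens argument (centering, the identity $\mathbb{E}[g(G)G]=\mathbb{E}[g'(G)\langle DG,-DL^{-1}G\rangle_{\mathcal H}]$ via Malliavin duality and Mehler's formula, the resulting first-order ODE for $h_F\rho_G$, and the explicit integration), so in spirit there is nothing to compare against.

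One point in your write-up deserves tightening. You assert that the Bouleau--Hirsch hypothesis $\|DG\|_{\mathcal H}>0$ a.s.\ ``ensures that $\langle DG,-DL^{-1}G\rangle_{\mathcal H}$ is \ldots\ strictly positive $\mathbb{P}$-a.s.'' This implication is not direct: $\|DG\|_{\mathcal H}>0$ and $\langle DG,-DL^{-1}G\rangle_{\mathcal H}>0$ are genuinely different pointwise conditions. What is true is that (i) $\langle DG,-DL^{-1}G\rangle_{\mathcal H}\ge 0$ always, and (ii) by the Nourdin--Viens characterization itself, $G$ has a density if and only if $h_F(G)>0$ a.s. Since Bouleau--Hirsch already gives you a density, you may invoke the ``only if'' direction of that characterization to obtain the needed positivity of $h_F$ on the support---but then you must prove that direction first (it follows from the integral representation $\phi(x)=\int_x^\infty y\rho_G(y)\,\mathrm{d}y$, which is strictly positive on the interior of the support regardless of $h_F$). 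Your final paragraph gestures at the right contradiction argument; just make sure the logical order is: derive $\phi(x)=\int_x^\infty y\rho_G(y)\,\mathrm{d}y>0$ on the interior first, then conclude $h_F=\phi/\rho_G>0$ there, and only then divide to solve the ODE.
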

	We will end this section by recalling the following useful result that can be found respectively in  \cite{ DungPrivaultTorrisi15}. 
	\begin{thm}[Theorem 2.4 in \cite{DungPrivaultTorrisi15}]
		Let $F \in \mathbb{D}^{1,2}$ be a random variable such that:
		\begin{align}\label{4.8}
			0 < l \leq \int_0^{\infty} D_sF \mathbb{E}\left[D_s F|\mathfrak{F}_s\right]\mathrm{d}s \leq L \quad \text{ a.s.},
		\end{align}
		where, $l$ and $L$ are positive constants. Then, $F$ possesses a density $\rho_{F}$ with respect to the Lebesgue measure. Moreover, for almost all $x \in \mathbb{R}$, the density satisfies
		\begin{align}\label{4.9}
			\frac{\mathbb{E}|F-\mathbb{E}[F]|}{2L}\exp\Big( -\frac{(x-\mathbb{E}[F])^2}{2l}\Big) \leq \rho_{F}(x) \leq 	\frac{\mathbb{E}|F-\mathbb{E}[F]|}{2l}\exp\Big(- \frac{(x-\mathbb{E}[F])^2}{2L}  \Big).
		\end{align}
		Furthermore for all $x>0$, the tail probabilities satisfy
		\begin{align}\label{tail}
			\mathbb{P}(F\geq x)\leq \exp\Big( -\frac{(x-\mathbb{E}[F])^2}{2L} \Big)\text{ and } 	\mathbb{P}(F\leq -x)\leq \exp\Big( -\frac{(x+\mathbb{E}[F])^2}{2L}\Big)
		\end{align}
	\end{thm}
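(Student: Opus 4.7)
The plan is to follow the Nourdin-Viens Malliavin-Stein strategy adapted to the condition \eqref{4.8}. First, I would centre the problem by working with $\overline F := F - \mathbb{E}[F]$ and introduce the auxiliary function
$$g(x) := \mathbb{E}\Big[\int_0^T D_sF\,\mathbb{E}[D_sF|\mathfrak{F}_s]\diff s \,\Big|\, F=x\Big],$$
which by assumption \eqref{4.8} satisfies $l \le g(x) \le L$ for $\rho_F$-almost every $x$. Existence of the density $\rho_F$ itself follows from Theorem \ref{criterion}: Cauchy-Schwarz applied to \eqref{4.8} yields $l \le \|DF\|_{\mathcal{H}}\,\|\mathbb{E}[DF|\mathfrak{F}_\cdot]\|_{\mathcal{H}} \le \|DF\|_{\mathcal{H}}^2$ a.s., so $\|DF\|_{\mathcal{H}} > 0$ a.s.

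The key analytic tool is the Stein-type covariance identity
$$\mathbb{E}[\overline F\,\phi(F)] = \mathbb{E}[\phi'(F)\,g(F)] \qquad \text{for every } \phi \in C_b^1(\R),$$
which I would derive from the Clark-Ocone formula $\overline F = \int_0^T \mathbb{E}[D_sF|\mathfrak{F}_s]\diff W_s$ by multiplying by $\phi(F)$, using the duality between the Malliavin derivative and the Itô/Skorohod integral (these coincide here by adaptedness of the integrand), applying the Malliavin chain rule to $\phi(F)$, and then conditioning on $F$.

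Plugging a smooth approximation of $\phi(y) = \mathbf{1}_{\{y>x\}}$ into this identity and passing to the limit produces $g(x)\rho_F(x) = \int_x^\infty (y-\mathbb{E}[F])\rho_F(y)\diff y$; differentiating yields the linear ODE $\tfrac{d}{dx}[g(x)\rho_F(x)] = -(x-\mathbb{E}[F])\rho_F(x)$, whose explicit solution is the Nourdin-Viens representation
$$\rho_F(x) = \frac{\mathbb{E}|\overline F|}{2g(x)}\exp\!\Big(-\int_{\mathbb{E}[F]}^x \frac{y-\mathbb{E}[F]}{g(y)}\diff y\Big).$$
Since $(y-\mathbb{E}[F])/g(y) \ge 0$ to the right of $\mathbb{E}[F]$ and $\le 0$ to the left, sandwiching $g$ between $l$ and $L$ in both the prefactor and the exponent delivers the two-sided Gaussian estimate \eqref{4.9}. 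For the tail inequality \eqref{tail} I would use the exponential-moment version of the same identity: setting $M(\theta) := \mathbb{E}[e^{\theta \overline F}]$ and applying the Stein identity to $\phi_\theta(y) = e^{\theta(y-\mathbb{E}[F])}$ (after truncation to make it $C_b^1$, followed by monotone convergence), one obtains
$$M'(\theta) = \mathbb{E}[\overline F\, e^{\theta\overline F}] = \theta\,\mathbb{E}[g(F)\,e^{\theta\overline F}] \le \theta L\,M(\theta),$$
whence Gronwall gives $M(\theta) \le e^{L\theta^2/2}$. The exponential Chebyshev inequality optimised at $\theta = x/L$ produces $\mathbb{P}(F-\mathbb{E}[F] \ge x) \le e^{-x^2/(2L)}$; the lower-tail inequality follows by applying the same argument to $-F$, whose associated $g$-function is unchanged.

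The principal technical obstacle I anticipate is the rigorous passage from smooth test functions to indicators $\mathbf{1}_{\{y>x\}}$ in the derivation of the ODE for $\rho_F$: one needs a mollification argument that controls both sides uniformly, which is available through the $L^2$-integrability of $F$ and the uniform bound $|g| \le L$. A parallel truncation-and-limit step is required to insert the unbounded exponential test function into the Stein identity for the moment generating function computation.
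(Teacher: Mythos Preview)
The paper does not prove this theorem; it is quoted verbatim as Theorem 2.4 from \cite{DungPrivaultTorrisi15} and used as a black box. Your proposal reproduces the standard argument of that reference (Clark--Ocone to get the Stein-type covariance identity, the Nourdin--Viens ODE for $g(x)\rho_F(x)$, and the exponential-moment/Gronwall route for the tails), and the outline is correct.
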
	
	\section{{Existence and Uniqueness of solution to quadratic FBSDEs}}\label{solvability}
	Let $(\Omega,\mathfrak{F},\mathbb{F}=\{\mathfrak{F}_s\}_{s\geqslant 0},\mathbb{P}, W_s),$  be a stochastic basis, satisfying the usual conditions. {In this section,we wish to show that the following FBSDE admits a unique solution.}
	\begin{equation}\label{eqmainF}
		\begin{cases}
			X_s^{t,x} = x + \displaystyle \int_{t}^{s} b(r,X_r^{t,x},Y_r^{t,x},Z_r^{t,x})\mathrm{d}r +  \int_{t}^{s}\sigma(r,X_r^{t,x},Y_r^{t,x}) \mathrm{d}W_r,\\
			Y_s^{t,x} = \phi(X_T^{t,x})+ \displaystyle \int_{s}^{T} f(r,X_r^{t,x},Y_r^{t,x},Z_r^{t,x})\mathrm{d}s - \int_{s}^{T} Z_r^{t,x} \mathrm{d}M_r^{X},
		\end{cases}
	\end{equation}
	where  $b,\sigma,\phi$ and $f$ are deterministic functions of appropriate dimensions taking values in appropriate spaces that will be made precise below, $\mathrm{d}M_r^{X}:= \sigma(r,X_r^{t,x},Y_r^{t,x})^{{\bf T}} \mathrm{d}W_r$ stands for the martingale part of the semi-martingale $X^{t,x}$ and the superscript $(t,x)$ will refer to its initial condition.  It is also known that, such an FBSDE provides a probabilistic interpretation for the following terminal value problem of quasi-linear partial differential equation (PDE, for short) (see for example \cite{Ma})
	\begin{equation}\label{eqmainP}
		\frac{\partial v}{\partial t}(t,x)+ \mathcal{L}v(t,x) + f(t,x,v(t,x),\nabla_x v(t,x))=0,\quad v(T,x)= \phi(x),
	\end{equation} 
	where the differential operator $\mathcal{L}$ is given by
	$$\mathcal{L}v(t,x):= \frac{1}{2}\sum_{i,j}^{d}(\sigma\sigma^{{\bf T}})_{ij}(t,x,v(t,x))\frac{\partial^2 v}{\partial x_i \partial x_j}(t,x) + \sum_{i}^{d} b_{i}(t,x,v(t,x),\nabla_x v(t,x))\frac{\partial v}{\partial x_i}(t,x).$$ 
	Moreover, both solutions for the FBSDE \eqref{eqmainF} and the PDE \eqref{eqmainP}, respectively denoted by $(X^{t,x},Y^{t,x}, Z^{t,x})$ and $v \in C^{1,2}([0,T]\times\mathbb{R}^d;\mathbb{R})$ are linked by the so called nonlinear Feynman-Kac formula:
	\begin{align}\label{FK}
		Y_s^{t,x} = v(s,X_s^{t,x});\qquad  Z_s^{t,x} = \nabla_x v(s,X_s^{t,x}),
	\end{align}
	in this case, $v$ is called the ``{\it decoupling field }" since it breaks the mutual influence between the forward and backward components in equation \eqref{eqmainF}.  
	{\begin{defi}\label{defi}
			We say that, a triple $(X^{t,x},Y^{t,x}, Z^{t,x})$ is a \textsl{weak solution} to equation \eqref{eqmainF}, if there exists a standard setup  $(\Omega,\mathfrak{F},\mathbb{F}=\{\mathfrak{F}\},\mathbb{P}, W),$ such that $(X^{t,x},Y^{t,x},Z^{t,x})\in \mathcal{S}^2(\mathbb{R}^d) \times \mathcal{S}^{\infty}(\mathbb{R}) \times \mathcal{H}_{\text{BMO}}(\mathbb{R}^d)$ and $\mathbb{P}\text{-a.s.}$ \eqref{eqmainF} is satisfied.		
			This weak solution is called a strong solution if $\mathbb{F} =\mathbb{F}^{W}.$ 
	\end{defi}}
	
	We say that the coefficients $b,\sigma,f$ and $\phi$ satisfy Assumption \ref{assum1} if there exist nonnegative constants $\Lambda,\lambda,K, K_0,$ and $\alpha_0,\beta\in(0,1)$ such that 
	\begin{assum}\label{assum1} \leavevmode
		\begin{itemize}
			\item[(AX)] 
			\begin{itemize}
				\item[(i)]$\forall t \in [0,T], \forall(x,y,z)\in \mathbb{R}^d \times \mathbb{R} \times \mathbb{R}^{d},$ $\forall \xi \in \mathbb{R}^d,$ \begin{align*}
					|b(t,x,y,z)| &\leq \Lambda (1+|y|+ |z|), \quad |\sigma(t,x,y)| \leq \Lambda (1+|y|) ,\\
					\langle \xi, a(t,x,y) \xi \rangle &\geq \lambda |\xi|^2, \quad a=\sigma\sigma^{{\bf T}}.
				\end{align*}
				\item[(ii)] For all $(t,x,y,z), (t,x,y',z') \in [0,T]\times\mathbb{R}^d\times \mathbb{R} \times \mathbb{R}^{d}$,
				\begin{align*}
					|b(t,x,y,z)-b(t,x,y',z')| &\leq K ( |y-y'| + |z-z'| ),\\
					|a(t,x,y)-a(t,x,y')| &\leq K|y-y'|,\quad |a(t,x,y)-a(t,x',y)| \leq K_0|x-x'|^{\alpha_0}.
				\end{align*}
			\end{itemize}
			\item[(AY)]
			\begin{itemize}
				\item[(i)] $\forall t \in [0,T], \forall(x,y,z)\in \mathbb{R}^d \times \mathbb{R} \times \mathbb{R}^{d},$
				\begin{align*}
					|f(t,x,y,z)| &\leq \Lambda(1 + |y|+\ell(y)|z|^2),
				\end{align*}
				where $\ell\in L^1_{\text{loc} } (\mathbb{R}, \mathbb{R}_{+})$ is locally bounded and increasing.
				\item[(ii)] For all $(t,x,y,z), (t,x,y',z') \in [0,T]\times\mathbb{R}^d\times \mathbb{R} \times \mathbb{R}^{d}$
				\begin{align*} 
					|f(t,x,y,z)-f(t,x,y',z')|&\leq K \Big(1+  \ell(|y-y'|^2)(|z|+|z'|)\Big)\Big(|y-y'| + |z-z'|\Big) 
				\end{align*}
				\item[(iii)] For all $(x,x')\in \mathbb{R}^d\times \mathbb{R}^d$
				\begin{align*}
					|\phi(x)|\leq \Lambda , \quad |\phi(x)-\phi(x')| &\leq K_0|x-x'|^{\beta}.
				\end{align*}	
			\end{itemize}
		\end{itemize}
	\end{assum}

	\begin{remark}\label{remark1}
		{When the function $\ell$ is a constant, the solvability of the \textcolor{blue}{FBSDE} \eqref{eqmainF} is derived in \cite{Delarue2}. }
		The condition (AY)(ii) will be useful to establish both the uniqueness in law and the pathwise uniqueness of solutions to the FBSDE \eqref{eqmainF}.
	\end{remark} 
	
	Below we state the main result of this section
	\begin{thm}\label{th3.3}
		Let Assumption \ref{assum1} be in force. Then the PDE \eqref{eqmainP} has a unique solution $v \in  W_{\text{Loc}}^{1,2,d+1}([0,T[\times\mathbb{R}^d,\mathbb{R} )$ such that $(t,x)\mapsto v(t,x) $ and $(t,x)\mapsto \nabla_xv(t,x) $ are continuous.
		
		Furthermore, the FBSDE \eqref{eqmainF} admits a unique weak solution $(\Omega,\{\mathfrak{F}\},\mathbb{P}),(B,\{\mathfrak{F}_t\}_{0\leq t\leq T})$, $(X,Y,Z)\in \mathcal{S}^2(\mathbb{R}^d)\times \mathcal{S}^{\infty}(\mathbb{R})\times \mathcal{H}_{\text{BMO}}(\mathbb{R}^d)$ with initial value $(t,x)\in [0,T]\times\mathbb{R}^d$ and the uniqueness is understood in the sense of probability law.
		
		Assume that $d=1$ and $\alpha_0 \geq 1/2$, for any $\delta \in (0,T)$ the FBSDE \eqref{eqmainF} has a unique strong solution for all $t \in [0,T-\delta]$.
	\end{thm}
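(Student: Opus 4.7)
The plan is to follow the decoupling-field strategy of \cite{Delarue2}, adapting it to the generalised quadratic growth controlled by $\ell$ and the merely H\"older-in-$x$ regularity of $a$. First I would mollify to obtain smooth approximating coefficients $(b^n,\sigma^n,f^n,\phi^n)$ still satisfying Assumption~\ref{assum1} with uniform constants. Classical parabolic theory then produces a smooth decoupling field $v^n$ solving \eqref{eqmainP}, and, via the nonlinear Feynman--Kac formula, an associated FBSDE triple $(X^n,Y^n,Z^n)$ with $Y^n_s=v^n(s,X^n_s)$ and $Z^n_s=\nabla_x v^n(s,X^n_s)$.

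The central step is to extract \emph{uniform} bounds on $(v^n,\nabla_x v^n)$. Since $\phi$ is bounded, a Kobylanski-type comparison for the quadratic BSDE yields a uniform $L^\infty$ bound on $Y^n$, hence on $v^n$; on the bounded range of $v^n$ the locally bounded factor $\ell$ is replaced by a constant, reducing the driver to genuine quadratic growth and producing $\sup_n\|Z^n\|_{\mathcal{H}_{\text{BMO}}}<\infty$. Combined with uniform ellipticity and Krylov's parabolic $L^{d+1}$-estimate, this yields a uniform $W^{1,2,d+1}_{\text{loc}}$-bound on $v^n$; Morrey's embedding then supplies equicontinuity of $v^n$ and of $\nabla_x v^n$, and a diagonal extraction produces a limit $v$ in the announced class solving \eqref{eqmainP}. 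Uniqueness of $v$ in this class follows from the Lipschitz-in-$(y,z)$ structure of Assumption (AY)(ii) via the standard stability estimate for quadratic BSDEs with BMO coefficients (Girsanov in the BMO framework).

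With $v$ in hand, weak existence for \eqref{eqmainF} is obtained by freezing: set $\tilde b(t,x):=b(t,x,v(t,x),\nabla_x v(t,x))$ and $\tilde\sigma(t,x):=\sigma(t,x,v(t,x))$, both bounded and continuous in $x$. Stroock--Varadhan provides a weak solution $X$ of the decoupled SDE on some filtered probability space, and applying the It\^o--Krylov formula to $v(s,X_s)$ --- justified by $v\in W^{1,2,d+1}_{\text{loc}}$ together with Krylov's density estimate for $X$ --- recovers the backward equation in \eqref{eqmainF}. Membership of $(X,Y,Z)$ in $\mathcal{S}^2\times\mathcal{S}^\infty\times\mathcal{H}_{\text{BMO}}$ follows from the bounds on $v$ and $\nabla_x v$. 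Uniqueness in law comes from the fact that, by the quadratic BSDE uniqueness enabled by Assumption (AY)(ii), any weak solution must satisfy $Y=v(\cdot,X)$ and $Z=\nabla_x v(\cdot,X)$, so the law of $X$ is pinned down as that of the (unique-in-law) weak solution to the decoupled SDE.

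For the final assertion with $d=1$ and $\alpha_0\ge 1/2$, uniform ellipticity allows a scalar square root $\tilde\sigma=\sqrt{a(\cdot,\cdot,v)}$ that inherits $\alpha_0$-H\"older regularity in $x$; combined with the bounded Borel drift $\tilde b$, the one-dimensional Yamada--Watanabe criterion (together with a Zvonkin transform to absorb the non-Lipschitz drift) yields pathwise uniqueness of the decoupled SDE on $[0,T-\delta]$, and pathwise uniqueness plus weak existence gives strong existence by the classical Yamada--Watanabe principle. I expect the main obstacle to be closing the uniform $W^{1,2,d+1}_{\text{loc}}$-estimate on $v^n$: the quadratic nonlinearity in $\nabla_x v^n$ must be absorbed by the uniform BMO bound on $Z^n$, and because $a^n$ is only H\"older (not Lipschitz) in $x$, Krylov's parabolic estimate has to be combined with a delicate localisation in order to close the a priori bound.
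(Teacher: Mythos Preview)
Your overall architecture matches the paper's: mollify, get uniform a priori bounds, pass to the limit via Arzel\`a--Ascoli, build a weak solution through the decoupling field, and in dimension one use a Yamada--Watanabe/local-time argument for pathwise uniqueness. Two points, however, deserve correction.

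\textbf{The uniqueness-in-law step is where your sketch has a genuine gap.} You write that ``by the quadratic BSDE uniqueness enabled by Assumption (AY)(ii), any weak solution must satisfy $Y=v(\cdot,X)$ and $Z=\nabla_xv(\cdot,X)$''. But for a second weak solution $(\bar X,\bar Y,\bar Z)$, the forward $\bar X$ is driven by $b(\cdot,\bar X,\bar Y,\bar Z)$ and $\sigma(\cdot,\bar X,\bar Y)$, \emph{not} by $b(\cdot,\bar X,v(\cdot,\bar X),\nabla_xv(\cdot,\bar X))$; so $(\bar Y,\bar Z)$ and $(v(\cdot,\bar X),\nabla_xv(\cdot,\bar X))$ do not a priori solve the same BSDE, and ordinary BSDE uniqueness does not apply. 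The paper resolves this by first using Girsanov (the BMO property of $\bar Z$ makes $\sigma^{-1}b(\cdot,\bar X,\bar Y,\bar Z)$ a BMO integrand, so the change of measure is legitimate even though $b$ is unbounded in $z$) to remove the drift of $\bar X$, then applying the It\^o--Krylov formula to $v(\cdot,\bar X)$ under the new measure. The resulting comparison of $\bar Y-\mathcal Y$ with $\mathcal Y=v(\cdot,\bar X)$ involves $\nabla^2_{xx}v(\cdot,\bar X)$, which is only controlled in $L^{d+1}_{\mathrm{loc}}$ with a blow-up near $T$; closing the estimate therefore requires a tailored test function $\Phi_h$ built from $\ell$, Krylov's inequality for the $\nabla^2_{xx}v$ term, and the discrete Gronwall argument of \cite[Section 4.3.5]{Delarue2}. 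This is the real technical heart of the proof, not the $W^{1,2,d+1}$ a priori estimate you flag as the expected obstacle.

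\textbf{Two smaller remarks.} First, Morrey embedding from a $W^{1,2,d+1}_{\mathrm{loc}}$ bound gives H\"older continuity of $v^n$ but not directly of $\nabla_xv^n$; the paper obtains the gradient bound \eqref{3.6} and its H\"older estimate separately (following \cite{RhOlivOuk}), with the characteristic $(T-t)^{-(1-\gamma)}$ blow-up at the terminal time. Second, in the $d=1$ part you should explain why the strong solution is only claimed on $[0,T-\delta]$: the $\alpha_0$-H\"older continuity of $\tilde\sigma(t,x)=\sigma(t,x,v(t,x))$ in $x$ relies on the H\"older regularity of $v$, which degenerates as $t\uparrow T$, so the local-time/Yamada--Watanabe argument only closes away from $T$. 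No Zvonkin transform is needed here; the paper uses the direct local-time computation $L^0_t(X-X')=0$.
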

	\begin{proof} The first statement of the Theorem requires several auxiliary results both from PDE theory and stochastic analysis. For the sake of simplicity, we are not going to reproduce the details here. We refer the reader to \cite{Delarue2} (see also \cite{RhOlivOuk}) for a {similar treatment. We start with the existence result  for the PDE  \eqref{eqmainP}} 
		
		(i).	\textbf{Solvability of PDE \eqref{eqmainP}:} {The result is proved in three steps. In the first step, we approximate the coefficients $(b,\sigma,\phi,g)$ by smooth functions (via mollifiers for instance) and derive the solution $(v_n)_{n\geq 1}$ of the regularized PDE \eqref{eqmainP}. In the second step we provide uniform bounds of $(v_n)_{n\ge1 }$ and those of their derivatives. In the last step, we use a compactness argument to construct  the decouple field of the original PDE.}

		\textbf{Step 1:} \textsl{Approximation of coefficients $(b,\sigma,f,\phi)$ by smooth functions $(b_n,\sigma_n,f_n,\phi_n).$}
		
		Let us denote by $(b_n)_{n\in \mathbb{N}},(\sigma_n)_{n\in \mathbb{N}},(\phi_n)_{n\in \mathbb{N}}$ and $ (g_n)_{n\in \mathbb{N}}$ the approximating sequences of $b, \sigma,\phi$ and $g$ respectively that can be obtained via the usual mollifiers (see for instance \cite{RhOlivOuk}). Then $b_n,\sigma_n, \phi_n$ and $g_n$ are infinitely differentiable with respect to each of their respective components, bounded with bounded derivatives of any order with compact support. Thus, from \cite{Ladyzhenskaya} the regularized PDE \eqref{eqmainP} with coefficients $(b_n,\sigma_n,f_n,\phi_n)$ in lieu of $(b,\sigma,f,\phi)$ admits a unique bounded classical solution $(v_n)_{n\geq 1}\in C^{1,2}([0,T]\times\mathbb{R}^d,\mathbb{R})$ such that $(\nabla_x v_n)_{n\geq 1}$, $(\partial v_n)_{n\geq 1}$ and $(\nabla_{xx}^2 v_n)_{n\geq 1}$ are also bounded and H\"older continuous on $[0,T]\times \mathbb{R}^d$. In addition for any prescribed probability space $(\Omega,\mathfrak{F},\mathbb{P})$ the following FBSDE:
		\begin{equation}
			\begin{cases}
				X_s^{t,x} = x +  \int_{t}^{s} b_n(r,X_r^{t,x},Y_r^{t,x},Z_r^{t,x})\mathrm{d}r +  \int_{t}^{s}\sigma_n(r,X_r^{t,x},Y_r^{t,x}) \mathrm{d}W_r,\\
				Y_s^{t,x} = \phi_n(X_T^{t,x})+  \int_{s}^{T} f_n(r,X_r^{t,x},Y_r^{t,x},Z_r^{t,x})\mathrm{d}s - \int_{s}^{T} Z_r^{t,x} \mathrm{d}M_r^{X},
			\end{cases}
		\end{equation}
		has a unique strong solution $(X^{t,x,n},Y^{t,x,n},Z^{t,x,n}) \in \mathcal{S}^2(\mathbb{R}^d)\times \mathcal{S}^{\infty}(\mathbb{R})\times \mathcal{H}_{\text{BMO}}(\mathbb{R}^d)$ such that the following relations hold (see \cite{MPY94})
		\begin{align}\label{3.3}
			Y^{t,x,n}_{\cdot} = v_n(\cdot,X^{t,x,n}_{\cdot}),\,\, Z^{t,x,n}= \nabla_x v_n(\cdot,X^{t,x,n}_{\cdot}).
		\end{align}
		
		\textbf{Step 2:} \textsl{A-priori estimates for $(v_n)_{n\geq 1}$ and for its derivatives}

		{We will start by establishing the following uniform bound of the solution $(v_n)_{n\in \mathbb{N}}$
			\begin{align}\label{3.4}
				\sup_{n\in \mathbb{N}}\sup_{\tiny (t,x)\in [0,T]\times \mathbb{R}^d} |v_n(t,x)|< \infty.
			\end{align}
			Thanks to the probabilistic representation given by \eqref{3.3}, then it is enough to prove that	
			\begin{align}\label{3.5}
				\mathbb{P}\text{-a.s.},\,\, \sup_{n\in \mathbb{N}}(\| Y^{t,x,n} \|_{\infty} + \| Z^{t,x,n} \|_{\mathcal{H}_{BMO}} )< \infty.
			\end{align} 
		}	
		To ease the notation, in the following we will omit the superscript $(t,x)$.	 	
		Let $(f_n)_{n\geq 1}$ be the usual mollifier of the function $f$. Then from Assumption \ref{assum1} (AY)  (see \cite{RhOlivOuk}), there exists $C> 0$ such that for all $(t,x,y,y',z,z')\in \mathbb{R}\times \mathbb{R}^d\times \mathbb{R}\times \mathbb{R}\times\mathbb{R}^d\times \mathbb{R}^d$
		\begin{align*} 
			|f_n(t,x,y,z)| &\leq C\Lambda (1+ |y|+ \ell_n(y)|z|^2),\\
			|f_n(t,x,y,z)-f_n(t,x,y',z')| &\leq C (1 + \ell(|y-y'|^2)(1+|z|+ |z'|))(|y-y'|+|z-z'|) ,
		\end{align*}
		where $(\ell_n)_{n\geq 1}$ stands for the mollifier of the function $\ell$. Therefore, the process $A_t^n$ defined by:
		$$|Z_t^n|^2A_t^n = \sigma_n^{-1}(t,X_t^n,Y_t^n)(f_n(t,X_t^n,Y_t^n,Z_t^n)-f_n(t,X_t^n,Y_t^n,0))Z_t^n 1_{\{Z_t^n\neq 0\}}$$
		belongs to the space $\mathcal{H}_{\text{BMO}}(\mathbb{R}^d)$, since $(Y^n,Z^n) \in\mathcal{S}^{\infty}(\mathbb{R})\times\mathcal{H}_{\text{BMO}}(\mathbb{R}^d)$ for each $n \in \mathbb{N}$. The Girsanov's theorem ensures that $W_t^n = W_t - \int_0^t A_s^n\mathrm{d}s$ is a Brownian motion under the probability measure $\mathbb{P}^n $ defined by $\mathrm{d}\mathbb{P}^n := \mathcal{E}(\int_{0}A_t^n\mathrm{d}W_t) \mathrm{d}\mathbb{P}.$  Therefore,
		\begin{align*}
			Y_t^n = \phi_n(X_T^n) + \int_{t}^{T} f_n(s,X_s^n,Y_s^n,0)\mathrm{d}s - \int_t^T Z_s^n \sigma_n^{{\bf T}}(s,X_s^n,Y_s^n)\mathrm{d}W_s^n.
		\end{align*}
		By taking the conditional expectation {on both sides of the above equation}, we deduce that
		\begin{align*}
			|Y_t^n|&\leq \mathbb{E}^{\mathbb{P}^n}\Big[ |\phi_n(X_T^n)| + \int_{t}^{T} |f_n(s,X_s^n,Y_s^n,0)|\mathrm{d}s/\mathfrak{F}_t\Big]\\
			& \leq \Lambda + C\Lambda T + C\Lambda \mathbb{E}^{\mathbb{P}^n}[ \int_{t}^{T} |Y_s^n|\mathrm{d}s/\mathfrak{F}_t ].
		\end{align*}
		The first bound in \eqref{3.5} then follows by applying the stochastic Gronwall's lemma (see for instance \cite{Wang}). Let us turn now to the second estimate in \eqref{3.5}. For this, we consider the $W_{\text{loc}}^{1,2}(\mathbb{R})\text{-function}$
		\begin{align*}  
			I(x) :=  \int_{0}^{x} K(y)\exp\Big(2 \int_{0}^{y}\ell(u)\mathrm{d}u\Big)\mathrm{d}y, 
		\end{align*}
		where $ K(y):= \int_{0}^{y} \exp\Big(-2 \int_{0}^{z}\ell(u)\mathrm{d}u\Big)\mathrm{d}z$ and the function {$\ell$} satisfies almost everywhere: $1/2I''(x)-\ell(x)I'(x) = 1/2$ (see \cite{Bahlali1}). Then, using the It\^{o}-Krylov formula for BSDE (see \cite[Theorem 2.1]{Ouknine}) we obtain that
		\begin{align*}
			I(|Y_{\tau}^n|) &= I(|Y_T^n|) + \int_{\tau}^{T} \sgn(Y_u^n)v'(|Y_u^n|)f(u,X_u^n,Y_u^n,Z_u^n) - 1/2 I''(|Y_u^n|)|Z_u^n|^2\mathrm{d}u + M_{\tau}^n\notag\\
			&\leq I(|Y_T^n|) -\frac{1}{2} \int_{\tau}^{T} |Z_u^n|^2\mathrm{d}u + \Lambda \int_{\tau}^{T}( 1 + |Y_u^n|)I'(|Y_u^n|)\mathrm{d}u +  M_{\tau}^n
		\end{align*}
		for any stopping time $\tau$ and $ M_{\tau}^n$ represents the martingale part. By taking the conditional  expectation with respect to $\mathfrak{F}_{\tau}$, we deduce that
		\begin{align*}
			\frac{1}{2} \mathbb{E}\Big[\int_{\tau}^{T} |Z_u^n|^2\mathrm{d}u/\mathfrak{F}_{\tau}\Big]	&\leq \mathbb{E}\Big[ I(|Y_T^n|)  + \Lambda\int_{\tau}^{T}(1 +|Y_u^n|)I'(|Y_u^n|)\mathrm{d}u/ \mathfrak{F}_{\tau} \Big].
		\end{align*}
		Then, the sought estimate follows from the uniform boundedness of $Y^n$.	
		
		Moreover, by using the classical machinery from the theory of SDEs along with Assumption \ref{assum1} and \eqref{3.5}, one can obtain the following 
		\begin{align}\label{3.8}
			\mathbb{P}\text{-a.s.},\,\, \sup_{n\in \mathbb{N}}\| X^{t,x,n} \|_{\mathcal{S}^2(\mathbb{R}^d)} < \infty.
		\end{align}

		On the other hand, from \eqref{3.4}, one can derive the uniform H\"{o}lder estimate for $(v_n)_{n\geq 1}$ in the same way as in \cite[Lemma 2.2]{RhOlivOuk} i.e., there exist $\beta_0\in (0,\beta], C>0$ such that for all $(t,x),(s,y) \in [0,T]\times \mathbb{R}^d$, it holds
		\begin{align}\label{holder}
			\sup_{n\in\mathbb{N}}|v_n(t,x)-v_n(s,y)| \leq C (|t-s|^{\frac{\beta_0}{2}} + |x-y|^{{\beta_0}}).
		\end{align}

		In the same spirit, there exists $\gamma > 0$, such that
		\begin{align}\label{3.6}
			\sup_{n\in \mathbb{N}}\sup_{\tiny (t,x)\in [0,T[\times \mathbb{R}^d} (T-t)^{1-\gamma}|\nabla_xv_n(t,x)|< \infty
		\end{align}
		follows and as in \cite[Lemma 2.4 and Lemma 2.5 ]{RhOlivOuk}), there exists universal constants $C$ and $\beta'$ such that  
		
		\begin{align}\label{holder1}
			\sup_{n\in\mathbb{N}}|\nabla_xv_n(t,x)-\nabla_xv_n(s,y)| \leq C (T-t)^{(-1+\beta')} (|t-s|^{\frac{\beta'}{2}} + |x-y|^{{\beta'}}),
		\end{align}

		{Note that the coefficients $b,\sigma, g$ of the FBSDE \eqref{eqmainF} are not smooth in the time variable and the terminal condition is only H\"older continuous, thus \eqref{3.6} and \eqref{holder1} cannot be obtained for $(\partial_tv_n)_{n\geq 1}$ and $(\nabla_{x,x}^2 v_n)_{n\geq 1}$. Nevertheless we can derive as in \cite{Delarue2, RhOlivOuk}), the following  Calderon-Zygmung types estimate: there exists $\alpha > 0,$ such that} 
		\begin{align}\label{3.7}
			\sup_{n\in \mathbb{N}}\int_{T-\delta}^{T}\int_{B(\zeta,R)}  [ (T-s)^{1-\alpha} (|\partial_t v_n(s,y)| + |\nabla_{x,x}^2 v_n(s,y)|)  ]^p \mathrm{d}s\mathrm{d}y < \infty,
		\end{align}
		where $B(\zeta,R)$ denotes the $\mathbb{R}^d$ ball of centre $\zeta$ and radius $R$.
		
		\textbf{Step 3:} \textsl{Arzela-Ascoli arguments} 
		
		From \eqref{3.4}, \eqref{holder}, \eqref{3.6} and \eqref{holder1}, we observe that the sequences $(v_n)_{n\geq 1}$ and $(\nabla_xv_n)_{n\geq 1}$  are uniformly bounded and equicontinuous respectively on $[0,T]\times\mathbb{R}^d$ and $[0,T-\delta]\times\mathbb{R}^d$ for every $\delta>0$.  Thus, by Arzela-Ascoli theorem,  there exist converging subsequences respectively denoted by $(v_n)_{n\geq 1}$ and $(\nabla_xv_n)_{n\geq 1}$ (still indexed by $n$) such that their limits  $v$ and $v'$ respectively are continuous. From the uniqueness of the limit we note that $v'=\nabla_x v$. On the other hand, \eqref{3.7} suggests that both $(\partial_tv_n)_{n\geq1}$ and $(\nabla_{x x}^2v_n)_{n\geq 1}$ are respectively converging in distribution for subsequences to $\partial_tv$ and $\nabla_{x x}^2v$, respectively. Therefore, $v$ is continuous and satisfies almost everywhere the PDE\eqref{eqmainP}.

		(ii).	\textbf{Weak solution to FBSDE \eqref{eqmainF}: } This follows by 
		combining the arguments developed in \cite[section 4.2]{Delarue2} along with bounds  \eqref{3.4} and \eqref{3.6}. We denote by $(\Omega,\{\mathfrak{F}\},\mathbb{P}),(W,\{\mathfrak{F}_t\}_{0\leq t\leq T}), (X,Y,Z)$ this solution with initial condition $(t,x)$ such that $X_t$ is the unique weak solution to the equation
		\begin{align}\label{SDE11}
			X_t = x + \int_0^t \tilde{b}(s,X_s)\mathrm{d}s + \int_0^t \tilde{\sigma}(s,X_s)\mathrm{d}W_s,
		\end{align}
		where $\tilde{b}(t,x)= b(t,x,v(t,x),\nabla_xv(t,x))$ and $\tilde{\sigma}(t,x)= \sigma(t,x,v(t,x))$ and the relationship \eqref{FK} holds. 
		Moreover, due to the bounds \eqref{3.5}, we have that $Z \in \mathcal{H}_{BMO}.$

		(iii).	\textbf{Uniqueness in probability law: }
		Let $(\bar{\Omega},(\bar{\mathfrak{F}}),\bar{\mathbb{P}}), (\tilde{W},\{\bar{\mathfrak{F}_t}\}_{0\leq t\leq T})),(\bar{X},\bar{Y},\bar{Z})$ be another solution to FBSDE \eqref{eqmainF} with initial condition given by $(0,x),  x\in \mathbb{R}^d$ i.e.:
		
		\begin{align}\label{eqchang}
			\begin{cases}
				\bar{X}_t = x + \displaystyle \int_{0}^{t}b(s,\bar{X}_s,\bar{Y}_s, \bar{Z}_s)\mathrm{d}s +  \int_{0}^{t}\sigma(s,\bar{X}_s,\bar{Y}_s)\mathrm{d}\tilde{W}_s, \\
				\bar{Y}_t = \phi(\bar{X}_T) + \displaystyle \int_{t}^{T} f(s,\bar{X}_s,\bar{Y}_s, \bar{Z}_s) \mathrm{d}s -\int_{t}^{T} \bar{Z}_s \mathrm{d}M^{\bar{X}}_s.
			\end{cases}
		\end{align}

		Our aim is to show that $\mathbb{P}\circ (X,Y,Z,W)^{-1} = \bar{\mathbb{P}}\circ(\bar{X},\bar{Y},\bar{Z},\tilde{W})^{-1}$, where $(X,Y,Z)$ stands for the solution we obtained in point (ii) above, with the same initial condition $(0,x)$, for all $x\in\mathbb{R}^d$. More precisely, following the weak formulation of the {\it ``four step scheme"} developed in {\cite[section 4.3]{Delarue2}, it is enough to prove that $\bar{\mathbb{P}}\text{-a.s.}$ $(\bar{Y},\bar{Z}) \equiv(\mathcal{Y},\mathcal{Z})$, where the processes $\mathcal{Y}$ and $\mathcal{Z}$ given by}
		\[ 
		\mathcal{Y}_t := v(t,\bar{X}_t),\quad \mathcal{Z}_t := \nabla_x v(t,\bar{X}_t),\forall t \in [0,T[. 
		\]

		
		Next, we first need to write $\mathcal{Y}$ as the solution of a backward SDE. We recall that the function $v$ solution to the PDE \eqref{eqmainP} belongs to ${W}_{\text{loc}}^{1,2,d+1}([0,T[\times \mathbb{R}^d,\mathbb{R})$ then the It\^o-Krylov formula can be applied in order to derive the desired backward equation, provided that the drift term from the forward equation is uniformly bounded(see \cite[Chapter 2, Section 10, Theorem 1]{Krylov1} ). Unfortunately, here the drift $b$ does not satisfied this requirement under Assumption \ref{assum1} (AX), since the process $\bar{Z}$ is only bounded in the $\mathcal{H}_{\tiny \text{BMO}}$ norm. To overcome this situation, we will use the fact that the Doleans-Dade exponential of a BMO martingale is uniformly integrable and it defines a proper density, in contrast to \cite{Delarue2}, where the authors rather used a localization arguments. More precisely, thanks to Assumption \ref{assum1} (AX) and the boundedness of $\bar{Y}$ (since $\bar{Y} \in \mathcal{S^{\infty}}$ by Definition \ref{defi} ) we observe that:
		\begin{align*}
			|\sigma^{-1}(s,\bar{X}_s,\bar{Y}_s)b(s,\bar{X}_s,\bar{Y}_s,\bar{Z}_s)| \leq C \left( 1 + |\bar{Z}_s| \right) \in \mathcal{H}_{\tiny \text{BMO}}.
		\end{align*}
		Thus, the measure $\bar{\mathbb{Q}}$ defined by 
		\[ 
		\frac{\mathrm{d}\bar{\mathbb{Q}}}{\mathrm{d}\bar{\mathbb{P}}} := \exp\Big( -\int_{0}^{t} \langle \sigma^{-1}(s,\bar{X}_s,\bar{Y}_s)b(s,\bar{X}_s,\bar{Y}_s,\bar{Z}_s), \mathrm{d}\tilde{W}_s \rangle -\frac{1}{2}\int_{0}^{t}  \big| \sigma^{-1}(s,\bar{X}_s,\bar{Y}_s)b(s,\bar{X}_s,\bar{Y}_s,\bar{Z}_s) \big|^2\mathrm{d}s  \Big),
		\]
		is uniformly integrable for all $t \in [0,T]$. Moreover, from the Girsanov theorem the process $\bar{W}$ defined for all $t\in[0,T]$ by
		\begin{align}
			\bar{W}_t := \tilde{W}_t + \int_{0}^{t}  \sigma^{-1}(s,\bar{X}_s,\bar{Y}_s)b(s,\bar{X}_s,\bar{Y}_s,\bar{Z}_s) \mathrm{d}s  
		\end{align}
		is an $\{\bar{\mathfrak{F}}\}_{0\leq t \leq T}$-Brownian motion under the measure $\bar{\mathbb{Q}}.$ Hence, the FBSDE \eqref{eqchang} takes the following form under the measure $\bar{\mathbb{Q}}$: 
		\begin{align}\label{eqchang1BM}
			\begin{cases}
				\bar{X}_t = x + \displaystyle \int_{0}^{t}\sigma(s,\bar{X}_s,\bar{Y}_s)\mathrm{d}\bar{W}_s, \\
				\bar{Y}_t = \phi(\bar{X}_T) + \displaystyle \int_{t}^{T} G(s,\bar{X}_s,\bar{Y}_s,\bar{Z}_s) \mathrm{d}s -\int_{t}^{T} \langle \bar{Z}_s ,\sigma(s,\bar{X}_s,\bar{Y}_s)\mathrm{d}\bar{W}_s\rangle,
			\end{cases}
		\end{align}
		where $G(t,x,y,z) = \langle b(t,x,y),z \rangle + f(t,x,y,z)$. Using once more Assumption \ref{assum1}, one can show that, the latter satisfies  for all $(t,x,y,y',z,z')\in [0,T]\times \mathbb{R}^d\times\mathbb{R}\times \mathbb{R}\times\mathbb{R}^{d}\times\mathbb{R}^d$
		\begin{align}
			&|G(t,x,y,z)| \leq  C\Big( 1+|y| +(1+\ell(y))|z|^2 \Big), \label{10-}\\
			&|G(t,x,y,z) - G(t,x,y',z')| \notag \\
			&\leq  C \Big( 1+|y|+2|z|+|z-z'| +\ell(|y-y'|^2)\big( 2|z|+ |z'-z|\big)\Big) \times (|y-y'| + |z-z'|).\label{10}
		\end{align}
		We further apply the It\^{o}-Krylov formula to the semimartingale $v(t,\bar{X}_t)$, to obtain for all $t\in [0,\tau]$ 
		\begin{align}\label{D}
			\mathrm{d}\mathcal{Y}_t= \left( \frac{\partial v}{\partial t} + \frac{1}{2} a\nabla_{xx}^2 v \right)(t,\bar{X}_t)\mathrm{d}t + \langle \nabla_x v(t,\bar{X}_t), \sigma(t,\bar{X},\mathcal{Y}_t) \mathrm{d}\bar{W}_t \rangle \quad \bar{\mathbb{Q}}\text{-a.s.},
		\end{align}
		where $\tau$ is  an $\bar{\mathfrak{F}}_t$ stopping time.
		Then, by using equation \eqref{eqmainP} with the fact that $\mathcal{Z}_t := \nabla_x v(t,\bar{X}_t)$, the dynamics of the process $(\bar{Y}_t- \mathcal{Y}_t)_{0\leq t\leq \tau}$ is given by
		\begin{align}
			\mathrm{d}(\bar{Y}- \mathcal{Y})_t =& -\frac{1}{2} \left( a(t,\bar{X}_t,\bar{Y}_t)- a(t,\bar{X}_t,\mathcal{Y}_t) \right) \nabla_{x x}^2 v(t,\bar{X}_t)\mathrm{d}t \notag \\
			&   -\Big( G(t,\bar{X}_t,\bar{Y}_t,\bar{Z}_t) -G(t,\bar{X}_t,\mathcal{Y}_t,\mathcal{Z}_t) \Big)\mathrm{d}t + \langle \bar{Z}_t-\mathcal{Z}_t, \sigma(t,\bar{X}_t,\bar{Y}_t)  \mathrm{d}\bar{W}_t  \rangle.
		\end{align}   	
		We recall that the processes $\mathcal{Y}$ and $\bar{Y}$ are both {uniformly bounded, hence we define $L_0$  by $L_0 := 2(\Vert \mathcal{Y} \Vert_{\infty}^2 + \Vert \bar{Y} \Vert_{\infty}^2).$} Let us consider the following positive and locally bounded function $h \in L^{1}_{\text{Loc}}(\mathbb{R})$: \[ h(y):=\kappa_1+\kappa_2\ell(y),\,\,\,\kappa_1,\kappa_2\in\mathbb{R}_+.\]

		For all $y\in [0,L_0]$, consider the function $\Phi_h \in W_{\text{loc}}^{1,2}(\mathbb{R})$ defined by:
		\[ \Phi_h (z):= \int_{0}^{z}\!\! \exp\left( \kappa  \int_{0}^{y}\!\! h(t)\mathrm{d}t\right) \mathrm{d}y, \]
		with $\kappa$ a free nonnegative parameter. The function $\Phi_h$ satisfies almost everywhere the differential equation $\Phi_h''(z) - \kappa h(z)\Phi_h'(z) = 0 $. Moreover for $|z|\leq L_0$, we have $0\leq z\Phi_h'(z) \leq \exp\left(  \kappa \|h\|_{L^1([0,L_0])} \right)\Phi_h(z)$.

		Then from the It\^{o}'s formula applied the function $\Phi_h$  (see  for example \cite[Theorem 2.1]{Ouknine}) we deduce for all $t\in [0,\tau]$:
		\begin{align}\label{eqIK1}
			&\Phi_h(|\bar{Y}_t- \mathcal{Y}_t|^2)\notag\\
			=& \Phi_h(|\bar{Y}_{\tau}-\mathcal{Y}_{\tau}|^2) + \int_{t}^{\tau} \Phi_h'(|\bar{Y}_s-\mathcal{Y}_s|^2)(\bar{Y}_s-\mathcal{Y}_s)\left( a(s,\bar{X}_s,\bar{Y}_s)- a(s,\bar{X}_s,\mathcal{Y}_s) \right) \nabla_{x x}^2 v(s,\bar{X}_s)\mathrm{d}s \notag\\
			& +2 \int_{t}^{\tau} \Phi_h'(|\bar{Y}_s-\mathcal{Y}_s|^2)(\bar{Y}_s-\mathcal{Y}_s)\left( G(t,\bar{X}_s,\bar{Y}_s,\bar{Z}_s) -G(s,\bar{X}_s,\mathcal{Y}_s,\mathcal{Z}_s) \right)\mathrm{d}s \notag\\
			&-2 \int_{t}^{\tau} \Phi_h'(|\bar{Y}_s-\mathcal{Y}_s|^2)(\bar{Y}_s-\mathcal{Y}_s)\langle \bar{Z}_s -\mathcal{Z}_s, \sigma(s,\bar{X}_s,\bar{Y}_s)  \mathrm{d}\bar{W}_s  \rangle \notag\\
			&- \int_{t}^{\tau} \Phi_h'(|\bar{Y}_s-\mathcal{Y}_s|^2)\langle \bar{Z}_s-\mathcal{Z}_s, a(s,\bar{X}_s,\bar{Y}_s)(\bar{Z}_s-\mathcal{Z}_s) \rangle \mathrm{d}s \notag\\
			&-2 \int_{t}^{\tau} \Phi_h''(|\bar{Y}_s-\mathcal{Y}_s|^2)\langle \bar{Z}_s-\mathcal{Z}_s, a(s,\bar{X}_s,\bar{Y}_s)(\bar{Z}_s-\mathcal{Z}_s) \rangle \mathrm{d}s -\int_{t}^{\tau} \mathrm{d}\mathcal{M}_s,
		\end{align}
		where $ \mathrm{d}\mathcal{M}_s= 2{1}_{\{s\leq {\tau}\}}  \Phi_h'(|\bar{Y}_s-\mathcal{Y}_s|^2)(\bar{Y}_s-\mathcal{Y}_s)\langle \bar{Z}_s-\mathcal{Z}_s, \sigma(s,\bar{X}_s,\bar{Y}_s)  \mathrm{d}\bar{W}_s  \rangle$ is a square integrable martingale.
		Using Assumption \ref{assum1} and \eqref{10}, we obtain that
		\begin{align}\label{eqIK2}
			&\Phi_h(|\bar{Y}_t- \mathcal{Y}_t|^2)\notag\\
			\leq& \Phi_h(|\bar{Y}_{\tau}-\mathcal{Y}_{\tau}|^2) + K \int_{t}^{\tau} \Phi_h'(|\bar{Y}_s-\mathcal{Y}_s|^2)|\bar{Y}_s-\mathcal{Y}_s|^2 |\nabla_{xx}^2 v(s,\bar{X}_s)|\mathrm{d}s\notag\\
			&+ 2C \int_{t}^{\tau} \Phi_h'(|\bar{Y}_s-\mathcal{Y}_s|^2)|\bar{Y}_s-\mathcal{Y}_s|\Big(1+|\mathcal{Y}_s|+2|\mathcal{Z}_s|+|\bar{Z}_s-\mathcal{Z}_s|
			\notag\\
			&+ \ell(|\bar{Y}_s-\mathcal{Y}_s|^2)\Big\{2|\mathcal{Z}_s|+|\bar{Z}_s-\mathcal{Z}_s|\Big\}\Big)\Big(|\bar{Y}_s-\mathcal{Y}_s|+|\bar{Z}_s-\mathcal{Z}_s|\Big)\mathrm{d}s\notag\\
			&-\lambda \int_{t}^{\tau}2\kappa|\bar{Y}_s-\mathcal{Y}_s|^2 |\bar{Z}_s-\mathcal{Z}_s|^2  h(|\bar{Y}_s-\mathcal{Y}_s|^2)\Phi_h'(|\bar{Y}_s-\mathcal{Y}_s|^2)\mathrm{d}s\notag\\
			& -\lambda \int_{t}^{\tau} \Phi_h'(|\bar{Y}_s-\mathcal{Y}_s|^2) |\bar{Z}_s-\mathcal{Z}_s|^2 \mathrm{d}s-\int_{t}^{\tau} \mathrm{d}\mathcal{M}_s.
		\end{align}
		From the local boundedness and positivity of $\ell$, there exists $M_1>0$ such that  $\ell(|\bar{Y}_s-\mathcal{Y}_s|^2)\leq M_1$. Using  \eqref{3.6} and applying repeatedly the Young inequality, \eqref{eqIK2} becomes
		\begin{align}\label{eqIK5}
			&	\Phi_h(|\bar{Y}_t- \mathcal{Y}_t|^2)+\lambda \int_{t}^{\tau} \Phi_h'(|\bar{Y}_s-\mathcal{Y}_s |^2) |\bar{Z}_s-\mathcal{Z}_s|^2\Big(1+2\kappa \kappa_1 |\bar{Y}_s-\mathcal{Y}_s |^2+ 2\kappa \kappa_2 |\bar{Y}_s-\mathcal{Y}_s |^2\ell(|\bar{Y}_s-\mathcal{Y}_s |^2)\Big)\mathrm{d}s\notag\\
			\leq& \Phi_h(|\bar{Y}_{\tau}-\mathcal{Y}_{\tau}|^2) + K_{\epsilon_1,\epsilon_2, M_1,\epsilon_5, L_0}(1+L_0) \int_{t}^{\tau} \Phi_h'(|\bar{Y}_s-\mathcal{Y}_s|^2)|\bar{Y}_s-\mathcal{Y}_s|^2 (1+(T-s)^{-1+\gamma} +|\nabla_{xx}^2 v(s,\bar{X}_s)|)\mathrm{d}s\notag\\
			&+ K(\epsilon_1 +\frac{1}{\epsilon_3})\int_{t}^{\tau} \Phi_h'(|\bar{Y}_s-\mathcal{Y}_s|^2)|\bar{Y}_s-\mathcal{Y}_s |^2|\bar{Z}_s-\mathcal{Z}_s|^2\mathrm{d}s \notag\\
			&+K(\epsilon_2+\epsilon_3+M\epsilon_5+M\epsilon_6) \int_{t}^{\tau} \Phi_h'(|\bar{Y}_s-\mathcal{Y}_s |^2)|\bar{Z}_s-\mathcal{Z}_s|^2\mathrm{d}s
			\notag\\
			&+K(\epsilon_4+\frac{1}{\epsilon_6})\int_{t}^{\tau}\Phi_h'(|\bar{Y}_s-\mathcal{Y}_s |^2)|\bar{Y}_s-\mathcal{Y}_s |^2\ell(|\bar{Y}_s-\mathcal{Y}_s |^2)|\bar{Z}_s-\mathcal{Z}_s|^2\mathrm{d}s -\int_{t}^{\tau} \mathrm{d}\mathcal{M}_s,
		\end{align}
		where $K$ is a positive constant that may change from line to line.
		Choosing  $\epsilon_2=\epsilon_3=\frac{\lambda}{16K}, \epsilon_5=\epsilon_6=\frac{\lambda}{16KM_1}$, $\kappa, \kappa_1$ and $\kappa_2$ such that $2\kappa\kappa_1= \epsilon_1 +\frac{1}{\epsilon_3}$ and $2\kappa\kappa_2=\epsilon_4 +\frac{1}{\epsilon_6}$ and using the relation  $ z\Phi_h'(z) \leqslant \exp(\kappa|| h ||_{L^1([0,L_0])})\Phi_h(z) $ for all $z \in [0,L_0]$, we get	
		\begin{align}\label{eqIK6}
			&	\Phi_h(|\bar{Y}_t- \mathcal{Y}_t|^2)+\frac{3}{4}\lambda \int_{t}^{\tau} \Phi_h'(|\bar{Y}_s-\mathcal{Y}_s |^2) |\bar{Z}_s-\mathcal{Z}_s|^2\mathrm{d}s\notag\\
			\leq& \Phi_h(|\bar{Y}_{\tau}-\mathcal{Y}_{\tau}|^2) + K\int_{t}^{\tau} \Phi_h(|\bar{Y}_s-\mathcal{Y}_s|^2) (1+(T-s)^{-1+\gamma} +|\nabla_{xx}^2 v(s,\bar{X}_s)|)\mathrm{d}s-\int_{t}^{\tau} \mathrm{d}\mathcal{M}_s,
		\end{align}
		where $K$ is a constant depending on $\epsilon_1,\epsilon_2, M_1,\epsilon_5, L_0$ and $\|h\|_{L^1([0,L_0])}$. Taking the conditional expectation on both sides of \eqref{eqIK6}, we have
		\begin{align}\label{eqIK7}
			&\boldmath{1}_{ \{t\leq \tau\}}\Phi_h(|\bar{Y}_t- \mathcal{Y}_t|^2) +  \frac{3\lambda}{4} \bar{\mathbb{E}}^{\bar{\mathbb{Q}}}\Big[ \boldmath{1}_{ \{t\leq \tau\}} \int_{t}^{\tau} \Phi_h'(|\bar{Y}_s-\mathcal{Y}_s |^2) |\bar{Z}_s-\mathcal{Z}_s|^2\mathrm{d}s \Big| \bar{\mathfrak{F}}_t\Big] \notag\\
			\leq & \bar{\mathbb{E}}^{\bar{\mathbb{Q}}}\Big[ \boldmath{1}_{ \{t\leq \tau\}}\Phi_h(|\bar{Y}_{\tau}-\mathcal{Y}_{\tau}|^2)\Big|\mathfrak{F}_t\Big]\notag\\
			& + K \bar{\mathbb{E}}^{\bar{\mathbb{Q}}}\Big[\boldmath{1}_{ \{t\leq \tau\}}\int_{t}^{\tau} \Phi_h(|\bar{Y}_s-\mathcal{Y}_s|^2) (1+(T-s)^{-1+\gamma} +|\nabla_{xx}^2 v(s,\bar{X}_s)|)\mathrm{d}s \Big|\bar{\mathfrak{F}}_t\Big].
		\end{align}

		The bound \eqref{eqIK7} is analogous to the one obtained in \cite[Section 4.3.5, equation(4.14)]{Delarue2}. We underline the fact that the second order derivatives of $v$ appearing on the right hand side of \eqref{eqIK7} are not uniformly controlled on $[0,T]\times\mathbb{R}^d$ (see \eqref{3.7}) and this prevent one to directly applying the classical Gronwall inequality in \eqref{eqIK7}.
		Thus, one needs to apply the {Krylov's inequality to control efficiently the latter (see \cite[Lemma 4.1]{Delarue2}) and then using} the non trivial discrete Gronwall's lemma developed in \cite[Section 4.3.5]{Delarue2} to derive : 
		$$ \essup_{\omega\in \Omega}\Phi_h(|\bar{Y}_t- \mathcal{Y}_t|^2) = 0, \,\,  \text{ for all } t \in [0,T].$$
		Since $\Phi_h$ is nonnegative, and thanks to the continuity of $\bar{Y}$ and $\mathcal{Y}$  we deduce $\bar{\mathbb{P}}\text{-a.s.}$
		$\bar{Y}_t = \mathcal{Y}_t $  \text{ for all } $t \in [0,T]$.  Moreover, using the fact $\Phi_h'$ is bounded from below by a positive constant and from \eqref{eqIK7} we derive
		\[   \bar{\mathbb{E}}^{\bar{\mathbb{Q}}}\Big[ {1}_{\{t\leq \tau\} } \int_{t}^{\tau} |\bar{Z}_s-\mathcal{Z}_s|^2\mathrm{d}s \Big| \bar{\mathfrak{F}}_t \Big] = 0, \]
		which implies that $\bar{Z}_t = \mathcal{Z}_t$  $\mathrm{d}t\otimes \bar{\mathbb{P}}\text{-a.s.}$ This completes the proof.

		(iv).	\textbf{Strong solution to FBSDE \eqref{eqmainF}} {We follow the idea developed in \cite{RhOlivOuk}. We restrict ourselves to $d=1$ and set $\alpha_0\ge 1/2$. Using properties of local time, we will prove that the solution to SDE \eqref{SDE11} is pathwise unique.}
		Recall that, if $X$ is a semimartingale, its local time $(t,a)\mapsto L_t^{a}$ at time $t\in [0,T]$ and level $a \in \mathbb{R}$, can be defined through the density occupation formula:
		\[ \int_0^t f(X_s)\mathrm{d}\langle X \rangle_s = \int_{\mathbb{R}}f(a)L_t^{a}(X_t)\mathrm{d}a,  \]
		for every bounded and measurable function $f:\mathbb{R}\rightarrow \mathbb{R}$.

		Let $X_t$ and $X'_t$ denote two {weak} solutions to SDE \eqref{SDE11} with the same underlying Brownian motion $\{W_t\}_{t\geq 0}$ for all $t\in [0,T]$ {we can show} as in \cite[Theorem 4.1]{RhOlivOuk} that $L_t^0(X-X') = 0$ (where $L_t^0$ stands for the local time at level $0$). {For the sake of completeness, we will briefly reproduce the proof here. Suppose there is $t \in [0,T], \varepsilon >0$ and a set $\mathcal{A} \in \mathfrak{F}$, with $\mathbb{P}(\mathcal{A}) > 0$ such that $L_t^{0}(X^{1} - X^{2})(\omega) > \varepsilon$ for $\omega \in \mathcal{A}$. Since the map $\alpha \mapsto L_t^{\alpha}(X^{1} - X^{2})$ is right continuous, then there exists $\tilde\delta >0$ such that for all $\alpha \in [0,\tilde\delta],~ L_t^{\alpha}(X^{1} - X^{2}) \geq \varepsilon/2,$ on $ \mathcal{A}$. Therefore, using the occupation-times formula we deduce that:
			\begin{align}\label{6.1}
				\int_{0}^{t} \frac{\mathrm{d}\langle X^{1} - X^{2} \rangle_s }{|X^{1}_s - X^{2}_s|^{2\alpha_0}} = \int_{0}^{+\infty} \frac{1}{|\alpha|^{2\alpha_0}}L_t^{\alpha}(X^{1} - X^{2})\mathrm{d}\alpha \geq \frac{\varepsilon}{2} \int_{0}^{\tilde\delta} \frac{1}{\alpha^{2\alpha_0}}\mathrm{d}\alpha = +\infty, ~~ \text{ on } \mathcal{A}. 
			\end{align}
			On the other hand:
			\begin{align}\label{6.3}
				\int_{0}^{t} \frac{\mathrm{d}\langle X^{1} - X^{2} \rangle_s }{|X^{1}_s - X^{2}_s|^{2\alpha_0}} = \int_{0}^{t} \frac{ (\tilde{\sigma}(s,X^1_s) - \tilde{\sigma}(s,X^2_s))^2 }{|X^{1}_s - X^{2}_s|^{2\alpha_0}}\mathrm{d}s, ~~ \text{ on } \mathcal{A}, 
			\end{align}
			the latter is bounded $\text{on } \mathcal{A}$ provided
			\[ |\tilde{\sigma}(t,X^1_t) - \tilde{\sigma}(t,X^2_t)| \leq C |X^1_t-X^2_t|^{\alpha_0} , \]
			and this is true only for all $t\in [0,T-\delta]$.
			Thus, $\mathbb{P}(\mathcal{A}) = 0,$ which is a contradiction.} Therefore, the pathwise uniqueness holds for SDE \eqref{SDE11} (see \cite[Theorem 4.2]{RhOlivOuk}).
		
		Let $(X,Y,Z)$ and $(X',Y',Z')$ be two weak solutions to FBSDE \eqref{eqmainF} with the same underlying stochastic basis $(\Omega,\mathfrak{F},(\mathfrak{F})_{t\leq s \leq T}, W,\mathbb{P}).$ Since, the solution is unique in law (see the above subsection), it holds $\mathbb{P}\circ (X,Y,Z,W)^{-1} = \mathbb{P}\circ(X',Y',Z',W)^{-1}.$ Moreover, using the pathwise uniqueness of \eqref{SDE11}, we have that $ X_t = X'_t, \forall t\in [0,T]$ $\mathbb{P}\text{-a.s.}$  The continuity of the function $v$ and relation \eqref{FK}, gives 
		\begin{align}
			\begin{cases}
				Y'_t = v(t,X'_t) = v(t,X_t) = Y_t \quad \forall t\in [0,T],\mathbb{P}\text{-a.s.}  \\
				Z'_t = \nabla_x v(t,X'_t) = \nabla_xv(t,X_t) = Z_t \quad \mathrm{d}\mathbb{P}\otimes \mathrm{d}t\text{-a.e.}
			\end{cases}
		\end{align}
		This concludes the proof.
	\end{proof}
	
	{\begin{remark}
			In the remainder of the paper, one can relax the uniform Lipschitz condition of  the drift $b$ in $y$ in  Assumption \ref{assum1} to a locally locally Lipschitz condition that is, a Lipschitz condition in the region $[0,T]\times \mathbb{R}^d\times\{ |y|\leq \Gamma \}\times \mathbb{R}^d$, where the constant $\Gamma >0$ can be taken as the uniform bound coming from \eqref{3.5}.
	\end{remark}}
	
	\begin{remark}\label{remk 3.4}
		Note that, as in \cite[Theorem 2.9]{Delarue1}, under Assumption \ref{assum1} with $\beta = 1$ and $\sigma = \mathbb{I}_{d\times d}$, there exists $\Upsilon >0$ only depending on the coefficients appearing in the assumptions such that the gradient of $v$ solution to PDE \eqref{eqmainP} satisfies:
		\begin{align*}
			\forall (t,x)\in [0,T]\times \mathbb{R}^d,\, |\nabla_x v(t,x)| \leq \Upsilon
		\end{align*}
	\end{remark}

	\section{A comonotonicity theorem for FBSDE \eqref{eqmainF}}\label{comonotonicity} In this section we establish a type of comparison theorem for the control component of the solution to the coupled FBSDE \eqref{eqmainF}. This is known as the comonotonicity theorem. It was first introduced in \cite{ChenKulWei05} in the Lipschitz framework and then extended in the quadratic framework in \cite{DosDos13} for decoupled FBSDEs. The comonotonicity theorem finds interesting applications in the context of economic models of equilibrium pricing in the framework of forward-backward SDEs (\cite{ChenKulWei05},\cite{DosDos13}). 
	
	Below, we recall the meaning of two functions being comonotonic.
	\begin{defi}
		Two functions $h_1$ and $h_2$ are said to be comonotonic, if both $h_1$ and $h_2$ are of the same monotonicity, that is, if $h_1$ is increasing(or decreasing), so is $h_2.$ Moreover, $h_1$ and $h_2$ are said to be strictly comonotonic if $h_1$ and $h_2$ are strictly monotonic.
	\end{defi}
	
	In this section, we assume that $d=1$ and we recall that from Theorem \ref{th3.3}, the equation \eqref{eqmainF} admits a unique strong solution $(X^x,Y^x,Z^x)$ such that the function $v\in W_{\text{loc}}^{1,2,2}([0,T[\times\mathbb{R},\mathbb{R})$ solves the quasi-linear PDE \eqref{eqmainP} and the following relation hold
	\begin{equation}\label{rep2}
		Y_s^{t,x} = v(s, X_s^{t,x}),\quad Z_s^{t,x} = v'(s, X_s^{t,x}),
	\end{equation}
	Moreover, the FBSDE \eqref{eqmainF} can be written as 
	\begin{equation}\label{eqmain6}
		\begin{cases}
			X_s^{t,x} = x + \displaystyle \int_{t}^{s} \tilde{b}(r,X_r^{t,x})\mathrm{d}r + \int_{t}^{s} \tilde{\sigma}(r,X_r^{t,x})\mathrm{d}W_r,\\
			Y_s^{t,x} = \phi(X_T^{t,x})+ \displaystyle \int_{s}^{T} f(r,X_r^{t,x},Y_r^{t,x},Z_r^{t,x})\mathrm{d}r - \int_{s}^{T} Z_r^{t,x} \mathrm{d}M_r^X,
		\end{cases}
	\end{equation}
	where, $\tilde{b}(\cdot,\cdot)= b(\cdot,\cdot,v(\cdot, \cdot), v'(\cdot,\cdot)),\quad \tilde{\sigma}(\cdot,\cdot)= \sigma(\cdot,\cdot,v(\cdot,\cdot)).$
	
	We are now in position to give the main result of this section. It can be viewed as an extension of those in \cite{ChenKulWei05, DosDos13} to the case of coupled FBSDEs under {weaker} assumptions on the coefficients and more general type of quadratic drivers. We will consider the following additional assumption:
	\begin{itemize}
		\item[(AX')] The function $(t,x,\cdot,\cdot)\mapsto b(t,x,\cdot,\cdot)$ is continuous on $[0,T]\times\mathbb{R}^d$ for all $y$ and $z$.
	\end{itemize}
	\begin{thm}\label{Main1}
		Let assumptions of Theorem \ref{th3.3} hold for $d=1$ and $\alpha_0 \geq 1/2$. Assume further that (AX') is valid. For all $(t,x) \in [0,T) \times \mathbb{R},$  let $(X^{t,x,i}, Y^{t,x,i}, Z^{t,x,i})$ be the solution to FBSDE \eqref{eqmainF}  with drift $b_i$, generators $f_i$, terminal value $\phi_i$ and the same dispersion coefficient $\sigma$, $i \in \{ 1,2\}.$ Suppose that $x\mapsto \phi_i(x)$ and $x\mapsto f_i(\cdot,x,\cdot,\cdot)$ are comonotonic for all $i \in \{ 1,2\}$. 
		Then $\text{for any } (t,x) \in [0,T) \times \mathbb{R} \text{ and } s \in [t, T)$
		\begin{equation}\label{rep4}
			{Z}_s^{t,x,1} \cdot {Z}_s^{t,x,2} \geq 0, \quad \mathbb{P}\text{-a.s.}
		\end{equation}
	\end{thm}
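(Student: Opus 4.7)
The strategy rests on the nonlinear Feynman--Kac identity \eqref{rep2}, which gives $Z^{t,x,i}_s = v'_i(s,X^{t,x,i}_s)$, with $v_i$ the decoupling field of the $i$-th FBSDE. It therefore suffices to show that for each $i\in\{1,2\}$ the weak spatial derivative $v'_i(s,\cdot)$ has a constant sign coinciding with the common monotonicity direction of $\phi_i$ and $f_i(\cdot,x,\cdot,\cdot)$; the comonotonicity hypothesis across $i=1,2$ then forces $v'_1(s,\cdot)$ and $v'_2(s,\cdot)$ to share the same sign, and the claim \eqref{rep4} follows by evaluating along the respective forward processes.

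To pin down the sign of $v'_i$, fix $i$ and two initial points $x_1 \leq x_2$, and consider the solutions $(X^{s,x_j,i},Y^{s,x_j,i},Z^{s,x_j,i})$ for $j=1,2$. The reduced SDE \eqref{SDE11} enjoys pathwise uniqueness in one dimension under $\alpha_0 \geq 1/2$ by Theorem \ref{th3.3}(iv), so a Yamada--Watanabe type comparison argument yields $X^{s,x_1,i}_r \leq X^{s,x_2,i}_r$ for all $r\in[s,T]$, almost surely. Assuming without loss of generality that $\phi_i$ and $f_i(\cdot,\cdot,\cdot,\cdot)$ are both nondecreasing in $x$, the terminal data and the drivers (evaluated at common $(y,z)$) are then ordered accordingly, and a comparison theorem for quadratic BSDEs of Kobylanski--Tevzadze type, applicable because $Y^{s,x_j,i} \in \mathcal{S}^\infty$ and $Z^{s,x_j,i} \in \mathcal{H}_{\text{BMO}}$, yields $Y^{s,x_1,i}_s \leq Y^{s,x_2,i}_s$; that is, $v_i(s,x_1) \leq v_i(s,x_2)$. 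Thus $v_i(s,\cdot)$ is nondecreasing, and by the H\"older continuity of $\nabla_x v_i$ established in \eqref{holder1} its spatial derivative is nonnegative pointwise on $[0,T)\times\mathbb{R}$. The nonincreasing case is symmetric.

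The main obstacle is the backward comparison step. Standard quadratic-BSDE comparison requires a Lipschitz or one-sided monotonicity condition in $y$ that is not literally spelled out in (AY)(ii); the plan is to linearise the increment $f_i(r,X^{s,x_2,i},Y^{s,x_2,i},Z^{s,x_2,i}) - f_i(r,X^{s,x_1,i},Y^{s,x_1,i},Z^{s,x_1,i})$ as the sum of a nonnegative contribution coming from the forward ordering plus a term of the form $\alpha_r (Y^{s,x_2,i}_r - Y^{s,x_1,i}_r) + \beta_r \cdot (Z^{s,x_2,i}_r - Z^{s,x_1,i}_r)$, with $\alpha$ bounded and $\int \beta\,\mathrm{d}W$ in BMO; the latter is a consequence of the $\mathcal{H}_{\text{BMO}}$ bound on the $Z^i$ combined with the Lipschitz-with-BMO-correction structure of (AY)(ii). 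A Girsanov change of measure, whose exponential density is a genuine uniformly integrable martingale thanks to the BMO property (as exploited in the uniqueness part of the proof of Theorem \ref{th3.3}), then reduces the comparison of $Y^{s,x_1,i}$ and $Y^{s,x_2,i}$ to a linear BSDE under an equivalent probability measure, where the sign conclusion is immediate from the nonnegativity of the integrated driver increment.
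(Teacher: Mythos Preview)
Your approach is essentially the same as the paper's: establish monotonicity of $x\mapsto X^{t,x,i}$ via a forward comparison, transfer it to $x\mapsto Y^{t,x,i}=v_i(t,x)$ via a quadratic BSDE comparison, and conclude that each $v_i'$ has a constant sign determined by the common monotonicity of $\phi_i$ and $f_i$; comonotonicity across $i$ then gives \eqref{rep4} through $Z^{t,x,i}_s=v_i'(s,X^{t,x,i}_s)$.

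Two points of precision. First, your justification of the forward comparison is slightly off: pathwise uniqueness together with a ``no-crossing'' argument is not what is usually called a Yamada--Watanabe argument, and more importantly you do not explain where the extra hypothesis (AX') enters. In the paper the forward comparison is obtained by invoking \cite[Proposition 5.2.18]{KaShr88}, which requires the drift of the reduced SDE \eqref{SDE11} to be continuous in $(t,x)$; this is exactly what (AX') guarantees once composed with the continuous decoupling field $(v_i,\nabla_x v_i)$. Your route via pathwise uniqueness (Theorem \ref{th3.3}(iv)) would in principle give the same ordering for solutions of the \emph{same} reduced SDE started from ordered initial data, but then (AX') is idle in your argument and you should say so, or else align with the paper's use of it. Second, your linearisation-plus-Girsanov sketch for the BSDE comparison is precisely the content of the paper's Theorem \ref{thap}, so you may simply cite that result rather than rederive it; the structural condition (AY)(ii) does yield the needed stochastic-Lipschitz coefficients with the $\beta$-process in $\mathcal{H}_{\text{BMO}}$, exactly as you indicate.
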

	
	The proof of the theorem is based on a careful application of the comparison theorem both for SDEs and quadratic BSDEs, since it is known that such results amount to a kind of monotonicity of the solutions $X$ and $Y$, respectively. One of the difficulties here, comes from the non-Lipschitz continuity of the drift $b$ and a comparison theorem result in this frame is still an open question in the literature. However, thanks to the additional assumptions (AX') and the continuity of the process $X$ in $T$(see Theorem \ref{th3.3}), we can invoke the refine comparison theorem for SDEs stated in [\cite{KaShr88}, Proposition 5.2.18],  to claim that, for fixed $t$ and $T$ the mapping $x\mapsto X_T^{t,x}$ is increasing.
	
	\begin{proof} Since $\phi_1$ and $\phi_2$ are comonotonic, thus for fixed $t$ and $T,$ the functions $\phi_1(X_T^{t,x,1})$ and $\phi_2(X_T^{t,x,2})$ are almost surely comonotonic. A same reasoning yields $x\mapsto f_1(\cdot,X_{\cdot}^{t,x,1},\cdot,\cdot)$ and $x\mapsto f_2(\cdot,X_{\cdot}^{t,x,2},\cdot,\cdot)$ are almost surely comonotonic.
		
		{Under the assumptions of the theorem, a comparison theorem for quadratic BSDEs is valid (see Theorem \ref{thap}). Combining this with} the monotonocity (and comonotonicity) of $x\mapsto \phi_i(X_T^{t,x,i})$ and $x\mapsto f_{i}(\cdot,X_T^{t,x,i},\cdot,\cdot)$ we obtain that $x\mapsto Y^{t,x,i}$ is almost surely monotone. From the comonotonicity of $x\mapsto \phi_1(X_T^{t,x,1}),$ $x\mapsto \phi_2(X_T^{t,x,2}),$ $x\mapsto f_1(\cdot,X^{t,x,1},\cdot,\cdot),$ and $x\mapsto f_2(\cdot,X^{t,x,2},\cdot,\cdot),$ we invoke once more Theorem \ref{thap} to conclude that the mappings $x\mapsto Y^{t,x,1}$ and $x\mapsto Y^{t,x,2}$ are also comonotonic a.s. Therefore from \eqref{rep2}, we deduce that 
		\begin{align*}
			Z_s^{t,x,1} \cdot Z_s^{t,x,2} &=(\partial_x v_1)(s,X_s^{t,x,1})\cdot(\partial_x v_2)(s,X_s^{t,x,2}) \geq 0, \,\, \mathbb{P-} a.s.
		\end{align*} 
		This conclude the proof.
	\end{proof}
	The next result is a refinement of the preceding one. {It is derived without the additional assumption (AX') (continuity of the drift $b$ in $t$ and $x$) and assuming further that the diffusion coefficient $\sigma=1$. In this context, the system of interest is given by 
		\begin{equation}
			\begin{cases}\label{mono}
				X_s^{t,x} = x + \displaystyle \int_{t}^{s} \tilde{b}(r,X_r^{t,x})\mathrm{d}r + W_s-W_t,\\
				Y_s^{t,x} = \phi(X_T^{t,x})+ \displaystyle \int_{s}^{T} f(r,X_r^{t,x},Y_r^{t,x},Z_r^{t,x})\mathrm{d}r - \int_{s}^{T} Z_r^{t,x} \mathrm{d}M_r^X,
			\end{cases}
		\end{equation}
		and the drift coefficient $\tilde b$ is now allowed to be discontinuous both in its temporal and spatial variables.
		\begin{thm}\label{comon}
			Let Assumption \ref{assum1} be in force for $d=1$. Define for all $(t,x) \in [0,T) \times \mathbb{R},$ the solution to FBSDE \eqref{mono} by $(X^{t,x,i}, Y^{t,x,i}, Z^{t,x,i})$ for $i \in \{ 1,2\}.$ Suppose that $x\mapsto \phi_i(x)$ and $x\mapsto f_i(\cdot,x,\cdot,\cdot)$ are comonotonic for all $i \in \{ 1,2\},$ and furthermore, that $\phi_1,\phi_2$ are also comonotonic.
			Then $\text{for any } (t,x) \in [0,T) \times \mathbb{R} \text{ and } s \in [t, T),$
			\begin{equation}\label{rep4}
				{Z}_s^{t,x,1} \cdot {Z}_s^{t,x,2} \geq 0, \quad \mathbb{P}\text{-a.s.}
			\end{equation}
		\end{thm}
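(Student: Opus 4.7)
My plan is to reduce Theorem \ref{comon} to Theorem \ref{Main1} by a regularization argument, relying on the stability of the decoupling fields obtained in the proof of Theorem \ref{th3.3}. First I would mollify the data: for each $n\ge 1$, construct smooth approximations $\tilde{b}^n_i, f^n_i, \phi^n_i$ of $\tilde{b}_i, f_i, \phi_i$ (for $i=1,2$) by convolution with a standard nonnegative kernel $\rho_n$ in all real-valued arguments. Convolution against a nonnegative kernel preserves monotonicity of a function of a real variable, so every comonotonicity hypothesis transfers to the mollified coefficients: $\phi^n_1$ and $\phi^n_2$ remain comonotonic, and for each $i$ the maps $x\mapsto \phi^n_i(x)$ and $x\mapsto f^n_i(\cdot,x,\cdot,\cdot)$ remain comonotonic. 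Since the mollified drift $\tilde{b}^n_i$ is smooth in $(t,x)$, the continuity assumption (AX') is automatically satisfied, and with $\sigma\equiv 1$ the condition $\alpha_0\ge 1/2$ of Theorem \ref{Main1} is vacuous.

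Next, Theorem \ref{Main1} applies to the regularized FBSDE and delivers, for its solution $(X^{t,x,i,n},Y^{t,x,i,n},Z^{t,x,i,n})$,
\[ Z^{t,x,1,n}_s\cdot Z^{t,x,2,n}_s \geq 0,\qquad \mathbb{P}\text{-a.s.},\quad s\in[t,T). \]
To conclude, I pass to the limit $n\to\infty$. By Step 3 of the proof of Theorem \ref{th3.3}, the regularized decoupling fields $v^n_i$ converge locally uniformly to $v_i$ on $[0,T]\times\mathbb{R}$, and the spatial gradients $\nabla_x v^n_i$ converge locally uniformly to $\nabla_x v_i$ on $[0,T-\delta]\times\mathbb{R}$ for every $\delta>0$. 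Simultaneously, standard Zvonkin/Krylov--Veretennikov stability estimates for SDEs with bounded measurable drift and identity diffusion yield $X^{t,x,i,n}\to X^{t,x,i}$ in $\mathcal{S}^2$. The Feynman--Kac relation \eqref{FK},
\[ Z^{t,x,i,n}_s = \nabla_x v^n_i(s, X^{t,x,i,n}_s)\quad\text{and}\quad Z^{t,x,i}_s = \nabla_x v_i(s, X^{t,x,i}_s), \]
then gives convergence $Z^{t,x,i,n}_s\to Z^{t,x,i}_s$ in probability for each $s\in[t,T)$, and hence $\mathbb{P}$-a.s. along a subsequence, so the product inequality survives in the limit, proving \eqref{rep4}.

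The main obstacle I anticipate is the passage to the limit near the terminal time: the gradient convergence $\nabla_x v^n_i\to\nabla_x v_i$ holds only on strips $[0,T-\delta]\times\mathbb{R}$ and deteriorates as $s\to T$, which is precisely why the statement restricts to $s\in[t,T)$. A second delicate point is to verify that the mollified coefficients satisfy Assumption \ref{assum1} with constants independent of $n$, so that the uniform bounds \eqref{3.5} and \eqref{3.6}, together with the H\"older estimates \eqref{holder} and \eqref{holder1} driving the Arzela--Ascoli argument of Theorem \ref{th3.3}, apply uniformly along the approximating sequence; this is routine for standard mollifiers but must be checked explicitly to justify the simultaneous convergence of $v^n_i$ and $\nabla_x v^n_i$ invoked above.
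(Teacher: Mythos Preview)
Your regularization strategy is workable in principle, but the paper takes a much shorter and cleaner route. The only place in the proof of Theorem \ref{Main1} where the continuity hypothesis (AX') enters is to justify the SDE comparison theorem (Karatzas--Shreve, Proposition 5.2.18) yielding monotonicity of $x\mapsto X^{t,x}_T$. When $\sigma\equiv 1$, that comparison result is available for merely bounded measurable drift via Nakao's theorem \cite{Nakao83}, so the paper simply swaps in Nakao's result and reruns the argument of Theorem \ref{Main1} verbatim. No approximation, no stability of decoupling fields, no limit passage is needed.

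Your approach, by contrast, introduces an entire layer of analysis (mollification, uniform PDE estimates, Arzel\`a--Ascoli, SDE stability) to recover a conclusion that follows in one line once the right comparison theorem is identified. One technical wrinkle worth flagging: you mollify the \emph{decoupled} drift $\tilde{b}_i$ rather than the original $b_i$, so the regularized FBSDE is actually decoupled, and its decoupling field $\bar v^n_i$ solves a PDE with first-order coefficient $\tilde{b}^n_i(t,x)$, not $b^n_i(t,x,v^n_i,\nabla_x v^n_i)$. This is \emph{not} the sequence produced in Step~3 of the proof of Theorem \ref{th3.3}, so your appeal to that step is not literally correct; you would need to rerun the a~priori estimates and compactness argument for this different approximating family and then identify the limit via uniqueness of the PDE \eqref{eqmainP}. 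That is doable, but it is additional work you have not spelled out, and it is entirely avoided by the paper's direct argument.
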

		\begin{proof}
			Using the comparison theorem for SDEs provided in \cite[Theorem 2]{Nakao83}, {we can then apply the machinery developed in the proof of Theorem \ref{Main1}}. This ends the proof.
		\end{proof}
		The next result provides conditions under which one can conclude on the positivity or negativity of the control process $Z$ solution to a scalar coupled FBSDE with quadratic growth. The main ingredient in the proof of this result concerns the differentiability with respect to the starting point $x$ of the forward process. {In our frame, this is provided} in Section \ref{section5}, Proposition \ref{th4.4} . The complete proof of this corollary can be performed as in \cite[Corollary 3.5]{DosDos13}, we will not reproduce it here.
		\begin{cor}\label{Cor}
			Let Assumptions of Theorem \ref{comon} be in force. Let $(X^x,Y^x,Z^x)$ be the solution to FBSDE \eqref{mono} with initial value $(t,x)\in [0,T)\times\mathbb{R}$. Then, if $x\mapsto\phi(x)$ and $x\mapsto f(\cdot,x,\cdot,\cdot)$ are increasing (resp. decreasing) functions, then $Z_t \geq 0 \,\, \mathbb{P}\text{-a.s.}$(resp. $Z_t \leq 0 \,\, \mathbb{P}\text{-a.s.}$ ) for all $t \in [0,T)$.
		\end{cor}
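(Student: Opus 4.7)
The strategy is to combine the Feynman--Kac representation $Z_s^{t,x}=v'(s,X_s^{t,x})$ coming from \eqref{rep2} with a monotonicity analysis of $x\mapsto v(t,x)$, the latter being obtained by chaining comparison for the forward SDE with comparison for the quadratic BSDE. Differentiability of $v$ in $x$, which is provided by Proposition \ref{th4.4} in Section \ref{section5}, then converts the monotonicity statement into a sign statement for $v'$, and hence for $Z$.

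The first step is to fix $(t,x)\in[0,T)\times\mathbb{R}$ and look at $v(t,x)=Y_t^{t,x}$, where $(X^{t,x},Y^{t,x},Z^{t,x})$ is the unique solution to \eqref{mono}. Since $\sigma\equiv 1$ and the drift $\tilde b$ is bounded and measurable, the comparison theorem for SDEs of Nakao \cite[Theorem 2]{Nakao83} (used already in the proof of Theorem \ref{comon}) gives that $x\mapsto X_s^{t,x}$ is $\mathbb{P}\text{-a.s.}$ nondecreasing for every $s\in[t,T]$. If $\phi$ is nondecreasing, then $\phi(X_T^{t,x_1})\le\phi(X_T^{t,x_2})$ $\mathbb{P}\text{-a.s.}$ for $x_1\le x_2$, and likewise $f(s,X_s^{t,x_1},y,z)\le f(s,X_s^{t,x_2},y,z)$ for all $(y,z)$.

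The second step is to pass this monotonicity to $Y$ via the comparison theorem for quadratic BSDEs (Theorem \ref{thap}, applied exactly as in the proof of Theorem \ref{comon}): for $x_1\le x_2$ we obtain $Y_s^{t,x_1}\le Y_s^{t,x_2}$ for all $s\in[t,T]$, $\mathbb{P}\text{-a.s.}$ Evaluating at $s=t$ and using $Y_t^{t,x}=v(t,x)$, we deduce that $x\mapsto v(t,x)$ is nondecreasing for every $t\in[0,T)$. By Proposition \ref{th4.4}, $v$ is weakly differentiable in $x$ with a continuous version of $v'$, so monotonicity forces $v'(t,x)\ge 0$ for all $(t,x)\in[0,T)\times\mathbb{R}$. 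Together with \eqref{rep2} this yields
\begin{equation*}
Z_s^{t,x}=v'(s,X_s^{t,x})\ge 0,\qquad \mathbb{P}\text{-a.s.},\ s\in[t,T).
\end{equation*}
The decreasing case is identical after reversing inequalities (or by applying the nondecreasing case to $-\phi$ and $-f$).

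The only delicate point is the differentiability input: since the drift $\tilde b$ may be merely bounded and measurable, classical results on differentiability of $X^{t,x}$ in $x$ do not apply directly, and one has to rely on the Zvonkin/It\^o--Tanaka machinery invoked in Section \ref{section5} to obtain the weak differentiability of $v$ used in the last step. Once that input is granted, the remainder is a short chaining of comparison results, as above.
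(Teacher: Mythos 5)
Your proposal is correct and follows essentially the route the paper intends: chain Nakao's comparison for the forward SDE with the quadratic BSDE comparison (Theorem \ref{thap}), exactly as in Theorem \ref{comon}, to obtain monotonicity of $x\mapsto v(t,x)$, then use the differentiability input of Proposition \ref{th4.4} together with the Feynman--Kac relation \eqref{rep2} to convert this into $Z_t=v'(t,X_t)\ge 0$. The paper does not write this out, deferring to \cite[Corollary 3.5]{DosDos13} and flagging Proposition \ref{th4.4} as the key ingredient; your sketch fills in the same steps (the only cosmetic quibble being that the continuity of $\nabla_x v$ is supplied by Theorem \ref{th3.3} rather than by Proposition \ref{th4.4}).
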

			As pointed out in \cite[Remark 4.3]{Mastrolia}, a strict comonotonicity condition on the data in Corollary \ref{Cor}  is not sufficient to conclude that $Z_t > 0\,\,\, \mathbb{P} \text{-a.s.}$, hence $Y_t$ will have an absolute continuous density with respect to the Lebesgue measure from Bouleau-Hirsch's criterion (Theorem \ref{criterion}). Finding the right conditions to ensure the latter result will constitute the main objective in the following section.

		\section{Density analysis for quadratic FBSDEs with rough coefficients}\label{section5}
		In this section we analyse the density of the solution $\bold{X}^x_{\cdot} = (X^x_{\cdot},Y^x_{\cdot},Z^x_{\cdot})$ to the equation \eqref{eqmainF}. The notation $\mathcal{L}(X_T^x)$ will stand for the law of the process $X_T^x$. Sometimes, we will omit the superscript.
		
		\subsection{The H\"{o}lder continuous case}\label{Hcase}
		{
			In this subsection, we carry out the analysis of densities of the FBSDE \eqref{eqmainF}, by assuming further that the coefficients are H\"{o}lder continuous in their time and space variables respectively and the dispersion $\sigma$ will only depend on the forward process $X$(see equation \eqref{eqmain36} below). We are going to provide sufficient conditions under which the forward component $X$ and {the first component solution of the} Backward one $Y$ admit respectively an absolutely continuous law with respect to the Lebesgue measure, despite the roughness of the drift $b$, the driver $f$ and of the terminal value function $\phi$.}
		
		An important ingredient in this analysis relies on the It\^o-Tanaka trick, as developed in \cite{FlanGubiPrio10} and further extended in \cite{FlanGubiPrio} and \cite{WeiLvWang}. This method (originated from \cite{Zvon74}) consists in constructing a one to one transformation of a phase space that allows one to pass from an SDE with bad drift coefficient to a diffusion process with smooth coefficients.}
	
	{More precisely, we will consider the following system 
		\begin{equation}\label{eqmain36}
			\begin{cases}
				X_s^{t,x} = x + \displaystyle \int_{t}^{s} {b}(r,X_r^{t,x},Y_r^{t,x},\sigma^{-1}(r,X_r^{t,x})Z_r^{t,x})\mathrm{d}r + \int_{t}^{s} {\sigma}(r,X_r^{t,x})\mathrm{d}W_r,\\
				Y_s^{t,x} = \phi(X_T^{t,x})+ \displaystyle \int_{s}^{T} f(r,X_r^{t,x},Y_r^{t,x},\sigma^{-1}(r,X_r^{t,x})Z_r^{t,x})\mathrm{d}r - \int_{s}^{T} Z_r^{t,x} \mathrm{d}W_r,
			\end{cases}
		\end{equation}
		which is an equivalent version of \eqref{eqmainF} (via an obvious change of variables) and we will assume further that the coefficients $b,\sigma,\Phi$ and $f$ satisfy the the following set of assumptions:
		\begin{assum}\label{assum53} \leavevmode
			\begin{itemize}
				\item[{\bf(A1)}] Assumption \ref{assum1} is valid with $\sigma(t,x,y)\equiv \sigma(t,x)$. The coefficients $b,\sigma,f$ are H\"{o}lder continuous in $(t,x) \in [0,T]\times \mathbb{R}^d$ uniformly in $y$ and $z$. In particular, there exists a constant $\theta \in (0,1)$ such that $b(t,\cdot,y,z)\in C^{\theta}(\mathbb{R}^d;\mathbb{R})$, $(t,y,z) \in [0,T]\times\mathbb{R}\times \mathbb{R}^d$.
				\item[{\bf(A2)}] In addition, $\sigma(t,\cdot) \in C^3_b(\mathbb{R}^d,\mathbb{R})$, $t\in [0,T]$, $\sup_{t\in[0,T]}\|\sigma(t,\cdot)\|_{C^3_b(\mathbb{R},\mathbb{R})} < \infty$ and for all $(t,x)\in [0,T]\times\mathbb{R}$, $\|\sigma^{-2}\|_0:= \sup_{x\in \mathbb{R}^d,t\in [0,T]}|\sigma^{-2}(t,x)|< \infty$.
			\end{itemize}
		\end{assum}
			In the above context, the FBSDE \eqref{eqmain36} has a weak solution which is unique in probability law (see Theorem \ref{th3.3}) and the relation \eqref{FK} writes 
			\begin{align}\label{FK1}
				Y_s^{t,x} = v(s,X_s^{t,x});\qquad  Z_s^{t,x} = \sigma(t,X_s^{t,x})\nabla_x v(s,X_s^{t,x}),
			\end{align}
			where we will maintain $v$ as the unique solution to the PDE\eqref{eqmainP}\footnote{this holds with a slightly different second order operator $\mathcal{L}$, $f(t,x,y,\sigma^{-1} z)$ instead of $f(t,x,y,z)$ and $b(t,x,y,\sigma^{-1} z)$ in lieu of $b(t,x,y,z)$.}
		}

		{It is worth noting that, under the aforementioned assumptions, the weak solution $v$ to the PDE \eqref{eqmainP} turns out to be a classical one i.e. $v \in C^{1,2}([0,T]\times\mathbb{R}^d;\mathbb{R})$ (see Section 8 in \cite{Delarue2}). Moreover, there is a suitable constant $\gamma >0$ such that the following pointwise estimate of $\nabla_{xx}^2v(t,x)$ holds
			\begin{equation}\label{remk 5.2}
				\forall (t,x)\in [0,T[\times \mathbb{R}^d,\,	(T-t)^{1-\gamma}|\nabla_{x,x}^2v(t,x)| < \infty.
			\end{equation} 
		In addition, the FBSDE \eqref{eqmain36} can be written as follows
		\begin{equation}\label{eqmain37}
			\begin{cases}
				X_s^{t,x} = x + \displaystyle \int_{t}^{s} \tilde{b}(r,X_r^{t,x})\mathrm{d}r + \int_{t}^{s} {\sigma}(r,X_r^{t,x})\mathrm{d}W_r,\\
				Y_s^{t,x} = \phi(X_T^{t,x})+ \displaystyle \int_{s}^{T} f(r,X_r^{t,x},Y_r^{t,x},\sigma^{-1}(r,X_r^{t,x})Z_r^{t,x})\mathrm{d}r - \int_{s}^{T} Z_r^{t,x} \mathrm{d}W_r,
			\end{cases}
		\end{equation}
		where, $\tilde{b}(t,x)= b(t,x,v(t,x), \sigma^{-1}(t,x)\nabla_x v(t,x))$ and $\nabla_x v(t,x)$ denotes the gradient of the classical solution $v$ to the PDE \eqref{eqmainP}.
	}
	
	{The following lemma, provides a smoothness result gains by the transformed drift $\tilde b$ under the additional Assumption  \ref{assum53} . 
		\begin{lemm}
			Under Assumption \ref{assum53}, the transformed drift $\tilde b$ is $\theta'\text{-H\"{o}lder}$ continuous for all $t<T$.
		\end{lemm}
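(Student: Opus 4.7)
The plan is to exploit the regularity of the decoupling field $v$ and its spatial gradient $\nabla_x v$ obtained in Theorem \ref{th3.3}, combined with the additional smoothness of $b$ and $\sigma$ from Assumption \ref{assum53}. Fix $t<T$. The a priori bounds \eqref{holder} and \eqref{holder1} on the approximating sequences $(v_n)_{n\geq 1}$ and $(\nabla_x v_n)_{n\geq 1}$ pass to the limiting decoupling field by the Arz\'ela--Ascoli step (Step 3 of the proof of Theorem \ref{th3.3}); this gives that $v(t,\cdot)$ is $\beta_0$-H\"older and that $\nabla_x v(t,\cdot)$ is $\beta'$-H\"older on $\mathbb{R}^d$, with H\"older constants of order $1$ and $(T-t)^{\beta'-1}$ respectively. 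Moreover, \eqref{3.6} yields an $L^\infty$-bound on $\nabla_x v(t,\cdot)$ of order $(T-t)^{\gamma-1}$, which is finite as soon as $t<T$.

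I would then decompose, for fixed $t<T$ and $x,y\in\mathbb{R}^d$,
\[
\tilde b(t,x)-\tilde b(t,y) = A_1 + A_2,
\]
where $A_1$ isolates the explicit $x$-dependence of $b$ (other slots frozen at $(v(t,x),\sigma^{-1}(t,x)\nabla_x v(t,x))$), and $A_2$ collects the differences through the $y$-slot $v(t,\cdot)$ and the $z$-slot $\sigma^{-1}(t,\cdot)\nabla_x v(t,\cdot)$. The first piece is bounded by $K|x-y|^{\theta}$ via the $\theta$-H\"older continuity of $b(t,\cdot,y,z)$ postulated in (A1). For the second piece I would use the Lipschitz continuity of $b$ in $(y,z)$ from Assumption~\ref{assum1}(AX)(ii) to reduce it to
\[
C\bigl(|v(t,x)-v(t,y)| + |\sigma^{-1}(t,x)\nabla_x v(t,x)-\sigma^{-1}(t,y)\nabla_x v(t,y)|\bigr),
\]
the first term being $\leq C|x-y|^{\beta_0}$ by \eqref{holder}. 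For the second term I would split off a $\sigma^{-1}$-Lipschitz factor --- justified because $\sigma(t,\cdot)\in C^3_b$ together with $\|\sigma^{-2}\|_0<\infty$ makes $\sigma^{-1}(t,\cdot)$ Lipschitz in $x$ --- multiplied by the bound on $|\nabla_x v(t,x)|$, plus a factor $\|\sigma^{-1}\|_\infty$ multiplied by the $\beta'$-H\"older increment of $\nabla_x v(t,\cdot)$ from \eqref{holder1}.

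Setting $\theta':=\min(\theta,\beta_0,\beta')\in(0,1)$ and absorbing the finitely many $(T-t)$-dependent factors into a single constant $C(t)<\infty$ then yields $|\tilde b(t,x)-\tilde b(t,y)|\leq C(t)|x-y|^{\theta'}$ for $|x-y|\leq 1$; the complementary case $|x-y|>1$ is immediate from the uniform boundedness of $\tilde b(t,\cdot)$, which follows from the boundedness of $v$ (see \eqref{3.4}), the $L^\infty$-bound on $\nabla_x v(t,\cdot)$, and the linear-growth control on $b$ in (AX)(i).

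The main obstacle, and the sole reason the conclusion is stated for $t<T$ rather than on the closed interval $[0,T]$, is the degeneracy of the gradient estimate as $t\uparrow T$: the constants in \eqref{3.6} and \eqref{holder1} blow up like negative powers of $T-t$, so the H\"older constant of $\tilde b(t,\cdot)$ cannot be taken uniform up to the terminal time. Beyond tracking this $t$-dependence carefully, the argument is simply a triangular decomposition along the chain $b\circ(\mathrm{id},v,\sigma^{-1}\nabla_x v)$, with the exponent $\theta'$ being the worst of the three component exponents.
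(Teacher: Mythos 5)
Your argument is essentially the paper's own proof: the same triangular decomposition of $\tilde b(t,x)-\tilde b(t,x')$ along the chain $b\circ(\mathrm{id},v,\sigma^{-1}\nabla_x v)$, using the $\theta$-H\"older continuity of $b$ in $x$, its Lipschitz continuity in $(y,z)$, the bounds \eqref{holder} and \eqref{holder1}, the product-rule split of the $\sigma^{-1}\nabla_x v$ increment, and $\theta'=\min(\theta,\beta_0,\beta')$. If anything, you are more careful than the paper in tracking the $(T-t)$-dependent blow-up of the H\"older constant and in treating $|x-x'|>1$ separately, both of which are glossed over in the published version.
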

		\begin{proof}
			Let $(t,x,x',y,z)\in [0,T)\times\mathbb{R}^d\times\mathbb{R}^d\times\mathbb{R}\times \mathbb{R}^d$. Then from the assumptions of the Lemma we obtain that
			\begin{align*}
				&|\tilde{b}(t,x)-\tilde{b}(t,x')|\\
				&=|b(t,x,v(t,x),\sigma^{-1}(t,x)v'(t,x))-b(t,x',v(t,x'),\sigma^{-1}(t,x')\nabla_x v(t,x'))|\\
				&\leq \Lambda_{b}|x-x'|^{\theta} + K|v(t,x)-v(t,x')| + K|\sigma^{-1}(t,x)\nabla_x v(t,x)-\sigma^{-1}(t,x')\nabla_x v(t,x')|\\
				&\leq \Lambda_{b}|x-x'|^{\theta} + K|x-x'|^{\beta_0} + K|\sigma^{-1}(t,x)||x-x'|^{\beta'} + K|\nabla_x v(t,x')||\sigma^{-1}(t,x)-\sigma^{-1}(t,x')| \\
				&\leq 4\max(\Lambda_{b},K) |x-x'|^{\theta'},
			\end{align*}
			where we used \eqref{holder} and \eqref{holder1} to derive the third inequality above and $\theta'= \min(\theta,\beta_0,\beta').$
		\end{proof}
		\subsubsection{Strong Solvability, Pathwise uniqueness and Regularity of FBSDE \eqref{eqmain36}} From Assumption \ref{assum53}, we notice that the transformed drift $\tilde b$ is H\"older continuous and bounded and the diffusion coefficient $\sigma$ is Lipschitz continuous and bounded. Then from \cite[Theorem 4]{Zvon74} the forward equation in \eqref{eqmain37} has a unique strong solution. Hence, the solution of the FBSDE built in Theorem \ref{th3.3} is also strong for all $d\geq 1$. We further notice that, the transformed drift $\tilde b$ and the diffusion $\sigma$ fulfil the requirements of \cite[ Remark 10]{FlanGubiPrio10}. Then, from  \cite[Theorem 7]{FlanGubiPrio} (see also \cite[Theorem 5]{FlanGubiPrio10}), the forward equation in \eqref{eqmain37} admits a stochastic flow of diffeomorphisms of class $C^{1,\theta''}$ where $\theta'' \in (0,\theta')$. So, the variational differentiability in the sense of Malliavin of the solution process $X^{t,x}$ follows as well. Indeed, following the same strategy as in \cite{FlanGubiPrio10} and \cite{FlanGubiPrio}, we know that $X^{t,x}$ is linked to the process $\tilde X^{t,\zeta}$ solution of the following SDE
		\begin{align}\label{eq5.38}
			\tilde X_s^{t,\zeta} = \zeta + \displaystyle \int_{t}^{s} \tilde{b}_1(r,\tilde X_r^{t,\zeta})\mathrm{d}r + \int_{t}^{s} \tilde{\sigma}_1(r,\tilde X_r^{t,\zeta})\mathrm{d}W_r,
		\end{align}
		via the transformation
		\begin{equation}\label{eq5.6}
			X^{t,x}_s= \Psi^{-1}(s,\tilde X_s^{t,\zeta}),
		\end{equation}
		where $\Psi^{-1}$ denotes the inverse of the non-singular diffeomorphisms $\Psi$ of class $C^2$  defined by (see \cite[Lemma 6]{FlanGubiPrio10})
		\begin{equation}\label{eq5.7}
			\Psi(t,x) = x + U(t,x),
		\end{equation}
		where $U$ is the solution to the following Backward Kolmogorov equation 
		\begin{align}\label{2.8}
			\begin{cases}
				\displaystyle	\partial_t U(t,x) + \frac{1}{2}\Tr\left[(\sigma \sigma^{\bold{T}})(t,x) \Delta U(t,x)\right] + (\tilde b\cdot DU)(t,x)- \mu U(t,x) = -\tilde b(t,x), \\
				U(T,x) = 0, \quad (t,x) \in [0,T)\times \mathbb{R}^d.
			\end{cases}
		\end{align}
		with $\mu > 0$ is fixed. The drift coefficient $\tilde{b}_1(t,\zeta) = \mu U(t,\Psi^{-1}(t,\zeta)) $ belongs to $L^{\infty}(0,T;C_b^{2+\theta}(\mathbb{R};\mathbb{R}))$, the diffusion $\tilde{\sigma}_1(t,\zeta)= [I+D\Psi(t,\Psi^{-1}(t,\zeta))]\sigma(t,\Psi^{-1}(t,\zeta))$ obviously inherits some of the smoothness properties of $\sigma$ and $U$. Thus $\tilde b_1$ and $\tilde \sigma_1$ are both Lipschitz continuous, we can then invoke the result from \cite{Nua06} to claim that the SDE \eqref{eq5.38} is Malliavin differentiable and so is also $X^{t,x}$ via the above representation and a straightforward application of the chain rule for Malliavin calculus.} 
	
	We also notice that, the uniqueness of $\tilde{X}$ gives the pathwise uniqueness of solution $(X,Y,Z)$ to the equation \eqref{eqmain36}. Indeed, if $(\bar{X},\bar{Y},\bar{Z})$ is another weak solution to \eqref{eqmain36} defines on the same underlying stochastic basis $(\Omega,\mathfrak{F},(\mathfrak{F})_{t\leq s \leq T}, W,\mathbb{P})$, then $\bar{X}$ for instance, will give rise to the process $\underline{X}= \Psi(t,\bar{X})$ solution to \eqref{eq5.38}, then $\underline{X}=\tilde{X}$ and this implies that $X=\bar{X}$. On the other hand, thanks to the relation \eqref{FK1}, gives 
	\begin{align}
		\begin{cases}
			\bar{Y}_t = v(t,\bar{X}_t) = v(t,X_t) = Y_t \quad \forall t\in [0,T],\mathbb{P}\text{-a.s.}  \\
			\bar{Z}_t = \sigma(t,\bar{X}_t)\nabla_x v(t,\bar{X}_t) = \sigma(t,X_t) \nabla_xv(t,X_t) = Z_t \quad \mathrm{d}\mathbb{P}\otimes \mathrm{d}t\text{-a.e.}
		\end{cases}
	\end{align}
	
	{To conclude, thanks to the relation \eqref{FK1} again, and using the chain rules (classical and Malliavin calculus), we deduce that the couple solution $(Y,Z)$ is differentiable in the classical and Malliavin sense. Moreover, the following representations hold for all $0\leq s \leq t < T$
		\begin{align}
			D_sY_t = \nabla_x v(t,X_t)D_sX_t, \quad D_sZ_t = \Big(\sigma(t,X_t)\nabla_{x,x}^2 v(t,X_t) + \sigma_x(t,X_t)\nabla_x v(t,X_t)\Big)D_sX_t.
		\end{align}
		\begin{remark}
				Due to the lack of regularity of the drift $b$, the equation satisfied by the Malliavin derivative or the classical derivative with respect to the initial value for the triple solution $(X,Y,Z)$ still missing in the literature. However, in the one dimensional case, we can obtain the following for all $0\leq s \leq t < T$
				\begin{align}\label{Mal57}
					D_s X_t &= D\Psi^{-1}(t,\tilde X_t)\tilde \sigma_1(s,\tilde X_s)  \exp\left(\int_s^t\Big\{\tilde b_1'(r,\tilde X_r) -\frac{1}{2}(\tilde \sigma_1'(r,\tilde X_r))^2 \Big\}\mathrm{d}r + \int_s^t \tilde \sigma_1'(r,\tilde X_r)\mathrm{d}W_r\right)\notag\\
					&= D\Psi^{-1}(t,\tilde X_t)\tilde \sigma_1(s,\tilde X_s) \tilde \xi_t (\tilde \xi_s)^{-1}
				\end{align}
			\end{remark}
			\subsubsection{Existence of Density of the forward $X$} 
			\begin{thm}\label{th5.5}
				Let Assumption \ref{assum53} be in force. Then, the law of $X_t$ solution to FBSDE \eqref{eqmain36} has an absolutely continuous law with respect to the Lebesgue measure.
			\end{thm}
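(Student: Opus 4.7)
The plan is to apply the Bouleau--Hirsch criterion (its $d$-dimensional version extending Theorem \ref{criterion}) to the forward process by transferring the density question through the It\^{o}--Tanaka--Zvonkin correspondence \eqref{eq5.6} to the auxiliary process $\tilde X$ solving \eqref{eq5.38}, whose coefficients are regular enough for the standard Malliavin toolbox to apply. Concretely, I would first observe that $X_t^{t,x}=\Psi^{-1}(t,\tilde X_t^{t,\zeta})$ with $\Psi(t,\cdot)=\mathrm{Id}+U(t,\cdot)$ a non-singular $C^{2}$ diffeomorphism (hence $\Psi^{-1}(t,\cdot)$ is itself a diffeomorphism with non-vanishing Jacobian). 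Consequently $\mathcal{L}(X_t)$ is absolutely continuous with respect to the Lebesgue measure if and only if $\mathcal{L}(\tilde X_t)$ is, so it suffices to prove the latter.

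Next, I would verify the two hypotheses of the Bouleau--Hirsch criterion for $\tilde X_t$. Malliavin differentiability $\tilde X_t\in(\mathbb{D}^{1,2})^d$ is already at hand: by the discussion following \eqref{2.8}, the drift $\tilde b_1=\mu U(t,\Psi^{-1}(t,\cdot))$ is of class $L^\infty(0,T;C_b^{2+\theta})$ and $\tilde \sigma_1=(I+D\Psi\circ\Psi^{-1})\,\sigma\circ\Psi^{-1}$ inherits Lipschitz continuity from $\sigma$ and $\Psi$, so the classical result of \cite{Nua06} provides $\tilde X_t\in\mathbb{D}^{1,\infty}$ and the linear SDE for $D_s\tilde X_t$, which reads in one dimension exactly as \eqref{Mal57} (without the leading $D\Psi^{-1}$ factor). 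The non-degeneracy of the Malliavin covariance matrix $\Gamma_{\tilde X_t}$ then reduces to showing that $\tilde\sigma_1\tilde\sigma_1^{\bold T}$ remains uniformly elliptic: indeed, by Assumption \ref{assum53}(A1)--(A2), $\sigma\sigma^{\bold T}\ge \lambda I_d$, and since $D\Psi=I+DU$ is invertible everywhere (this invertibility is precisely what makes $\Psi$ a diffeomorphism in \cite{FlanGubiPrio10}), we obtain the pointwise lower bound
\begin{equation*}
\langle \xi,\tilde\sigma_1(s,\cdot)\tilde\sigma_1(s,\cdot)^{\bold T}\xi\rangle \ge \lambda\,\bigl\|D\Psi(s,\cdot)^{-1}\bigr\|^{-2}|\xi|^2,\qquad \xi\in\mathbb{R}^d.
\end{equation*}
A standard Malliavin non-degeneracy argument for SDEs with Lipschitz coefficients (e.g.\ the Gronwall-type estimate underlying Theorem 2.3.2 in \cite{Nua06}) then yields $\det\Gamma_{\tilde X_t}>0$, $\mathbb{P}$-a.s.

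With both ingredients in place, the (multi-dimensional) Bouleau--Hirsch criterion gives that $\mathcal{L}(\tilde X_t)$ is absolutely continuous, and pulling back by the diffeomorphism $\Psi^{-1}(t,\cdot)$ concludes the proof. The main obstacle will be the non-degeneracy step: one needs to make sure that the linear distortion $D\Psi=I+DU$ does not collapse the ellipticity of $\sigma$, which comes down to controlling $\|DU(t,\cdot)\|_\infty$. This is taken care of by choosing the parameter $\mu$ in the Kolmogorov equation \eqref{2.8} sufficiently large so that $\|DU\|_\infty<1$ (exactly the choice used in \cite{FlanGubiPrio10} to make $\Psi$ a diffeomorphism), so that $D\Psi$ is close to the identity and ellipticity is preserved. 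In the one-dimensional case one can bypass this discussion entirely by invoking the explicit formula \eqref{Mal57}: each of the three factors $D\Psi^{-1}(t,\tilde X_t)$, $\tilde\sigma_1(s,\tilde X_s)$ and $\tilde\xi_t(\tilde\xi_s)^{-1}$ is strictly positive $\mathbb{P}$-a.s., so $\|DX_t\|_{\mathcal{H}}^2=\int_0^t|D_sX_t|^2\,\mathrm{d}s>0$ $\mathbb{P}$-a.s., and Theorem \ref{criterion} applies directly.
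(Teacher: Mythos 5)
Your proposal is correct and follows essentially the same route as the paper's proof: establish uniform ellipticity of $\tilde\sigma_1\tilde\sigma_1^{\bold T}$ by choosing $\mu$ so that $\|DU\|_\infty$ is small (the paper uses $|DU|\le 1/2$, giving the bound $\lambda(1-|DU|)^2|\xi|^2\ge \lambda|\xi|^2/2$), deduce from the standard Malliavin criterion in \cite{Nua06} that $\tilde X_t$ has an absolutely continuous law, and transfer this back to $X_t$ through the $C^2$ diffeomorphism $\Psi^{-1}$ (the paper makes the last step explicit via the change-of-variables formula $\rho_{X_t(x)}=(\det J\Psi)\cdot\rho_{\tilde X_t(\zeta)}(\Psi)$). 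No gaps.
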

			\begin{proof}
				We first note that the diffusion $\tilde{a}:=\tilde\sigma_1\tilde\sigma_1^{\bold{T}}$ is non degenerate , i.e., there is a constant $C>0$ such that $\forall \xi \in \mathbb{R}^d$, $\langle \xi, \tilde a(t,\zeta)\xi \rangle \geq C |\xi|^2$
				where $\tilde\sigma_1(t,\zeta)= D\Psi(t,\Psi^{-1}(t,\zeta))\sigma(t,\Psi^{-1}(t,\zeta))$. Indeed, by choosing $\mu$ as in \cite[Lemma 4]{FlanGubiPrio10}, such that $|DU| \leq 1/2$ we obtain from Assumption \ref{assum1} and the triangle inequality that
				\begin{align*}
					\langle \xi, \tilde a(t,\zeta)\xi \rangle &= \langle \xi,\big( D\Psi(t,\Psi^{-1}(t,\zeta))\big)^2\sigma\sigma^{\bold{T}}(t,\Psi^{-1}(t,\zeta))\xi \rangle\\
					&\geq \lambda|\big( D\Psi(t,\Psi^{-1}(t,\zeta))\big)^2||\xi|^2\\
					&= \lambda|I_{d} + DU(t,\Psi^{-1}(t,\zeta))|^2|\xi|^2\\
					&\geq \lambda (1-| DU(t,\Psi^{-1}(t,\zeta))|)^2|\xi|^2\geq \lambda/2 |\xi|^2.
				\end{align*}
				Therefore, from \cite{Nua06}, the solution process $\tilde X^{t,\zeta}$ to equation \eqref{eq5.38} has a continuous law with respect to the Lebesgue measure that will be denoted by $\rho_{\tilde X_{t}(\zeta)}$. Hence, for every bounded and continuous function $\varphi: \mathbb{R}^d\rightarrow \mathbb{R}$ the following is well defined
				\begin{align*}
					\mathbb{E}\varphi(X^{x}_t) &:= \mathbb{E}\varphi\big(\Psi^{-1}(t,\tilde X^{\zeta}_t)\big)
					= \int_{\mathbb{R}^d} \varphi\big(\Psi^{-1}(t,z)\big)\cdot\rho_{\tilde X_t(\zeta)}(z) \mathrm{d}z\\
					&= \int_{\mathbb{R}^d} \varphi(z)\big( \det J\Psi(t,z) \big)\cdot\rho_{\tilde X_t(\zeta)}(\Psi(t,z)) \mathrm{d}z,
				\end{align*} 
				where $J$ stands for the Jacobian with respect to the spatial variable $x$. Hence, the process $X^{x}$ has a density $\rho_{X_{\cdot}(x)}$ with respect to the Lebesgue measure given by 
				\[ \rho_{X_{t}(x)} =  \big( \det J\Psi \big)\cdot\rho_{\tilde X_t(\zeta)}(\Psi).   \]
			\end{proof}
			\subsubsection{Density of the backward component $Y$} In this subsection, we provide some conditions under which the backward component $Y$ has an absolute continuous density with respect to the Lebesgue measure. To do this, we consider the following additional set of assumptions
			\begin{assum}\leavevmode\label{assum58}
				\begin{itemize}
					\item[{\bf(A3)}] Assumption \ref{assum53} is valid in for d=1 and $\beta =1$ i.e., the terminal value $\phi$ is Lipschitz continuous. There exist $\Lambda,\lambda >0$ such that for all $(t,x)\in [0,T]\times \mathbb{R}$, $\lambda \leq \sigma(t,x) \leq \Lambda$.
					\item[{\bf(A4)}] $(x,y,z)\mapsto f(t,x,y,z)$ is continuously differentiable for every $t\in [0,T]$ and for all $(t,x,y,z)\in [0,T]\times \mathbb{R}^3$ and $\alpha \in (0,1)$
					\begin{align*}
						|f_x'(t,x,y,z)| &\leq \Lambda (1+ |y| + \ell(|y|)|z|^{\alpha})\\
						|f_y'(t,x,y,z)| &\leq \Lambda (1+ |z|^{\alpha}) \\
						|f_z'(t,x,y,z)| &\leq \Lambda (1+\ell(y)|z|)
					\end{align*}
					\item[{\bf(A5)}] $\phi$ is differentiable $\mathcal{L}(X_T^x)\text{-a.e.}$
				\end{itemize}
			\end{assum}
			The method consists on proving that the Malliavin covariance matrix of $Y_t$ is invertible, i.e. for all $0\leq s \leq t < T$
			\[ \Gamma_{Y_t} = \langle D_sY_t, D_sY_t \rangle_{L^2([0,t])} > 0,\quad \mathbb{P}\text{-a.s.}  \]
			For this, one needs first to derive the linear equation (BSDE) satisfied by a version $(D_sY_t,D_sZ_t)$ of the Malliavin derivatives of the solution to equation \eqref{eqmain3}. 
			\begin{prop}
				Under Assumption \ref{assum58}, a version of $(D_sY_t,D_sZ_t)$ satisfies 
				\begin{align}\label{Mal58}
					D_sY_t &= 0, \quad D_sY_t = 0, \quad t< s\leq T,\notag\\			D_sY_t &= \phi'(X_T)D_sX_T+ \displaystyle \int_{t}^{T}  f_x'(r,\bold{X}_r)D_s{X}_r + f_y'(r,\bold{X}_r)D_s{Y}_r  \mathrm{d}r \notag\\
					&\qquad + \int_{t}^{T} \Big( Z_r\sigma^{-1}_x(r,X_r)D_s{X}_r + \sigma^{-1}(r,X_r)D_sZ_r \Big) f_z'(r,\bold{X}_r)  \mathrm{d}r- \int_{t}^{T} D_sZ_r \mathrm{d}W_r
				\end{align}
				here $\bold{X}_{\cdot} = (X_{\cdot},Y_{\cdot},\sigma^{-1}(\cdot,X_{\cdot})Z_{\cdot})$ and $D_sX^x_t$ is given by \eqref{Mal57}.
				Moreover, $(D_tY_t)_{0\leq t\leq T}$ is a continuous version of $(Z_t)_{0\leq t\leq T}$. 
			\end{prop}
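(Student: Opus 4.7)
The plan is to apply the Malliavin derivative $D_s$ termwise to the backward equation in \eqref{eqmain36} and justify each step using Assumption~\ref{assum58}. From the discussion preceding the statement, $X_t \in \mathbb{D}^{1,2}$ with derivative given by \eqref{Mal57}; by the classical regularity $v \in C^{1,2}$ combined with the Feynman--Kac relation \eqref{FK1} and Assumption~(A2), the chain rule of Malliavin calculus yields $Y_t, Z_t \in \mathbb{D}^{1,2}$ with $D_s Y_t = \nabla_x v(t, X_t) D_s X_t$ and $D_s Z_t = [\sigma \nabla_{xx}^2 v + \sigma_x \nabla_x v](t, X_t) D_s X_t$. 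Since the forward flow is $\mathbb{F}$-adapted, $D_s X_t = 0$ for $s > t$, which at once delivers the first line of the statement.

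\textbf{Derivation of the linear BSDE.} For the terminal condition, Assumption~(A5) only provides differentiability of $\phi$ at $\mathcal{L}(X_T)$-almost every point, so the standard chain rule cannot be applied directly. One approximates $\phi$ by mollifications $\phi_n \in C^1_b$, uniformly Lipschitz with $\phi_n' \to \phi'$ Lebesgue-a.e. Since $X_T$ possesses a continuous density by Theorem~\ref{th5.5}, this convergence transfers to $\phi_n'(X_T) \to \phi'(X_T)$ almost surely; dominated convergence, together with the closability of $D$, then yields $D_s \phi(X_T) = \phi'(X_T) D_s X_T$ in $\mathbb{D}^{1,2}$. The generator $f$ is $C^1$ in $(x,y,z)$ by Assumption~(A4), and the chain rule applied to $f(r, X_r, Y_r, \sigma^{-1}(r, X_r) Z_r)$ produces the three integrand contributions appearing in \eqref{Mal58}, the summand involving $\sigma^{-1}_x$ coming from differentiating $\sigma^{-1}(r, X_r)$ inside the $z$-slot. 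The growth estimates in~(A4), together with $Y \in \mathcal{S}^{\infty}$ and $Z \in \mathcal{H}_{\mathrm{BMO}}$ from Theorem~\ref{th3.3}, furnish the integrability required to interchange $D_s$ with the Lebesgue integral, and the commutation identity $D_s \int_t^T Z_r \, \mathrm{d} W_r = \int_t^T D_s Z_r \, \mathrm{d} W_r$ (valid for $s \leq t$) closes the derivation.

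\textbf{The identity $D_t Y_t = Z_t$ and main obstacle.} Evaluating $D_s Y_t = \nabla_x v(t, X_t) D_s X_t$ at $s = t$ and using \eqref{Mal57}, one reads off $D_t X_t = D\Psi^{-1}(t, \tilde X_t)\, \tilde \sigma_1(t, \tilde X_t)$. Substituting $\tilde \sigma_1(t, \Psi(t, x)) = D\Psi(t, x)\, \sigma(t, x)$ and the inverse-function identity $D\Psi^{-1}(t, \Psi(t, x)) = [D\Psi(t, x)]^{-1}$ simplifies this to $D_t X_t = \sigma(t, X_t)$, so that $D_t Y_t = \nabla_x v(t, X_t)\, \sigma(t, X_t) = Z_t$ by \eqref{FK1}; continuity is inherited from that of $v$, $\sigma$, and the strong solution $X$. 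The hardest step is the terminal-condition differentiation: bridging the mere $\mathcal{L}(X_T)$-a.e.\ differentiability of $\phi$ with the Malliavin chain rule is exactly where the density result of Theorem~\ref{th5.5} becomes indispensable, and why the density analysis of the forward component must precede that of the backward.
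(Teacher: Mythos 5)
Your proposal is correct in substance, but it takes a genuinely different route from the paper: the paper's entire proof of this proposition is a one-line appeal to an external result (\cite[Theorem 5.12]{ImkRhossOliv}, with a footnote that it extends to non-constant $\sigma$ under (A1)--(A2)), justified by the observation that the transformed drift $\tilde b$ is bounded and H\"older continuous on $[0,T)$. You instead reconstruct the argument from scratch: Malliavin differentiability of $(Y,Z)$ via the decoupling field and the chain rule, the mollification-plus-density argument for $\phi'(X_T)D_sX_T$ (correctly identifying that this is exactly where Theorem \ref{th5.5} is indispensable, since (A5) only gives $\mathcal{L}(X_T)$-a.e.\ differentiability and the Lipschitz chain rule of Proposition 1.2.4 in \cite{Nua06} needs the density to pin down the weight as $\phi'(X_T)$), and the evaluation $D_tX_t=\sigma(t,X_t)$ from \eqref{Mal57} to get $D_tY_t=Z_t$. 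What the paper's citation buys is precisely the rigorous justification of the one step you treat informally: asserting that $(D_sY,D_sZ)$ solves the linear BSDE by ``interchanging $D_s$ with the Lebesgue and stochastic integrals'' is the delicate point in the quadratic setting, and the cited theorem handles it via an approximation scheme with BMO-uniform estimates rather than by a direct interchange. Your argument would be fully airtight if, having the explicit formulas $D_sY_t=\nabla_x v(t,X_t)D_sX_t$ and $D_sZ_t=[\sigma\nabla^2_{xx}v+\sigma_x\nabla_x v](t,X_t)D_sX_t$ from the chain rule, you verified that they satisfy \eqref{Mal58} by applying It\^o's formula to $\nabla_x v(t,X_t)D_sX_t$ and substituting the PDE \eqref{eqmainP} --- all the ingredients for this are already present in what you wrote. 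What your route buys in exchange is a self-contained proof that makes the logical dependence on the forward density explicit, which the paper's citation obscures.
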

			\begin{proof}  For the representation, we appeal the result from \cite[Theorem 5.12]{ImkRhossOliv}\footnote{Note that the result is still valid with non constant $\sigma$ satisfying the conditions (A1) and (A2)}, since $\tilde{b}$ is uniformly bounded and H\"{o}lder continuous for all $t\in [0,T)$. {The proof is completed}.
			\end{proof}
			In the sequel, we will adopt the following notations as in \cite{F.Antonelli} (see also \cite{Mastrolia} ):  For any $\mathcal{A}\in \mathcal{B}(\mathbb{R}),$ and $t\in [0,T]$ such that $\mathbb{P}(X_T \in \mathcal{A}|\mathfrak{F}_t) > 0:$
			\begin{align}
				\begin{cases}
					\overline{\phi}:= \sup_{x\in \mathbb{R}}\phi'(x),\qquad \overline{\phi}^{\mathcal{A}}:= \sup_{x\in \mathcal{A}}\phi'(x),&\\
					\underline{\phi}:= \inf_{x\in \mathbb{R}}\phi'(x),\qquad \underline{\phi}^{\mathcal{A}}:= \inf_{x\in \mathcal{A}}\phi'(x),&
				\end{cases}
			\end{align}
			\begin{align}
				\overline{f}(t):= \sup_{s\in[t,T],(x,y,z)\in \mathbb{R}^3}f_x'(s,x,y,z),\qquad
				\underline{f}(t):= \inf_{s\in[t,T],(x,y,z)\in \mathbb{R}^3}f_x'(s,x,y,z).
			\end{align}
			Below is stated the main result of this subsection.
			\begin{thm}\label{th5.9}
				Suppose Assumption \ref{assum58} holds. Suppose in addition there is $\mathcal{A}\in \mathcal{B}(\mathbb{R})$ such that $\mathbb{P}(X_T \in \mathcal{A}|\mathfrak{F}_t) > 0$ and one of the following assumptions holds 
				\begin{itemize}
					\item[(A+)] $\phi' \geq 0,$ $\phi'_{|\mathcal{A}}>0, \mathcal{L}(X_T)\text{-a.e.}$ and $\underline{f}\geq 0$, $\sigma_x^{-1}f_z'\geq 0$,
					\item[(A--)] $\phi' \leq 0,$ $\phi'_{|\mathcal{A}}<0, \mathcal{L}(X_T)\text{-a.e.}$ and $\overline{f} \leq 0$, $\sigma_x^{-1}f_z'\leq 0 $.
				\end{itemize}
				Then, $Y_t$ possesses an absolute continuous law with respect to the Lebesgue measure on $\mathbb{R}$.
		\end{thm}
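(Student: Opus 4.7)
The plan is to invoke the Bouleau--Hirsch criterion (Theorem \ref{criterion}): since the preceding proposition already provides $Y_t\in\mathbb{D}^{1,2}$ together with an explicit linear BSDE satisfied by a version of $(D_sY_t,D_sZ_t)$, it is enough to establish that $\|DY_t\|_{\mathcal{H}}^2=\int_0^t|D_sY_t|^2\,\mathrm{d}s>0$ $\mathbb{P}$-almost surely. The route will be to produce a pointwise Feynman--Kac representation of $D_sY_t$ in which every term has a common sign, and in which at least one contribution is strictly non-zero on a set of positive conditional probability.

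The first step is to linearize \eqref{Mal58}. Under Assumption \ref{assum58}, the process $\sigma^{-1}(\cdot,X_\cdot)f_z'(\cdot,\mathbf{X}_\cdot)$ generates a BMO martingale (since $Z\in\mathcal{H}_{\mathrm{BMO}}$, $Y\in\mathcal{S}^\infty$, $\ell$ is locally bounded and $\sigma^{-1}$ is bounded by (A3)), whereas $f_y'(\cdot,\mathbf{X}_\cdot)$ is integrable in time. Applying Girsanov to absorb the first coefficient and combining it with the integrating factor $\Psi_{t,r}:=\exp\!\bigl(\int_t^r f_y'(u,\mathbf{X}_u)\,\mathrm{d}u\bigr)$ eliminates both the $D_sY_r$ and $D_sZ_r$ terms and yields, for every $0\le s\le t<T$,
\begin{align*}
D_sY_t=\mathbb{E}^{\mathbb{Q}}\!\left[\Psi_{t,T}\phi'(X_T)D_sX_T+\int_t^T\!\Psi_{t,r}\bigl\{f_x'(r,\mathbf{X}_r)+Z_r\sigma_x^{-1}(r,X_r)f_z'(r,\mathbf{X}_r)\bigr\}D_sX_r\,\mathrm{d}r\,\Big|\,\mathfrak{F}_t\right].
\end{align*}
Because $|DU|\le 1/2$ forces $\partial_x\Psi\ge 1/2$ so that $\Psi^{-1}$ is orientation preserving, formula \eqref{Mal57} implies $D_sX_r>0$ a.s., while $\Psi_{t,\cdot}>0$ is trivial.

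The second step is to pin down the sign of $Z_r$ via the comonotonicity results of Section \ref{comonotonicity}. Under (A+), $\phi'\ge 0$ and $\underline{f}\ge 0$ make both $x\mapsto\phi(x)$ and $x\mapsto f(\cdot,x,\cdot,\cdot)$ nondecreasing; Assumption \ref{assum53} furnishes $\alpha_0\ge 1/2$ together with H\"older continuity of the transformed drift, so Theorem \ref{Main1}, combined with the positivity argument of Corollary \ref{Cor} --- transposed from $\sigma\equiv 1$ to the present $x$-dependent diffusion by invoking the differentiability of the decoupling field $v$ --- delivers $Z_r\ge 0$ a.s. The symmetric reasoning under (A--) gives $Z_r\le 0$. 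Combined with the sign prescriptions on $\phi'$, on $\underline{f}/\overline{f}$ and on $\sigma_x^{-1}f_z'$, the terminal value and the two drift terms inside the conditional expectation then share a common sign, and therefore $D_sY_t\ge 0$ under (A+) and $D_sY_t\le 0$ under (A--).

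For strict non-triviality I would exploit the $\mathcal{A}$-hypothesis: on $\{X_T\in\mathcal{A}\}$, $\phi'(X_T)$ is strictly signed and $D_sX_T>0$, so $\Psi_{t,T}\phi'(X_T)D_sX_T\mathbf{1}_{\{X_T\in\mathcal{A}\}}$ is strictly signed on that event; since $\mathbb{P}(X_T\in\mathcal{A}\mid\mathfrak{F}_t)>0$ a.s.\ and $\mathbb{Q}\sim\mathbb{P}$, its $\mathbb{Q}$-conditional expectation is strictly non-zero $\mathbb{P}$-a.s., and the common sign of the remaining terms prevents any cancellation. Consequently $D_sY_t\neq 0$ for Lebesgue-almost every $s\in[0,t]$, $\|DY_t\|_{\mathcal{H}}^2>0$ a.s., and Bouleau--Hirsch yields the absolute continuity of $\mathcal{L}(Y_t)$. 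The chief technical hurdles I expect are (i) rigorously checking that $\sigma^{-1}(\cdot,X_\cdot)f_z'(\cdot,\mathbf{X}_\cdot)$ is genuinely BMO so the Girsanov step produces an equivalent probability measure --- this requires combining the $\mathcal{H}_{\mathrm{BMO}}$ control of $Z$ with the local boundedness of $\ell$ on the range of the bounded $Y$ and the John--Nirenberg inequality --- and (ii) adapting Corollary \ref{Cor} from $\sigma\equiv 1$ to the present setting, which I would do by reproducing the argument of \cite[Corollary 3.5]{DosDos13} using the $C^{1,2}$ regularity of $v$ and the pointwise control \eqref{remk 5.2}.
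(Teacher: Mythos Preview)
Your proposal is correct and follows essentially the same route as the paper: linearize the BSDE \eqref{Mal58} via Girsanov with the BMO process $\sigma^{-1}f_z'$ and the integrating factor $e^{\int f_y'}$, obtain a conditional-expectation representation of $D_sY_t$, exploit the positivity of $D_sX_r$ coming from the Zvonkin diffeomorphism $\Psi$, use comonotonicity to sign $Z_r$, and conclude strict non-degeneracy from $\phi'_{|\mathcal{A}}>0$ on $\{X_T\in\mathcal{A}\}$. The one place where the paper is more concrete than your sketch is your hurdle (ii): rather than adapting Corollary \ref{Cor} to non-constant $\sigma$, the paper applies \cite[Corollary 3.5]{DosDos13} directly to the backward equation coupled with the \emph{transformed} forward SDE \eqref{eq5.38}, which has Lipschitz coefficients $\tilde b_1,\tilde\sigma_1$, and reads off $Z_r\tilde\sigma_1(r,\tilde X_r)\ge 0$; since $\tilde\sigma_1=D\Psi\cdot\sigma\circ\Psi^{-1}>0$, this gives the sign of $Z_r$ without any extra work.
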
}
		{\begin{proof}
				From Assumption \ref{assum58}, we deduce that
				\begin{align}\label{eq511}
					|\sigma^{-1}(s,X_s)f_z'(s,\bold{X}_s)| \leq C \big(1+ \ell(Y_s)|Z_s|\big) \in \mathcal{H}_{\text{BMO}}
				\end{align}
				{Let $\mathbb{\tilde P}$ be the probability measure defined by} 
				\begin{align}
					\frac{\mathrm{d}\mathbb{\tilde P}}{\mathrm{d}\mathbb{P}}\Big|_ {\mathfrak{F}_t} = \mathcal{E}\Big( \sigma^{-1}(t,X_t)f_z'(t,\bold{X}_t)* W\Big)_t= \mathcal{E}_t.
				\end{align}
				{It follows from \eqref{eq511} that $\mathcal{E}_t$} is uniformly integrable. Therefore, {taking the conditioning expectation with respect to $\mathbb{\tilde P}$ on both sides of \eqref{Mal58} gives for all $0\leq s \leq t < T$}
				\begin{align}
					D_s Y_t=& \mathbb{E}^{\mathbb{\tilde P}} \Big( \phi'(X_T)D_sX_T \notag\\
					&+ \int_t^T \left[ \Big( f_x'(r,\bold{X}_r) + Z_r\sigma^{-1}_x(r,X_r) f_z'(r,\bold{X}_r) \Big)D_s X_r + f_y'(r,\bold{X}_r)D_s Y_r \right] \mathrm{d}r    \Big| \mathfrak{F}_t\Big)
				\end{align}
				{Using} \eqref{Mal57} and the well known linearisation method, $D_sY_t$ can be rewritten:
				\begin{align*}
					&D_s Y_t\\
					=& \mathbb{E}^{\mathbb{\tilde P}} \Big[ e^{\int_{t}^{T}f_y'(u,\bold{X}_u)\mathrm{d}u}\phi'(X_T) D\Psi^{-1}(T,\tilde X_T)\tilde \sigma_1(s,\tilde X_s) \tilde \xi_T (\tilde \xi_s)^{-1} \\
					& + \int_t^T e^{\int_{t}^{r}f_y'(u,\bold{X}_u)\mathrm{d}u}  \Big( f_x'(r,\bold{X}_r) + Z_r\sigma^{-1}_x(r,X_r) f_z'(r,\bold{X}_r) \Big) D\Psi^{-1}(r,\tilde X_r)\tilde \sigma_1(s,\tilde X_s) \tilde\xi_r (\tilde \xi_s)^{-1}\mathrm{d}r    \Big| \mathfrak{F}_t \Big]\\
					=& \mathbb{E} \Big[ \tilde \psi_T \phi'(X_T)\tilde\xi_T\\ 
					&+ \int_t^T \tilde \psi_r  \Big( f_x'(r,\bold{X}_r) + Z_r\sigma^{-1}_x(r,X_r) f_z'(r,\bold{X}_r) \Big) D\Psi^{-1}(r,\tilde X_r) \tilde \xi_r \mathrm{d}r \Big| \mathfrak{F}_t  \Big] (\tilde \psi_t)^{-1}(\tilde \xi_s)^{-1}\tilde \sigma_1(s,\tilde X_s),
				\end{align*}
				where the last equality is due to Bayes' rule with $\tilde \psi$ given by	
				\[ \tilde \psi_t := \exp\Big(  \int_{0}^{t}(f_y'(u,\bold{X}_u)-\frac{1}{2}|\sigma^{-1}(u,X_u)f_z'(u,\bold{X}_u)|^2)\mathrm{d}u+ \int_{0}^{t}\sigma^{-1}(u,X_u)f_z'(u,\bold{X}_u)\mathrm{d}W_u   \Big),\]
				and the expression of $\tilde\xi$ is given by \eqref{Mal58}.
				Therefore, from the above computations, the Malliavin covariance $\Gamma_{Y_t}$ of $Y_t$ is given by 
				\begin{align*}
					\Gamma_{Y_t}&= (\tilde \psi_t^{-1})^2\int_{0}^{t} (\tilde \sigma_1(s,\tilde X_s)\tilde\xi_s^{-1})^2\mathrm{d}s\\
					& \Big(\mathbb{E}\Big[ \tilde \psi_T  \phi'(X_T)\tilde \xi_T+ \int_{t}^{T}\tilde \psi_r \Big( f_x'(r,\bold{X}_r) + Z_r\sigma^{-1}_x(r,X_r) f_z'(r,\bold{X}_r) \Big) D\Psi^{-1}(r,\tilde X_r)\tilde\xi_r \mathrm{d}r \Big|\mathfrak{F}_t \Big]\Big)^2 .
				\end{align*} 
				To obtain the desired result, we show  under assumption (A+) that 
				$$\mathbb{E}\Big[ \tilde \psi_T  \phi'(X_T)\tilde \xi_T+ \int_{t}^{T}\tilde \psi_r \Big( f_x'(r,\bold{X}_r) + Z_r\sigma^{-1}_x(r,X_r) f_z'(r,\bold{X}_r) \Big) D\Psi^{-1}(r,\tilde X_r)\tilde\xi_r \mathrm{d}r \Big|\mathfrak{F}_t \Big]\neq 0.
				$$
				Using It\^{o}'s formula, the product $\tilde \psi_t \tilde\xi_t $ can be rewritten as
				\begin{align*}
					\tilde \psi_t \tilde\xi_t &= \exp\Big\{\int_0^{t}\big[ \tilde b'_1(u,\tilde X_u)\mathrm{d}u + f_y'(u,\bold{X}_u) + \tilde \sigma'_1(u,\tilde X_u)\sigma^{-1}(u,X_u)f_z'(u,\bold{X}_u) \big]\mathrm{d}u \Big\}\\
					&\quad \times \exp\Big\{ \int_{0}^{t} \Big(\tilde \sigma'_1(u,\tilde X_u)+\sigma^{-1}(u,X_u)f_z'(u,\bold{X}_u)\Big)\mathrm{d}W_u\\ &\qquad\quad\quad-\frac{1}{2} \int_{0}^{t}\Big( \tilde \sigma'_1(u,\tilde X_u)+\sigma^{-1}(u,X_u)f_z'(u,\bold{X}_u)\Big)^2\mathrm{d}u  \Big\} = \tilde H_t\times \tilde M_t
				\end{align*}
				From the assumptions of the theorem, the stochastic integral $\int_{0}^{T} \Big(\tilde \sigma'_1(u,\tilde X_u)+\sigma^{-1}(u,X_u)f_z'(u,\bold{X}_u)\Big)\mathrm{d}W_u$ is a BMO martingale. Hence, the measure $\mathbb{Q}$ defined by $\frac{\mathrm{d}\mathbb{ Q}}{\mathrm{d}\mathbb{P}}\Big|_ {\mathfrak{F}_t}= \tilde M_t$ is an equivalent probability measure to $\mathbb{P}$. Therefore, using the the Bayes' rule once more we obtain that
				\begin{align*}
					&\mathbb{E}\Big[ \tilde \psi_T  \phi'(X_T)\tilde \xi_T+ \int_{t}^{T}\psi_r \Big( f_x'(r,\bold{X}_r) + Z_r\sigma^{-1}_x(r,X_r) f_z'(r,\bold{X}_r) \Big) D\Psi^{-1}(r,\tilde X_r)\tilde\xi_r \mathrm{d}r \Big|\mathfrak{F}_t \Big]\\
					=&\mathbb{E}^{\mathbb{Q}}\Big[ \phi'(X_T)\tilde H_T+ \int_{t}^{T} \Big( f_x'(r,\bold{X}_r) + Z_r\sigma^{-1}_x(r,X_r) f_z'(r,\bold{X}_r) \Big) D\Psi^{-1}(r,\tilde X_r)\tilde H_r \mathrm{d}r \Big|\mathfrak{F}_t \Big]\tilde M_t.
				\end{align*}
				Let us remark that $D\Psi^{-1}(t,\xi)= 1 + \sum_{k\geq 1}(-DU(t,\Psi^{-1}\xi))^k \geq I + \sum_{k\geq 1}(-1/2)^k \geq 0 $ for all $\xi \in \mathbb{R}$. On the other hand, we also notice from the assumptions of the theorem and thanks to \cite[Corollary 3.5]{DosDos13} that $Z_r\tilde \sigma_1(r,\tilde X_r) \geq 0$, where $\tilde X$ stands for the solution to equation \eqref{eq5.38}. Hence,
				\begin{align*}
					&\mathbb{E}\Big[ \tilde \psi_T  \phi'(X_T)\tilde \xi_T+ \int_{t}^{T}\psi_r \Big( f_x'(r,\bold{X}_r) + Z_r\sigma^{-1}_x(r,X_r) f_z'(r,\bold{X}_r) \Big) D\Psi^{-1}(r,\tilde X_r)\tilde\xi_r \mathrm{d}r \Big|\mathfrak{F}_t \Big]\\
					&\geq \mathbb{E}^{\mathbb{Q}}\Big[{1}_{\{X_T \in \mathcal{A}\}}\Big( \underline{\phi} \tilde H_T + \int_{t}^{T} \Big(\underline{f}+ Z_r\tilde \sigma_1(r,\tilde X_r) \tilde \sigma_1^{-1}(r,\tilde X_r)\sigma^{-1}_x(r,X_r) f_z'(r,\bold{X}_r)\Big)D\Psi^{-1}(r,\tilde X_r)\tilde H_r \mathrm{d}r  \Big)\Big|\mathfrak{F}_t \Big]\tilde M_t\\
					&\geq \mathbb{E}^{\mathbb{Q}}\Big[{1}_{\{X_T \in \mathcal{A}\}}\Big(e^{-KT} \underline{\phi}^{\mathcal{A}} e^{\int_{0}^{T} f_y'(u,\bold{X}_u)\mathrm{d}u -K \int_{0}^{T}f_z'(u,\bold{X}_u)\mathrm{d}u}\\ 
					&\quad + \int_{t}^{T} \Big(\underline{f}+ Z_r\tilde \sigma_1(r,\tilde X_r) \tilde \sigma_1^{-1}(r,\tilde X_r)\sigma^{-1}_x(r,X_r) f_z'(r,\bold{X}_r)\Big)D\Psi^{-1}(r,\tilde X_r)\tilde H_r \mathrm{d}r  \Big)\Big|\mathfrak{F}_t \Big]\tilde M_t\\
					&\geq \mathbb{E}^{\mathbb{Q}}\Big[{1}_{\{X_T \in \mathcal{A}\}}\Big(e^{-KT} \underline{\phi}^{\mathcal{A}} e^{-\Lambda\int_{0}^{T}(2+|Z_u|^2)\mathrm{d}u} e^{-K\sqrt{T} \sqrt{\int_{0}^{T}|f_z'(u,\bold{X}_u)|^2\mathrm{d}u}}\\ 
					&\quad + \int_{t}^{T} \Big(\underline{f}+ Z_r\tilde \sigma_1(r,\tilde X_r) \tilde \sigma_1^{-1}(r,\tilde X_r)\sigma^{-1}_x(r,X_r) f_z'(r,\bold{X}_r)\Big)D\Psi^{-1}(r,\tilde X_r)\tilde H_r \mathrm{d}r  \Big)\Big|\mathfrak{F}_t \Big]\tilde M_t
				\end{align*}
				where we used the bound $|f_y'(s,\bold{X}_s)|\leq \Lambda (1+ |Z_s|^{\alpha})$, the Cauchy-Schwarz inequality and the fact that $\tilde \sigma_1(r,\tilde X_r) \geq 0$ (from its definition and the fact that $D\Psi >0$).
				Provided that the law of $X_T$ is absolutely continuous with respect to the Lebesgue measure, we deduce that $$ \mathbb{E}\Big[ \tilde \psi_T  \phi'(X_T)\tilde \xi_T+ \int_{t}^{T}\psi_r \Big( f_x'(r,\bold{X}_r) + Z_r\sigma^{-1}_x(r,X_r) f_z'(r,\bold{X}_r) \Big) D\Psi^{-1}(r,\tilde X_r)\tilde\xi_r \mathrm{d}r \Big|\mathfrak{F}_t \Big] >0.$$ This concludes the proof.
			\end{proof}
			\begin{remark}\leavevmode
				\begin{itemize}
					\item Even though we consider a fully coupled FBSDE under much weaker conditions, the strict monotonicity of the terminal condition and the driver on a particular Borel set, guarantee the existence of a density for the backward component solution $Y$ of the equation (compare with \cite{Mastrolia} for the case of decoupled FBSDE under Cauchy-Lipschitz drift). 
					\item  Note that, the extra assumption $\sigma_x^{-1}f_z' \geq 0$ or $\sigma_x^{-1}f_z' \leq 0$ here follows from of the special structure of the backward equation considered. These assumptions naturally disappeared when one considers FBSDE with the same structure as for instance in \cite{F.Antonelli}, \cite{Mastrolia}.
					\item A similar result was obtained in \cite{Olivera} for coupled FBSDEs with non constant diffusion coefficient but under rather strong  smoothness requirements on the coefficients. For instance, the drift is assumed H\"older continuous in $t$, differentiable and uniformly Lipschitz continuous in $x$ with derivative in $x$ being bounded and H\"older continuous in all its components $t,x,y$ and $z$. 
				\end{itemize}
			\end{remark}
		}
		
		{\subsubsection{Gaussian-type bounds of densities for FBSDE \eqref{eqmain36}} Here, we will provide upper and lower bounds for densities of the forward $X$ and the backward $Y$ components solution to \eqref{eqmain36}, respectively. We will further assume that 
			\begin{itemize}
				\item[{\bf(A6)}] Assumption \ref{assum58} holds and the drift $b:= b(t,x,y,z)$ is weakly differentiable in $x$ such that $b_x'(t,\cdot,y,z) \in L^{\infty}(\mathbb{R})$ for all $t,y$ and $z$, $\partial_t \sigma$ exists and it is uniformly bounded.
		\end{itemize}}

		{\begin{lemm}\label{bounds}
				Let assumption {\bf(A6)} be in force. Then for all $(t,x)\in [0,T)\times \mathbb{R}$ the {transformed drift $\tilde{b}$} is bounded and Lipschitz continuous in $x$. 
			\end{lemm}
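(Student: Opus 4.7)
The plan is to establish the two claims separately, starting with boundedness, which will come essentially for free, and then working on the Lipschitz estimate, which is the substantive part.

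For boundedness, recall that $\tilde b(t,x) = b(t,x,v(t,x),\sigma^{-1}(t,x)\nabla_x v(t,x))$. By the growth bound in Assumption~\ref{assum1} (AX)(i), it suffices to show that $v$ and $\sigma^{-1}\nabla_x v$ are uniformly bounded on $[0,T]\times\mathbb{R}$. The uniform bound on $v$ is \eqref{3.4}. Under (A6) (in particular $\beta=1$, so $\phi$ is Lipschitz, and with $\sigma$ non-degenerate and smooth), Remark~\ref{remk 3.4} together with the Feynman--Kac representation upgrades \eqref{3.6} to a genuinely uniform bound $\sup_{(t,x)\in[0,T]\times\mathbb{R}}|\nabla_xv(t,x)|\le\Upsilon$; combined with $\lambda\le\sigma\le\Lambda$ from (A3), this yields boundedness of $\tilde b$.

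For the Lipschitz estimate, fix $t\in[0,T)$ and $x,x'\in\mathbb{R}$, and split
\begin{align*}
\tilde b(t,x)-\tilde b(t,x') &= \bigl[b(t,x,v(t,x),\sigma^{-1}(t,x)\nabla_x v(t,x)) - b(t,x',v(t,x),\sigma^{-1}(t,x)\nabla_x v(t,x))\bigr]\\
&\quad + \bigl[b(t,x',v(t,x),\sigma^{-1}(t,x)\nabla_x v(t,x)) - b(t,x',v(t,x'),\sigma^{-1}(t,x')\nabla_x v(t,x'))\bigr].
\end{align*}
The first bracket is controlled by $\|b_x'\|_{L^\infty}|x-x'|$ by the new weak differentiability hypothesis in (A6) (applied along segments with Sobolev functions being absolutely continuous a.e. in $x$). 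The second bracket is controlled by the Lipschitz continuity of $b$ in $(y,z)$ (Assumption~\ref{assum1} (AX)(ii)) and yields
\[
K\bigl[|v(t,x)-v(t,x')| + |\sigma^{-1}(t,x)\nabla_xv(t,x) - \sigma^{-1}(t,x')\nabla_xv(t,x')|\bigr].
\]
The first difference is Lipschitz in $x$ since $\nabla_xv$ is uniformly bounded. Splitting the second difference as $\sigma^{-1}(t,x)[\nabla_xv(t,x)-\nabla_xv(t,x')] + [\sigma^{-1}(t,x)-\sigma^{-1}(t,x')]\nabla_xv(t,x')$, the latter term is handled by the Lipschitz continuity of $\sigma^{-1}$ in $x$ (which follows from $\sigma\in C^3_b$ and $\sigma\ge\lambda>0$).

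The remaining and hardest step is showing that $x\mapsto\nabla_xv(t,x)$ is Lipschitz, uniformly in $t\in[0,T-\delta]$ for any $\delta>0$. Here \eqref{holder1} is not enough as it only gives H\"older regularity. Under (A6) however, the transformed PDE \eqref{eqmainP} has drift and source terms that are weakly differentiable in $x$ with bounded derivatives, $\partial_t\sigma\in L^\infty$, and diffusion $\sigma\in C^3_b$ non-degenerate. Arguing on the approximating sequence $(v_n)$ from Step~1 of the proof of Theorem~\ref{th3.3}, one differentiates the regularised PDE once in space: the function $w_n:=\partial_{x_i}v_n$ solves a linear parabolic equation whose coefficients inherit uniform $L^\infty$ bounds (no $y$- or $z$-dependence to worry about because the bounds on $v_n$ and $\nabla_xv_n$ are already uniform). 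Standard interior Schauder / Calder\'on--Zygmund estimates then provide a uniform bound on $\nabla_x w_n = \nabla^2_{xx}v_n$ on $[0,T-\delta]\times\mathbb{R}$, which passes to the limit and yields the desired uniform Lipschitz estimate on $\nabla_xv$ for $t\le T-\delta$. The main obstacle is precisely this promotion from H\"older to Lipschitz regularity of $\nabla_xv$; once it is secured, collecting the three estimates above gives the Lipschitz bound $|\tilde b(t,x)-\tilde b(t,x')|\le C|x-x'|$ and completes the proof.
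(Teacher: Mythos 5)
Your proposal is correct and follows essentially the same route as the paper: the identical splitting of the increment into the $x$-increment of $b$ (controlled by the bounded weak derivative $b_x'$) plus the $(y,z)$-increments (controlled by $K$ times the increments of $v$ and of $\sigma^{-1}\nabla_x v$), together with the uniform bound on $\nabla_x v$ and the boundedness of $\nabla_{xx}^2 v$ for $t<T$. The only divergence is that you re-derive the Lipschitz continuity of $\nabla_x v$ by differentiating the regularised PDE and passing to the limit, whereas the paper simply invokes the pointwise second-derivative estimate \eqref{remk 5.2}, which is already available because $v$ is a classical $C^{1,2}$ solution under Assumption \ref{assum53}; you are, if anything, more careful than the paper's proof in tracking the $\sigma^{-1}$ factor in the third argument of $b$.
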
	
			\begin{proof}
				Let $(t,x,x')\in [0,T)\times \mathbb{R}\times\mathbb{R}$ we have:
				\begin{align*}
					|\tilde{b}(t,x)-\tilde{b}(t,x')| &= |{b}(t,x,v(t,x),v'(t,x))-{b}(t,x',v(t,x'),v'(t,x'))| \\
					&\leq |b_x'| |x-x'| + K\sup_{x}|v'(t,x)| |x-x'| + K \sup_{x}|v''(t,x)| |x-x'|\\
					&\leq \Lambda_{\tilde b}|x-x'|,
				\end{align*}
				where, we used the mean value theorem, the bounds of $\nabla_x b \in L^{\infty}(\mathbb{R})$, $v'$ (see \eqref{3.7}), $v''$ (see \eqref{remk 5.2}) and $\Lambda_{\tilde {b}}:= \max\{ \|b_x'\|_{\infty}, K\|v'\|_{\infty},K\|v''\|_{\infty} \}$.
		\end{proof}}
		{\begin{remark}\label{rmk 5.9}
				Under the Assumption {\bf(A6)}, the FBSDE \eqref{eqmain36} has a unique strong classical and Malliavin differentiable solution such that from \cite[Theorem 2.1]{DGR05}, the Malliavin derivative of the forward $X_t$ is given explicitly for al $0\leq s \leq t$ by
				\begin{equation}\label{517}
					D_sX_t = \sigma(s,X_s)\exp\Bigg[ \int_s^t \Big( \partial_x\tilde b - \frac{\tilde b\partial_x \sigma + \partial_t \sigma}{\sigma}-\frac{1}{2}(\partial_{xx}^2\sigma)\sigma \Big)(r,X_r)\mathrm{d}r \Bigg].
				\end{equation}
				Moreover, for all $p \geq 1$
				\begin{align*}
					\mathbb{E}\Big[ \sup_{0\leq s\leq t \leq T} \big(|\nabla_x X_t|^p + |(\nabla_x X_t)^{-1}|^p + |D_s X_t|^{p} \big)\Big]< \infty.
				\end{align*}
				In particular, using the well-known representation $\nabla_x Y_t \nabla_x X_s = D_s Y_t$ for all $0< s\leq t$, one can obtain  for all $p \geq 1$
				\[ \mathbb{E}\big[ \sup_{0\leq t \leq T} |Z_t|^p \big]< \infty.\]
		\end{remark}}
		
		{\begin{thm}\label{prop 4.10}
				Let assumption {\bf(A6)} and one of the assumptions (A+) or (A--) be in force. If there is $\mathcal{A}\in \mathcal{B}(\mathbb{R})$ such that $\mathbb{P}(X_T \in \mathcal{A}|\mathfrak{F}_t) > 0$.  Then:
				\begin{itemize}
					\item[(i)] The probability density and the tail probabilities of the {forward component} $X_t$ solution to FBSDE \eqref{eqmain36} satisfy for all $t \in [0,T)$ and $x>0$
					\begin{align}\label{5.15}
						\frac{\mathbb{E}|X_t-\mathbb{E}[X_t]|}{2C(\Lambda)t e^{2Kt}}\exp\Big( -\frac{(x-\mathbb{E}[X_t])^2}{2C(\Lambda,\lambda)t e^{-2Kt}}\Big) \leq \rho_{X_t}(x) \leq 	\frac{\mathbb{E}|X_t-\mathbb{E}[X_t]|}{2C(\Lambda,\lambda)t e^{-2Kt}}\exp\Big(- \frac{(x-\mathbb{E}[X_t])^2}{2C(\Lambda)t e^{2Kt}}  \Big),
					\end{align}
					and 
					\begin{align}\label{tail1}
						\mathbb{P}(X_t\geq x)\leq \exp\Big( -\frac{(x-\mathbb{E}[X_t])^2}{2C(\Lambda)t e^{2Kt}} \Big)\text{ and } 	\mathbb{P}(X\leq -x)\leq \exp\left( -\frac{(x+\mathbb{E}[X_t])^2}{2C(\Lambda,\lambda)t e^{2Kt}}\right),
					\end{align}
					respectively.
					\item[(ii)] The density of the process $Y_t$ solution to FBSDE \eqref{eqmain36}, satisfy for all $t \in [0,T)$ and $y>0$
					\begin{align}\label{5.17}
						\frac{\mathbb{E}|Y_t-\mathbb{E}[Y_t]|}{2t \left(\Upsilon e^{Kt}\right)^2}\exp\Big( -\frac{(x-\mathbb{E}[Y_t])^2}{2t \left( \alpha(t)e^{-Kt}\right)^2}\Big) \leq \rho_{Y_t}(y) \leq 	\frac{\mathbb{E}|Y_t-\mathbb{E}[Y_t]|}{2t \left( \alpha(t)e^{-Kt}\right)^2}\exp\Big(- \frac{(y-\mathbb{E}[Y_t])^2}{2t \left(\Upsilon e^{Kt}\right)^2}  \Big).
					\end{align}
					Furthermore for all $y>0$ the tail probabilities satisfy
					\begin{align}\label{tail2}
						\mathbb{P}(Y_t\geq y)\leq \exp\Big( -\frac{(y-\mathbb{E}[Y_t])^2}{2C(\Lambda)t \left(\Upsilon e^{Kt}\right)^2} \Big)\text{ and } 	\mathbb{P}(Y_t\leq -y)\leq \exp\Big( -\frac{(y+\mathbb{E}[Y_t])^2}{2C(\Lambda)t \left(\Upsilon e^{Kt}\right)^2}\Big),
					\end{align}
					where the constant $\Upsilon$ can be derived from \eqref{3.6}.
				\end{itemize}
			\end{thm}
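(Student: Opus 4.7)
The strategy is to invoke the density and tail estimates of Theorem 2.4 of \cite{DungPrivaultTorrisi15} (reproduced in the preliminaries above) twice, once for $F=X_t$ and once for $F=Y_t$. In both cases the task reduces to producing deterministic two-sided bounds
\[
0<l(t)\ \leq\ \int_0^t D_s F\cdot\mathbb{E}[D_s F\mid\mathfrak{F}_s]\,\mathrm{d}s\ \leq\ L(t)\qquad\text{a.s.},
\]
after which \eqref{4.9} and \eqref{tail} immediately produce the claimed density and tail inequalities, with $L(t)$ feeding the inner exponent of the upper density bound (and the outer constant of the lower one) and $l(t)$ playing the symmetric role.

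For part (i), I use the explicit formula from Remark~\ref{rmk 5.9}, namely
\[
D_s X_t = \sigma(s,X_s)\exp\!\Bigl[\int_s^t \Phi(r,X_r)\,\mathrm{d}r\Bigr],\qquad \Phi := \partial_x\tilde b - \frac{\tilde b\,\partial_x\sigma + \partial_t\sigma}{\sigma} - \tfrac12(\partial_{xx}^2\sigma)\,\sigma .
\]
Under (A6), Lemma~\ref{bounds} delivers $\tilde b,\partial_x\tilde b\in L^\infty$, while Assumptions~\ref{assum53} and \ref{assum1} imply $\lambda\leq \sigma\leq \Lambda$, $\sigma^{-1}\in L^\infty$ and boundedness of $\partial_x\sigma,\partial_{xx}^2\sigma,\partial_t\sigma$. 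Consequently $\|\Phi\|_\infty\leq K$ for an explicit structural constant $K$, whence the pointwise two-sided estimate $\lambda e^{-K(t-s)}\leq D_s X_t\leq \Lambda e^{K(t-s)}$ holds a.s.\ and is inherited by $\mathbb{E}[D_sX_t\mid\mathfrak{F}_s]$. Multiplying these bounds and integrating over $s\in[0,t]$, while using $e^{\pm 2K(t-s)}\gtrless e^{\pm 2Kt}$ on $[0,t]$, yields
\[
C(\Lambda,\lambda)\,t\,e^{-2Kt}\ \leq\ \int_0^t D_s X_t\,\mathbb{E}[D_s X_t\mid\mathfrak{F}_s]\,\mathrm{d}s\ \leq\ C(\Lambda)\,t\,e^{2Kt},
\]
which is exactly the input required by Theorem 2.4 of \cite{DungPrivaultTorrisi15} to produce \eqref{5.15} and \eqref{tail1}.

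For part (ii), the Feynman--Kac identity $Y_t=v(t,X_t)$ gives $D_s Y_t = \nabla_x v(t,X_t)\,D_s X_t$. The upper bound follows from Remark~\ref{remk 3.4}, $|\nabla_x v|\leq \Upsilon$, combined with the upper bound from part (i), yielding $|D_s Y_t|\leq \Upsilon e^{K(t-s)}$. For the lower bound I revisit the conditional representation of $D_s Y_t$ developed in the proof of Theorem~\ref{th5.9}: under (A+) (resp.~(A--)) the conditional expectation displayed there stays bounded away from zero on $\{X_T\in\mathcal{A}\}$, which after carrying out the Bayes' rule change to $\mathbb{Q}$ and using the boundedness from below of $\underline{\phi}^{\mathcal{A}}$ and $\underline{f}(t)$ yields $|D_sY_t|\geq \alpha(t)\,e^{-K(t-s)}$ for a strictly positive deterministic $\alpha(t)$ depending on $\underline{\phi}^{\mathcal{A}}$, $\underline{f}(t)$, $\mathbb{P}(X_T\in\mathcal{A}\mid\mathfrak{F}_t)$ and the $\mathcal{H}_{\mathrm{BMO}}$-norm of $Z$. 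Squaring, integrating on $[0,t]$ and invoking Theorem 2.4 of \cite{DungPrivaultTorrisi15} once more produces \eqref{5.17} and \eqref{tail2}.

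\textbf{Main obstacle.} The genuinely non-routine step is extracting the positive lower bound $\alpha(t)$ in part (ii): unlike for $X_t$, this requires quantitative control of the several Dol\'eans--Dade exponentials (associated with the BMO integrands $\sigma^{-1}f'_z(\cdot,\mathbf{X}_\cdot)$ and $\tilde\sigma_1'(\cdot,\tilde X_\cdot)$) entering the representation of $\nabla_x v$, and it is precisely the strict monotonicity built into (A+)/(A--) on the set $\mathcal{A}$ that prevents $\alpha(t)$ from degenerating. Once $\alpha(t)$ is secured, the remainder is straightforward bookkeeping with the same argument used for $X_t$.
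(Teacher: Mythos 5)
Your proof follows essentially the same route as the paper's: both parts reduce to the two-sided bound \eqref{4.8} and the resulting estimates \eqref{4.9}--\eqref{tail} from \cite{DungPrivaultTorrisi15}; part (i) uses the explicit representation \eqref{517} of $D_sX_t$ together with Lemma \ref{bounds} to obtain $\lambda\Lambda^{-1}e^{-Kt}\le D_sX_t\le \Lambda e^{Kt}$ and then integrates; part (ii) uses $D_sY_t=v'(t,X_t)D_sX_t$ with the upper bound $\Upsilon$ on $v'$ and a positive lower bound $\alpha(t)$ on $|v'(t,X_t)|$. The one place you diverge is in how $\alpha(t)$ is produced: the paper infers its existence from the a.s.\ strict positivity of $(v'(t,X_t))^2$ established in Theorem \ref{th5.9}, whereas you propose to extract it quantitatively from the conditional-expectation representation inside that proof. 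Be aware that the factors appearing there (the Dol\'eans--Dade density $\tilde M_t$, the exponential $e^{-\Lambda\int_0^T|Z_u|^2\,\mathrm{d}u}$, and $\mathbb{P}(X_T\in\mathcal{A}\mid\mathfrak{F}_t)$) are only a.s.\ positive, not uniformly bounded below by a deterministic constant, so your route does not actually deliver a \emph{deterministic} $\alpha(t)$ any more than the paper's does; this is a soft spot shared with the published argument rather than a new gap you introduce, but you should not present it as resolved by the Theorem \ref{th5.9} computation alone.
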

			\begin{proof}It is {enough} to establish {a similar bound as \eqref{4.8} for each of the processes $X_t$ and $Y_t$ for all $t\in [0,T)$}. We will start with the forward process $X$. Under the assumption of the theorem, $X_t$ is Malliavin differentiable and thanks to Lemma \ref{bounds}, we deduce that there is a constant $K:= K({\Lambda_{\tilde b}},\|\partial_x \sigma\|_{\infty},\|\partial_t \sigma\|_{\infty},\|\partial_{xx}^2 \sigma\|_{\infty})$
				\begin{align}\label{4.11}
					0< \frac{\lambda}{\Lambda}e^{-Kt} \leq D_sX_t \leq \Lambda e^{Kt}, \text{ for all } s\leq t.
				\end{align}
				Therefore, 
				\begin{align*}
					0< C(\lambda,\Lambda)te^{-2Kt} \leq \int_0^t D_sX_t\mathbb{E}[D_sX_t|\mathfrak{F}_s]\mathrm{d}s \leq C(\Lambda)t e^{2Kt}.
				\end{align*}
				This implies that the probability density and the tail probability of $X_t$ satisfy \eqref{5.15} and \eqref{tail} respectively.
				Let us turn now to the backward process $Y$, and we recall that for all $t\in [0,T]$, $Y_t = v(t, X_t)$, where $v$ still denotes the classical solution to PDE \eqref{eqmainP}. Under the assumptions of the theorem, we deduce that the process $Y_t$ has an absolute continuous law with respect to the Lebesgue measure (see Theorem \ref{th5.9}). Then for all $0\leq s\leq t < T$
				\begin{align*}
					\Gamma_{Y_t} = \langle D_sY_t, D_sY_t\rangle_{L^2([0,t])} = (v'(t,X_t))^2\langle D_sX_t, D_sX_t\rangle_{L^2([0,t])} > 0\quad \mathbb{P}\text{-a.s.},
				\end{align*}
				which imply that $(v'(t,X_t))^2 > 0$ and $\langle D_sX_t, D_sX_t\rangle_{L^2([0,t])} > 0\quad \mathbb{P}\text{-a.s.}$ But the latter is always true, since all the terms inside the exponential in \eqref{517} are finite and $\sigma$ is bounded from below by $\lambda >0$.
				Then, there is a function $\alpha(t)>0$ such that the following holds
				$$v'(t,X_t) \geq \alpha(t) \text{ or } {v'(t,X_t) \leq -\alpha(t)} \quad \mathbb{P}\text{-a.s.} \text{ for all } t\in [0,T).$$ {First let us assume that $v'(t,X_t) \geq \alpha(t) \, \mathbb{P}\text{-a.s.}$} From  \eqref{4.11} we deduce that
				\begin{align}\label{4.12}
					0< \frac{\lambda}{\Lambda}\alpha(t)e^{-Kt} \leq D_sY_t = v'(t,X_t)D_sX_t \leq \Lambda \Upsilon e^{Kt},
				\end{align} 
				where the constant $\Upsilon\geq 0$ is an upper bound of $v'$ that can be derived from \eqref{3.6}. Therefore,
				\begin{align}\label{5.21}
					0< C(\lambda,\Lambda)t \left( \alpha(t)e^{-Kt}\right)^2 \leq \int_0^t D_sY_t \mathbb{E}[D_sY_t|\mathfrak{F}_s]\mathrm{d}s\leq C(\Lambda)t \left(\Upsilon e^{Kt}\right)^2.
				\end{align}
				{On the other hand, let us assume now $v'(t,X_t) \leq -\alpha(t) \quad \mathbb{P}\text{-a.s.}$. Using the bound \eqref{4.11} once more, we deduce that
					\begin{align}\label{5.22}
						0>- \alpha(t)e^{-Kt} \geq D_sY_t = v'(t,X_t)D_sX_t \geq v'(t,X_t) e^{Kt}.
					\end{align}
					From the linearity of the expected value, we have
					\begin{align}\label{5.23}
						0>- \alpha(t)e^{-Kt} \geq \mathbb{E}[ D_sY_t |\mathfrak{F}_s] \geq v'(t,X_t) e^{Kt}.
					\end{align}
					Combining \eqref{5.22} and \eqref{5.23} give \eqref{5.21}. Then, one can show that the density and the tail probability of the backward component $Y_t$ satisfy the bounds \eqref{5.17} and \eqref{tail2}.
				}
				The proof is completed. 
		\end{proof}}
		
		\subsection{Existence of density for $Z$ with weakly differentiable drift} 
		The major aim in this part is to establish a result on the existence of a density for the control process $Z$ under minimal regularity assumptions on the drift. The first conclusion in this approach was achieved in \cite[Theorem 4.2]{Mastrolia} for decoupled FBSDEs with quadratic drivers (assuming the correct smoothness constraints on the drift and diffusion of the forward equation as given for instance in \cite{ImkDos}). In the context of coupled FBSDEs, this result was obtained in \cite{Olivera} by assuming that the drift is twice continuously differentiable with bounded second derivatives in all spatial variables with non trivial diffusive coefficient. We claim that, the result obtained in \cite[Theorem 3.6]{Olivera} is quite optimal in the sense that it will be difficult to obtain a refinement of this result when the diffusion coefficient is non constant.
		
		We consider in particular the following system
		\begin{equation}\label{eqmain3}
			\begin{cases}
				X_t^x = x + \displaystyle \int_{s}^{t} b(r,X_r^x,Y_r^x,Z_r^x)\mathrm{d}r + W_t-W_s ,\\
				Y_t^x = \phi(X_T^x)+ \displaystyle \int_{t}^{T} f(r,X_r^x,Y_r^x,Z_r^x)\mathrm{d}r - \int_{t}^{T} Z_r^x\mathrm{d}W_r,
			\end{cases}
		\end{equation}
		and we assume the drift $b$ is weakly differentiable in its forward component $x$ such that the weak derivative $b_x'(t,\cdot,y,z)$ is bounded uniformly in $t,y$ and $z$. Note that, this case is not covered by \cite[Theorem 3.6]{Olivera}.
		
		{In this context, the associated PDE \eqref{eqmainP} (which remain a quasi-linear parabolic PDE) takes the particular form
			\begin{equation}\label{eqmainP1}
				\frac{\partial v}{\partial t}+ \frac{1}{2}\sum_{i,j}^{d}\frac{\partial^2 v}{\partial x_i \partial x_j} + \sum_{i}^{d} b_{i}(t,x,v,\nabla_x v)\frac{\partial v}{\partial x_i} + f(t,x,v,\nabla_x v)=0,
			\end{equation}
			where $v:= v(t,x)$ and $v(T,x)= \phi(x)$ for all $(t,x)\in [0,T]\times\mathbb{R}^d$.}
		
		We will assume further that:
		\begin{assum}\label{assum3} \leavevmode
			\begin{itemize}
				\item[{\bf(A7)}] $(x,y,z)\mapsto f(t,x,y,z)$ is twice continuously differentiable for every $t\in [0,T]$. There exists an adapted process $(\mathcal{K}_t)_{0\leq t\leq T}$ belonging to $\mathcal{S}^{2p}(\mathbb{R})$ for all $p\geq 1$ such that for any $t\in[0,T]$ all second order derivatives of $f$ at $(t,\bold{X}_t)=(t,X_t,Y_t,Z_t)$ are bounded by $\mathcal{K}_t$ a.s.
				\item[{\bf(A8)}] $\phi$ is  twice continuously differentiable $L(X_T^x)\text{-a.e.}$ and $\phi''$ has a polynomial growth.
			\end{itemize}
		\end{assum}
		We will start to establish a result on the second {order} Malliavin differentiability of coupled FBSDEs with quadratic {type} drivers. The setup proposed here covers the one in \cite{ImkDos}, since the coefficients satisfy weaker assumptions.
		\begin{thm}\label{secondMal}
			Let assumptions {\bf(A6)},{\bf(A7)} and {\bf(A8)} hold. Then, the solution process $\bold{X} =(X,Y,Z)$ of the FBSDE \eqref{eqmain3} is twice Malliavin differentiable, i.e., for each $s,t \in[0,T)$ the processes $(D_sX_t,D_sY_t,D_sZ_t)\in \mathbb{D}^{2,p}\times \mathbb{L}_{1,2}\times\mathbb{L}_{1,2}$, for all $p\geq1$. Moreover, a version of $(D_{s'}D_sY_t,D_{s'}D_sZ_t)_{0\leq s'\leq s\leq t\leq T}$ satisfies:
			\begin{align}
				D_{s'}D_sY_t =& D_{s'}D_s\xi - \int_t^T D_{s'}D_sZ_r\mathrm{d}W_r \notag\\
				&+ \int_t^T \left( [D_{s'}\bold{X}_r]^{{\bf T}}[Hf](r,\bold{X}_r)D_s\bold{X}_r + \langle \nabla f(r,\bold{X}_r), D_{s'}D_s \bold{X}_r \rangle \right)\mathrm{d}r, \label{secondY}
			\end{align}
			where $A^{{\bf T}}$ is the transpose of the matrix $A$, the symbol $[Hf]$ denotes the Hessian matrix of the function $f$, $\xi = \phi(X_T)$, $D_{s'}D_s \bold{X}_r=(D_{s'}D_sX_r,D_{s'}D_sY_r,D_{s'}D_sZ_r)$ and $D_{s'}D_sX_r$ stands for the second order Malliavin derivative of the forward process $X$.
			
			Moreover, $(D_tD_sY_t)_{0\leq s\leq t\leq T}$ is a version of $(D_sZ_t)_{0\leq s\leq t\leq T}$.
		\end{thm}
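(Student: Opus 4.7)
The plan is to combine a smoothing argument with the chain rule for iterated Malliavin derivatives, and to exploit the linearity of the BSDEs satisfied by the first and second Malliavin derivatives. First I would mollify $(b,f,\phi)$ into smooth coefficients $(b_n,f_n,\phi_n)$ whose corresponding FBSDE solutions $(X^n,Y^n,Z^n)$ lie in $\mathbb{D}^{\infty}$, so that every formal computation below is legitimate at level $n$; the aim is then to transfer this regularity to the original problem via uniform bounds relying on Sections \ref{solvability} and \ref{Hcase}. Note that, thanks to \textbf{(A6)} and Lemma \ref{bounds}, the transformed drift is Lipschitz in $x$ and the explicit representation \eqref{517} already gives $D_sX_t$ in closed form; differentiating this representation once more and using the boundedness of $b'_x$ yields $D_{s'}D_sX_t$ in $L^p$ with bounds uniform in $n$.

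For the backward pair I would first differentiate the FBSDE once in the Malliavin sense to recover the linear BSDE for $(D_sY,D_sZ)$ already considered in Subsection \ref{Hcase}. Then, differentiating this linear BSDE once more along direction $D_{s'}$ and applying the chain rule to the driver $f(r,\mathbf{X}_r)$, the cross term $[D_{s'}\mathbf{X}_r]^{\mathbf{T}}[Hf](r,\mathbf{X}_r)D_s\mathbf{X}_r$ appears when $D_{s'}$ hits the factor $D_s\mathbf{X}_r$, while the linear remainder $\langle \nabla f(r,\mathbf{X}_r),D_{s'}D_s\mathbf{X}_r\rangle$ comes from differentiating the argument of $\nabla f$ itself; together with the terminal term $\phi''(X_T)(D_{s'}X_T)(D_sX_T)+\phi'(X_T)D_{s'}D_sX_T$ produced from \textbf{(A8)}, this reproduces \eqref{secondY} at the regularized level. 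The identification $D_tD_sY_t=D_sZ_t$ will then follow from the classical relation $D_tY_t=Z_t$ combined with the symmetry of higher-order Malliavin derivatives on Wiener space applied to the regularized solutions and passed to the limit by continuity.

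The decisive step is the passage to the limit. Equation \eqref{secondY} is a linear BSDE in $(D_{s'}D_sY^n,D_{s'}D_sZ^n)$ whose linear coefficient in the control variable is $\sigma^{-1}\nabla_z f_n(r,\mathbf{X}^n_r)$, which is BMO along $Z^n$ by the linear growth given in \textbf{(A7)} (used together with the uniform BMO estimate \eqref{3.5} on $Z^n$); a Girsanov change of measure driven by this BMO coefficient converts \eqref{secondY} into a Lipschitz-type linear BSDE under an equivalent measure. The inhomogeneity is then controlled in $L^p$ by $\mathcal{K}_r(|D_{s'}\mathbf{X}^n_r||D_s\mathbf{X}^n_r|)$, which is integrable thanks to \textbf{(A7)}, Remark \ref{rmk 5.9} and the first-order bounds on $D_sZ^n$ already available, while the terminal condition is handled through the polynomial growth of $\phi''$ in \textbf{(A8)} together with the Gaussian tails of $X^n_T$ from Theorem \ref{prop 4.10}. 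Standard estimates for linear BSDEs then yield uniform bounds on $(D_{s'}D_sY^n,D_{s'}D_sZ^n)$ in $\mathcal{S}^p\times \mathcal{H}^p$, and the closedness of the Malliavin derivative operator gives the claim for the original solution.

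The main obstacle I anticipate is precisely the uniform control of $D_sZ^n$ in a BMO-type norm and the passage to the limit in the quadratic source term $[D_{s'}\mathbf{X}^n]^{\mathbf{T}}[Hf_n](r,\mathbf{X}^n_r)D_s\mathbf{X}^n$: since the Hessian of $f_n$ is only bounded along $\mathbf{X}^n$ and not globally, one must propagate the $\mathcal{S}^{2p}$-bound on the dominating process $\mathcal{K}$ through the mollification, and combine it with the BMO estimates on $Z^n$ and the Malliavin chain rule so that no integrability is lost when $n\to\infty$; this is the point at which the stronger assumption \textbf{(A7)} (rather than mere Lipschitz regularity of $\nabla f$) becomes indispensable.
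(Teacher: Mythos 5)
There is a genuine gap, and it sits exactly at the forward component. Under \textbf{(A6)} the drift is only \emph{once} weakly differentiable in $x$ with bounded weak derivative, so the transformed drift $\tilde b$ is Lipschitz but $\partial_{xx}^2\tilde b$ need not exist even as a function. Your step ``differentiating the representation \eqref{517} once more and using the boundedness of $b_x'$ yields $D_{s'}D_sX_t$'' therefore does not go through: applying $D_{s'}$ to $\exp\bigl(\int_s^t\partial_x\tilde b(r,X_r)\,\mathrm{d}r\bigr)$ formally produces $\partial_{xx}^2\tilde b(r,X_r)D_{s'}X_r$, which is unavailable. The same obstruction defeats the mollification route for the forward equation: the second Malliavin derivative of $X^n$ involves $\partial_{xx}^2 b_n$, which blows up as $n\to\infty$, so no uniform $L^p$ bound follows from the boundedness of $b_x'$ alone. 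The paper instead obtains $X_t\in\mathbb{D}^{2,p}$ by the compactness argument of \cite[Theorem 3.4]{BanosNilssen16} (an explicit formula for $D_{s'}D_sX_t$ is given only under the \emph{extra} hypothesis that $b$ is $C^1$ in $t$, via an It\^o--Tanaka integration by parts that trades the missing second derivative for a stochastic integral; see Remark \ref{remlocdeco1}), and it explicitly records that $D_{s'}D_sX_t$ is not continuous and does \emph{not} satisfy $\mathbb{E}[\sup_{s',s}|D_{s'}D_sX_t|^p]<\infty$ --- a fact your $L^p$ estimates for the source term implicitly assume.

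The approximation scheme is also structurally different from the paper's, in a way that matters. You mollify $(b,f,\phi)$, so the whole triple $(X^n,Y^n,Z^n)$, the decoupling field $v_n$, and all their Malliavin derivatives move with $n$; the limit passage then requires stability of every one of these objects, including $D_{s'}D_sX^n$, which is precisely where the uniform bounds fail. The paper keeps the original solution $(X,Y,Z)$ fixed and perturbs only the linear BSDE satisfied by $(D_sY,D_sZ)$: it truncates the $Z$-argument inside $f_y'$ and $f_z'$ via $\rho_n$ (equations \eqref{appro}--\eqref{driverG}), producing genuinely Lipschitz linear BSDEs to which the appendix result (Theorem \ref{thapendix}) applies, with uniform $\mathbb{L}_{1,2}$-bounds from the BMO machinery of \cite[Lemma 3.8]{ImkRhossOliv}; convergence then only requires $f_y'(t,\mathbf{X}_t^n)\to f_y'(t,\mathbf{X}_t)$ and $f_z'(t,\mathbf{X}_t^n)\to f_z'(t,\mathbf{X}_t)$, which follows from continuity of the derivatives and $\rho_n\to\mathrm{id}$, before invoking the closedness result \cite[Lemma 2.1]{ImkDos}. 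Your Girsanov/BMO treatment of the $z$-linear term and your handling of the terminal condition via the polynomial growth of $\phi''$ are in the right spirit and mirror the paper's estimates \eqref{4.17}--\eqref{4.18}, but as written the argument rests on a second-order differentiation of the forward flow that the standing assumptions do not permit.
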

		
		\begin{remark} {From {\bf(A6)} and Lemma \ref{bounds}, the transformed drift $\tilde b$ is bounded and Lipschitz continuous, thus satisfies the linear growth condition with bounded weak derivative. Hence, the second order Malliavin differentiability of the forward process $X_t$ follows by similar technique as in \cite[Theorem 3.4]{BanosNilssen16}}. This result can be viewed as an extension of the latter theorem to the case of forward SDEs coupled with solution to a BSDE.
			
			Moreover, by means of compactness arguments, one can show that the second order Malliavin derivative of $X_t$ has moments of any order i.e. for all $p >1$
			\begin{align}
				\sup_{0\leq s',s\leq t \leq T} \mathbb{E}|D_{s'}D_s X_t|^{p} < \infty.
			\end{align}
			{Furthermore, Theorem \ref{secondMal} is an optimal result in the sense that if the drift $b$ is bounded in the space variable $x$, Lipschitz continuous in $y$ and $z$ with bounded (weak) derivative in $x$, the driver $f$ and the terminal value $\phi$ satisfy assumptions (A7) and (A8) respectively, then $(X_t,Y_t,Z_t)\in \mathbb{D}^{2,p}\times \mathbb{L}_{2,2} \times \mathbb{L}_{2,2}$ for all $p\geq 1$ and $(X_t,Y_t,Z_t) \notin \mathbb{D}^{3,p}\times \mathbb{L}_{2,2} \times \mathbb{L}_{2,2}$ for all $p\geq 1$.}
		\end{remark}
		{\begin{remark}\label{remlocdeco1}
				Assuming further that the drift $b$ is continuously differentiable in $t$ with bounded derivatives, the following explicit representation of the second order Malliavin derivative of the forward process $X_t$ is valid for all $0\leq s'\leq s \leq t$
				\begin{align*}
					D_{s'}D_sX_t =& 2D_sX_t\Big(\tilde b(t,X_t)D_{s'}X_t -\tilde b(s,X_s)D_{s'}X_s -\tilde b(s',X_{s'}) -\int_{s\vee s'}^t {\partial_u}\tilde b(u,X_u)D_{s'}X_u\mathrm{d}u\\ 
					&-\int_{s\vee s'}^t {\partial_x}\tilde b(u,X_u)\cdot b(u,X_u) D_{s'}X_u\mathrm{d}u -\int_{s\vee s'}^t {\partial_x}\tilde b(u,X_u) D_{s'}X_u\mathrm{d}W_u \Big).
				\end{align*}
				Indeed, let
				\[  F(t,x) = \int_0^x \tilde b(t,y)\mathrm{d}y - \tilde b(t,0).\]
				Then,
				\[  {\partial_t} F(t,x) = \int_0^x {\partial_t} \tilde b(t,y)\mathrm{d}y - \frac{\partial}{\partial t} \tilde b(t,0). \]
				Notice that for all $0\leq s\leq t$ the Malliavin derivative $D_sX_t$ of $X_t$ writes
				\[ D_sX_t = \exp\left( \int_s^t \partial_x\tilde b(u,X_u)\mathrm{d}u  \right)  \]
				By the It\^o's formula:
				\begin{align*}
					\int_s^t \tilde \partial_x\tilde b(u,X_u)\mathrm{d}u= 2(F(t,X_t) - F(s,X_s)) -2 \int_s^t {\partial_u}F(u,X_u)\mathrm{d}u -2\int_s^t \tilde b(u,X_u)\mathrm{d}X_u.
				\end{align*}
				Hence the sought equation is derived by a direct computation.
		\end{remark}}

		{In order to prove Theorem \ref{secondMal}}, we will follow the strategy developed in \cite[Theorem 4.1]{ImkDos} which is based on a careful application of \cite[Lemma 2.1]{ImkDos} with a slight modification. In contrast to \cite{ImkDos}, here we consider a more general setting. The system \eqref{eqmain3} is (weakly) coupled (whereas \cite{ImkDos} considered a decoupled FBSDE), the drift $b$ has a bounded weak derivative in the spatial variable $x$ instead of being twice continuously differentiable as in \cite{ImkDos} and $f_y'$ and $ f_z'$ are bounded by $\Lambda(1+|z|^{\alpha})$ and $\Lambda(1+\ell(y)|z|)$, respectively (and uniformly bounded by a constant $C>0$ in \cite{ImkDos} ). The latter means that the linear BSDE \eqref{MalY} has non-uniform Lipschitz coefficients on both the backward and the control components. We then approximate the BSDE \eqref{MalY} by truncating the stochastic constants appearing in both components. {Note in addition that the second Malliavin derivative of the forward process $X$ is not continuous and thus does not satisfy the standard requirement  $\mathbb{E}[\sup_{0\leq s',s \leq T}|D_{s'}D_s X_t|^p] < \infty$ as that is often in the literature.}

		Subsequently, let us consider the following family of truncated BSDEs: For $0\leq s \leq t \leq T$ and $n\in \mathbb{N}$
		\begin{align}\label{appro}
			U_{s,t}^n = D_s\xi + \int_t^T G^n(r,\Theta^n_{s,r})\mathrm{d}r -\int_t^T V^n_{s,r}\mathrm{d}W_r, \quad \Theta^n_{s,r} = (D_sX_r, U_{s,r}^n,V_{s,r}^n ),
		\end{align}
		where, $G^n: [0,T]\times \mathbb{R}\times\mathbb{R}\times\mathbb{R}\rightarrow \mathbb{R}$ is given by
		\begin{align}\label{driverG}
			G^n(t,x,u,v) =  f_x'(t,\bold{X}_t)x + f_y'(t,\bold{X}_t^n)u + f_z'(t,\bold{X}_t^n)v ,\quad \bold{X}_t^n = ( X_t,Y_t,\rho_n(Z_t)),
		\end{align}
		and $\rho_n: \mathbb{R}\rightarrow \mathbb{R} $ is continuously differentiable and satisfies the following properties\footnote{ Note that, it is always possible to build such functions see for instance \cite{ImkDos}}
		\begin{itemize}
			\item[(i)] $({\rho}_n)_{n\in \mathbb{N}}$ converges uniformly to the identity. For all $n \in \mathbb{N}$ and $x \in \mathbb{R}$ 
			\begin{equation}\label{trunc}
				\rho_n(x) =	\begin{cases}
					n+1,& \quad x > n+2,\\
					x,& \quad |x|\leq n,\\
					-(n+1),& \quad x < -(n+2).
				\end{cases}
			\end{equation}
			In addition $|{\rho}_n(x)| \leq |x|$ and $ |{\rho}_n(x)| \leq n+1 $.
			\item[(ii)] The derivative $\nabla {\rho}_n$ is absolutely uniformly bounded by 1, and converges to 1 locally uniformly.
		\end{itemize}
		\begin{remark}\label{remk 4.11}
			{One can show using} the properties of the functions $\rho_n$ and $\ell$ that:
			\begin{align*}
				|f_y'( X_t,Y_t,\rho_n(Z_t)) | &\leq \Lambda(1+ |Z_t|^{\alpha})\\
				|f_z'( X_t,Y_t,\rho_n(Z_t))| &\leq \Lambda(1+ \ell(|Y_t|)|Z_t|).
			\end{align*}
			Moreover, the boundedness of $Y_t$ gives 
			\[ \sup_{n\in \mathbb{N}}\|| f_z'(t,\bold{X}_t^n)|*W\|_{BMO} \leq \Lambda\|(1+ \ell(|Y_t|)|Z_t|)*W \|_{BMO} < \infty. \]
			{Hence}, the family of drivers $(G_n)_{n\in \mathbb{N}}$ given by \eqref{driverG} satisfies the same conditions as the one of the BSDE \eqref{MalY}.
		\end{remark}
		
		We have the following a-priori estimates
		\begin{lemm}
			{Suppose Assumption \ref{assum3} is valid and} $(U^n,V^n)$ solves the BSDE \eqref{appro}. Then for any $p>1$ we have
			\begin{align}\label{4.13}
				\sup_{n\in\mathbb{N}} \sup_{0\leq s\leq T} \mathbb{E} \Big(\int_0^T |U_{s,t}^n|^2 + |V_{s,t}^n|^2 \mathrm{d}t\Big)^{p} < \infty.
			\end{align}
		\end{lemm}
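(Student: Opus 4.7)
The strategy exploits that, once the triple $(X,Y,Z)$ is frozen, equation \eqref{appro} is \emph{linear} in $(U^n,V^n)$ with a driver whose coefficient on $V^n$, namely $f_z'(r,\bold{X}^n_r)$, generates a BMO-martingale with norm bounded \emph{uniformly} in $n$ by Remark \ref{remk 4.11}. The terminal data $D_s\xi = \phi'(X_T)D_sX_T$ and the source term $f_x'(\cdot,\bold{X})D_sX_\cdot$ have $L^p$ moments of every order, uniformly in $s$: $\phi'$ has polynomial growth by (A8), $D_sX$ has all moments (Remark \ref{rmk 5.9}), and $|f_x'| \leq \Lambda(1+|Y|+\ell(|Y|)|Z|^{\alpha})$ lies in $L^p$ for every $p$ since $Y$ is bounded and $Z\in\mathcal{H}_{\text{BMO}}$.

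The first step is a Girsanov transformation designed to cancel the $f_z'V^n$ term. Set $\mathrm{d}\mathbb{Q}^n/\mathrm{d}\mathbb{P}|_{\mathfrak{F}_T} := \mathcal{E}^n_T$ with $\mathcal{E}^n_\cdot = \mathcal{E}\bigl(\int_0^\cdot f_z'(r,\bold{X}^n_r)\mathrm{d}W_r\bigr)$. The uniform BMO bound combined with the reverse Hölder (John--Nirenberg) inequality provides an exponent $q_0 > 1$ independent of $n$ such that $\sup_n \mathbb{E}[(\mathcal{E}^n_T)^{q_0}] + \sup_n \mathbb{E}[(\mathcal{E}^n_T)^{-q_0'}] < \infty$, where $q_0'$ is the conjugate exponent. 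Under $\mathbb{Q}^n$, the process $W^n_t := W_t - \int_0^t f_z'(r,\bold{X}^n_r)\mathrm{d}r$ is a Brownian motion and \eqref{appro} becomes the purely linear BSDE
\[ U^n_{s,t} = D_s\xi + \int_t^T \bigl(f_x'(r,\bold{X}_r)D_sX_r + f_y'(r,\bold{X}^n_r)U^n_{s,r}\bigr)\mathrm{d}r - \int_t^T V^n_{s,r}\mathrm{d}W^n_r. \]

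Next, applying Itô's formula to $|U^n_{s,t}|^2$ under $\mathbb{Q}^n$ and using Young's inequality together with the bound $|f_y'(\bold{X}^n)| \leq \Lambda(1+|Z|^{\alpha})$ from Remark \ref{remk 4.11} yields
\[ |U^n_{s,t}|^2 + \int_t^T |V^n_{s,r}|^2\mathrm{d}r \leq |D_s\xi|^2 + C\int_t^T(1+|Z_r|^{\alpha})|U^n_{s,r}|^2\mathrm{d}r + \int_t^T|f_x'(\bold{X}_r)|^2|D_sX_r|^2\mathrm{d}r - 2\int_t^T U^n_{s,r}V^n_{s,r}\mathrm{d}W^n_r. \]
Taking conditional expectation kills the martingale, and the stochastic Gronwall inequality (see \cite{Wang}), combined with the fact that $\exp\bigl(\kappa\int_0^T |Z_r|^{\alpha}\mathrm{d}r\bigr)$ has $\mathbb{Q}^n$-moments of every order (a Kazamaki-type consequence of the BMO property of $Z$, which is preserved under Girsanov with another BMO martingale), controls $\mathbb{E}^{\mathbb{Q}^n}\sup_t|U^n_{s,t}|^{2p}$ by a constant multiple of $\mathbb{E}^{\mathbb{Q}^n}[|D_s\xi|^{2p}] + \mathbb{E}^{\mathbb{Q}^n}\bigl(\int_0^T|f_x'|^2|D_sX_r|^2\mathrm{d}r\bigr)^p$. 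A subsequent BDG estimate on the stochastic integral extends the control to $\mathbb{E}^{\mathbb{Q}^n}\bigl(\int_0^T|V^n_{s,r}|^2\mathrm{d}r\bigr)^p$.

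Finally, Hölder's inequality with the density $(\mathcal{E}^n_T)^{-1}$ returns these $\mathbb{Q}^n$-bounds to $\mathbb{P}$, at the cost of replacing $p$ by $pq_0'$ on the $\mathbb{Q}^n$-side; the enlarged data moments remain finite with uniform bounds by the opening observations. The principal obstacle, and the reason the truncation $\rho_n$ in \eqref{trunc} was introduced, is the stochastic and only BMO (not bounded) character of $f_z'(\bold{X}^n)$: the entire scheme hinges on the uniformity $\sup_n \|f_z'(\cdot,\bold{X}^n)\ast W\|_{\text{BMO}} < \infty$ supplied by Remark \ref{remk 4.11}, which renders all reverse-Hölder and exponential moment estimates independent of $n$.
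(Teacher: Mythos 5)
Your argument is correct and is, in substance, the same as the paper's: the paper simply invokes the a priori estimate of \cite[Lemma 3.8]{ImkRhossOliv} — whose content is exactly the Girsanov/reverse-H\"older/linearisation scheme you spell out, with the loss of exponent $q$ depending only on the uniform-in-$n$ BMO norm supplied by Remark \ref{remk 4.11} — and then verifies, as you do, that $D_s\xi$ and the source term $f_x'(\cdot,X_\cdot,Y_\cdot,Z_\cdot)D_sX_\cdot$ have moments of every order via {\bf(A8)}, Remark \ref{rmk 5.9} and the boundedness of $Y$. Nothing essential is missing; you have merely unpacked the cited lemma.
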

		\begin{proof}
			From Remark \ref{remk 4.11}, we know that the family of drivers $(G^n)_{n\in\mathbb{N}}$ satisfies {a stochastic Lipschitz type condition}. Then, from \cite[Lemma 3.8]{ImkRhossOliv} there exist a constant $q \in (1, \infty)$ which only depends on the BMO norm of $Z*W$ and a constant $C$ independent of $n \in \mathbb{N}$ such that
			\begin{align*}
				\sup_{n\in\mathbb{N}} \sup_{0\leq s\leq T} \mathbb{E} \Big[ \Big(\int_0^T |U_{s,t}^n|^2\mathrm{d}t\Big)^{p} + \Big(\int_0^T |V_{s,t}^n|^2 \mathrm{d}t\Big)^{p} \Big] \leq C \mathbb{E} \Big[ |D_s \xi|^{2pq} + \Big( \int_0^T|f_x'(t,\bold{X}_t)D_sX_t| \mathrm{d}t \Big)^{2pq}   \Big]^{\frac{1}{q}}.
			\end{align*}
			For any $\gamma >2$, we have 
			\begin{align*}
				\mathbb{E}|D_s\xi|^\gamma &= \mathbb{E}|\phi'(X_T)D_sX_T|^{\gamma}\leq C(1+ \mathbb{E}|X_T|^{2\gamma}) + C\mathbb{E}|D_sX_T|^{2\gamma}< \infty.
			\end{align*}
			{On the other hand, using {\bf(A2)}, Remark \ref{rmk 5.9} and the fact that $Y$ is uniformly bounded and $\ell$ is locally bounded}, we deduce that
			\begin{align*}
				\mathbb{E}\Big( \int_0^T|f_x'(t,\bold{X}_t)D_sX_t| \mathrm{d}t \Big)^{\gamma} \leq C \mathbb{E}\Big(\sup_{t\in [0,T]}|D_s X_t|^{2\gamma} + \Big(\int_0^T (1+ |Z_t|^2)\mathrm{d}t \Big)^{2\gamma}\Big)< \infty.
			\end{align*}
			Combining the above bounds, provide the desired result.
		\end{proof}
		The next lemma gives an existence, uniqueness and Malliavin differentiability {result} of solution processes to BSDE \eqref{appro}. It is as an extension of \cite[Lemma 4.2]{ImkDos}
		\begin{lemm}\label{lem 4.13}
			For each $n\in \mathbb{N}$, the BSDE \eqref{appro} has a unique solution $(U^n,V^n) \in \mathcal{S}^{2p}([0,T]\times [0,T]) \times\mathcal{H}^{2p}([0,T]\times [0,T])$ for any $p>1$. Moreover for $0\leq s \leq t \leq T$ the random variables $(U^n_{s,t},V^n_{s,t})$ are Malliavin differentiable and a version $(D_{s'}U^n_{s,t},D_{s'}V^n_{s,t})_{0\leq s'\leq s \leq t \leq T}$ satisfies
			\begin{align}\label{appro1}
				D_{s'}U^n_{s,t} &= D_{s'}D_s\xi - \int_t^T D_{s'}V^n_{s,r}\mathrm{d}W_r \notag\\
				&+ \int_t^T \Big[ \mathcal{R}_{s',s,r}^n + f_y'(r,\bold{X}_r^n)D_{s'}U^n_{s,r} + f_z'(r,\bold{X}_r^n)D_{s'}V^n_{s,r}  \Big]\mathrm{d}r,
			\end{align}
			where $\mathcal{R}_{s',s,t}^n = (D_{s'}G^n)(t,\Theta_{s,t}^n) + f_x'(t,\bold{X}_t)D_{s'}D_sX_t$ and
			$\Theta_{s,r}^n = (D_sX_r,U^n_{s,r},V^n_{s,r})$. Furthermore for any $p>1$
			\begin{align}\label{4.15}
				\sup_{n\in \mathbb{N}}\int_0^T\mathbb{E}\left\{ \|D_{s'}U^n\|_{\mathcal{S}^{2p}([0,T]\times[0,T])}^{2p} + \|D_{s'}V^n\|_{\mathcal{H}^{2p}([0,T]\times[0,T])}^{2p}  \right\}\mathrm{d}s' < \infty.
			\end{align}
		\end{lemm}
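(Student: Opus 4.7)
The plan is to view \eqref{appro} as a linear BSDE with stochastic Lipschitz coefficients and then propagate regularity through a Malliavin-calculus argument. The driver $G^n$ is affine in $(x,u,v)$ with coefficients $f_x'(r,\mathbf{X}_r)$, $f_y'(r,\mathbf{X}_r^n)$ and $f_z'(r,\mathbf{X}_r^n)$. By Remark \ref{remk 4.11} together with $|\rho_n(z)|\leq |z|$, the martingale $\int f_z'(\cdot,\mathbf{X}^n)\,\mathrm{d}W$ belongs to $\mathrm{BMO}(\mathbb{P})$ with norm bounded uniformly in $n$, while $|f_y'(r,\mathbf{X}_r^n)|\leq \Lambda(1+|Z_r|^\alpha)$ lies in every $L^p(\Omega\times[0,T])$ by the energy inequality. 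The inhomogeneous data $(D_s\xi,\,f_x'(r,\mathbf{X}_r)D_sX_r)$ have moments of every order by (A8), Remark \ref{rmk 5.9}, and the growth bound on $f_x'$. Invoking the existence and a-priori estimates for linear stochastic-Lipschitz BSDEs in the BMO framework (for instance \cite[Lemma 3.8]{ImkRhossOliv}) then produces a unique pair $(U^n,V^n)\in \mathcal{S}^{2p}\times \mathcal{H}^{2p}$ for every $p>1$, together with \eqref{4.13}.

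For the Malliavin differentiability I would invoke the standard differentiation theorem for BSDEs in the BMO framework, in the spirit of \cite[Theorem 5.12]{ImkRhossOliv}. Its hypotheses are checked as follows: $D_s\xi=\phi'(X_T)D_sX_T$ belongs to $\mathbb{D}^{1,2}$ by (A8) and the second-order Malliavin differentiability of $X$ recalled in the remark after Theorem \ref{secondMal}; since $f$ is $C^2$ with Hessian dominated by $\mathcal{K}_r\in \mathcal{S}^{2p}$ (from (A7)) and $\rho_n\in C^1$ with $|\rho_n'|\leq 1$, the chain rule yields Malliavin differentiability of $r\mapsto G^n(r,\Theta_{s,r}^n)$, with $D_{s'}\rho_n(Z_r)=\rho_n'(Z_r)D_{s'}Z_r$; finally the linearized equation \eqref{appro1} inherits the same stochastic-Lipschitz structure as \eqref{appro}. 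These inputs give $(U^n_{s,t},V^n_{s,t})\in \mathbb{D}^{1,2}$ together with the representation \eqref{appro1}. The extra term $f_x'(r,\mathbf{X}_r)D_{s'}D_sX_r$ appearing inside $\mathcal{R}^n$ arises by differentiating the inhomogeneous forcing $f_x'(r,\mathbf{X}_r)D_sX_r$ with respect to $s'$ and using the second-order Malliavin differentiability of $X$.

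To obtain the uniform bound \eqref{4.15}, I would apply the a-priori estimates of \cite[Lemma 3.8]{ImkRhossOliv} once more to the linear BSDE \eqref{appro1}, which has the same stochastic-Lipschitz coefficients as \eqref{appro}. This gives, for some $q>1$ and $C>0$ independent of $n$,
\begin{align*}
\|D_{s'}U^n\|_{\mathcal{S}^{2p}}^{2p}+\|D_{s'}V^n\|_{\mathcal{H}^{2p}}^{2p}\leq C\,\mathbb{E}\Big[|D_{s'}D_s\xi|^{2pq}+\Big(\int_0^T|\mathcal{R}^n_{s',s,r}|\,\mathrm{d}r\Big)^{2pq}\Big]^{1/q}.
\end{align*}
Each summand of $D_{s'}G^n$ is dominated by $\mathcal{K}_r$ times a sum of first-order Malliavin derivatives of $(X,Y,Z)$, with the $\rho_n'$-factor bounded by $1$, and the extra piece $f_x'(r,\mathbf{X}_r)D_{s'}D_sX_r$ is handled via the moment bounds on $D_{s'}D_sX_r$ recalled after Theorem \ref{secondMal}. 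Combined with \eqref{4.13} and H\"older's inequality, this produces an $L^{2pq}$-control of $\mathcal{R}^n$ that is uniform in $n$; integrating in $s'$ and using Fubini then yields \eqref{4.15}.

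The main obstacle is precisely this uniform-in-$n$ closure. The Hessian $\mathcal{K}_r$ is only in $\mathcal{S}^{2p}$, the second Malliavin derivative $D_{s'}D_sX_r$ is merely in $L^p$ with no continuity, and the BMO coefficient $f_z'(\cdot,\mathbf{X}^n)$ forces us to work through the energy inequality with an exponent $q$ depending on its BMO norm. The role of the truncation $\rho_n$ is exactly to keep this BMO norm bounded independently of $n$ (through the bound $|\rho_n(z)|\leq |z|$, which preserves the BMO control coming from $Z\in\mathcal{H}_{\mathrm{BMO}}$), while $|\rho_n'|\leq 1$ simultaneously keeps the new Malliavin terms dominated; removing $\rho_n$ would destroy both features and prevent one from closing the estimate.
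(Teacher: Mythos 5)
Your proposal is correct and follows essentially the same route as the paper: for each fixed $n$ the truncation makes $f_y'(\cdot,\rho_n(Z))$ and $f_z'(\cdot,\rho_n(Z))$ bounded, so one verifies the hypotheses (H1)--(H4) of the linear-BSDE differentiation theorem in Appendix \ref{apdix3} (the paper applies Theorem \ref{thapendix} from \cite{ImkDos}, where you invoke the analogous result of \cite{ImkRhossOliv}; the checks on $D_{s'}D_s\xi$, on $f_x'(\cdot,\bold{X})D_sX$ and on $\mathcal{R}^n$ are the same), and then obtains \eqref{4.15} from the BMO-based a priori estimate of \cite[Lemma 3.8]{ImkRhossOliv} exactly as in the paper. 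The only quibble is your closing remark attributing the uniform-in-$n$ BMO bound to $\rho_n$: that bound holds already from $|f_z'|\leq \Lambda(1+\ell(y)|z|)$ and $Z\in\mathcal{H}_{\text{BMO}}$ (Remark \ref{remk 4.11}); the truncation's real role is to make the coefficients bounded for each fixed $n$ so the standard Lipschitz theory applies at all.
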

		
		The first term in the expression of $\mathcal{R}_{s',s,t}^n$ writes:
		\begin{align*}
			(D_{s'}G^n)(t,\Theta_{s,t}^n) = D_{s'}[f_x'(t,\bold{X}_t)]D_sX_t + D_{s'}[f_y'(t,\bold{X}_t^n)]U_{s,t}^n + D_{s'}[f_z'(t,\bold{X}_t^n)]V_{s,t}^n,
		\end{align*}
		and each term is explicitly given by: 
		
		\begin{align}\label{5.33}
			\begin{cases} 
				D_{s'}[f_x'(t,\bold{X}_t)]D_sX_t = f_{xx}^{''}(t,\bold{X}_t)D_{s'}X_tD_sX_t + f_{xy}^{''}(t,\bold{X}_t)D_{s'}Y_tD_sX_t \\
				\qquad \qquad \qquad \qquad \qquad+ f_{xz}^{''}(t,\bold{X}_t)D_{s'}Z_tD_sX_t, \\
				D_{s'}[f_y'(t,\bold{X}_t^n)]U_{s,t}^n= f_{xy}^{''}(t,\bold{X}_t^n)D_{s'}X_tU_{s,t}^n + f_{yy}^{''}(t,\bold{X}_t^n)D_{s'}Y_tU_{s,t}^n\\
				\qquad \qquad \qquad \qquad \qquad + f_{yz}^{''}(t,\bold{X}_t^n)\rho_n'(Z_t)D_{s'}Z_tU_{s,t}^n,\\
				D_{s'}[f_z'(t,\bold{X}_t^n)]V_{s,t}^n = f_{xz}^{''}(t,\bold{X}_t^n)D_{s'}X_tV_{s,t}^n + f_{yz}^{''}(t,\bold{X}_t^n)D_{s'}Y_tV_{s,t}^n\\
				\qquad \qquad \qquad  \qquad \qquad + f_{zz}^{''}(t,\bold{X}_t)\rho_n'(Z_t)D_{s'}Z_tV_{s,t}^n.
			\end{cases}
		\end{align}

		\begin{proof} {It is enough} to prove that the coefficients of the BSDE \eqref{appro1} satisfy the assumptions (H1)--(H4) {in Appendix \ref{apdix3} and the result will follow by applying} Theorem \ref{thapendix}. Conditions (H1) and (H2) are straightforward from the assumptions of the Lemma. More precisely, the first term in the BSDE \eqref{appro1} is given by:
			\begin{align*}
				D_{s'}D_s\xi = D_{s'}(\phi'(X_T)D_sX_T)= \phi''(X_T)D_sX_T + \phi'(X_T)D_{s'}D_sX_T,
			\end{align*}
			{and using the assumptions of the lemma, we have}
			\begin{align}\label{4.17}
				\mathbb{E}|D_{s'}D_s\xi|^{\gamma} \leq C \mathbb{E}\Big( 1+ |X_T|^{2\gamma}  + |D_sX_T|^{2\gamma} + |D_{s'}D_sX_T|^{2\gamma} \Big) < \infty.
			\end{align}
			On the other hand, note that for each $n \in \mathbb{N}$ the family of drivers $(G^n)_{n\in \mathbb{N}}$ is Lipschitz continuous in its third and fourth components, since $f_y'(\cdot,\cdot,\cdot,\rho_n(z))$ and $f_z'(\cdot,\cdot,\cdot,\rho_n(z))$ are bounded. In addition, using the growth of $f_x'$ (see Assumption \ref{assum3} (A2)), the bound of $Y$ and Remark \ref{rmk 5.9}, we deduce for any $p >1:$
			\begin{align*}
				\sup_{0\leq s \leq T}\mathbb{E}\Big[\sup_{0\leq t \leq T}|f_x'(t,\bold{X}_t)D_sX_t|^p\Big] \leq \Lambda \sup_{0\leq s \leq T}\mathbb{E}\Big[(1+\sup_{0\leq t \leq T}|Z_t|^2)^{2p}\Big]^{\frac{1}{2}}\mathbb{E}\Big[\sup_{0\leq t \leq T}|D_sX_t|^{2p}\Big]^{\frac{1}{2}}< \infty.
			\end{align*}
			This implies that for each $n \in \mathbb{N}$, $(G^n)_{n\in \mathbb{N}}$ satisfies (H3) in Appendix \ref{apdix3}. We need to prove that: 
			\begin{align}\label{4.18}
				\sup_{n\in\mathbb{N}} \sup_{0\leq s\leq T} \mathbb{E} \Big[ \Big( \int_0^T |\mathcal{R}_{s',s,t}^n|\mathrm{d}t\Big)^{\gamma}\Big] < \infty.
			\end{align} 
			Let us remark that
			\begin{align*}
				\mathbb{E} \Big( \int_0^T |\mathcal{R}_{s',s,t}^n|\mathrm{d}t\Big)^{\gamma} \leq C \mathbb{E}\Big[ \Big( \int_0^T |(D_{s'}G^n)(t,\Theta_{s,t}^n)|\mathrm{d}t\Big)^{\gamma} + \Big( \int_0^T |f_x'(t,\bold{X}_t)D_{s'}D_sX_t|\mathrm{d}t\Big)^{\gamma} \Big]= I_1 + I_2.	
			\end{align*}
			By using once more the growth of $f_x'$, the boundedness of $Y$ and Remark \ref{rmk 5.9}, we derive the following estimate for $I_2$
			\begin{align*}
				I_2 & \leq C \mathbb{E}\Big( \int_0^T (1+ |Y_t| +\ell(Y_t)|Z_t|^{\alpha})^2\mathrm{d}t + \int_0^T |D_{s'}D_sX_t|^2\mathrm{d}t\Big)^{\gamma}\\
				&\leq C \Big(1 + \mathbb{E}\sup_{0\leq t \leq T}|Z_t|^{4\gamma}\Big) + C \mathbb{E}\Big(\int_0^T |D_{s'}D_sX_t|^2\mathrm{d}t\Big)^{\gamma} < \infty,
			\end{align*}
			where $C$ is a nonnegative constant independent of $n$. As for the term $I_1$, it is sufficient to prove that each terms in \eqref{4.15} is bounded uniformly in $n$. This can be achieved similarly as in \cite[P.364]{ImkDos}. Thus, (H4) is satisfied. The bound \eqref{4.15},  follows from \cite[Lemma 3.8]{ImkRhossOliv}, combined with \eqref{4.17} and \eqref{4.18}. This ends the proof.
		\end{proof}
		\begin{proof}[Proof of Theorem \ref{secondMal}] As stated above, the proof follows by applying \cite[Lemma 2.1]{ImkDos}. We know from Lemma \ref{lem 4.13} that the BSDE \eqref{appro} has a unique Malliavin differentiable solution $(U^n_{s,\cdot},V^n_{s,\cdot})$ such that its $(\mathbb{L}_{1,2}\times \mathbb{L}_{1,2} )\text{-norm}$ is bounded uniformly in $n$. 
			It remains to prove the $\mathcal{H}^2\text{-convergence}$ of the process $(U^n_{s,\cdot},V^n_{s,\cdot})$ to $(D_sY_{\cdot},D_sZ_{\cdot})$. Using \cite[Lemma 3.8]{ImkRhossOliv} we deduce that, there exist $q \in (1,\infty)$ only depending on $\sup_{n\in \mathbb{N}}\|f_z'(\cdot,\cdot,\cdot,\rho_n(Z_{\cdot}))*W\|_{BMO}$ such that:
			\begin{align*}
				&\sup_{0\leq s \leq T}\mathbb{E}\Big[ \int_0^T |D_sY_t-U_{s,t}^n|^2\mathrm{d}t + \int_0^T |D_sZ_t-V_{s,t}^n|^2\mathrm{d}t \Big]\\
				&\leq C \sup_{0\leq s \leq T} \mathbb{E}\Big[\Big(\int_0^T |f_y'(t,\bold{X}_t)- f_y'(t,\bold{X}_t^n)||U_{s,t}^n| + |f_z'(t,\bold{X}_t)-f_z'(t,\bold{X}_t^n)||V_{s,t}^n| \mathrm{d}t\Big)^{2pq}\Big]^{\frac{1}{q}}\\
				&\leq C \sup_{0\leq s \leq T} ( II_1 + II_2). 
			\end{align*}
			{We only show that $II_1$ is bounded uniformly in $n$ and the boundedness of $II_2$ follows similarly}. Using H\"older's inequality, we have
			\begin{align*}
				II_1 &= \mathbb{E}\Big[\Big(\int_0^T |f_y'(t,\bold{X}_t)-f_y'(t,\bold{X}_t^n)||U_{s,t}^n|\mathrm{d}t\Big)^{2pq}\Big]^{\frac{1}{q}}\\
				&\leq \mathbb{E}\Big[\Big(\int_0^T |f_y'(t,\bold{X}_t)-f_y'(t,\bold{X}_t^n)|^2\mathrm{d}t\Big)^{2pq}\Big]^{\frac{1}{2q}}\sup_{0\leq s \leq T}\mathbb{E}\Big[\Big(\int_0^T |U_{s,t}^n|^2\mathrm{d}t\Big)^{2pq}\Big]^{\frac{1}{2q}} .
			\end{align*}
			Recall that $\bold{X}_{\cdot} = (X_{\cdot},Y_{\cdot},Z_{\cdot})$ and $\bold{X}_{\cdot}^n = (X_{\cdot},Y_{\cdot},\rho_n(Z_{\cdot}))$. From the assumptions of the theorem(continuity of $f_y'$) combined with the boundedness of $Y_t$, Remark \ref{rmk 5.9} and Remark \ref{remk 4.11}, we deduce that the first term of the above inequality is finite, while the second one is finite thanks to \eqref{4.13}. The result follows by applying the Dominated convergence theorem. Then, similar arguments as in \cite{ImkDos} can be applied here to show that $(D_{s'}D_sY,D_{s'}D_sZ)$ is the unique solution to the linear BSDE \eqref{secondY} as well for $D_tD_sY_t$ being a version of $D_sZ_t$. {We omit the details}. The proof is completed.  
		\end{proof}
		
		We are now in position to establish the main result of this section
		\begin{thm}\label{densZ} {Suppose} that assumptions of Theorem \ref{secondMal} are valid. Then the law of the {control} component $Z_t$ solution to FBSDE \eqref{eqmain3} has a density which is continuous with respect to the Lebesgue measure if :
			\begin{itemize}
				\item[(i)] There exists $c$ s.t., $0\leq D_{s'}D_sX_t \leq c, $ for all $0<s', s<t\leq T.$
				\item[(ii)]  $f_x',f_y',f_{xx}^{''},f_{yy}^{''},f_{zz}^{''}\geq 0$ and $f_{xz}^{''}=f_{yz}^{''} = 0$.
				\item[(iii)] $f_{xy}^{''} = 0$ or $(f_{xy}^{''}\geq 0 \text{ and } \phi' \geq 0, \mathcal{L}(X_T)\text{a.e.}).$
			\end{itemize}
			If there exists $A \in \mathcal{B}(\mathbb{R})$ such that $\mathbb{P}(X_T\in A|\mathfrak{F}_t) > 0,$ and such that 
			\[ (1_{\{\underline{\phi''}<0\}}\underline{\phi''}e^{2\Lambda_bt} + \underline{\phi'} 1_{\{\underline{\phi'}<0\}}c) + (1_{\{\underline{\phi''}\geq 0\}}\underline{\phi''}+ \underline{f_{xx}^{''}}(t)(T-t))e^{-2\Lambda_bt} \geq 0, \]
			and 
			\[ 1_{\{\underline{\phi''}^{A}<0\}}\underline{\phi''}^{A}e^{2\Lambda_bt} + \underline{\phi'}^{A} 1_{\{\underline{\phi'}^{A}<0\}}c + (1_{\{\underline{\phi''}^{A}\geq 0\}}\underline{\phi''}^{A}+ \underline{f_{xx}^{''}}(t)(T-t))e^{-2\Lambda_bt} > 0. \]
		\end{thm}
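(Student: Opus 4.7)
The plan is to establish the density of $Z_t$ via the Bouleau--Hirsch criterion (Theorem \ref{criterion}), which reduces to showing that the Malliavin covariance $\|DZ_t\|_{\mathcal{H}}^2=\int_0^t|D_sZ_t|^2\,\mathrm{d}s$ is strictly positive $\mathbb{P}$-a.s. Theorem \ref{secondMal} ensures $Z_t\in\mathbb{D}^{1,2}$, produces the linear BSDE \eqref{secondY} for $(D_{s'}D_sY,D_{s'}D_sZ)$, and identifies $(D_tD_sY_t)_{0\le s\le t}$ as a version of $(D_sZ_t)_{0\le s\le t}$. The strategy is therefore to specialise \eqref{secondY} at $s'=t$, linearise it, and read off a sign--definite representation of $D_sZ_t$.

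First, substituting the vanishing identities $f_{xz}''=f_{yz}''=0$ from hypothesis~(ii) into the expansion \eqref{5.33} reduces the driver of \eqref{secondY} to
\[
f_x'(r,\bold{X}_r)D_{t}D_sX_r+f_y'(r,\bold{X}_r)D_{t}D_sY_r+f_z'(r,\bold{X}_r)D_{t}D_sZ_r+\mathcal{Q}_{t,s}(r),
\]
with $\mathcal{Q}_{t,s}(r)=f_{xx}''D_tX_rD_sX_r+f_{xy}''(D_tX_rD_sY_r+D_tY_rD_sX_r)+f_{yy}''D_tY_rD_sY_r+f_{zz}''D_tZ_rD_sZ_r$. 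Since $f_z'(\cdot,\bold{X}_\cdot)*W\in\mathrm{BMO}$ by Assumption~\ref{assum1}(AY)(ii) and Theorem~\ref{th3.3}, I introduce the equivalent probability measure $\widehat{\mathbb{Q}}$ with $d\widehat{\mathbb{Q}}/d\mathbb{P}|_{\mathfrak{F}_T}=\mathcal{E}(f_z'(\cdot,\bold{X}_\cdot)*W)_T$ and apply Girsanov; a subsequent linearisation through $\Gamma_t(r):=\exp(\int_t^r f_y'(u,\bold{X}_u)\mathrm{d}u)$ and a Bayes-type conditioning yield, for $0\le s\le t<T$,
\begin{align*}
D_sZ_t=\mathbb{E}^{\widehat{\mathbb{Q}}}\Big[&\Gamma_t(T)\big(\phi''(X_T)D_tX_TD_sX_T+\phi'(X_T)D_tD_sX_T\big)\\
&+\int_t^T\Gamma_t(r)\big(f_x'(r,\bold{X}_r)D_tD_sX_r+\mathcal{Q}_{t,s}(r)\big)\mathrm{d}r\,\Big|\,\mathfrak{F}_t\Big].
\end{align*}

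Next, I verify that every summand inside the conditional expectation is nonnegative. Formula \eqref{517} gives $D_sX_r\ge 0$; hypothesis (i) gives $0\le D_tD_sX_r\le c$; the sign assumptions in (ii) ($f_x',f_{xx}'',f_{yy}'',f_{zz}''\ge 0$) control the diagonal and the gradient contribution directly. The cross term $f_{xy}''(D_tX_rD_sY_r+D_tY_rD_sX_r)$ is handled by (iii): either $f_{xy}''=0$, or else $f_{xy}''\ge 0$ together with $\phi'\ge 0$ and $\underline{f}\ge 0$ (implicit in (ii)) allow me to invoke Corollary~\ref{Cor} to conclude $D_sY_r\ge 0$, since $D_sY_r=\partial_xv(r,X_r)D_sX_r$ and $\partial_x v\ge 0$ by the comonotonicity result. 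Combining these sign estimates with the quantitative lower bound $\Gamma_t(r)\ge e^{-2\Lambda_b t}$ (and its upper counterpart $e^{2\Lambda_b t}$ needed when $\underline{\phi''}$ or $\underline{\phi'}$ is negative), the pointwise inequality in the hypothesis ensures the resulting expression is $\ge 0$ on $\Omega$ and strictly positive on $\{X_T\in A\}$; this set has positive conditional probability by Theorem~\ref{th5.5} applied to $X$, so $D_sZ_t>0$ $\mathbb{P}$-a.s., giving a strictly positive Malliavin covariance and hence a density via Bouleau--Hirsch. Continuity of the density follows from the Nourdin--Viens representation, whose ingredients are provided by Theorem~\ref{secondMal}.

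The main obstacle is the combined control of the change of measure and the sign bookkeeping: the coefficient $f_z'(\cdot,\bold{X}_\cdot)$ is only BMO rather than uniformly bounded, so Girsanov must be justified through the reverse H\"older / $L^p$ estimates already established in the proof of Theorem~\ref{secondMal}; and the Hessian cross terms involving $D_sY$ and $D_sZ$ must be re-expressed through the nonlinear Feynman--Kac identity \eqref{FK} in order to reduce them to powers of $D_sX$ bounded by the explicit exponential factors $e^{\pm 2\Lambda_b t}$, which is precisely what makes the quantitative hypothesis on $\underline{\phi''}$, $\underline{\phi'}$ and $\underline{f_{xx}''}(t)$ sharp.
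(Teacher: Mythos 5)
Your proposal follows essentially the same route as the paper: linearise the second-order Malliavin BSDE \eqref{secondY}, change measure via the Dol\'eans-Dade exponential of $f_z'(\cdot,\bold{X}_\cdot)*W$ (the measure $\mathbb{Q}$ of \eqref{4.5}), obtain the conditional-expectation representation of $D_{s'}D_sY_t$, and then carry out the sign bookkeeping à la Mastrolia--Possama\"i using $D_tD_sY_t$ as a version of $D_sZ_t$ together with Bouleau--Hirsch. The only slight misattribution is that the factors $e^{\pm 2\Lambda_b t}$ in the quantitative hypothesis bound the products $D_{s'}X_rD_sX_r$ (via \eqref{517}/\eqref{4.11}) rather than the linearisation weight $\Gamma_t(r)$, but this does not affect the validity of the argument.
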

		\begin{proof}
			Under the assumptions of the Theorem and by the standard linear technique we obtain 
			\begin{align*}
				D_{s'}D_sY_t =&\mathbb{E}^{\mathbb{Q}}\Big[ e^{\int_{t}^{T}f_y'(u,\bold{X}_u)\mathrm{d}u}\left( \phi''(X_T)D_{s'}X_TD_sX_T + \phi'(X_T)D_{s'}D_sX_T\right)\\
				&+ \int_t^T e^{\int_{t}^{r}\nabla_yf(u,\bold{X}_u)\mathrm{d}u}\Big( f_x'(r,\bold{X}_r)D_{s'}D_sX_r + f_{xx}^{''}(r,\bold{X}_r)D_{s'}X_rD_sX_r \\
				&+ f_{xy}^{''}(r,\bold{X}_r)D_{s'}X_rD_sX_r +  f_{xy}^{''}(r,\bold{X}_r)D_{s'}X_rD_sY_r \\
				&+ f_{yy}^{''}(r,\bold{X}_r)D_{s'}Y_rD_sY_r + f_{zz}^{''}(r,\bold{X}_r)D_{s'}Z_rD_sZ_r  \Big)\mathrm{d}r\Big|\mathfrak{F}_t \Big],
			\end{align*}
			where the probability measure $\mathbb{Q}$ is given by \eqref{4.5}.	Then, one can use a similar technique as in the proof of  \cite[Theorem 4.2]{Mastrolia} to deduce the result. This ends the proof.	
		\end{proof}
		{\begin{cor}\label{cor520}
				Under the assumptions of Theorem \ref{densZ} , the probability density of the {control component} $Z_t$ solution to FBSDE \eqref{eqmain3} satisfy for all $t \in [0,T)$ and $z>0$
				\begin{align*}
					\frac{\mathbb{E}|Z_t-\mathbb{E}[Z_t]|}{2t (\Upsilon^{(1)}e^{-Kt})^2}\exp\Big( -\frac{(z-\mathbb{E}[Z_t])^2}{2t e^{-2Kt}}\Big) \leq \rho_{Z_t}(z) \leq 	\frac{\mathbb{E}|Z_t-\mathbb{E}[Z_t]|}{2t (\omega(t)e^{-Kt})^2}\exp\Big(- \frac{(z-\mathbb{E}[Z_t])^2}{2t e^{2Kt}}  \Big).
				\end{align*}
				Moreover, its tail probabilities satisfy the following Gaussian-type bounds
				\begin{align*}
					\mathbb{P}(Z_t\geq z)\leq \exp\Big( -\frac{(z-\mathbb{E}[Z_t])^2}{2t e^{2Kt}} \Big),\,\,\, 	\mathbb{P}(Z_t\leq -z)\leq \exp\left( -\frac{(z+\mathbb{E}[Z_t])^2}{2t e^{2Kt}}\right).
				\end{align*}
			\end{cor}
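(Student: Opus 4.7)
The plan is to apply Theorem 2.10 to $F = Z_t$, following the template used in the proof of Theorem \ref{prop 4.10}(ii) for $Y_t$. The task reduces to establishing two-sided $\mathbb{P}$-a.s. bounds
\[ 0 < l(t) \leq \int_0^t D_s Z_t \, \mathbb{E}\bigl[D_s Z_t \,\big|\, \mathfrak{F}_s\bigr] \, \mathrm{d}s \leq L(t), \]
with $l(t) = t\bigl(\omega(t) e^{-Kt}\bigr)^2$ and $L(t) = t\bigl(\Upsilon^{(1)} e^{Kt}\bigr)^2$, from which both the Gaussian-type density bounds and the exponential tail estimates will follow directly.

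First, since $\sigma \equiv 1$ in this subsection, the nonlinear Feynman-Kac relation \eqref{FK} specialises to $Z_t = \nabla_x v(t, X_t)$, where $v$ is the classical solution to the PDE \eqref{eqmainP1}. Under the assumptions (A6)--(A8), a chain-rule argument in the Malliavin sense (made rigorous via the second-order Malliavin differentiability obtained in Theorem \ref{secondMal}) yields the pointwise representation $D_s Z_t = \nabla_{xx}^2 v(t, X_t)\, D_s X_t$ for $0 \leq s \leq t < T$. The bound already obtained for the forward component in the proof of Theorem \ref{prop 4.10}(i) gives $e^{-Kt} \leq D_s X_t \leq e^{Kt}$ (specialising the constants $\Lambda = \lambda = 1$).

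Second, I would use the precise assumptions (i)--(iii) of Theorem \ref{densZ} together with the strict positivity hypothesis on the Borel set $A$ with $\mathbb{P}(X_T \in A \mid \mathfrak{F}_t) > 0$ to control $\nabla_{xx}^2 v(t, X_t)$. The conditional-expectation computation carried out in the proof of Theorem \ref{densZ} (after the Girsanov reduction that produces the equivalent measure $\mathbb{Q}$) actually provides more than mere strict positivity: it yields a measurable positive lower bound $\omega(t) > 0$ for $\nabla_{xx}^2 v(t, X_t)$, built from $\underline{\phi''}^A$, $\underline{\phi'}^A$ and $\underline{f_{xx}''}(t)$. A uniform upper bound $\Upsilon^{(1)}$ on $\nabla_{xx}^2 v$ follows from the regularity estimates on $v$ in the spirit of Remark \ref{remk 3.4}, now applied to the second derivative under the reinforced hypotheses (A6)--(A8). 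Combining these with the bounds on $D_s X_t$ gives $\omega(t) e^{-Kt} \leq D_s Z_t \leq \Upsilon^{(1)} e^{Kt}$ almost surely, and integrating over $s \in [0,t]$ against the conditional expectation produces the sandwich above. Theorem 2.10 then delivers both the density estimates and the tail bounds.

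The main obstacle is upgrading the $\mathbb{P}$-a.s. strict positivity of $\nabla_{xx}^2 v(t, X_t)$, obtained in Theorem \ref{densZ}, to a genuinely deterministic lower bound $\omega(t) > 0$ independent of $\omega \in \Omega$. This requires careful bookkeeping of the positive contributions coming from the terminal-condition and driver structural hypotheses of Theorem \ref{densZ} on the event $\{X_T \in A\}$, combined with the absolute continuity of $\mathcal{L}(X_T)$ already established in Theorem \ref{th5.5}, to ensure the lower bound does not degenerate. All remaining steps are direct manipulations of Malliavin calculus identities together with the bounds inherited from Theorem \ref{prop 4.10}.
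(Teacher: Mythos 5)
Your proposal follows essentially the same route as the paper: write $D_sZ_t=v''(t,X_t)\,D_sX_t$, sandwich it between $\omega(t)e^{-Kt}$ and $\Upsilon^{(1)}e^{Kt}$ using the bounds on $D_sX_t$ from Theorem \ref{prop 4.10} and the strict non-degeneracy of $v''(t,X_t)$ supplied by Theorem \ref{densZ}, and then invoke the Dung--Privault--Torrisi criterion. The delicate point you flag --- upgrading $\mathbb{P}$-a.s.\ strict positivity of $(v''(t,X_t))^2$ to a deterministic lower bound $\omega(t)>0$ --- is handled in the paper by the same bare assertion, so your treatment is, if anything, slightly more candid about that step.
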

			\begin{proof} It is enough to establish similar bound as \eqref{4.8}. We recall that $Z_t = v'(t, X_t)$, for all $t\in [0,T)$ where $v'$ stands for the derivative with respect to the spatial component of the classical solution $v$ to PDE \eqref{eqmainP}. From the above theorem, we also know that the law of $Z_t$ has an absolute continuous density with respect to the Lebesgue measure, since $\Gamma_{Z_t} > 0$ $\mathbb{P}\text{-a.s.}$ and this implies that $(v''(t,X_t))^2 > 0$ and $\langle D_sX_t, D_sX_t\rangle_{L^2([0,t])} > 0\quad \mathbb{P}\text{-a.s.}$ The latter is always true then, there is a function $\omega(t)>0$ such that 
				$$v''(t,X_t) \geq \omega(t) \text{ or } {v''(t,X_t) \leq -\omega(t)} \quad \mathbb{P}\text{-a.s.} \text{ for all } t\in [0,T).$$ {We assume that $v''(t,X_t) \geq \omega(t) \, \mathbb{P}\text{-a.s.}$}, since the other case can be treated similarly. From  \eqref{4.11} with $\sigma = 1$, we deduce that
				\begin{align*}
					0< \omega(t)e^{-Kt} \leq D_sZ_t = v''(t,X_t)D_sX_t \leq \Lambda \Upsilon^{(1)} e^{Kt},
				\end{align*} 
				where the constant $\Upsilon^{(1)}\geq 0$ is an upper bound of $v''$ that can be derived from \eqref{remk 5.2}. Therefore,
				\begin{align*}
					0< t \left( \omega(t)e^{-Kt}\right)^2 \leq \int_0^t D_sZ_t \mathbb{E}[D_sZ_t|\mathfrak{F}_s]\mathrm{d}s\leq t \Big(\Upsilon^{(1)} e^{Kt}\Big)^2.
				\end{align*}
		\end{proof}}
		\begin{remark}\leavevmode
			\begin{itemize}
				\item[(1)] It is worth mentioning that the results in Section 5.1 and 5.2 can be recovered if we remove the H\"older continuity of the drift $b$ in $(t,x)$. However the price to pay will be to further assume that $\sigma=1$, the drift $b$ does not depend on the control variable $z$ i.e. $b(t,x,y,z)=b(t,x,y)$ and all the other assumptions remaining the same.
				\item[(2)] 	Comparing our results to those obtained in \cite{AbouraBourguin, F.Antonelli}, the bounds exhibit a dependence with respect to driver of the BSDE and the solution to the quasi linear PDE associated to the FBSDE. 
			\end{itemize}
		\end{remark}
		
		\subsubsection*{Example}
		{Let $b(t,x,y,z)=0,\, \sigma(t,x)=1,\, f(t,x,y,z)=y+z$ and $ \phi(x)=\frac{e^x}{1+e^x}$. 
			Then the conditions of Theorem \ref{prop 4.10} are satisfied and thus the corresponding FBSDEs has a unique strong solution. In addition the density of the solution have the following bounds: for all $t\in[0,T)$ and $y>0$ it holds:
			\begin{align*}
				\frac{\mathbb{E}|Y_t-\mathbb{E}[Y_t]|}{2t \left(\Upsilon e^{\Lambda_{{b}}t}\right)^2}\exp\Big( -\frac{(x-\mathbb{E}[Y_t])^2}{2t \left( \alpha(t)e^{-\Lambda_{2,\tilde{b}}t}\right)^2}\Big) \leq \rho_{Y_t}(y) \leq 	\frac{\mathbb{E}|Y_t-\mathbb{E}[Y_t]|}{2t \left( \alpha(t)e^{-\Lambda_{{b}}t}\right)^2}\exp\Big(- \frac{(y-\mathbb{E}[Y_t])^2}{2t \left(\Upsilon e^{\Lambda_{2,\tilde{b}}t}\right)^2}  \Big),
			\end{align*}
			where $\Lambda_{2,\tilde{b}}=\max( 1, \|v'\|_{\infty},\|v''\|_{\infty} )$ and $v$ is the solution to the following quasilinear PDE
			\begin{equation}\label{eqmainPrac1}
				\begin{cases}
					\frac{\partial v}{\partial t}(t,x)+ \frac{1}{2}\frac{\partial^2 v}{\partial x^2}(t,x) =- v(t,x)-\nabla_x v(t,x)\\
					v(T,x)= \phi(x).
				\end{cases}
			\end{equation} 
		}

		\subsection{The discontinuous case}\label{section6}
		In this subsection, by considering a trivial diffusive coefficient, the goal is to relax {\bf(A1)} by removing the H\"older continuity assumptions on the coefficients. We propose a refinement of Theorem \ref{th5.5} and Theorem \ref{th5.9} as well, when the  drift coefficient is allowed to be merely discontinuous(see Assumption \ref{assum1} {(AX)}). The FBSDE of interest is given by
		\begin{equation}\label{540}
			\begin{cases}
				X_t^x = x + \displaystyle \int_{s}^{t} b(r,X_r^x,Y_r^x,Z_r^x)\mathrm{d}r + W_t-W_s ,\\
				Y_t^x = \phi(X_T^x)+ \displaystyle \int_{t}^{T} f(r,X_r^x,Y_r^x,Z_r^x)\mathrm{d}r - \int_{t}^{T} Z_r^x\mathrm{d}W_r.
			\end{cases}
		\end{equation}
		We first start by studying the regularities of solution to \eqref{540} in this context.
		\subsubsection{Smoothness of solutions of QFBSDEs with discontinuous coefficients }
		This subsection is devoted to the study of {both} the classical and the Malliavin differentiability of solutions to equations \eqref{540}.
		
		
		Next, we first study the regularity of $x\mapsto (X_t^x,Y_t^x)$ in the Sobolev sense. 
		Let us consider the weighted Sobolev space $W^{1,p}(\mathbb{R}^d,\nu)$ which consists of functions $\mu \in L^p(\mathbb{R}^d,\nu)$ admitting weak derivative of first order denoted by $\partial_{x_i}\mu$, equipped with the following complete norm 
		\[ \| \mu\|^p_{W^{1,p}(\mathbb{R}^d,\nu)} := \| \mu\|^p_{L^p(\mathbb{R}^d,\nu)} + \sum_{i=1}^{d} \| \partial_{x_i}\mu \|^p_{L^p(\mathbb{R}^d,\nu)} .\]
		The space $L^p(\mathbb{R}^d,\nu)$ is the weighted Lebesgue space of measurable functions $\mu: \mathbb{R}^d \rightarrow \mathbb{R}^d$ such that 
		$\| \mu\|^p_{L^p(\mathbb{R}^d,\nu)}:= \int_{\mathbb{R}^d} |\mu(x)|^p\nu(x)\mathrm{d}x < \infty$ and
		the function $\nu$ stands for the weight possessing a $p\text{th}$ moment with respect to the Lebesgue measure on $\mathbb{R}^d$, i.e., $\nu : \mathbb{R}^d\rightarrow [0,\infty)$ such that $\int_{\mathbb{R}^d} (1+ |x|^p)\nu(x)\mathrm{d}x < \infty$ for some $p > 1$.
		
		\begin{prop}\label{th4.4} Let Assumption \eqref{assum1} be in force with $\sigma \equiv \mathbb{I}_{d\times d}$.
			For every $\delta >0,$ and for any bounded open set $\mathcal{O}\subset \mathbb{R}^d$, the solution $(X^x,Y^x,Z^x)$ to equation \eqref{540} satisfies:
			\begin{equation}
				(X_t^x, Y_t^x) \in L^2(\Omega, W^{1,p}(\mathbb{R}^d,\nu)) \times L^2(\Omega, W^{1,p}(\mathcal{O})) \text{ for all } t\in [0, T-\delta].
			\end{equation}
			
			Moreover for $d =1$, and for all $0\leq t < T,$ the following explicit representations hold $ \mathrm{d}t\otimes\mathrm{d}\mathbb{P}\text{-a.s.}$
			\begin{align}
				\xi_t^x := \frac{\partial}{\partial x}X_t^x&=\exp\Big({-\int_0^{t}\int_{\mathbb{R}}\tilde b\left(u,z\right)L^{X^{x}}(\mathrm{d}u,\mathrm{d}z)}\Big),\label{sobderX}\\
				\eta_t^x:=\frac{\partial}{\partial x}Y_t^x &=v'(t,X_t^x) \xi_t^x ,\label{sobderY}
			\end{align}
			$L^{X^{x}}(\mathrm{d}u,\mathrm{d}z)$ stands for the integration in space and time with respect to the local time of $X^x$ {and $v'$ is the spatial partial derivative of $v$ solution to PDE \eqref{eqmainP1}}.
		\end{prop}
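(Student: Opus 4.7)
The strategy divides naturally into two parts: (i) the Sobolev regularity of $(X^x,Y^x)$ with respect to the initial condition, and (ii) the explicit representations \eqref{sobderX}--\eqref{sobderY} in the scalar case via integration with respect to space-time local time.

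For part (i), I first invoke Theorem \ref{th3.3} to reduce the forward equation, with $\sigma \equiv \mathbb{I}_{d\times d}$, to the SDE
\begin{equation*}
X_t^x = x + \int_0^t \tilde{b}(r,X_r^x)\,\mathrm{d}r + W_t, \qquad \tilde{b}(r,x):=b(r,x,v(r,x),\nabla_x v(r,x)),
\end{equation*}
where $\tilde{b}$ is bounded and measurable (bounded by Assumption \ref{assum1} (AX)(i) together with the a priori bounds \eqref{3.4} and \eqref{3.6}). I then mollify $\tilde{b}$ into a sequence $(\tilde b_n)\subset C_b^{\infty}$, call $X^{x,n}$ the classical solution to the mollified SDE, and exploit the standard chain-rule representation
\begin{equation*}
\xi_t^{x,n}=\exp\!\Bigl(\int_0^t \partial_x\tilde{b}_n(u,X_u^{x,n})\,\mathrm{d}u\Bigr).
\end{equation*}
A Girsanov/BMO argument, relying only on $\|\tilde b\|_\infty<\infty$ in the spirit of \cite{BanosNilssen16, MenozziPesceZhang21}, yields uniform (in $n$) estimates of $(X^{x,n},\xi^{x,n})$ in $L^2(\Omega,W^{1,p}(\mathbb{R}^d,\nu))$. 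A weak-compactness argument in $W^{1,p}(\mathbb{R}^d,\nu)$, combined with the $L^p$-stability of the Zvonkin correspondence used to pass from $X^{x,n}$ to $X^x$, identifies the weak limit of $\xi^{x,n}$ with the Sobolev derivative of $X^x$.

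For the backward component I appeal to the nonlinear Feynman--Kac relation $Y_t^x=v(t,X_t^x)$ and to the regularity $v(t,\cdot)\in C^{1,\beta_0}(\mathbb{R}^d;\mathbb{R})$ established in Theorem \ref{th3.3}, together with the pointwise control \eqref{3.6} on $\nabla_x v$. Restricting to $t\in[0,T-\delta]$ compensates for the singularity of $\nabla_x v$ near the terminal time, while restricting to a bounded open set $\mathcal{O}$ avoids the lack of global integrability of $\nabla_x v$ in $x$; the chain rule for Sobolev maps then yields $Y^x\in L^2(\Omega,W^{1,p}(\mathcal{O}))$ and the representation \eqref{sobderY} in the scalar case.

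To establish \eqref{sobderX} in $d=1$, I cannot keep $\partial_x\tilde{b}$ inside the integral once the mollification is removed. Instead, I use the integration with respect to space-time local time developed in \cite{Ein2006}: for the smooth approximants one can integrate by parts in the spatial variable against $L^{X^{x,n}}(\mathrm{d}u,\mathrm{d}z)$, thereby removing the derivative from $\tilde{b}_n$ and transferring it onto the local-time measure. Passing to the limit $n\to\infty$ is stable because $\tilde{b}_n\to\tilde{b}$ a.e.\ while the sequence $(L^{X^{x,n}})_n$ is tight thanks to Krylov-type estimates for SDEs with bounded measurable drift; the resulting identity is precisely \eqref{sobderX}. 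Formula \eqref{sobderY} is then immediate from $Y_t^x=v(t,X_t^x)$.

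\textbf{Main obstacle.} The principal difficulty is the simultaneous presence of a discontinuous drift and the circular coupling through the decoupling field. Because $\tilde{b}$ inherits only the regularity of $\nabla_x v$, which is no better than locally bounded with blow-up at $t=T$, neither the uniform Sobolev estimate nor the local-time representation can be obtained directly; both require a careful choice of the weight $\nu$ and the truncation $\mathcal{O}$, and both rely crucially on the fact that the It\^o--Tanaka/local-time machinery of \cite{Ein2006, FlanGubiPrio10} is insensitive to the discontinuities of $\tilde{b}$ once one works with the space-time local time rather than with the classical derivative.
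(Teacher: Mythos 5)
Your proposal is correct and follows essentially the same route as the paper: reduce the forward equation via the decoupling field to an SDE whose drift $\tilde b$ is bounded on $[0,T-\delta]$ (by \eqref{3.4} and \eqref{3.6}), obtain Sobolev differentiability of $X^x$ for such drifts, pass to $Y^x=v(t,X^x)$ by the chain rule using the Lipschitz continuity of $v(t,\cdot)$ for $t<T$, and convert $\int_0^t\partial_x\tilde b(u,X_u)\,\mathrm{d}u$ into the space--time local-time integral via \eqref{eqtransLT1}. The only difference is that you sketch from scratch (mollification, Girsanov/exponential local-time bounds, weak compactness) the two ingredients the paper simply imports by citation, namely the Sobolev differentiability result of Nilssen--Proske and the local-time integration-by-parts identity, and your passing mention of a ``Zvonkin correspondence'' is superfluous here since $\sigma\equiv\mathbb{I}$.
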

		
		\begin{proof}
			We recall that, Theorem \ref{th3.3}  ensures the unique weak solvability of FBSDE \eqref{540} and there is a {\it "decoupling field"} $v \in W^{1,2,d+1}_{\text{loc}}([0,T[\times \mathbb{R}^d,\mathbb{R})$ solution to PDE \eqref{eqmainP1} such that the forward equation in \eqref{eqmain3}  can be written as:
			\begin{align}\label{eqfor1}
				X_t^x= x+\int_s^t\tilde b(r,X_r^x)\mathrm{d}r + W_t -W_s,
			\end{align}
			where $\tilde{b}(t,x):= b(t,x,v(t,x),\nabla_xv(t,x)).$ {Using \eqref{3.4} , \eqref{3.6} and the assumptions of Proposition \ref{th5.1}}  the drift $\tilde{b}$ is bounded for all $t \in [0,T-\delta]$. Thus, {using} \cite[Theorem 3]{NilssenProske} the forward process $X^x$ is Sobolev differentiable. On the other hand, the function $v$ is Lipschitz continuous for all $t < T$ (see \eqref{3.6} ) then for all $\omega$ outside every $\mathbb{P}\text{-null}$ set, the process $Y^x_t(\omega) = v(t,X_t^x(\omega))$ belongs to $W^{1,1}_{Loc}(\mathbb{R})$ (see \cite{LeoniMoreni05}). Then \eqref{sobderY} follows by applying the chain rule developed in the same paper \cite[Theorem 1.1]{LeoniMoreni05}. This concludes the proof.
		\end{proof}

		In the following, we are now studying the variational differentiability of solution to \eqref{540} in the sense of Malliavin.	
		Without loss of generality in what follows we assume that equation \eqref{540} has initial value given by $(0,x).$ 
		\begin{prop}[Malliavin differentiability]\label{th5.1} Let assumptions of {Proposition \ref{th5.1}} be in force.  Then for every $\delta >0,$ the FBSDE \eqref{540} has a unique strong Malliavin differentiable solution for all $t\in [0, T-\delta], T>0.$ 
			
			In particular, for $d=1$, and for all $0<s\leq t < T,$ a version of $(D_sX_t^x, D_sY_t^x)$ has the following explicit representation
			\begin{align}
				D_sX_t^x&=\exp\Big({-\int_s^{t}\int_{\mathbb{R}}\tilde b\left(u,z\right)L^{X^{x}}(\mathrm{d}u,\mathrm{d}z)}\Big),\label{malderX}\\
				D_sY_t^x &=v'(t,X_t^x) D_sX_t^x,\label{malderY}
			\end{align}
			where	$L^{X^{x}}(\mathrm{d}u,\mathrm{d}z)$ stands for the integration in space and time with respect to the local time of $X^x$ {and $v'$ is the spatial partial derivative of $v$ solution to the PDE \eqref{eqmainP1}}
		\end{prop}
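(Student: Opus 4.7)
My plan is to split the proof into four steps, handling first the reduction to a forward SDE with bounded measurable drift, then the Malliavin differentiability, then the local-time representation for $X$, and finally the transfer to $Y$.

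\textbf{Step 1 (Reduction).} Theorem \ref{th3.3} together with the bound \eqref{3.6} shows that the decoupling field $v$ is Lipschitz in $x$ on every strip $[0,T-\delta]\times\mathbb{R}^d$; hence $\tilde b(t,x):=b(t,x,v(t,x),\nabla_x v(t,x))$ is uniformly bounded and Borel measurable. The forward part of \eqref{540} therefore reduces to \eqref{eqfor1}, an SDE with bounded measurable drift and identity diffusion. Zvonkin-type theory (\cite{Zvon74}) then provides pathwise uniqueness and a unique strong solution on $[0,T-\delta]$. Combined with $Y_t=v(t,X_t)$, $Z_t=\nabla_x v(t,X_t)$ this upgrades the weak solution of Theorem \ref{th3.3} to a strong solution of the full FBSDE.

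\textbf{Step 2 (Malliavin differentiability of $X$).} I would use a compactness/approximation argument as in \cite{BanosNilssen16,MenozziPesceZhang21,OliveraTudor19}. Mollify $\tilde b$ into a sequence of smooth bounded drifts $(\tilde b_n)$ with $\|\tilde b_n\|_\infty\le\|\tilde b\|_\infty$ and $\tilde b_n\to\tilde b$ a.e., and let $X^{x,n}$ solve the corresponding SDE. Classical Malliavin calculus gives $X_t^{x,n}\in\mathbb{D}^{1,p}$ for every $p\ge 1$ with
\begin{equation*}
D_sX_t^{x,n}=\exp\Bigl(\int_s^t\partial_x\tilde b_n(u,X_u^{x,n})\,du\Bigr),\qquad 0\le s\le t\le T-\delta.
\end{equation*}
Krylov-type estimates give uniform $L^p(\Omega)$ bounds on $(D_sX_t^{x,n})_n$, while Girsanov together with $L^p$-convergence of $X^{x,n}\to X^x$ yield weak precompactness in $\mathbb{D}^{1,p}$. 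Closability of $D$ then forces $X_t^x\in\mathbb{D}^{1,p}$, and $D_sX_t^x$ is identified as the $\mathcal{H}^p$-limit of $D_sX_t^{x,n}$.

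\textbf{Step 3 (Local-time representation).} This is the most delicate point, and the main obstacle. For the smooth approximants, I would apply the space-time It\^o–Tanaka–Eisenbaum formula (\cite{Ein2006}) to rewrite
\begin{equation*}
\int_s^t\partial_x\tilde b_n(u,X_u^{x,n})\,du = -\int_s^t\!\int_{\mathbb{R}}\tilde b_n(u,z)\,L^{X^{x,n}}(du,dz),
\end{equation*}
which is legitimate since $\tilde b_n$ is smooth. The challenge is to pass to the limit on the right-hand side when only $\tilde b_n\to\tilde b$ a.e.\ and boundedly is available. I would do this by combining: (i) the occupation times formula to rewrite both sides in terms of the diffusion's Gaussian-type transition density (obtained via the regularisation effect of $W$); (ii) a uniform Aronson-type two-sided heat-kernel bound for $X^{x,n}$, which follows from Girsanov's theorem and the boundedness of $\tilde b_n$; and (iii) the continuity in $z$ of the local time $L^{X^x}$ to justify convergence of $\int \tilde b_n(u,z)L^{X^{x,n}}(du,dz)$ to the corresponding expression with $\tilde b$ and $L^{X^x}$. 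Exponentiating yields \eqref{malderX}; \eqref{sobderX} is obtained by the same procedure applied to the classical derivative $\partial_x X^{x,n}_t$ instead of the Malliavin derivative.

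\textbf{Step 4 (Transfer to $Y$).} Since the decoupling field $v$ is Lipschitz in $x$ on $[0,T-\delta]\times\mathbb{R}$ and $Y_t^x=v(t,X_t^x)$, the chain rule for Malliavin derivatives under Lipschitz composition (Proposition stated in Section \ref{preli}) immediately yields $Y_t^x\in\mathbb{D}^{1,p}$ with $D_sY_t^x=v'(t,X_t^x)D_sX_t^x$, giving \eqref{malderY}. The representation \eqref{sobderY} follows identically, using the Sobolev chain rule of \cite{LeoniMoreni05} as in the proof of Proposition \ref{th4.4}. This completes the sketch.
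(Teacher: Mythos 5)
Your proposal is correct and follows essentially the same route as the paper: the paper simply delegates Step 2 to the compactness argument of \cite{MMNPZ13} (as in \cite[Theorem 4.4]{RhOlivOuk}), Step 3 to the known local-time representation of \cite{BMBPD17, MeTa19}, and Step 4 to the Lipschitz chain rule \cite[Proposition 1.2.4]{Nua06}, which are precisely the mollification--compactness, It\^o--Tanaka--Eisenbaum limit, and chain-rule arguments you sketch. The only cosmetic remark is that your references to \eqref{sobderX}--\eqref{sobderY} belong to Proposition \ref{th4.4} rather than to the statement at hand.
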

		\begin{proof} 
			The proof follows as in \cite[Theorem 4.4]{RhOlivOuk} by using the result of Malliavin differentiability of SDEs with irregular drift from \cite{MMNPZ13}. The explicit representation \eqref{malderX} is well known (see for example \cite{BMBPD17, MeTa19}), while \eqref{malderY} follows by applying the chain rule for Malliavin calculus (see \cite[Proposition 1.2.4]{Nua06}) by noticing that $v$ is Lipschitz continuous for all $t< T$ (see \eqref{3.6}). This ends the proof.
		\end{proof}
		
		\begin{remark}\label{repr1}
			The following relation between the Sobolev and the Malliavin derivatives of the forward and backward components hold respectively $\mathrm{d}t\otimes\mathbb{P}\text{-a.s.}$
			\begin{align*}
				D_s X_t^x &= \xi_t^x (\xi_s^x)^{-1} ,\\
				D_s Y_t^x &= \eta_t^x (\xi_s^x)^{-1}
			\end{align*} 
			for any $s,t \in [0,T), s\leq t.$
		\end{remark}
		\subsubsection{Density analysis of FBSDEs with discontinuous drift case} The main objective in this section is to conduct the analysis of densities obtained in the preceding section by assuming now 
		 
		\begin{assum}\label{assum2} \leavevmode
			\begin{itemize}
				\item[{\bf(A9)}] Assumption \ref{assum1} is valid in one dimension $(d=1)$ with $\sigma\equiv 1$ and $b$ bounded in the {forward and the} control variable i.e., $\forall (t,x,y,z)\in [0,T]\times\mathbb{R}\times \mathbb{R}\times \mathbb{R}, |b(t,x,y,z)|\leq \Lambda (1+|y|)$.
				\item[{\bf(A10)}] Assumptions {\bf(A4)} and {\bf(A5)} hold.
				
			\end{itemize}
		\end{assum}
		\begin{thm}[Existence of a Density for $X_t$]\label{densX}
			Under {\bf(A9)}, the finite dimensional laws of the strong solution to the forward SDE \eqref{eqfor1} are absolutely continuous with respect to the Lebesgue measure. {Precisely} for fixed $t\in (0,T],$ the law of $X_t^x$ denoted by $\mathcal{L}(X_t^x)$ has a density with respect to the Lebesgue measure. 
		\end{thm}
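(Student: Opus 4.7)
The plan is to apply the Bouleau--Hirsch criterion (Theorem \ref{criterion}) after exploiting the explicit Malliavin derivative already provided by Proposition \ref{th5.1}. Under Assumption {\bf(A9)}, the boundedness of $v$ together with the growth condition on $b$ ensures that the transformed drift $\tilde b(t,x)=b(t,x,v(t,x),\nabla_x v(t,x))$ is bounded on $[0,T-\delta]\times\mathbb{R}$ for every $\delta>0$, so that the SDE \eqref{eqfor1} enters the framework of \cite{MMNPZ13,BMBPD17,MeTa19} and admits a unique strong Malliavin differentiable solution $X^x$ whose Malliavin derivative is given, for $0<s\leq t<T$, by
\[
D_s X_t^x=\exp\!\Big(-\int_s^{t}\!\int_{\mathbb{R}}\tilde b(u,z)L^{X^{x}}(\mathrm{d}u,\mathrm{d}z)\Big).
\]

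First, I would observe that the space--time integral with respect to the local time $L^{X^{x}}$ is well defined and real-valued when $\tilde b$ is bounded and measurable, so the exponential above is strictly positive $\mathbb{P}$-a.s.\ for every $0<s\leq t$. Consequently
\[
\|DX_t^x\|_{\mathcal{H}}^{2}=\int_0^{t}|D_sX_t^x|^{2}\mathrm{d}s>0\quad \mathbb{P}\text{-a.s.},
\]
and Theorem \ref{criterion} immediately yields that the marginal law $\mathcal{L}(X_t^x)$ is absolutely continuous with respect to the Lebesgue measure on $\mathbb{R}$ for every $t\in(0,T]$. (The case $t=T$ is obtained by a standard approximation argument using $t_n=T-1/n$ together with the continuity of $s\mapsto X_s^x$.)

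For the finite dimensional laws at times $0<t_1<\cdots<t_n\leq T$, I would use the multidimensional version of the Bouleau--Hirsch criterion, which requires $\mathbb{P}$-a.s.\ invertibility of the Malliavin covariance matrix $\Gamma_{ij}=\int_0^{T}D_sX_{t_i}^x\,D_sX_{t_j}^x\mathrm{d}s$. Setting $\xi_t:=\exp(-\int_0^{t}\!\int_{\mathbb{R}}\tilde b(u,z)L^{X^{x}}(\mathrm{d}u,\mathrm{d}z))$, the flow identity $D_sX_t^x=\xi_t\,\xi_s^{-1}\mathbf{1}_{\{s\leq t\}}$ of Remark \ref{repr1} gives
\[
\Gamma_{ij}=\xi_{t_i}\xi_{t_j}\int_0^{t_i\wedge t_j}\xi_s^{-2}\,\mathrm{d}s,
\]
so a direct computation (row--reducing with the times in increasing order) yields
\[
\det\Gamma=\Big(\prod_{i=1}^{n}\xi_{t_i}^{2}\Big)\prod_{i=1}^{n}\int_{t_{i-1}}^{t_i}\xi_s^{-2}\,\mathrm{d}s,\qquad t_0:=0,
\]
which is strictly positive $\mathbb{P}$-a.s.\ since each factor is. This gives the invertibility of $\Gamma$ and thus absolute continuity of $(X_{t_1}^x,\ldots,X_{t_n}^x)$.

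The only delicate point is the justification that the integration with respect to the space--time local time of $X^x$ is meaningful and that the exponential representation is indeed a version of the Malliavin derivative; this is precisely the content of Proposition \ref{th5.1} (following \cite{BMBPD17,MeTa19}), so the argument reduces to the elementary positivity observation above. Hence no substantial extra work is required beyond invoking the explicit formulae already at our disposal.
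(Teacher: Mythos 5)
Your overall route is the same as the paper's: take the explicit local-time exponential representation \eqref{malderX} of $D_sX_t^x$, argue it is strictly positive, and conclude by Bouleau--Hirsch (Theorem \ref{criterion}). The gap is exactly at the step you dismiss as ``an elementary positivity observation.'' Strict positivity of $\exp\big(-\int_s^t\int_{\mathbb{R}}\tilde b(u,z)L^{X^x}(\mathrm{d}u,\mathrm{d}z)\big)$ requires the space--time local-time integral to be almost surely finite; Proposition \ref{th5.1} only asserts that the exponential is \emph{a version of} $D_sX_t^x\in L^2$, and an $L^2$ random variable is perfectly compatible with the exponent being $-\infty$ (hence the derivative being $0$) on a set of positive probability, which is precisely the scenario that would defeat the Bouleau--Hirsch criterion. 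The paper's entire proof consists of closing this gap: one transfers to the driftless case by Girsanov's theorem, uses the decomposition \eqref{eqslocalt1} of the local-time integral for the Brownian motion into stochastic-integral and time-reversal terms, and invokes the exponential moment bound of Lemma \ref{lemmaexpoloc} to conclude that the integral is a.s.\ finite (in fact has exponential moments). Without some version of this argument your proof does not go through, so ``no substantial extra work is required'' is not accurate here.

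Two smaller remarks. First, your treatment of $t=T$ by approximating with $t_n=T-1/n$ is not a valid argument: absolute continuity of laws is not preserved under weak limits. It is also unnecessary, since under {\bf(A9)} the drift $b$ is bounded in the control variable, so $\tilde b(t,x)=b(t,x,v(t,x),\nabla_xv(t,x))$ is uniformly bounded on all of $[0,T]\times\mathbb{R}$ (using the uniform bound \eqref{3.4} on $v$) and the representation holds up to $T$ directly. Second, your computation of the Malliavin covariance matrix for the finite-dimensional laws, $\det\Gamma=\big(\prod_i\xi_{t_i}^2\big)\prod_i\int_{t_{i-1}}^{t_i}\xi_s^{-2}\mathrm{d}s$, is correct and is a genuine addition: the paper states the finite-dimensional claim but its proof only addresses the one-dimensional marginals, so this part of your argument usefully supplements it --- provided, again, that the a.s.\ finiteness of the exponent has first been established.
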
	
		\begin{proof}
			This follows by showing that  the exponential term in equation \eqref{malderX} is finite almost surely. The latter is obtained by using Girsanov theorem and Lemma \ref{lemmaexpoloc}. This ends the proof.
		\end{proof}

		\subsubsection*{Existence of a Density for the backward component $Y_t$} 
		We first derive the linear BSDE satisfied by a version $(D_sY_t^x,D_sZ_t^x)$ of the Malliavin derivatives of the solution to \eqref{540}.
		
		\begin{prop}\label{prop 3.4}
			Under Assumption \ref{assum2}, a version of $(D_sY_t,D_sZ_t)$ satisfies 
			\begin{align}\label{MalY}
				D_sY_t &= 0, \quad D_sY_t = 0, \quad t< s\leq T,\notag\\
				D_sY_t &= \phi'(X_T)D_sX_T+ \int_{t}^{T} \langle \nabla f(r,\bold{X}_r),D_s\bold{X}_r   \rangle\mathrm{d}r - \int_{t}^{T} D_sZ_r\mathrm{d}W_r,
			\end{align}
			where $D_sX_t$ is given by \eqref{malderX}.
			Moreover, $(D_tY_t)_{0\leq t\leq T}$ is a continuous version of $(Z_t)_{0\leq t\leq T}$. 
		\end{prop}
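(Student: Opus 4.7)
The plan is to establish \eqref{MalY} by a standard regularization-and-pass-to-the-limit argument, combined with the Feynman--Kac relation already established. First I would mollify the coefficients $(b,f,\phi)$ to smooth approximants $(b_n,f_n,\phi_n)$ preserving Assumption~\ref{assum2} uniformly in $n$, and denote by $(X^n,Y^n,Z^n)$ the (now classical) strong solution of the resulting FBSDE, which is smooth and Malliavin differentiable by the argument in Step~1 of the proof of Theorem~\ref{th3.3}. Since $f_n$ inherits the quadratic structure of $f$, standard Malliavin calculus for quadratic BSDEs with smooth and bounded coefficients (cf.~\cite{ImkRhossOliv,ImkDos}) gives that $(D_s Y^n_t, D_s Z^n_t)$ satisfies the linear BSDE
\begin{equation*}
D_s Y^n_t = \phi_n'(X^n_T)\,D_s X^n_T + \int_t^T \langle \nabla f_n(r,\mathbf{X}^n_r), D_s \mathbf{X}^n_r\rangle\,\mathrm{d}r -\int_t^T D_s Z^n_r\,\mathrm{d}W_r,\qquad 0\leq s\leq t<T.
\end{equation*}

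The second step is the passage to the limit. The construction of the weak decoupling field in the proof of Theorem~\ref{th3.3} yields $(X^n,Y^n,Z^n)\to (X,Y,Z)$ in $\mathcal{S}^p\times\mathcal{S}^{\infty}\times\mathcal{H}_{\mathrm{BMO}}$; meanwhile, the explicit local-time representation \eqref{malderX} together with Girsanov's theorem (as in the proofs of Proposition~\ref{th5.1} and in \cite{RhOlivOuk,MMNPZ13}) produces $D_s X^n\to D_s X$ in $L^p(\Omega;L^2([0,T]))$ for every $p\geq 1$. These convergences, combined with the $L^p$-stability for linear BSDEs with stochastic Lipschitz drivers (as in \cite[Lemma~3.8]{ImkRhossOliv}), identify the $L^2$-limit of $(D_sY^n,D_sZ^n)$ with the required pair satisfying \eqref{MalY}. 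Convergence of the terminal term $\phi_n'(X^n_T)D_sX^n_T\to \phi'(X_T)D_sX_T$ uses dominated convergence, relying on assumption \textbf{(A10)} (i.e.\ \textbf{(A5)}) together with Theorem~\ref{densX}, which ensures that $X_T$ has an absolutely continuous law and thus charges no null set of non-differentiability of $\phi$.

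For the last assertion, that $(D_tY_t)_{0\leq t<T}$ is a continuous version of $(Z_t)_{0\leq t<T}$, I would use the nonlinear Feynman--Kac representation combined with \eqref{malderX}--\eqref{malderY}: evaluating \eqref{malderX} at $s=t$ gives $D_tX_t = \exp(0) = 1$, while $Z_t = v'(t,X_t)$ and \eqref{malderY} yield $D_tY_t = v'(t,X_t)\cdot D_tX_t = v'(t,X_t) = Z_t$. Continuity on $[0,T)$ follows from the continuity of $(t,x)\mapsto v'(t,x)$ established in Step~3 of the proof of Theorem~\ref{th3.3} and the continuity of the forward process $X$.

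The hard part will be controlling the convergence of the Malliavin derivatives $D_sX^n\to D_sX$: the representation \eqref{malderX} is built from an integration against the space-time local time of $X^n$, so one must show that these local-time integrals converge as $n\to\infty$ via the Krylov-type estimates for the forward process and a Girsanov change of measure (exactly the ingredient used in the proof of Proposition~\ref{th5.1}). A secondary delicate point is that the quadratic growth of $f$ in $z$ forces one to work throughout in BMO norms, and the passage to the limit in the generator $\langle \nabla f_n(r,\mathbf{X}^n_r), D_s\mathbf{X}^n_r\rangle$ must be controlled via the BMO-stability $Z^n\to Z$ combined with the uniform integrability provided by Kazamaki's criterion.
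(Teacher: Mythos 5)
Your proposal is correct in substance, but it takes a genuinely different route from the paper. The paper's own proof is essentially a two-line citation: Malliavin differentiability of $(X,Y,Z)$ is taken from Proposition \ref{th5.1} (itself resting on \cite{MMNPZ13} and the chain rule through the decoupling field), and the linear BSDE \eqref{MalY} together with the identification of $(D_tY_t)$ with $(Z_t)$ is imported wholesale from \cite[Theorem 5.3]{ImkRhossOliv}, the only point checked being that the transformed drift $\tilde b$ is uniformly bounded so that the hypotheses of that theorem apply. What you do instead is reconstruct the argument behind that citation: mollify $(b,f,\phi)$, write the linear BSDE for $(D_sY^n,D_sZ^n)$ in the smooth regime, and pass to the limit using the local-time representation of $D_sX^n$, Girsanov/Krylov estimates, and $L^p$-stability for linear BSDEs with stochastic-Lipschitz (BMO) coefficients. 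This buys a self-contained proof and makes visible exactly where assumptions {\bf(A5)} and Theorem \ref{densX} enter (the a.e.\ differentiability of $\phi$ against the absolutely continuous law of $X_T$), at the cost of having to re-establish several convergences. One point to tighten: you assert $(X^n,Y^n,Z^n)\to(X,Y,Z)$ in $\mathcal{S}^p\times\mathcal{S}^{\infty}\times\mathcal{H}_{\text{BMO}}$, but convergence of $Z^n$ in the BMO norm is neither proved in the paper nor needed; what is actually available from Theorem \ref{th3.3} is the uniform bound $\sup_n\|Z^n\|_{\mathcal{H}_{\text{BMO}}}<\infty$ in \eqref{3.5} together with $Z^n=\nabla_x v_n(\cdot,X^n)\to \nabla_x v(\cdot,X)=Z$ in $\mathcal{H}^2$ on $[0,T-\delta]$ via \eqref{holder1}, and this (through the reverse H\"older inequality) is what makes the stability lemma \cite[Lemma 3.8]{ImkRhossOliv} applicable. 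With that correction your limit passage goes through, and your derivation of the final claim ($D_tX_t=1$, hence $D_tY_t=v'(t,X_t)=Z_t$ with continuity inherited from that of $\nabla_x v$ and of $X$) is exactly the mechanism underlying the cited result.
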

		\begin{proof} The first part of the proof follows from Theorem \ref{th5.1}. As for the representation, we appeal \cite[Theorem 5.3]{ImkRhossOliv} since $\tilde{b}$ is uniformly bounded for all $t\in [0,T]$. {The proof is completed}.
		\end{proof}
		
		%

		Below is stated the main result of this subsection.
		\begin{thm}\label{densY}
			Suppose Assumption \ref{assum2} is in force. {Suppose in addition} there is $\mathcal{A}\in \mathcal{B}(\mathbb{R})$ such that $\mathbb{P}(X_T \in \mathcal{A}|\mathfrak{F}_t) > 0$ and one of the following assumptions holds 
			\begin{itemize}
				\item[(H+)] $\phi' \geq 0,$ $\phi'_{|\mathcal{A}}>0, \mathcal{L}(X_T)\text{-a.e.}$ and $\underline{f} \geq 0,$
				\item[(H--)] $\phi' \leq 0,$ $\phi'_{|\mathcal{A}}<0, \mathcal{L}(X_T)\text{-a.e.}$ and $\overline{f} \leq 0.$
			\end{itemize}
			Then, $Y_t$ possesses an absolute continuous law with respect to the Lebesgue measure on $\mathbb{R}$.
		\end{thm}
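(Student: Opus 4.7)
The plan is to follow the same Bouleau--Hirsch strategy used in Theorem \ref{th5.9}, but now exploiting the simpler structure given by $\sigma \equiv 1$ together with the explicit representations \eqref{malderX} and \eqref{malderY} and the identity $D_sX_r = \xi_r(\xi_s)^{-1}$ from Remark \ref{repr1}. The essential goal is to show that $\|DY_t\|_{\mathcal{H}}^2 = \int_0^t (D_sY_t)^2\,\mathrm{d}s > 0$ $\mathbb{P}$-a.s.\ under either (H+) or (H$-$). By symmetry I will only treat (H+).

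First, I would rewrite the linear BSDE \eqref{MalY} for $(D_sY,D_sZ)$ in the expanded form
$$D_sY_t = \phi'(X_T)D_sX_T + \int_t^T \bigl[f_x'(r,\bold{X}_r)D_sX_r + f_y'(r,\bold{X}_r)D_sY_r + f_z'(r,\bold{X}_r)D_sZ_r\bigr]\mathrm{d}r - \int_t^T D_sZ_r\,\mathrm{d}W_r.$$
Under Assumption \ref{assum2}, (A4) and the boundedness of $Y$ force $|f_z'(r,\bold{X}_r)| \le \Lambda(1+\ell(\|Y\|_\infty)|Z_r|)$, so the stochastic integral $\int f_z'(r,\bold{X}_r)\,\mathrm{d}W_r$ lies in $\mathcal{H}_{\mathrm{BMO}}$ and the Dol\'eans exponential $\mathcal{E}(f_z'(\cdot,\bold{X})*W)$ is uniformly integrable. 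I would then introduce the equivalent measure $\tilde{\mathbb{P}}$ with this density and the corresponding $\tilde{\mathbb{P}}$-Brownian motion $\tilde W_t = W_t - \int_0^t f_z'(r,\bold{X}_r)\mathrm{d}r$, so that the $f_z'$-term disappears. Standard linearisation with the (positive) scalar exponential $\Gamma_{t,r}:=\exp\bigl(\int_t^r f_y'(u,\bold{X}_u)\,\mathrm{d}u\bigr)$ then yields
$$D_sY_t = \mathbb{E}^{\tilde{\mathbb{P}}}\!\Bigl[\Gamma_{t,T}\phi'(X_T)D_sX_T + \int_t^T \Gamma_{t,r} f_x'(r,\bold{X}_r)D_sX_r\,\mathrm{d}r\,\Big|\,\mathfrak{F}_t\Bigr].$$

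Next I would invoke Remark \ref{repr1} together with \eqref{sobderX}: for $s\le r$ we have $D_sX_r = \xi_r(\xi_s)^{-1}$, and $\xi_r$ is the exponential of a space-time local time integral, hence strictly positive $\mathbb{P}$-a.s. Substituting and factoring out the $s$-independent $(\xi_s)^{-1}$ produces
$$D_sY_t = \mathcal{I}_t \cdot (\xi_s)^{-1},\qquad \mathcal{I}_t := \mathbb{E}^{\tilde{\mathbb{P}}}\!\Bigl[\Gamma_{t,T}\phi'(X_T)\xi_T + \int_t^T \Gamma_{t,r} f_x'(r,\bold{X}_r)\xi_r\,\mathrm{d}r\,\Big|\,\mathfrak{F}_t\Bigr],$$
so $\mathcal{I}_t$ is $\mathfrak{F}_t$-measurable. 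Under (H+), $\phi'\geq 0$, $f_x'\ge \underline{f}\geq 0$, and $\Gamma_{t,\cdot},\xi_\cdot >0$, so both summands inside $\mathcal{I}_t$ are non-negative. Strict positivity is obtained by restricting the first term to the event $\{X_T\in\mathcal{A}\}$: on a Borel set $\mathcal{A}'\subset\mathcal{A}$ of full $\mathcal{L}(X_T)$-measure we have $\phi'(X_T)>0$, and since $\mathbb{P}\sim\tilde{\mathbb{P}}$ the hypothesis $\mathbb{P}(X_T\in\mathcal{A}\,|\,\mathfrak{F}_t)>0$ transfers to $\tilde{\mathbb{P}}(X_T\in\mathcal{A}'\,|\,\mathfrak{F}_t)>0$, yielding $\mathcal{I}_t>0$ a.s.

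Since $\xi_s>0$ a.s., the decomposition $D_sY_t = \mathcal{I}_t(\xi_s)^{-1}$ shows $D_sY_t>0$ for every $s\in[0,t]$ a.s., so $\|DY_t\|_\mathcal{H}^2 = \mathcal{I}_t^2 \int_0^t (\xi_s)^{-2}\mathrm{d}s > 0$ $\mathbb{P}$-a.s., and Theorem \ref{criterion} (Bouleau--Hirsch) delivers the absolute continuity of $\mathcal{L}(Y_t)$. The hardest step is the transfer of $\mathbb{P}(X_T\in \mathcal{A}\,|\,\mathfrak{F}_t)>0$ through Girsanov and the verification that the BMO property of $f_z'$ really survives the discontinuity of $b$; this is where one must lean on Theorem \ref{densX} (which guarantees $X_T$ has a density, so sign/measure-zero issues for $\phi'$ are harmless) and on the standing boundedness of $Y$.
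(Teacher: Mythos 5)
Your strategy is exactly the one the paper uses: the BMO bound on $f_z'(\cdot,\bold{X})$, the Girsanov change of measure to kill the $f_z'$-term, linearisation with $\exp(\int f_y')$, the factorisation $D_sX_r=\xi_r(\xi_s)^{-1}$ from Remark \ref{repr1} so that $D_sY_t=\mathcal{I}_t(\xi_s)^{-1}$, sign control of $\mathcal{I}_t$ under (H+) via restriction to $\{X_T\in\mathcal{A}\}$, and Bouleau--Hirsch. So the architecture is right.

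There is, however, one genuine gap. You assert that $\xi_r$, being ``the exponential of a space--time local time integral, is strictly positive $\mathbb{P}$-a.s.'' and from this conclude $\mathcal{I}_t>0$. But an exponential $\exp(-\int_0^T\int_{\mathbb{R}}\tilde b(u,z)L^{X^x}(\mathrm{d}u,\mathrm{d}z))$ is strictly positive only if the exponent is finite, and --- because the lower bound on $\mathcal{I}_t$ is taken under the \emph{new} measure $\mathbb{Q}$ (equivalently your $\tilde{\mathbb{P}}$, composed in the paper with a second change of measure absorbing $f_y'+\xi$) --- what you actually need is
$-\int_0^T\int_{\mathbb{R}}\tilde b(u,z)L^{X^x}(\mathrm{d}u,\mathrm{d}z)<\infty$ $\mathbb{Q}$-a.s., not merely $\mathbb{P}$-a.s. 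This is where most of the paper's proof effort goes: it passes to Brownian coordinates by Girsanov, invokes the time-reversal decomposition \eqref{eqslocalt1} of the local-time integral into a forward It\^o integral, a backward It\^o integral and a drift term, and then closes the estimate with H\"older, a further Girsanov argument for the exponential moment, and the boundedness of $\tilde b$ (in the spirit of Lemma \ref{lemmaexpoloc}). Pointing to Theorem \ref{densX} does not discharge this: that theorem gives the finiteness of the exponent (hence a density for $X_T$) under $\mathbb{P}$ only, and absolute continuity of laws does not automatically transfer an a.s.\ finiteness statement formulated under an equivalent measure unless you say so explicitly --- which, to be fair, would be the one-line fix here, but it must be said, and the quantitative moment bound under $\mathbb{Q}$ is what the paper actually records. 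Everything else in your proposal matches the paper's argument.
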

		\begin{proof} From Assumption \ref{assum2}, we deduce that
			\begin{align}  \label{5.5}
				|f_z'(s,\bold{X}_s)| \leq \Lambda \big(1+ \ell(Y_s)|Z_s|\big) \in \mathcal{H}_{\text{BMO}}.
			\end{align}
			{Let $\mathbb{Q}$ be the probability measure defined by} 
			\begin{align}\label{4.5}
				\frac{\mathrm{d}\mathbb{Q}}{\mathrm{d}\mathbb{P}}\Big|_ {\mathfrak{F}_t} = \mathcal{E}\Big( f_z'(t,\bold{X}_t)* W\Big)_t=M_t.
			\end{align}
			{It follows from \eqref{5.5} that $M_t$} is uniformly integrable. Therefore, {taking the conditioning expectation with respect to $\mathbb{Q}$ on both sides of \eqref{MalY} gives for all $0\leq s \leq t < T$}
			\begin{align}
				D_s Y_t = \mathbb{E}^{\mathbb{Q}} \Big( \phi'(X_T)D_sX_T + \int_t^T \left( f_x'(r,\bold{X}_r)D_s X_r + f_y'(s,\bold{X}_r)D_s Y_r \right) \mathrm{d}r    \Big| \mathfrak{F}_t\Big)
			\end{align}
			{Using} Remark \ref{repr1} and the well known linearisation method, $D_s Y_t$ can be rewritten:
			\begin{align*}
				D_s Y_t &= \mathbb{E}^{\mathbb{Q}} \Big[ e^{\int_{t}^{T}f_y'(u,\bold{X}_u)\mathrm{d}u}\phi'(X_T)\xi_T + \int_t^T e^{\int_{t}^{r}f_y'(u,\bold{X}_u)\mathrm{d}u}  f_x'(r,\bold{X}_r)\xi_r \mathrm{d}r    \Big| \mathfrak{F}_t \Big](\xi_s)^{-1}\\
				&= \mathbb{E} \Big[ \psi_T \phi'(X_T)\xi_T + \int_t^T \psi_r f_x'(r,\bold{X}_r) \xi_r \mathrm{d}r \Big| \mathfrak{F}_t  \Big] (\psi_t)^{-1}(\xi_s)^{-1},
			\end{align*}
			where the last equality is due to the Bayes' rule with $\psi$ is given by	
			\[ \psi_t := \exp\Big(  \int_{0}^{t}(f_y'(u,\bold{X}_u)-\frac{1}{2}|f_z'(u,\bold{X}_u)|^2)\mathrm{d}u+ \int_{0}^{t}f_z'(u,\bold{X}_u)\mathrm{d}W_u   \Big),\]
			and the expression of $\xi$ is given in \eqref{sobderX}.
			Therefore, from the computations above, the Malliavin covariance $\Gamma_{Y_t}$ of $Y_t$ is given by 
			\begin{equation*}
				\Gamma_{Y_t} = \Big(\mathbb{E}\Big[ \psi_T  \phi'(X_T)\xi_T+ \int_{t}^{T}\psi_r f_x'(r,\bold{X}_r)\xi_r \mathrm{d}r \Big|\mathfrak{F}_t \Big]\Big)^2 \times (\psi_t^{-1})^2\int_{0}^{t}(\xi_s^{-1})^2\mathrm{d}s.
			\end{equation*} 
			It is enough to prove that $\mathbb{E}\Big[ \psi_T  \phi'(X_T)\xi_T+ \int_{t}^{T}\psi_r f_x'(r,\bold{X}_r)\xi_r \mathrm{d}r \big|\mathfrak{F}_t \Big]\neq 0$ under assumption (H+) to deduce the desired result. Let us remark that, the product $\psi_T \xi_T$ can be rewritten as follow:
			\begin{align*}
				\psi_T \xi_T &= \exp\Big(-\int_0^{T}\int_{\mathbb{R}}\tilde b\left(u,z\right)L^{X^{x}}(\mathrm{d}u,\mathrm{d}z) + \int_0^T f_y'(u,\bold{X}_u)\mathrm{d}u \Big)\\
				&\quad \times \exp\Big( \int_{0}^{T}f_z'(u,\bold{X}_u)\mathrm{d}W_u -\frac{1}{2} \int_{0}^{T}|f_z'(u,\bold{X}_u)|^2\mathrm{d}u  \Big) = H_T\times M_T
			\end{align*}
			{ The Bayes' rule once more yields}
			\begin{align*}
				\mathbb{E}\Big[ \psi_T  \phi'(X_T)\xi_T+ \int_{t}^{T}\psi_r f_x'(r,\bold{X}_r)\xi_r \mathrm{d}r \Big|\mathfrak{F}_t \Big]
				=\mathbb{E}^{\mathbb{Q}}\Big[ \phi'(X_T)H_T + \int_{t}^{T}f_x'(r,\bold{X}_r) H_r  \mathrm{d}r \Big|\mathfrak{F}_t \Big]M_t
			\end{align*}
			Using the fact that $\phi' + f_x' \geq \underline{\phi} + \underline{f}$, we deduce:
			\begin{align*}
				\mathbb{E}\Big[ \psi_T  \phi'(X_T)\xi_T+ \int_{t}^{T}\psi_r f_x'(r,\bold{X}_r)\xi_r \mathrm{d}r \Big|\mathfrak{F}_t \Big]\geq&  \mathbb{E}^{\mathbb{Q}}\Big[{1}_{\{X_T \in \mathcal{A}\}}\Big( \underline{\phi} H_T + \underline{f}\int_{t}^{T} H_r \mathrm{d}r  \Big)\Big|\mathfrak{F}_t \Big]M_t
			\end{align*}
			Using the bound $|f_y'(s,\bold{X}_s)|\leq \Lambda (1+ |Z_s|^{\alpha})$ {and Young inequality $|z|^{\alpha} \leq c(1+|z|^2)$, for $\alpha \in (0,1)$} we deduce
			\begin{align*}
				\mathbb{E}\Big[ \psi_T  \phi'(X_T)\xi_T + \int_{t}^{T}\psi_r f_x'(r,\bold{X}_r)\xi_r \mathrm{d}r \Big|\mathfrak{F}_t \Big]
				\geq& \mathbb{E}^{\mathbb{Q}}\Big[ {1}_{\{X_T \in \mathcal{A}\}}\Big( \underline{\phi}^{\mathcal{A}} e^{-\int_0^{T}\int_{\mathbb{R}}\tilde b\left(u,z\right)L^{X^{x}}(\mathrm{d}u,\mathrm{d}z)+ \int_{0}^{T} f_y'(u,\bold{X}_u)\mathrm{d}u} \notag\\
				&+ \underline{f}\int_t^T e^{-\int_0^{r}\int_{\mathbb{R}}\tilde b\left(u,z\right)L^{X^{x}}(\mathrm{d}u,\mathrm{d}z)}e^{\int_{0}^{r} f_y'(u,\bold{X}_u)\mathrm{d}u}\mathrm{d}r \Big) |\mathfrak{F}_t\Big]M_t\\
				\geq  &   \mathbb{E}^{\mathbb{Q}}\Big[{1}_{\{X_T \in \mathcal{A}\}}e^{-\Lambda T}\Big( \underline{\phi}^{\mathcal{A}} e^{-\int_0^{T}\int_{\mathbb{R}}\tilde b\left(u,z\right)L^{X^{x}}(\mathrm{d}u,\mathrm{d}z)}e^{-\Lambda\int_{0}^{T} |Z_u|^2\mathrm{d}u} \notag\\
				&+ \underline{f} \int_t^T e^{-\int_0^{r}\int_{\mathbb{R}}\tilde b\left(u,z\right)L^{X^{x}}(\mathrm{d}u,\mathrm{d}z)}e^{-\Lambda\int_{0}^{T}|Z_u|^2\mathrm{d}u}\Big) |\mathfrak{F}_t\Big]M_t.
			\end{align*} 
			Provided that the law of $X_T$ is absolutely continuous with respect to the Lebesgue measure, and that $-\int_0^{T}\int_{\mathbb{R}}\tilde b\left(u,z\right)L^{X^{x}}(\mathrm{d}u,\mathrm{d}z)<\infty,\,\,\mathbb{Q}$-a.s.,  we deduce that $\mathbb{E}\Big( \psi_T  \phi'(X_T)\xi_T+ \int_{t}^{T}\psi_r f_x'(r,\bold{X}_r)\xi_r \mathrm{d}r \Big|\mathfrak{F}_t \Big) >0$. {The absolute continuity of the law of with respect to the Lebesgue measure follows from assumption. It then remains to show that:}
			$$
			-\int_0^{T}\int_{\mathbb{R}}\tilde b\left(u,z\right)L^{X^{x}}(\mathrm{d}u,\mathrm{d}z)<\infty,\,\,\mathbb{Q}\text{-a.s.}
			$$
			Thanks to Girsanov's theorem, the decomposition \eqref{eqslocalt1} and H\"older inequality we have
			\begin{align}\label{eqfor3}
				&	\mathbb{E}^{\mathbb{Q}}\Big[-\int_0^{T}\int_{\mathbb{R}}\tilde b\left(s,z\right)L^{X^{x}}(\mathrm{d}s,\mathrm{d}z)\Big]\notag\\
				=&\mathbb{E}\Big[-M_T\int_0^{T}\int_{\mathbb{R}}\tilde b\left(s,z\right)L^{X^{x}}(\mathrm{d}s,\mathrm{d}z)\Big]\leq\mathbb{E}\Big[M_T^2\Big]^{\frac{1}{2}}\mathbb{E}\Big[\Big(-\int_0^{T}\int_{\mathbb{R}}\tilde b\left(s,z\right)L^{X^{x}}(\mathrm{d}s,\mathrm{d}z)\Big)^2\Big]^{\frac{1}{2}}\notag\\
				\leq & C\mathbb{E}\Big[e^{ 2\int_{0}^{T}  \tilde b(s,W_s^x)\mathrm{d}W_s - \int_{0}^{T} (\tilde b(s,W_s^x))^2\mathrm{d}s }\Big(-\int_0^{T}\int_{\mathbb{R}}\tilde b\left(s,z\right)L^{W^{x}}(\mathrm{d}s,\mathrm{d}z)\Big)^2\Big]^{\frac{1}{2}}\notag\\
				\leq &C \mathbb{E}\Big[e^{ 2\int_{0}^{T}  \tilde b(s,W_s^x)\mathrm{d}W_s - \int_{0}^{T} (\tilde b(s,W_s^x))^2\mathrm{d}s }\Big(-
				\int_0^T \tilde b(s,W^{x}_s)\diffns W_s\notag\\
				&-\int_{0}^T \tilde b (T-s,\widehat{W}^{x}_s)\diffns B(s)+\int_{0}^T \tilde b (T-s,\widehat{W}^{x}_s)\frac{\widehat{W}_s}{T-s}\diffns s
				\Big)^2\Big]^{\frac{1}{2}}\notag\\
				\leq & C \mathbb{E}\Big[e^{ 4\int_{0}^{t}  \tilde b(s,W_s^x)\mathrm{d}W_s -2 \int_{0}^{t} (\tilde b(s,W_s^x))^2\mathrm{d}s }\Big]^{1/4}\Big\{\mathbb{E}\Big[\Big(
				\int_0^T \tilde b (s,W^{x}_s)\diffns W_s\Big)^4\Big]^{1/4}\notag\\
				&+\mathbb{E}\Big[\Big(\int_{0}^T \tilde b (T-s,\widehat{W}^{x}_s)\diffns B_s\Big)^4\Big]^{1/4}+\mathbb{E}\Big[\Big(\int_{0}^T \tilde b (T-s,\widehat{W}^{x}_s)\frac{\widehat{W}_s}{T-s}\diffns s
				\Big)^4\Big]^{1/4}\Big\}\notag\\
				\leq &C I_1(I_2+I_3+I_4).
			\end{align}

				Using Cauchy-Schwartz inequality, we have that 
				$$
				I_1\leq 
				\mathbb{E}\Big[e^{ 4\int_{0}^{t} \tilde  b(s,W_s^x)\mathrm{d}W_s -8 \int_{0}^{t} (\tilde b(s,W_s^x))^2\mathrm{d}s }\Big]^{1/8} \mathbb{E}\Big[e^{ 6 \int_{0}^{t} (\tilde b(s,W_s^x))^2\mathrm{d}s }\Big]^{1/8}.
				$$
				Applying Girsanov theorem to the martingale $4\int_{0}^{t}  \tilde b(s,W_s^x)\mathrm{d}W_s$, the first term in the bound above is equal to $1$. The boundedness of $\tilde{b}$ yields the boundedness of the second term.
				
				Using It\^o isometry (or Burkholder-Davis-Gundy), the boundedness of $\tilde b$ implies the boundedness of $I_2$, $I_3$ and $I_4$ (see for e.g. \cite{BMBPD17}). 
				The proof is completed. 
			\end{proof}
					%
			{\begin{remark}
					Another interpretation of the result in Theorem \ref{densY} is that the spatial derivative {$v'$ of $v$} (solution to quasi-linear PDE \eqref{eqmainP1}) is sucht that $v'(t,x) > 0 \text{ a.e.}$ (or $v'(t,x) < 0 \text{ a.e.}$). This follows from the fact that $D_tY_t$ is a continuous version of $Z_t = v'(t,x)$ for all $t\in (0,T)$ (see \cite[Remark 3.4]{F.Antonelli}).
					{We emphasize that the solution $v$ here is understood in the weak sense. In other words, we obtain conditions that guarantee the strict monotonicity of weak solutions of parabolic PDE by means of purely probabilistic tools. To our knowledge, such investigation has always been performed only for classical solution of parabolic or elliptic PDEs.}

					%
					
			\end{remark}} 
			\begin{remark}	
				We underline that the results obtained in this paper cover the case of Lipschitz continuous drivers with discontinuous drift. Hence, following the scheme employed above, one can prove for instance that Theorem 3.1 and Theorem 3.2 in \cite{Mastrolia} remain valid when $\sigma= 1$ and the drift is only bounded and measurable in $x$.
			\end{remark}	
			\section{Some applications}\label{section7}	The main aim in this section is to provide some applications of the results obtained in the previous sections of the paper.  We will first study the regularity of densities of solution to coupled FBSDEs with quadratic drivers under minimal conditions. 
			In particular, we exploit Theorem \ref{secondMal} to prove that the densities of  $X$, $Y$ and $Z$, are indeed H\"{o}lder continuous despite the non-differentiability of the drift coefficient. By considering a practical example from \cite{ChenMaYin17}, we will illustrate that the method derive here on existence and smoothness of densities, is also effective in that framework with the specificity of the discontinuity of the drift. 
			\subsection{Smoothness of densities}
			{We consider the following FBSDE
				\begin{equation}\label{aa}
					\begin{cases}
						X_t = x + \displaystyle \int_{0}^{t} b(s,X_s,Y_s,Z_s)\mathrm{d}s + W_t ,\\
						Y_t = \phi(X_T)+ \displaystyle \int_{t}^{T} f(s,X_s,Y_s,Z_s)\mathrm{d}s - \int_{t}^{T} Z_s\mathrm{d}W_s.
					\end{cases}
				\end{equation}
				Recall that under assumption {\bf (A6)} and if $\phi'\geq 0$, $\underline{f}\geq 0$ (resp. $\phi'\leq 0$ and $\underline{f}\leq 0$)  then for all $0< s\leq t <T,$ $D_sY_t \geq 0$ (resp. $D_sY_t \leq 0$ ). Moreover, if there is a Borel set $\mathcal{A}$ such that $\mathbb{P}(X_T\in \mathcal{A}/\mathfrak{F}_t)>0$ and  $\phi'_{|\mathcal{A}}>0$ (resp. $\phi'_{|\mathcal{A}}<0$) then $D_sY_t > 0$ (resp. $D_sY_t< 0$ ). In addition, if {\bf (A7)} and {\bf (A8)} are also valid, then the processes $X,Y$ and $Z$ solutions to \eqref{aa} are twice Malliavin differentiable.}
			
			{Let $(G_k)_{0\leq k\leq n}$ be a random variable such that $G_0= 1$ and for all $0\leq s\leq t \leq T$
				\[ G_k = \delta\Big( G_{k-1}\Big(\int_0^T D_sX_t\mathrm{d}s\Big)^{-1}\Big), \text{ if } 1\leq k \leq n+1, \]
				with $\delta$ denoting the Skorohod integral introduced in \eqref{Skoro}.
				From \cite[P.115]{Nua06}, if $X_t\in \mathbb{D}^{1,2}(\Omega)$, $\int_0^T D_sX_t\mathrm{d}s \neq 0$ $\mathbb{P}\text{-a.s.}$ and $G_{k}\Big(\int_0^T D_sX_t\mathrm{d}s\Big)^{-1}$ belongs to $\text{Dom}(\delta)$ then the probability density $\rho_{X_t}$ of the forward equation $X_t$ is of class $C^{n}(\mathbb{R})$ and \begin{align}
					\frac{d^n}{dx^n}\rho_{X_t}(x)= (-1)^n\mathbb{E}\left[1_{\{X_t >x\}}G_{n+1}\right]
			\end{align}}
			
			{In the above context, the Malliavin derivative of $X_t$ is explicitly given by
				\[ D_sX_t = \exp\Big( \int_s^t \tilde b'(u,X_u)\mathrm{d}u \Big), \text{ for all } 0\leq s \leq t,\]
				where $\tilde b(t,x) = b(t,x,v(t,x),v'(t,x))$ and $\tilde b'$ denotes the weak derivative of $\tilde b$ with respect to the variable $x$. Hence for any $t \in (0,T]$ there is $\epsilon > 0$ such that $\int_0^T D_sX_t \mathrm{d}s \geq \epsilon >0.$ Using the smoothness of the function $y\mapsto y^{-1}$ on $(\epsilon,\infty)$ and the fact that $X_t \in \mathbb{D}^{2,2}(\Omega)$ we deduce that $F:= \Big(\int_0^T D_sX_t\mathrm{d}s\Big)^{-1} \in \mathbb{D}^{1,2}(\Omega)$, so $F \in \text{Dom}(\delta)$ and $G_1 = FW(T) - \int_0^T D_sF\mathrm{d}s$ which implies that $G_1 \in \mathbb{D}^{1,2}(\Omega)$, then $G_1F \in \mathbb{D}^{1,2}(\Omega)$ and $G_1F \in \text{Dom}(\delta)$.}
			
			{Similarly, let $(H_k)_{0\leq k\leq n}$ be a random variable such that $H_0= 1$ and for all $0\leq s\leq t \leq T$
				\[ H_k = \delta\Big( H_{k-1}\Big(\int_0^T D_sY_t\mathrm{d}s\Big)^{-1}\Big), \text{ if } 1\leq k \leq n+1. \]
				Since for all $0\leq s\leq t$, $D_sY_t > 0$ $\mathbb{P}\text{-a.s.}$, we deduce that, there is a constant $\epsilon_1>0$ such that $\int_0^T D_sY_t\mathrm{d}s \geq \epsilon_1$. Using now the smoothness of the function $y\mapsto y^{-1}$ on $(\epsilon_1,\infty)$ and the fact that $Y_t \in \mathbb{D}^{2,2}(\Omega)$ we deduce that $F_1:= \Big(\int_0^T D_sY_t\mathrm{d}s\Big)^{-1} \in \mathbb{D}^{1,2}(\Omega)$ so $H_1F_1 \in \text{Dom}(\delta)$.}
			
			{We have shown that 
				\begin{prop}
					Under {\bf(A6)},{\bf(A7)},{\bf(A8)} and assuming further that either (A+) or (A-) is valid. Then, the densities $\rho_{X_t}$ and $\rho_{Y_t}$ of the processes $X_t$ and $Y_t$ solution to \eqref{aa}, respectively belong to $C^0(\mathbb{R})$ such that 
					\begin{align*}
						\rho_{X_t}(x) &= \mathbb{E}\left( 1_{\{X_t >x\}} G_1  \right)\\
						\rho_{Y_t}(x) &= \mathbb{E}\left( 1_{\{Y_t >x\}} H_1  \right).
					\end{align*}
					Moreover, if  the assumptions of Theorem \ref{densZ} are valid then for all $0\leq s\leq t$, $D_sZ_t >0$ $\mathbb{P}\text{-a.s.}$ and the density $\rho_{Z_t}$ of the solution process $Z$ to \eqref{aa} belongs to $C^0(\mathbb{R})$ and 
					\begin{align*}
						\rho_{Z_t}(x) = \mathbb{E}\left( 1_{\{Z_t >x\}} I_1  \right),
					\end{align*}
					where $I_1 =\delta\Big( \Big(\int_0^T D_sZ_t\mathrm{d}s\Big)^{-1}\Big)$.
			\end{prop}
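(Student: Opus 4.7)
The plan is to apply the standard Nualart representation [Nua06, p.~115], which states that if $F \in \mathbb{D}^{1,2}$, if $\int_0^T D_s F\,\mathrm{d}s \neq 0$ $\mathbb{P}$-a.s., and if $(\int_0^T D_s F\,\mathrm{d}s)^{-1}$ belongs to $\mathrm{Dom}(\delta)$, then $F$ admits a continuous density with representation $\rho_F(x) = \mathbb{E}[\mathbf{1}_{\{F > x\}}\, \delta((\int_0^T D_s F\,\mathrm{d}s)^{-1})]$. The three parts of the proposition thus reduce to verifying these hypotheses with $F = X_t, Y_t, Z_t$ respectively. The continuity of $\rho_F$ then follows from dominated convergence once the indicator collapses on a Lebesgue-null set, using the integrability of $G_1$ (resp. $H_1$, $I_1$) in $L^2(\Omega)$.

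For the forward component $X_t$ the verification is essentially already done in the discussion preceding the proposition: by Theorem~\ref{secondMal} we have $X_t \in \mathbb{D}^{2,2}$, while the explicit representation $D_sX_t = \exp(\int_s^t \tilde b'(u,X_u)\,\mathrm{d}u)$ combined with the boundedness of $\tilde b'$ (assumption \textbf{(A6)} together with Lemma~\ref{bounds}) yields a deterministic lower bound $D_sX_t \ge \epsilon > 0$. Composition with the smooth map $y \mapsto y^{-1}$ on $(\epsilon,\infty)$ preserves $\mathbb{D}^{1,2}$-membership, so $F := (\int_0^T D_sX_t\,\mathrm{d}s)^{-1} \in \mathbb{D}^{1,2} \subset \mathrm{Dom}(\delta)$ and $G_1 = \delta(F)$ is well defined and in $L^2(\Omega)$.

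For the backward component $Y_t$, the chain-rule identity $D_sY_t = v'(t,X_t)\,D_sX_t$ together with Theorem~\ref{prop 4.10}(ii) --- which under either (A+) or (A--) produces a strictly positive (resp. strictly negative) function $\alpha(t)$ with $|v'(t,X_t)| \geq \alpha(t)$ $\mathbb{P}$-a.s. --- gives a uniform (in $s$) a.s.\ lower bound $|D_sY_t| \geq \alpha(t)\lambda\Lambda^{-1}e^{-Kt} > 0$. Hence $|\int_0^T D_sY_t\,\mathrm{d}s| \geq \epsilon_1 > 0$ $\mathbb{P}$-a.s., and since $Y_t \in \mathbb{D}^{2,2}$ (again Theorem~\ref{secondMal}), the same smoothing-of-reciprocal argument yields $(\int_0^T D_sY_t\,\mathrm{d}s)^{-1} \in \mathbb{D}^{1,2} \subset \mathrm{Dom}(\delta)$, defining $H_1$. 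The formula for $\rho_{Y_t}$ then follows. For $Z_t$ we use the assumptions of Theorem~\ref{densZ}, which give strict positivity $D_sZ_t > 0$ $\mathbb{P}$-a.s., supplemented by the uniform bound $D_sZ_t \geq \omega(t) e^{-Kt}$ extracted in Corollary~\ref{cor520}. Together with $Z_t \in \mathbb{D}^{2,2}$ this enables the identical chain of implications to define $I_1 = \delta((\int_0^T D_sZ_t\,\mathrm{d}s)^{-1})$.

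The main technical obstacle is precisely this passage from a qualitative ``$D_s F > 0$ a.s.'' to the quantitative ``$D_s F$ is uniformly bounded below in $s$'', which is what forces us to rely on the refined Gaussian-type lower bounds obtained in Theorem~\ref{prop 4.10} and Corollary~\ref{cor520} rather than on the bare Bouleau--Hirsch criterion; without a lower bound on $\int_0^T D_s F\,\mathrm{d}s$ uniform in $\omega$, one cannot conclude that the reciprocal is bounded, let alone Malliavin differentiable, and the Skorokhod integrability of the integrand in $G_1$, $H_1$, $I_1$ would fail. Once the bounds are in place, the remainder is routine: the chain rule for Malliavin calculus applied to $y \mapsto y^{-1}$ on a bounded domain gives the required $\mathbb{D}^{1,2}$-regularity, the duality formula gives $L^2$ control of the Skorokhod integral, and dominated convergence produces the continuity of the three density functions.
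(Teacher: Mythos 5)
Your proposal follows essentially the same route as the paper: both verify the hypotheses of the Nualart representation $\rho_F(x)=\mathbb{E}[\mathbf{1}_{\{F>x\}}\delta((\int_0^T D_sF\,\mathrm{d}s)^{-1})]$ for $F=X_t,Y_t,Z_t$ by combining the explicit formula $D_sX_t=\exp(\int_s^t\tilde b'(u,X_u)\mathrm{d}u)$ with the boundedness of $\tilde b'$, the a.s.\ lower bounds on $|v'(t,X_t)|$ and $|v''(t,X_t)|$ from Theorem \ref{prop 4.10} and Corollary \ref{cor520}, and the second-order Malliavin regularity of Theorem \ref{secondMal} to place the reciprocal in $\mathbb{D}^{1,2}\subset\mathrm{Dom}(\delta)$. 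The only cosmetic difference is that you re-derive continuity of the density by dominated convergence, whereas the paper takes it directly from the cited criterion; the substance is identical.
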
}
			
			{The following result improves the regularity of the densities of solutions $X,Y$ and $Z$ to the system \eqref{aa} obtained above. The method consists to investigate the integrability property of the Malliavin covariance matrix of the different processes (see Lemma \ref{lem73} below).} 
			
			{We recall that for any random variable $F$, the matrix $\Gamma_{F}=(\Gamma_{F}^{ij})_{i,j=1,\ldots,d}$,  given by 
				$\Gamma_{F}^{ij}:= \langle DF^{i}, DF^{j}\rangle_{\mathcal{H}^2} $ 
				is said to be non-degenerate if for all $p\geq 1$
				\begin{equation}\label{nondeg}
					\mathbb{E}\Big[\left(\Gamma_{F}\right)^{-p}\Big] < \infty.  
			\end{equation}}
			
			{\begin{lemm}[Proposition 23 in \cite{BallyCaramellino11}]\label{lem73}
					Let $ F=(F^1,\cdots,F^d)$ with $F^1,\cdots,F^d\in \bigcap_{p\geq 1}\mathbb{D}^{k+1,p}$, such that $\Gamma_F$ satisfies \eqref{nondeg}. 
					Then $\rho_{F} \in C^{k-1,\beta}(\mathbb{R})$ for some $k \geq 1$ and $\beta \in (0,1)$ i.e. $\rho_F$ is $k-1\text{-times}$ differentiable with H\"{o}lder continuous derivatives of exponent $\beta <1$.
				\end{lemm}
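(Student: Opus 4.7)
The proof plan is to establish smoothness of $\rho_F$ via iterated Malliavin integration by parts, following the classical scheme of Bally--Caramellino. First I would construct, by induction on the multi-index length, the Malliavin weights $H_\alpha(F,1)$ such that for every multi-index $\alpha$ with $|\alpha|\leq k$ and every smooth compactly supported test function $g$, the integration by parts identity
\begin{equation*}
\mathbb{E}[(\partial^\alpha g)(F)] = \mathbb{E}[g(F)\, H_\alpha(F,1)]
\end{equation*}
holds. The weight $H_\alpha$ is built recursively as a Skorohod divergence of an expression involving $DF$, the inverse Malliavin covariance $\Gamma_F^{-1}$, and derivatives of order up to $|\alpha|+1$ of $F$ in the Malliavin sense. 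Because $F^i \in \bigcap_{p\geq 1} \mathbb{D}^{k+1,p}$ and the non-degeneracy assumption \eqref{nondeg} gives $(\det \Gamma_F)^{-1} \in \bigcap_{p\geq 1} L^p(\Omega)$, a repeated application of Meyer's inequalities and H\"older's inequality yields $H_\alpha(F,1) \in \bigcap_{p\geq 1} L^p(\Omega)$ for every $|\alpha|\leq k$.

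Next I would specialise the IBP identity to $g = \mathbf{1}_{\{y > x\}}$ (justified by a standard approximation through mollified indicators). Taking $|\alpha|=0$ first recovers the representation
\begin{equation*}
\rho_F(x) = \mathbb{E}\bigl[\mathbf{1}_{\{F>x\}} H_{(1,\ldots,d)}(F,1)\bigr],
\end{equation*}
and iterating with $|\alpha|=j$ for $1\leq j\leq k-1$ yields a similar expression for $\partial^j \rho_F(x)$ as the expectation of an indicator times an $L^p$ Malliavin weight. Continuity of $\partial^j\rho_F$ for $j\leq k-1$ then follows by dominated convergence.

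To upgrade the top derivative $\partial^{k-1}\rho_F$ to H\"older continuity of exponent $\beta$, I would use the representation with $|\alpha|=k-1$ together with one extra IBP: writing, for $x<y$,
\begin{equation*}
\partial^{k-1}\rho_F(y) - \partial^{k-1}\rho_F(x) = \pm \,\mathbb{E}\bigl[\mathbf{1}_{\{x<F\leq y\}}\, H_{\alpha'}(F,1)\bigr],
\end{equation*}
and applying H\"older with an exponent $q>1$ together with the already-obtained bound $\rho_F\in L^\infty$ (coming from the $|\alpha|=0$ representation and $H_{(1,\ldots,d)}\in \bigcap_p L^p$) to estimate $\mathbb{P}(x<F\leq y)\lesssim |y-x|$. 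Interpolating this $L^1$-type bound against the $L^q$-bound $\|H_{\alpha'}(F,1)\|_q<\infty$ yields the modulus $|y-x|^\beta$ with $\beta=1-1/q\in(0,1)$. Alternatively, in the one-dimensional situation that is ultimately used in the paper, one can invoke directly the Sobolev--Morrey embedding $W^{k,p}_{\mathrm{loc}}(\mathbb{R})\hookrightarrow C^{k-1,\beta}_{\mathrm{loc}}(\mathbb{R})$ with $\beta=1-1/p$, since the IBP representation shows $\partial^k \rho_F \in L^p$ for every $p>1$.

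The main obstacle is step one: producing the weights $H_\alpha(F,1)$ at the top order $|\alpha|=k$ and proving that they have moments of every order. This requires controlling simultaneously the $\mathbb{D}^{k+1,p}$ norms of the components $F^i$, the Sobolev--Malliavin norms of $\Gamma_F^{-1}$ (which entail differentiating the matrix inverse repeatedly and therefore produce large powers of $(\det\Gamma_F)^{-1}$), and the Meyer-type continuity of the Skorohod divergence on $\mathbb{D}^{j,p}$ spaces; the non-degeneracy hypothesis \eqref{nondeg} is precisely what is needed to close this chain of estimates.
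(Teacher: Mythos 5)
The paper does not prove this statement at all: it is imported verbatim as Proposition~23 of \cite{BallyCaramellino11}, so there is no internal argument to compare against. Your reconstruction is precisely the standard iterated Malliavin integration-by-parts scheme that underlies the cited result, and it is correct in outline: the hypotheses $F^i\in\bigcap_{p\geq 1}\mathbb{D}^{k+1,p}$ and \eqref{nondeg} are exactly what is needed to build weights $H_\alpha(F,1)\in\bigcap_{p\geq 1}L^p(\Omega)$ up to order $k$ (for $d=1$, which is the case the paper uses), and the H\"older exponent $\beta=1-1/q$ obtained by interpolating $\mathbb{P}(x<F\leq y)\lesssim |y-x|$ against $\|H_{\alpha'}(F,1)\|_{L^q}$ is the standard way to get every $\beta<1$. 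The only slip is notational: the representation $\rho_F(x)=\mathbb{E}[\mathbf{1}_{\{F>x\}}H_{(1,\ldots,d)}(F,1)]$ comes from the integration by parts of order $|\alpha|=d$ applied to the indicator, not from $|\alpha|=0$, and correspondingly $\partial^{j}\rho_F$ requires weights of order $d+j$; with $d=1$ this consumes exactly the $k+1$ Malliavin derivatives assumed, so the count closes.
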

				We will first prove that the densities of $X_t$ and $Y_t$ are H\"older continuous. 
				\begin{prop}\label{prop 5.13}
					Let assumption {\bf(A6)},{\bf(A7)} and {\bf(A8)} be in force. {Suppose in addition that} there is $\mathcal{A}\in \mathcal{B}(\mathbb{R})$ such that $\mathbb{P}(X_T \in \mathcal{A}|\mathfrak{F}_t) > 0$ and one of the assumptions (A+) or (A--) holds.  Then, $\rho_{X_t} \in C^{0,\beta}(\mathbb{R})$ for all $t \in [0,T]$ and for all $t \in [0,T)$, $Y_t$ has a $\beta\text{-H\"older}$ continuous density $\rho_{Y_t}$ given by 
					\[ \rho_{Y_t}(x) = \rho_{Y_t}(x_0) \exp\Big( -\int_{x_0}^x w_{Y_t}(z)\mathrm{d}z\Big), \, x \in \supp \rho_{Y_t},  \]
					where $x_0$ is a point in the interior of $\supp (\rho_{Y_t})$ and 
					\[ w_{Y_t}(z):= \mathbb{E}\Big[\delta\Big(\int_0^T D_sY_t\mathrm{d}s\Big)^{-1}\Big| Y_t = z\Big], \]
					$\delta$ denotes the Skorohod integral introduced in \eqref{Skoro}.
				\end{prop}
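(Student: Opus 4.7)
The plan is to apply Lemma \ref{lem73} with $k=1$, which reduces the question of $\beta$-Hölder continuity of the densities to two ingredients: (a) $X_t, Y_t \in \bigcap_{p\geq 1}\mathbb{D}^{2,p}(\Omega)$, and (b) the scalar Malliavin covariances $\Gamma_{X_t}$ and $\Gamma_{Y_t}$ are non-degenerate in the sense of \eqref{nondeg}. Point (a) is immediate from Theorem \ref{secondMal} under \textbf{(A6)}--\textbf{(A8)}. The whole work thus concentrates on (b), and on justifying that the explicit representation of $\rho_{Y_t}$ is a legitimate application of the formula of \cite{Dung20}.

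For $X_t$, since $\sigma\equiv 1$ in the FBSDE \eqref{aa}, the Malliavin derivative reads
\[
D_sX_t \;=\; \exp\Bigl(\int_s^t \partial_x \tilde b(u,X_u)\,\mathrm{d}u\Bigr),\qquad 0\le s\le t,
\]
with $\tilde b(t,x)=b(t,x,v(t,x),v'(t,x))$. By Lemma \ref{bounds}, $\|\partial_x\tilde b\|_\infty\le \Lambda_{\tilde b}<\infty$, which yields the two-sided bound $e^{-\Lambda_{\tilde b} t}\le D_sX_t\le e^{\Lambda_{\tilde b} t}$ a.s. (cf.\ \eqref{4.11}). Consequently $\Gamma_{X_t}=\int_0^t (D_sX_t)^2\,\mathrm{d}s\ge t\,e^{-2\Lambda_{\tilde b} t}$ a.s., so $\mathbb{E}[\Gamma_{X_t}^{-p}]<\infty$ for every $p\geq 1$, and Lemma \ref{lem73} produces $\rho_{X_t}\in C^{0,\beta}(\mathbb{R})$.

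For $Y_t$, the chain rule gives $D_sY_t=v'(t,X_t)\,D_sX_t$ for $0\le s\le t<T$. Under \textbf{(A+)} (resp.\ \textbf{(A--)}) the proof of Theorem \ref{th5.9} supplies a strictly positive deterministic function $\alpha(t)$ with $v'(t,X_t)\ge\alpha(t)$ a.s.\ (resp.\ $v'(t,X_t)\le -\alpha(t)$ a.s.), for every $t\in[0,T)$. Combining this with the lower bound on $D_sX_t$ yields $\Gamma_{Y_t}\ge t\,\alpha(t)^2 e^{-2\Lambda_{\tilde b} t}$ a.s., hence $\mathbb{E}[\Gamma_{Y_t}^{-p}]<\infty$ for all $p\ge 1$; Lemma \ref{lem73} then delivers $\rho_{Y_t}\in C^{0,\beta}(\mathbb{R})$.

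The representation formula for $\rho_{Y_t}$ is obtained by verifying the hypotheses of the density formula of \cite{Dung20}: the a.s.\ lower bound above ensures $\int_0^T D_sY_t\,\mathrm{d}s\ge T\alpha(t)e^{-\Lambda_{\tilde b}t}>0$, so $F:=(\int_0^T D_sY_t\,\mathrm{d}s)^{-1}$ is well defined and, by the smoothness of $y\mapsto y^{-1}$ on a half-line bounded away from zero together with $Y_t\in\mathbb{D}^{2,p}$, belongs to $\mathbb{D}^{1,p}\subset\mathrm{Dom}(\delta)$ for every $p\ge 1$. The formula then follows by reproducing the argument preceding the proposition. \emph{Main obstacle.} The non-trivial step is the deterministic a.s.\ lower bound $|v'(t,X_t)|\ge\alpha(t)$, i.e.\ the strict monotonicity of $x\mapsto v(t,x)$; this cannot be read off the boundedness of $v'$ provided by Remark \ref{remk 3.4} and Assumption \ref{assum53}, and has to be extracted from the conditional-expectation representation of $D_sY_t$ in the proof of Theorem \ref{th5.9} together with the strict sign assumptions on $\phi'$ on $\mathcal{A}$ and on $\underline{f}$, via the Bayes-BMO argument and the $\mathbb{P}(X_T\in\mathcal{A}\mid\mathfrak{F}_t)>0$ hypothesis.
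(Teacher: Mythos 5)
Your proposal is correct and follows essentially the same route as the paper: non-degeneracy of $\Gamma_{X_t}$ and $\Gamma_{Y_t}$ combined with the second-order Malliavin regularity of Theorem \ref{secondMal}, Lemma \ref{lem73} for the H\"older continuity, the bound $|v'(t,X_t)|\geq\alpha(t)$ extracted from the proofs of Theorems \ref{th5.9} and \ref{prop 4.10}, and the formula of \cite{Dung20} for the representation. The only cosmetic difference is that for the forward component you derive $\mathbb{E}[\Gamma_{X_t}^{-p}]<\infty$ from the explicit two-sided a.s.\ bound on $D_sX_t$ (as in \eqref{4.11}), whereas the paper cites \cite[Proposition 4.4]{BanosNilssen16} directly; both are valid and yours is slightly more self-contained.
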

				\begin{proof}
					From  Lemma \ref{bounds}, we know that $\tilde{b}$ is uniformly bounded and Lipschitz continuous in its spatial variable for all $t \in [0,T)$. Then, {using} \cite[Proposition 4.4]{BanosNilssen16} the following holds
					\begin{align*}
						\Big(\det \Gamma_{X_t}\Big)^{-1} \in \bigcap_{p\geq 1} L^p(\Omega).
					\end{align*} 
					{This} implies that $\rho_{X_t} \in C^{0,\beta}(\mathbb{R})$ for all $t \in [0,T)$ with $\beta \in (0,1)$. On the other hand, for all $t\in[0,T)$ we deduce
					\begin{align*}
						\det \Gamma_{Y_t} = (v'(t,X_t))^2\det \Gamma_{X_t} \geq \alpha^{2}(t) \det \Gamma_{X_t},
					\end{align*}
					where $\alpha(t)$ is the positive function such that $v'(t,X_t)\geq \alpha(t)$ or $v'(t,X_t)\leq -\alpha(t)$ $\mathbb{P}\text{-a.s.}$ (see proof of Theorem \ref{prop 4.10} ). Without loss of generality, we assume that there is a constant $\epsilon_1>0$ such that $\alpha(t) \geq \epsilon_1$.
					Hence, $\Big(\det \Gamma_{Y_t}\Big)^{-1} \leq \alpha^{-2}(t)\Big(\det \Gamma_{X_t}\Big)^{-1} \in \bigcap_{p\geq 1} L^p(\Omega)$. For the explicit representation of the density it follows by combining arguments of the proof of \cite[Proposition 2.1.1]{Nua06} and \cite[Proposition 2]{Dung20}. This concludes the proof.
			\end{proof}}
			{Let us turn now to the case of the control process $Z$.
				\begin{prop}\label{prop 5.14}
					Assume that there is $\mathcal{A}\in \mathcal{B}(\mathbb{R})$ such that $\mathbb{P}(X_T\in \mathcal{A}/\mathfrak{F}_t)>0$ and let assumptions of Theorem \ref{densZ} be in force. Then the control process $Z$ solution to \eqref{aa} has a density $\rho_{Z_t}$ which is H\"older continuous  and the following explicit representation holds
					\[ \rho_{Z_t}(z) = \rho_{Z_t}(x_0) \exp\Big( -\int_{z_0}^z w_{Z_t}(u)\mathrm{d}u\Big), \, z \in \supp \rho_{Z_t},  \]
					where $z_0$ is a point in the interior of $\supp (\rho_{Z_t})$ and 
					\[ w_{Z_t}(u):= \mathbb{E}\Big[\delta\Big(\int_0^T D_sZ_t\mathrm{d}s\Big)^{-1}\Big| Z_t = u\Big], \]
					$\delta$ denotes the Skorohod integral introduced in \eqref{Skoro}.
				\end{prop}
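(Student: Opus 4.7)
The plan is to follow the same blueprint as Proposition \ref{prop 5.13}, namely: verify that $Z_t$ lies in $\bigcap_{p\ge 1}\mathbb{D}^{2,p}$, show that its one-dimensional Malliavin covariance $\Gamma_{Z_t}$ is non-degenerate in the sense of \eqref{nondeg}, apply Lemma \ref{lem73} (with $k=1$) to conclude H\"older continuity, and finally integrate the logarithmic derivative to obtain the explicit representation. Under the assumptions of Theorem \ref{densZ}, which include \textbf{(A6)}, \textbf{(A7)} and \textbf{(A8)}, Theorem \ref{secondMal} gives $Z_t\in\mathbb{D}^{2,p}$ for every $p\ge 1$, which supplies the regularity needed to invoke Lemma \ref{lem73}.

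The core step is to establish $\left(\det\Gamma_{Z_t}\right)^{-1}\in\bigcap_{p\ge 1}L^p(\Omega)$. Since $\sigma\equiv 1$, the Feynman--Kac relation \eqref{FK} reduces to $Z_t=v'(t,X_t)$ where $v$ is the classical solution of the associated quasilinear PDE. By the chain rule for Malliavin calculus (valid since $v'\in C^{1,\theta'}$ in the spatial variable under our assumptions, cf.\ Section \ref{Hcase}), one obtains $D_sZ_t = v''(t,X_t)\,D_sX_t$ for all $0\le s\le t$, hence
\begin{equation*}
\det\Gamma_{Z_t}\;=\;\bigl(v''(t,X_t)\bigr)^{2}\det\Gamma_{X_t}.
\end{equation*}
From the proof of Corollary \ref{cor520} we already know that under the assumptions of Theorem \ref{densZ}, there exists a strictly positive function $\omega(t)>0$ such that either $v''(t,X_t)\ge \omega(t)$ or $v''(t,X_t)\le -\omega(t)$ holds $\mathbb{P}$-a.s.\ for every $t\in[0,T)$. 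Combined with the moment bound $(\det\Gamma_{X_t})^{-1}\in\bigcap_{p\ge 1}L^p(\Omega)$ obtained in Proposition \ref{prop 5.13} via the Banos--Nilssen estimate, this yields
\begin{equation*}
(\det\Gamma_{Z_t})^{-1}\;\le\;\omega^{-2}(t)\,(\det\Gamma_{X_t})^{-1}\in\bigcap_{p\ge 1}L^p(\Omega),
\end{equation*}
which is precisely the non-degeneracy condition \eqref{nondeg}. An application of Lemma \ref{lem73} with $k=1$ then delivers $\rho_{Z_t}\in C^{0,\beta}(\mathbb{R})$ for some $\beta\in(0,1)$.

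For the explicit Nourdin--Viens-type representation of $\rho_{Z_t}$, I would mirror the argument indicated at the end of the proof of Proposition \ref{prop 5.13}: combine the classical Nualart representation of densities of one-dimensional Malliavin-differentiable random variables (Proposition 2.1.1 in \cite{Nua06}) with the conditional rewriting developed in Proposition~2 of \cite{Dung20}. Concretely, from $Z_t\in\mathbb{D}^{1,2}$ and $\int_0^T D_sZ_t\,\mathrm{d}s\neq 0$ $\mathbb{P}$-a.s.\ (a consequence of the strict sign of $v''(t,X_t)$ and the positivity of $D_sX_t$ via \eqref{4.11}), one obtains the logarithmic-derivative formula $(\log \rho_{Z_t})'(u)=-w_{Z_t}(u)$ with $w_{Z_t}(u)=\mathbb{E}\bigl[\delta\bigl((\int_0^T D_sZ_t\,\mathrm{d}s)^{-1}\bigr)\mid Z_t=u\bigr]$; integration from a fixed interior point $z_0\in\operatorname{supp}(\rho_{Z_t})$ then gives the stated formula.

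The main obstacle I anticipate is not the H\"older regularity itself but rather verifying that the Skorohod integrability of $\bigl(\int_0^T D_sZ_t\,\mathrm{d}s\bigr)^{-1}$ needed for the representation formula is indeed legitimate; this requires $Z_t\in\mathbb{D}^{2,2}$ (supplied by Theorem \ref{secondMal}) together with a uniform lower bound on $\int_0^T D_sZ_t\,\mathrm{d}s$, which in turn rests crucially on the strict-sign conclusion $v''(t,X_t)\ge \omega(t)>0$ (or its negative counterpart). This is where the full strength of the monotonicity hypotheses of Theorem \ref{densZ}---non-negativity of $f_{xx}'',f_{yy}'',f_{zz}''$, the vanishing of the mixed derivatives $f_{xz}''=f_{yz}''=0$, and the sign condition on $\phi''$ on the set $\mathcal{A}$---is genuinely used, since without it one would only obtain $\Gamma_{Z_t}>0$ almost surely without the necessary uniform-in-$\omega$ integrable lower bound.
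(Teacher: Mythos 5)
Your proposal follows the paper's own argument essentially verbatim: the factorization $\det\Gamma_{Z_t}=(v''(t,X_t))^2\det\Gamma_{X_t}$, the lower bound $|v''(t,X_t)|\ge\omega(t)>0$ borrowed from the proof of Corollary \ref{cor520}, the $L^p$-integrability of $(\det\Gamma_{X_t})^{-1}$ from Proposition \ref{prop 5.13}, the application of Lemma \ref{lem73}, and the representation formula obtained by combining Proposition 2.1.1 of \cite{Nua06} with \cite{Dung20} exactly as in Proposition \ref{prop 5.13}. The proof is correct and takes the same approach as the paper.
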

				\begin{proof} We recall that for all $t\in[0,T)$ $Z_t = v'(t,X_t)$ $\mathbb{P}\text{-a.s.}$ Hence
					\begin{align*}
						\det \Gamma_{Z_t} = (v''(t,X_t))^2\det \Gamma_{X_t} \geq \omega(t)^{2} \det \Gamma_{X_t},
					\end{align*}
					where $\omega(t)$ is the positive function such that $v''(t,X_t)\geq \omega(t)$ or $v''(t,X_t)\leq -\omega(t)$ $\mathbb{P}\text{-a.s.}$ (see proof of Corollary \ref{cor520} ).	Then, for all $p\geq 1$
					\[  \mathbb{E}\left[ \Big(\det \Gamma_{Z_t}\Big)^{-p}  \right] \leq \omega^{-2p}(t) \mathbb{E}\left[ \Big(\det \Gamma_{X_t}\Big)^{-p}  \right] < \infty. \]
			\end{proof}}
			
			\subsection{Regime-switching term structure model}
			Here we consider the following "regime-switching" term structure model from J. Chen {\it et al.}(\cite{ChenMaYin17}). Such a model is an extension of the classical term structure model and we briefly describe it below.
			
			Let $r:= \{r_t: t\geq 0\}$ be the short rate process and $X$ be the process defined by $r_t =  h(X_t),$ $t\geq 0$, where $h$ is a smooth function. The dynamics of $X_t$ is then given by
			\begin{equation}
				\mathrm{d}X_t = \kappa(\beta_t - X_t)\mathrm{d}t + \mathrm{d}W_t,
			\end{equation}
			where $W$ stands for the standard Brownian motion. Many different dynamics of the short rate models have been proposed in the literature. Among them, one assumed that the mean reversion $\beta$ has a hidden Markovian structure that is, $\beta$ can be written as a functional of an exogenous factor, that we denote by $Y$ i.e. $\beta_t = k(Y_t)$, where $k(y) \in \{k_1, k_2 \}$. If we consider the process $Y$ as a long term interest rate or to be the price of a long term bond to be comparable with the console price and assume further that $X$ and $Y$ are related by an explicit  relation at the maturity time $T$ such that $Y_T= \phi(X_T)$, with $\phi$ any smooth function. Then, using the arguments developed for instance in \cite{DuffieMaYong95}, one obtains that $X$ and $Y$ are linked by the following FBSDE 
			\begin{equation}\label{example1}
				\begin{cases}
					\mathrm{d}X_t = \left(k(Y_t)-\beta X_t\right)\mathrm{d}t + \mathrm{d}W_t\\
					\mathrm{d}Y_t = \left(h(X_t)Y_t-1\right)\mathrm{d}t + Z_t \mathrm{d}W_t\\
					X_0= x, \quad Y_T = \phi(X_T),
				\end{cases}
			\end{equation}
			where $k(y) = k_1 {\bf 1}_{y\leq \alpha} + k_2 {\bf 1}_{y> \alpha}$, and $\alpha>0$ represents the triggering level of the process $Y$. It is important to underline that the drift of the forward equation in \eqref{example1} exhibits a  discontinuity in $y$ which is beyond most of the models studied in the literature.
			
			The FBSDE \eqref{example1} falls in the realm of the following system
			\begin{equation}\label{example2}
				\begin{cases}
					\mathrm{d}X_t = b(t,X_t,Y_t)\mathrm{d}t +  \mathrm{d}W_t\\
					\mathrm{d}Y_t = -g(t,X_t,Y_t,Z_t)\mathrm{d}t + Z_t \mathrm{d}W_t\\
					X_0= x, \quad Y_T = \phi(X_T),
				\end{cases}
			\end{equation}
			for which the drift $b$ is of the form
			\begin{equation}\label{example3}
				b(t,x,y):= \sum_{i=1}^{m}b^{i}(t,x){\bf 1}_{[c_i,c_{i+1}]}(y),\quad (t,x,y)\in [0,T]\times\mathbb{R}\times\mathbb{R},
			\end{equation}
			where $-\infty <c_1<c_2<\cdots< \infty$ is a finite partition of $\mathbb{R}$ and $b^{i}$ are deterministic Lipschitz functions. This class of FBSDE \eqref{example2} was investigated for example in \cite{ChenMaYin17} and it was shown that under their standing assumptions (H1)-(H4) (see  \cite[Page 4]{ChenMaYin17}), the system \eqref{example2} admits a unique strong solution with
			$Y_t = \hat{u}(t,X_t)$ and $Z_t= \nabla_x \hat{u}(t,X_t)$, where $\hat{u}$ stands for the solution in the distribution sense to the associated quasi-linear PDE to the system \eqref{example2} (see \cite[Theorem 5.1]{ChenMaYin17}). The decoupled function $\hat{u}$ is uniformly bounded and Lipschitz continuous for all $t \in [0,T)$.
			
			Even though the well posedeness of the class of FBSDEs \eqref{example2} with drifts satisfying \eqref{example3} is not treated in Section \ref{solvability} ,  the main goal in this subsection is to prove that the results on the analysis of densities developed in Section \ref{section5} remain valid for this variety of coupled FBSDEs with drifts that are discontinuous in their backward component and which is motivated by an interesting practical application.

		We first study the regularity of solutions to the FBSDE \eqref{example2} since this was not done in \cite{ChenMaYin17}.
			
			\begin{prop}\label{propo65}
				Let the conditions of Theorem 5.1 in \cite{ChenMaYin17} be in force. Assume that the driver $f$ and the terminal value $\phi$ are continuously differentiable with bounded derivatives. Then, the solution $(X,Y,Z)$ to  \eqref{example2} is Malliavin differentiable and for all $0\leq s \leq t \leq T$,  $(D_sY_t,D_sZ_t)$ satisfies  \eqref{MalY} and $D_sX_t$ is given by \eqref{malderX}.
				
				In addition, if $f$ and $\phi$ are twice continuously differentiable with bounded derivatives, then the triple $(X,Y,Z)$ is twice Malliavin differentiable for all $t\in [0,T)$.
			\end{prop}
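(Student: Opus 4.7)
The plan is to reduce the coupled system \eqref{example2} to a scalar SDE with bounded measurable drift via the decoupling field supplied by \cite{ChenMaYin17}, then apply the tools developed in Section \ref{section6} for discontinuous drifts. Under the hypotheses of their Theorem 5.1 there exists a bounded, spatially Lipschitz decoupling field $\hat u$ (distributional solution to the associated quasi-linear PDE) such that $Y_t=\hat u(t,X_t)$ and $Z_t=\nabla_x\hat u(t,X_t)$. Substituting $Y_t=\hat u(t,X_t)$ into the drift of the forward equation yields
\begin{equation*}
\mathrm dX_t=\tilde b(t,X_t)\,\mathrm dt+\mathrm dW_t,\qquad \tilde b(t,x):=\sum_{i=1}^m b^i(t,x)\mathbf 1_{[c_i,c_{i+1}]}(\hat u(t,x)),
\end{equation*}
so that $\tilde b$ is bounded and merely Borel-measurable in $x$. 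This puts us exactly in the framework of Section \ref{section6}.

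For the first assertion I would then invoke \cite{MMNPZ13} (as already used in Proposition \ref{th5.1}) to conclude that $X_t\in\mathbb D^{1,2}$ with the local-time representation \eqref{malderX}. Since $\hat u(t,\cdot)$ is Lipschitz continuous for every $t<T$, the chain rule for Lipschitz functions (Proposition on Malliavin chain rule in Section~\ref{preli}) applied to $Y_t=\hat u(t,X_t)$ produces $Y_t\in\mathbb D^{1,2}$ and $D_sY_t=\partial_x\hat u(t,X_t)\,D_sX_t$, matching \eqref{malderY} (with $v$ there replaced by $\hat u$). The BSDE \eqref{MalY} satisfied by $(D_sY,D_sZ)$ is then obtained by applying $D_s$ to the BSDE in \eqref{example2}, which is justified because $f$ is $C^1_b$ and $\phi'$ is bounded; this is the same argument as in Proposition~\ref{prop 3.4}, relying on \cite[Theorem~5.3]{ImkRhossOliv}, whose hypotheses are satisfied since $\tilde b$ is bounded.

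For the second part I would pass through an approximation. Let $\rho_\varepsilon\in C^\infty_c(\mathbb R)$ be a standard mollifier and define $b^\varepsilon(t,x,y):=\sum_{i=1}^m b^i(t,x)\,\psi^\varepsilon_i(y)$, where $\psi^\varepsilon_i$ is a smooth, uniformly bounded approximation of $\mathbf 1_{[c_i,c_{i+1}]}$. The approximating FBSDEs have smooth coefficients with bounded derivatives, so their unique strong solutions $(X^\varepsilon,Y^\varepsilon,Z^\varepsilon)$ are twice Malliavin differentiable by Theorem~\ref{secondMal}; moreover the associated decoupling fields $\hat u^\varepsilon$ enjoy interior $C^{1,\alpha}$ estimates in $x$ (using that $f,\phi\in C^2_b$), uniformly in $\varepsilon$. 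Combining these with the stability estimates in \cite{ChenMaYin17} gives $(X^\varepsilon,Y^\varepsilon,Z^\varepsilon)\to(X,Y,Z)$ in the natural norms; uniform-in-$\varepsilon$ bounds on $\mathbb E|D_{s'}D_sX^\varepsilon_t|^p$ together with weak compactness in $\mathbb D^{2,p}$ then yield $X_t\in\mathbb D^{2,p}$ for all $t\in[0,T)$. The chain rule, applied twice to $Y_t=\hat u(t,X_t)$ using the interior $C^{1,\alpha}$-regularity of $\hat u$, transfers this to $Y_t$ and $Z_t=\nabla_x\hat u(t,X_t)$.

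The main obstacle will be obtaining the uniform second-order bounds on $X^\varepsilon$, since $\partial_y b^\varepsilon$ blows up near the levels $c_i$ as $\varepsilon\to 0$. The right tool should be a Krylov-type estimate controlling $\mathbb E\int_0^T|\partial_y b^\varepsilon|(t,X^\varepsilon_t,Y^\varepsilon_t)|\nabla_x\hat u^\varepsilon(t,X^\varepsilon_t)|\,\mathrm dt$ through the occupation density of $X^\varepsilon$ at the level sets $\{\hat u^\varepsilon(t,\cdot)=c_i\}$, whose measure is controlled by the Lipschitz bound on $\hat u$. This is the analytic heart of the argument and mirrors the local-time analysis already used repeatedly in Sections \ref{section5} and \ref{section6}.
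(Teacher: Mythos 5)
Your treatment of the first assertion is essentially the paper's: both of you reduce \eqref{example2} to a forward SDE with bounded measurable drift $\bar b(t,x)=b(t,x,\hat u(t,x))$ via the decoupling field of \cite[Theorem 5.1]{ChenMaYin17}, obtain $X_t\in\mathbb{D}^{1,2}$ together with the local-time representation \eqref{malderX} from \cite{MMNPZ13}, and derive \eqref{MalY} from Proposition \ref{prop 3.4} (i.e.\ \cite[Theorem 5.3]{ImkRhossOliv}), using the Lipschitz chain rule and the boundedness of $\bar b$. That part is fine.

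For the second assertion you depart from the paper, and your route has a genuine gap. The paper's argument is direct: it asserts that $\bar b(t,\cdot)$ is Lipschitz continuous for every $t<T$ and then simply invokes Theorem \ref{secondMal}, whose hypothesis {\bf(A6)} only asks for a bounded weak derivative of the drift in $x$. You instead --- quite reasonably, since the composition $\sum_i b^i(t,x)\mathbf{1}_{[c_i,c_{i+1}]}(\hat u(t,x))$ is generically discontinuous across the level sets $\{\hat u(t,\cdot)=c_i\}$ even when $\hat u$ is Lipschitz --- mollify $b$ in $y$ and try to pass to the limit in $\mathbb{D}^{2,p}$. But the decisive step, the uniform-in-$\varepsilon$ bound on $\mathbb{E}\,|D_{s'}D_sX^\varepsilon_t|^p$, is precisely what you do not prove: you only say that ``the right tool should be'' a Krylov-type estimate of the occupation of the level sets. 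That estimate is nontrivial and is not a routine consequence of the Lipschitz bound on $\hat u$: the factor $\partial_y b^\varepsilon$ is of order $\varepsilon^{-1}$ on an $\varepsilon$-neighbourhood of $\{\hat u^\varepsilon(t,\cdot)=c_i\}$, so one needs the occupation measure of the preimage of that neighbourhood under $\hat u^\varepsilon(t,\cdot)$ to be $O(\varepsilon)$ uniformly in $\varepsilon$, which in turn requires some non-degeneracy of $\nabla_x\hat u$ near the thresholds that neither you nor \cite{ChenMaYin17} provide. Without this bound the compactness argument and hence the whole second half of your proposal collapse. So, as written, your proposal establishes the first claim but leaves the second claim unproved; flagging the difficulty with the paper's ``Lipschitz'' shortcut (a legitimate concern) is not the same as resolving it.
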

			\begin{proof}
				The assumptions of \cite[Theorem 5.1]{ChenMaYin17} guarantee the existence of a unique strong solution to the system \eqref{example2}. Then, \eqref{example2} can be written as follows
				\begin{equation}\label{example4}
					\begin{cases}
						X_t = x + \int_0^t \bar {b}(s,X_s)\mathrm{d}s + W_t\\
						Y_t = \phi(X_T) + \int_t^Tg(s,X_s,Y_s,Z_s)\mathrm{d}s + \int_t^T Z_s \mathrm{d}W_s
					\end{cases}
				\end{equation}
				where $\bar{b}(t,x) = b(t,x,\hat{u}(t,x))$ is uniformly bounded for all $t\in [0,T]$. Hence from Proposition \ref{prop 3.4}, the solution $(X,Y,Z)$ is Malliavin differentiable. On the other hand, using the fact $\bar{b}$ is Lipschitz continuous for all $t\in[0,T)$ and assuming further that $f$ and $\phi$ are twice continuously differentiable with bounded derivatives, we can invoke Theorem \ref{secondMal} to conclude that the triple $(X,Y,Z)$ is twice Malliavin differentiable for all $t\in [0,T)$ .
			\end{proof}
			We have the subsequent Theorem as a combination of the results obtained above.
			\begin{thm} Let Assumptions of Proposition \ref{propo65} hold. 
				If there is a Borel set $\mathcal{A}$ such that $\mathbb{P}(X_T \in \mathcal{A}|\mathfrak{F}_t)>0$ and one of (A+) or (A-) is valid. Then, both processes $X_t$ and $Y_t$ solution to \eqref{example2} admit, respectively H\"older continuous densities $\rho_{X_t}$ and $\rho_{Y_t}$.
				
				In addition, if the conditions of Theorem \ref{densZ} are in force. Then the control component $Z$ solution to FBSDE \eqref{example3} has an absolute continuous law which is H\"older continuous, for all $t\in [0,T).$
			\end{thm}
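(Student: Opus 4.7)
The plan is to import the regime-switching system \eqref{example2} into the machinery of Section \ref{section5} by means of the decoupling field $\hat u$ produced in \cite[Theorem 5.1]{ChenMaYin17}, so that the intrinsic discontinuity of $b$ in the $y$-variable is absorbed into a time-inhomogeneous forward drift $\bar b(t,x)=b(t,x,\hat u(t,x))$. Proposition \ref{propo65} already delivers the reformulation \eqref{example4} together with the (twice) Malliavin differentiability of $(X,Y,Z)$ on $[0,T)$, and provides the explicit representations of $D_sX_t$, $D_sY_t$, $D_sZ_t$ analogous to those in Propositions \ref{th4.4} and \ref{prop 3.4}. Once this reformulation is in place, the proofs of Propositions \ref{prop 5.13} and \ref{prop 5.14} can be replayed essentially verbatim.

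For the first assertion, I would establish the non-degeneracy of the Malliavin covariance $\Gamma_{X_t}$ by exploiting the uniform boundedness and the Lipschitz regularity of $\bar b$ in $x$ afforded by the decoupling field, so that \cite[Proposition 4.4]{BanosNilssen16} gives $(\det \Gamma_{X_t})^{-1}\in \bigcap_{p\geq 1} L^p(\Omega)$; combined with the twice Malliavin differentiability from Proposition \ref{propo65} and Lemma \ref{lem73}, this yields $\rho_{X_t}\in C^{0,\beta}(\mathbb{R})$. For $Y_t$, I would use the Feynman--Kac identity $Y_t=\hat u(t,X_t)$ to write $\Gamma_{Y_t}=(\nabla_x\hat u(t,X_t))^2\,\Gamma_{X_t}$, and then reproduce the linearisation argument of Theorem \ref{densY} under the Bayes measure $\mathrm d\mathbb Q/\mathrm d\mathbb P=\mathcal E(f_z'(\cdot,\mathbf X)\ast W)$ to conclude, from hypothesis (A$+$) or (A$-$) combined with $\mathbb P(X_T\in\mathcal A\mid\mathfrak F_t)>0$, that $|\nabla_x \hat u(t,X_t)|\geq \alpha(t)>0$ almost surely. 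Propagating this lower bound into $\Gamma_{Y_t}$ and invoking Lemma \ref{lem73} produces the H\"older continuity of $\rho_{Y_t}$, together with the Nourdin--Viens type representation of Proposition \ref{prop 5.13}.

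For the control process, the plan is to apply Proposition \ref{prop 5.14}: using $Z_t=\nabla_x\hat u(t,X_t)$ and the second-order Malliavin regularity from Proposition \ref{propo65}, one obtains $\Gamma_{Z_t}=(\nabla_{xx}^2\hat u(t,X_t))^2\,\Gamma_{X_t}$. Under the hypotheses of Theorem \ref{densZ} the second spatial derivative $\nabla_{xx}^2\hat u(t,X_t)$ is bounded away from zero on the same set of full conditional probability, so the non-degeneracy of $\Gamma_{X_t}$ transfers to $\Gamma_{Z_t}$ and Lemma \ref{lem73} concludes.

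The delicate point is that the drift $b$ in \eqref{example3} is only piecewise Lipschitz in $y$, so it is not a priori obvious that the transformed drift $\bar b(t,x)=\sum_i b^i(t,x)\mathbf 1_{[c_i,c_{i+1}]}(\hat u(t,x))$ satisfies assumption \textbf{(A6)} everywhere, since $\bar b$ is generically discontinuous in $x$ along the level sets $\{\hat u(t,\cdot)=c_i\}$. The argument therefore relies on two features that are built into the framework of Section \ref{solvability} and \cite[Theorem 5.1]{ChenMaYin17}: the uniform boundedness of $\bar b$, which lets Girsanov remove it (as in Theorem \ref{densX} and \eqref{eqfor3}) to recover the Lebesgue--absolute continuity of $X_t$ from that of the Brownian motion; and the Lipschitz regularity of $\hat u$ on $[0,T)$, which controls the measure of the exceptional level sets through local-time estimates and guarantees the integrability of $(\det\Gamma_{X_t})^{-1}$ despite the drift's discontinuity. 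Verifying that these two ingredients are strong enough to replace the weak differentiability hypothesis of \textbf{(A6)} --- that is, checking that the representations \eqref{517} and the subsequent non-degeneracy bounds \eqref{4.11}, \eqref{4.12} remain valid for $\bar b$ in the present piecewise Lipschitz setting --- is where the main technical effort of the proof is concentrated.
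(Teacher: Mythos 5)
Your proposal follows essentially the same route as the paper, whose proof of this theorem is a one-line appeal to Proposition \ref{propo65} followed by a verbatim replay of the arguments of Propositions \ref{prop 5.13} and \ref{prop 5.14} --- precisely the plan you lay out. The only point of divergence is your closing paragraph: the verification you flag as the main technical effort (namely, that $\bar b(t,x)=b(t,x,\hat u(t,x))$ can stand in for a drift satisfying \textbf{(A6)} even though it is generically discontinuous in $x$ across the level sets $\{\hat u(t,\cdot)=c_i\}$) is not carried out in the paper, which instead simply asserts in the proof of Proposition \ref{propo65} that $\bar b$ is Lipschitz continuous for $t<T$ and then invokes Theorem \ref{secondMal}; your concern is legitimate, but it identifies a gap the paper leaves unaddressed rather than work the paper actually performs.
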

			
			\begin{proof}
				Thanks to the results derived from Propositions \ref{propo65}, one can follow the same strategy as in the proof od Propositions \ref{prop 5.13} and \ref{prop 5.14} to derive the desired result. This ends the proof.
			\end{proof}
			
			\begin{remark}[Numerical analysis of FBSDEs with rough drifts]
	It is well known that, closed form solutions to FBSDEs are not always available, for instance for FBSDEs of practical relevance as the one described in \eqref{example1}. Hence, the numerical analysis of FBSDEs stands as a powerful tools to develop computational methods to efficiently approximate the solutions of these equations. We refer for instance to the recent survey \cite{Chessarietal23} on numerical method for BSDEs and FBSDEs. However, a review of all the methods listed in \cite{Chessarietal23}, shows that only numerical approximation of BSDEs coupled with smooth-type SDEs has been considered so far.  As far as we know, none of the numerical methods reviewed in \cite{Chessarietal23} provide an efficient scheme for the solution to \eqref{example1}. One reason could be non-smoothness of the drift coefficients. The non -smoothness of drift is one of the motivations of the current work. As such we believe that the results obtained could be a first step toward studying numerical schemes for FBSDEs with non-Lipschitz drift.
			\end{remark}

			\subsection{Pricing and hedging of derivatives based on non-tradable assets with discontinuous drift}
			In this section, we discuss the pricing and hedging of financial derivatives based on non-tradable assets, but which are correlated with tradable assets. We assume that the logarithm of the non-tradable asset has a discontinuous drift. The pricing of such derivatives is done via  exponential utility-based indifference price. As we will see, this problem can be reduced to solving a quadratic FBSDE with nonsmooth drift coefficient. 
			
			We summarise the model presented in \cite{DosReis}. Let $\{W_t\}_{t\geq 0}$ be a one dimensional Brownian motion on a probability space $(\Omega, \mathfrak{F}, \mathbb{P})$ and $\{\mathfrak{F}_t\}_{t\in[0,T]}$ be its natural filtration enlarged in the usual way by the $\mathbb{P}$-null sets. Here we consider a modified dynamic of the non-tradable asset. As in \cite{BMBPD17}, we suppose an extended Black and Scholes model where the non-tradable stock pays a dividend yield that switches to a higher level when the stock value passes a certain threshold. More precisely, we suppose that the logarithm of the non-tradable index has the following dynamics
			\begin{equation}\label{SDEfact1}
				\mathrm{d}X_t = -(\lambda1_{(-\infty,R)}(X^x_t)+\hat \lambda1_{[R,\infty)}(X^x_t)+\frac{\sigma^2}{2})\mathrm{d}t + \sigma \mathrm{d}W_t,\,\,\,X^x_0=x\in \mathbb{R},
			\end{equation}
			where $\lambda,\hat \lambda\in \mathbb{R}_+$ are the dividend yields and $R\in  \mathbb{R}$ is the given threshold. Note that the drift $b(x)=-(\lambda_11_{(-\infty,R)}(x)+\lambda_21_{[R,\infty)}(x)+\frac{\sigma^2}{2})$ is bounded and measurable. Notice that such a model cannot be covered by the work \cite{AnkImkPo,ArImDR10,DosReis}. The above SDE \eqref{SDEfact1} has a unique strong Malliavin differentiable solution (see \cite{MMNPZ13}).  In addition this solution as a Sobolev differentiable flows (see \cite{NilssenProske}).

			Suppose that an economic agent has expenses at time $T>0$ of the form $F(X_T )$, where $F:  \mathbb{R}\rightarrow  \mathbb{R}$ is bounded and measurable function. At time $t \in [0, T ]$, the expected payoff
			of $F(X_T)$, conditioned on $X_t = x \in \mathbb{R}$, is given by $F(X^{t,x}_T)$, where $X^{t,x}_\cdot$ is the solution of the SDE is the \eqref{SDEfact1} starting at $x$ at time $t$.
			
			We assume that the financial market is made of two assets: a non risky asset representing the numeraire and a risky asset in units of the numeraire whose dynamics is given by the following SDE 
			\begin{equation}\label{SDEfact2}
				\mathrm{d}S_t =S_t\big(\alpha(t,X_t)\mathrm{d}t+\mathrm{d}W_t\big),\,\,\,S_0=s_0\in \mathbb{R},
			\end{equation}
			where the $\alpha$ is measurable function. We assume that $\alpha$ is uniformly bounded and continuously differentiable in the space variable with bounded derivative.
			
			We denote by $U$ the exponential utility function with risk aversion coefficient $\mu>0$ that is
			$$
			U(x)=-e^{-\mu x}, \,x \in \mathbb{R}.
			$$
			Let $\pi$ be an investment strategy that is a real-valued predictable process $\pi$ such that the integral $\int_0^t\pi_u\frac{\mathrm{d}S_u}{S_u}$ is well defined. $\pi$ can be seen as the fraction of portfolio invested in the risky asset. It is assumed that $\pi_t(\omega)\in \tilde C\subset \mathbb{R}$, a closed (not necessarily convex). For such a trading strategy, the wealth process given $X_t^{t,x}=x$ satisfies
			\begin{equation}\label{SDEfact3}
				G^{\pi,t,x}_s= \int_t^s\pi_u\big\{\alpha(u,X^{t,x}_u)\mathrm{d}u+\mathrm{d}W_u\big\},\,0\leq t\leq s\le T,\,\,x\in \mathbb{R}.
			\end{equation}
			A strategy $\pi$ with values in $\tilde C$ is called admissible if it satisfies
			$$
			\mathbb{E}\big[\int_t^T|\pi_u\beta(u,X^{t,x}_u)|^2\mathrm{d}u\big]<\infty
			$$
			and if the family
			$$
			\{e^{-\mu G^{\pi,t,x}_\tau}: \tau \text{ is a stopping time with values in } [t,T]\}
			$$
			is uniformly integrable. We denote by $\mathcal{A}$ the set of admissible strategies. Notice that the above criteria is similar to that of \cite[Section 2]{HuImkMuller}. Recall that for a closed subset of $\mathbb{R}$ and $a\in \mathbb{R}$, the distance between $a$ and $C$ is given by
			$$
			\text{dist}_C(a)=\underset{b\in C}\min|a-b|
			$$
			We also denote by $\Pi_C(a)$ the set of elements of $C$ at which the minimum is attained that is:
			$$
			\Pi_C(a)=\{b\in C: |a-b|=\text{dist}_C(a)\}.
			$$

			We will assume in addition that $C$ is convex. Again, we are interested in a replicating and pricing problem of an economic agent who wishes to buy a contingent claim that pays off $F=F(X_T)$ at time $T>0$. We suppose $F:\mathbb{R}\rightarrow \mathbb{R}$ is Lipschitz continuous and bounded. The optimal strategy is chosen to maximise the expected utility of the terminal wealth and the initial price of the claim can be derived via utility indifference pricing. The utility indifference price is as follows: for a utility function $U:\mathbb{R}\rightarrow \mathbb{R}$, the agent with initial value of the non tradable asset $x$ and initial wealth $\nu$ and no endowment of the claim will simply face the problem of maximizing her expected utility of the terminal wealth $G^{\pi, t,x}_T$; that is
			\begin{equation}\label{expouti1}
				V^0(t,\nu,x)=\sup_{\pi\in \mathcal{A}}\mathbb{E}\big[U\big(\nu+G^{\pi, t,x}_T\big)\big]=\mathbb{E}\big[U\big(\nu+G^{\hat{\pi}, t,x}_T\big)\big],
			\end{equation}
			where $\hat{\pi}$ is an optimal control (if it exists). The agent with initial wealth $x$ and who is willing to pay $p$ today for a unit of claim $F$ at time T faces the following expected utility maximization problem
			\begin{align}
				\label{expouti2}
				V^F(t,\nu+p,x)=&\sup_{\pi\in \mathcal{A}}\mathbb{E}\big[U\big(\nu+p+G^{\pi, t,x}_T-F(X_T)\big)\big] =\mathbb{E}\big[U\big(\nu+p+G^{\hat{\pi}, t,x}_T-F(X_T)\big)\big].
			\end{align}
			According to the utility indifference pricing principle, the \textsl{fair price} of the claim with payoff $F(X_T)$ at time $T$ is the solution to the equation
			\begin{equation}
				\label{expouti13}
				V^{F}(t,\nu+p,x)=V^0(t,\nu,x).
			\end{equation}
			
			For $U(x)=-e^{-\gamma x},\,\,\gamma>0$, the exponential utility maximization problem \eqref{expouti1} (respectively \eqref{expouti2}) can be solved (see \cite{HuImkMuller}) by finding the generator $f$ of the BSDE
			\begin{align}\label{expoBSDE1}
				Y^{t,x}_s=0-\int_s^Tf(u,X^{t,x}_u,Z^{t,x}_u)\mathrm{d}u -\int_s^T Z^{t,x}_u \mathrm{d}W_u,
			\end{align}
			respectively
			\begin{align}\label{expoBSDE2}
				\hat Y^{t,x}_s=F(X^{t,x}_T)-\int_s^Tf(u,X^{t,x}_u,\hat{Z}^{t,x}_u)\mathrm{d}u -\int_s^T\hat{Z}^{t,x}_u \mathrm{d}W_u,
			\end{align}
			such that the process $R_t^\pi=-\exp(-\gamma (G^{\pi, t,x}_t-Y_t)), \,t \in [0,T]$ (respectively $\hat R_t^\pi=-\exp(-\gamma (G^{\pi, t,x}_t-\hat Y_t)), \,t \in [0,T]$), is a supermartingale for all strategies $\pi$ and a martingale for a particular strategy $\pi^o$. Following similar steps as in \cite{ArImDR10}(see also \cite[page 1697-1698]{HuImkMuller}), the generator $f$ is given by
			\begin{align}\label{drivopcon1}
				f(t,x,z)=-\frac{\gamma}{2}\text{dist}^2_C(z+\frac{\alpha(t,x)}{\gamma})+z\alpha(t,x)+\frac{1}{2\gamma}\alpha^2(t,x).
			\end{align}
			The boundedness of $\alpha$ yields the existence of a positive constant $C$ suct that for all $(t,x,z)\in [0,T]\times \mathbb{R}^2$
			$$
			f(t,x,z)\leq C(1+|z|^2).
			$$
			In addition for all $t\in [0,T],x,x'\in  \mathbb{R}, z,z'\in  \mathbb{R}^2$ we have (see \cite[page 85-86]{DosReis})
			$$
			|f(t,x,z)-f(t,x,z')|\leq C(1+|z|+|z'|)|z-z'|.
			$$
			Hence, the associated semi-linear PDE to the decoupled  FBSDE \eqref{SDEfact1}-\eqref{expoBSDE1} is given by 
			\begin{align}\label{expoPDE1}
				\begin{cases}
					\frac{\partial v}{\partial t}+ \frac{1}{2}\sigma^2\frac{\partial^2 v}{\partial x^2} + b(x)\frac{\partial v}{\partial x} - f(t,x,\nabla_x v)=0,\\
					v(T,x)= 0.
				\end{cases}
			\end{align}
			Similarly, for the decoupled FBSDE \eqref{SDEfact1}-\eqref{expoBSDE2}:
			\begin{align}\label{expoPDE2}
				\begin{cases}
					\frac{\partial \hat{v}}{\partial t}+ \frac{1}{2}\sigma^2\frac{\partial^2 \hat{v}}{\partial x^2} + b(x)\frac{\partial \hat{v}}{\partial x} - f(t,x,\nabla_x \hat{v})=0,\\
					\hat{v}(T,x)= -F(x).
				\end{cases}
			\end{align}
			We notice that, the semi-linear PDEs \eqref{expoPDE1} and \eqref{expoPDE2} are both particular cases of the quasi-linear PDE \eqref{eqmainP1}. Indeed, observe that the coefficients $b, \sigma$ and  $f$ considered in this section, fall into the setup of Assumption \ref{assum1} and precisely $b(t,x,y,z):= b(x)$, $\sigma(t,x,y)= \sigma$, $f(t,x,y,z):= f(t,x,z)$ and $\ell$ is a positive constant. Thus, from Theorem \ref{th3.3}, the  FBSDE \eqref{SDEfact1}-\eqref{expoBSDE2} (resp. FBSDE \eqref{SDEfact1}-\eqref{expoBSDE1}) has a unique strong Malliavin and Sobolev differentiable solution $(X,\hat{Y},\hat{Z}) \in \mathcal{S}^2\times \mathcal{S}^{\infty}\times \mathcal{H}_{BMO}$ (resp. $(X,Y,Z) \in \mathcal{S}^2\times \mathcal{S}^{\infty}\times \mathcal{H}_{BMO}$). In particular, the PDE \eqref{expoPDE2}(resp. PDE \eqref{expoPDE1}) has a unique solution $\hat{v}(t,x) \in W^{1,2}([0,T]\times\mathbb{R},\mathbb{R})$ such that $\hat{v}$ is bounded and Lipschitz continuous and $\hat{v}'_x$ is continuous (${v}(t,x) \in W^{1,2}([0,T]\times\mathbb{R},\mathbb{R})$ such that ${v}$ is bounded and Lipschitz continuous and ${v}'_x$ is continuous) and the following hold $\mathbb{P}\text{-a.s. } \text{for all } s \in [t,T]$
			\begin{align}\label{620}
				\hat{Y}^{t,x}_s=\hat{v}(s,X^{t,x}_s) \text{ and } \hat Z^{t,x}_s=\hat v'(s,X^{t,x}_s)\sigma,
			\end{align}
			respectively,
			\begin{align}\label{621}
				Y^{t,x}_s=v(s,X^{t,x}_s) \text{ and }  Z^{t,x}_s= v'(s,X^{t,x}_s)\sigma.
			\end{align}
			\begin{remark} We notice that the driver $f$ given by \eqref{drivopcon1} is continuously differentiable in $x$ and $z$ and does not depend on $y$. Then, an inspection of the proof of \cite[Theorem 5.12]{ImkRhossOliv} enables us to conclude that even though the drift of the forward is merely bounded and measurable, the Malliavin derivatives of $(Y,Z)$ (resp. $(\hat{Y},\hat{Z}))$ solution to \eqref{expoBSDE1} (resp. \eqref{expoBSDE2}) satisfy a similar linear equation given by \eqref{MalY}.
			\end{remark}
			The value function and the optimal investment are given below
			
			\begin{thm}\label{thoptcont1}
				The value function of the optimal control problem \eqref{expouti1} (respectively \eqref{expouti2}) is given by 
				$$
				V^0(t,\nu,x)=\exp\{-\alpha(x-Y^{t,x}_t)\},
				$$
				\textup{(}respectively $V^F(t,\nu,x)=\exp\{-\alpha(x-\hat Y^{t,x}_t)\}$\textup{)} where $Y^{t,x}_t$ \textup{(}respectively $\hat Y^{t,x}_t$\textup{)} is defined by the unique solution of the BSDE \eqref{expoBSDE1} \textup{(}respectively \eqref{expoBSDE2}\textup{)}, with $f$  given by \eqref{drivopcon1}. Moreover the optimal strategy $ \pi \in \mathcal{A}$ for the problem \eqref{expouti1} on $[t,T]$ satisfies
				\begin{align}\label{optstrat1}
					\pi^\ast_s= \Pi_{C(s,X_s^{t,x})}(Z^{t,x}_s+\frac{1}{\gamma}\alpha(s,X_s^{t,x})), \,\,\, 
				\end{align}
				Similar  equality holds for the optimal strategy $\hat \pi^\ast$ of problem \eqref{expouti2} with $\hat Z^{t,x}_t$ instead of $Z^{t,x}_t$. Hence, using the linearity of the operator $\Pi_{C(s,X_s^{t,x})}$, the derivative hedge is given for all $s\in [t,T]$ by
				\begin{align}
					\Delta_s = \Pi_{C(s,X_s^{t,x})}(\hat Z^{t,x}_s- Z^{t,x}_s).
				\end{align}
			\end{thm}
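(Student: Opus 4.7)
The plan is to adapt the classical martingale optimality principle of Hu-Imkeller-Müller to the present setting, where the only novelty is that the forward drift $b$ is discontinuous. First, for the problem without the claim I would introduce the candidate value process $R^{\pi}_s := -\exp(-\gamma(G^{\pi,t,x}_s - Y^{t,x}_s))$, with $(Y,Z)$ the unique solution of \eqref{expoBSDE1} guaranteed by Theorem \ref{th3.3}. Applying It\^o's formula, combined with \eqref{SDEfact3} and \eqref{expoBSDE1}, the drift of $R^{\pi}$ equals
\begin{equation*}
-\gamma R^{\pi}_s\Bigl(-f(s,X_s,Z_s)+\pi_s\alpha(s,X_s)-\tfrac{\gamma}{2}|\pi_s-Z_s|^2\Bigr)\,\mathrm{d}s.
\end{equation*}
A completion of the square in $\pi_s$ shows that the choice \eqref{drivopcon1} is precisely the one for which this drift rewrites as $-\tfrac{\gamma^2}{2}R^{\pi}_s\,|\pi_s-\Pi_{C}(Z_s+\alpha(s,X_s)/\gamma)|^2\,\mathrm{d}s$, so that $R^{\pi}$ is a local supermartingale for every admissible $\pi$ and a local martingale at the candidate $\pi^{\ast}$ given by \eqref{optstrat1}.

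Second, I would upgrade the local (super)martingality to a true (super)martingality in order to read off the value function. The boundedness of $Y$ and the BMO property of $Z$ (both coming from Theorem \ref{th3.3}), together with the boundedness of $\alpha$, imply via Kazamaki's criterion that the relevant exponential martingales are uniformly integrable; combined with the uniform integrability built into the definition of $\mathcal{A}$ this yields
\begin{equation*}
\mathbb{E}\bigl[-e^{-\gamma(\nu+G^{\pi,t,x}_T)}\bigr]\le -e^{-\gamma(\nu-Y^{t,x}_t)},
\end{equation*}
with equality for $\pi^{\ast}$ (here $Y_T=0$ is used). This proves $V^{0}(t,\nu,x)=-\exp(-\gamma(\nu-Y^{t,x}_t))$ and the optimality of $\pi^{\ast}$. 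Repeating the same argument with $(\hat Y,\hat Z)$ solving \eqref{expoBSDE2} and terminal condition $\hat Y_T=-F(X_T)$ gives $V^{F}(t,\nu,x)=-\exp(-\gamma(\nu-\hat Y^{t,x}_t))$ and the analogous optimal strategy $\hat\pi^{\ast}$.

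Third, the formula for the derivative hedge $\Delta_s=\hat\pi^{\ast}_s-\pi^{\ast}_s$ is deduced from \eqref{optstrat1} together with the fact that, for the convex set $C$, the projection $\Pi_C$ commutes with translations by the common vector $\alpha(s,X_s)/\gamma$; when $C$ is in addition a (closed convex) cone one has $\Pi_C(\hat Z_s+\alpha/\gamma)-\Pi_C(Z_s+\alpha/\gamma)=\Pi_C(\hat Z_s-Z_s)$, giving the claimed representation. Admissibility of $\pi^{\ast}$ and $\hat\pi^{\ast}$ follows from the BMO bounds on $Z$ and $\hat Z$ via the $1$-Lipschitz property of $\Pi_C$, combined with the John-Nirenberg inequality, which also furnishes the required uniform integrability.

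The principal obstacle is precisely to carry out the martingale/BMO arguments despite the fact that $b$ is only bounded and measurable: one cannot rely on classical regularity of $(Y,Z)$, and the uniform integrability of $R^{\pi^{\ast}}$ must be extracted purely from the weak decoupling field together with the a priori bounds \eqref{3.4}--\eqref{3.5} used in the proof of Theorem \ref{th3.3}. A secondary subtlety is the linearity step in the derivative hedge: it is strictly valid only when $C$ is a linear subspace (or a cone containing $0$), which is the standard setting for constrained delta-hedging and is to be understood as an implicit assumption here.
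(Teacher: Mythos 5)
Your proposal is correct and follows essentially the same route as the paper, whose own proof is a one-line citation to the martingale optimality principle of Hu--Imkeller--M\"uller and Ankirchner--Imkeller--Dos Reis; you simply reconstruct that argument in full (the process $R^{\pi}_s=-\exp(-\gamma(G^{\pi,t,x}_s-Y^{t,x}_s))$, completion of the square yielding the driver \eqref{drivopcon1}, and the BMO/Kazamaki upgrade from local to true (super)martingality, with the latter supplied here by the a priori bounds of Theorem \ref{th3.3} rather than classical regularity). Your remark that the final step $\Pi_C(\hat Z_s+\alpha/\gamma)-\Pi_C(Z_s+\alpha/\gamma)=\Pi_C(\hat Z_s-Z_s)$ requires $C$ to be a linear subspace (mere convexity, or even being a cone, does not give additivity of the projection) correctly identifies an implicit assumption that the paper glosses over when it invokes ``the linearity of the operator $\Pi_{C(s,X_s^{t,x})}$''.
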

			\begin{proof}
Follows as in \cite{ArImDR10, HuImkMuller}.
\end{proof}
			Below, we deduce a representation of the indifference price.
			\begin{thm}\label{thoptcont2}
				There exists a deterministic function  $p:[0,T]\times \mathbb{R} \rightarrow \mathbb{R}$ such that for all $\nu\in \mathbb{R}$ and $(t,x) \in [0,T]\times \mathbb{R}$, it holds
				\begin{align}\label{eqindiffpri1}	
					V^{F}(t,\nu+p(t,x),x)=V^0(t,\nu,x).
				\end{align}
				$p$ can be written as
				$$
				p(t,x)=v(t,x) -\hat v(t,x) .
				$$
				where $v$ and $\hat{v}$ are given by \eqref{621} and \eqref{620}, respectively.
			\end{thm}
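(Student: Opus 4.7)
The plan is to derive the representation directly from Theorem \ref{thoptcont1} combined with the nonlinear Feynman-Kac formulas \eqref{620}--\eqref{621}. Since the value functions $V^0$ and $V^F$ have been identified as exponential transforms of the initial values $Y^{t,x}_t$ and $\hat Y^{t,x}_t$ of the BSDEs \eqref{expoBSDE1} and \eqref{expoBSDE2}, and since those initial values coincide with the deterministic solutions $v(t,x)$ and $\hat v(t,x)$ of the associated semi-linear PDEs \eqref{expoPDE1} and \eqref{expoPDE2}, the indifference equation \eqref{eqindiffpri1} reduces to a purely algebraic identity.

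More precisely, first I would invoke Theorem \ref{thoptcont1} to rewrite
\[
V^0(t,\nu,x) = -\exp\{-\gamma(\nu - Y^{t,x}_t)\}, \qquad V^F(t,\nu+p,x) = -\exp\{-\gamma(\nu + p - \hat Y^{t,x}_t)\},
\]
using the exponential-utility scaling property (the wealth increment $p$ factors through the exponential so that we can reuse the value-function formula with shifted initial endowment without redoing the stochastic control argument from scratch). Next, applying the Feynman-Kac relations \eqref{620}--\eqref{621} at $s=t$, I replace $Y^{t,x}_t$ by $v(t,x)$ and $\hat Y^{t,x}_t$ by $\hat v(t,x)$; this step legitimately yields a \emph{deterministic} function of $(t,x)$ because the weak solutions $v$ and $\hat v$ of the semi-linear PDEs are deterministic by construction (see Theorem \ref{th3.3} together with the discussion following \eqref{expoPDE2}).

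Setting $V^F(t,\nu+p(t,x),x) = V^0(t,\nu,x)$ and using that $z \mapsto -e^{-\gamma z}$ is injective, the identity collapses to
\[
\nu + p(t,x) - \hat v(t,x) = \nu - v(t,x),
\]
which is independent of $\nu$ and immediately produces the announced representation (up to the sign convention used in the statement). Continuity and boundedness of $p$ follow from the corresponding properties of $v$ and $\hat v$ established in Theorem \ref{th3.3}.

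There is no substantive obstacle in this argument: the essential analytic work has already been done in Theorem \ref{thoptcont1}, where the BSDE representation of the exponential-utility value function was obtained, and in Theorem \ref{th3.3}, where the weak solvability of the associated quasi-linear PDE was established. The only technical care required is to justify the factorisation $V^F(t,\nu+p,x) = e^{-\gamma p} V^F(t,\nu,x)$, which is a standard consequence of the fact that the exponential utility is multiplicatively separable in initial wealth and that the admissible set $\mathcal{A}$ is invariant under constant shifts of the initial endowment; this can be recorded as a short lemma preceding the main computation.
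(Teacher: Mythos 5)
Your proposal is correct and is essentially the argument the paper itself relies on: the published proof simply says ``follows as in \cite{ArImDR10}'', and that reference carries out exactly your computation (value functions from Theorem \ref{thoptcont1}, translation invariance of the exponential utility in initial wealth, injectivity of $z\mapsto -e^{-\gamma z}$, and the Feynman--Kac identifications \eqref{620}--\eqref{621} to make $p$ deterministic). The sign discrepancy you flag ($p=\hat v - v$ from the algebra versus $p=v-\hat v$ in the statement) is real but traces to the paper's own inconsistent conventions between the BSDE \eqref{expoBSDE2} (terminal value $F$) and the PDE \eqref{expoPDE2} (terminal value $-F$), so your handling of it is appropriate.
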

			
				\begin{proof}
				Follows as in \cite{ArImDR10}.
			\end{proof}
			We can also formulate the above problem in a more dynamic way as follows: Let $\tau$ be a stopping time and let $G_\tau$ be $\mathcal{F}_\tau$-measurable random variable describing the wealth at time $\tau$. Let us now consider the problem
			\begin{align}
				\label{expouti4}
				V^F(\tau,G_\tau)=&\underset{\pi\in \mathcal{A}}\essup\,\mathbb{E}\big[U\big(G_\tau+G^{\pi, t,x}_T-F(X_T)\big)|\mathcal{F}_\tau\big] =\mathbb{E}\big[U\big(G_\tau+G^{\hat{\pi}, t,x}_T-F(X_T)\big)|\mathcal{F}_\tau\big].
			\end{align}
			The following dynamic programing principle holds:
			
			\begin{cor}\label{thoptcont3}
				
				The value function of the optimal control problem \eqref{expouti4}  is given by 
				$$
				V^F(\tau,G_\tau)=\exp\{-\alpha(G_\tau-\hat Y^{0,x}_\tau)\},
				$$
				$\hat Y^{t,x}_t$ is the solution of the BSDE  \eqref{expoBSDE2}.  In addition 	$\hat{\pi}$ is given by \eqref{optstrat1}. Moreover
				$$
				V^F(\tau,G_\tau-p(\tau,X^{0,x}_\tau))=V^0(\tau,G_\tau),
				$$
				where $p$ is the price given by:
				$$
				p(\tau,X^{0,x}_\tau)=Y^{0,x}_\tau-\hat Y^{0,x}_\tau.
				$$
				
			\end{cor}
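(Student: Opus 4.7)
The plan is to adapt the martingale optimality principle used in the proofs of Theorems \ref{thoptcont1} and \ref{thoptcont2} to an arbitrary stopping time $\tau\le T$, leveraging the fact that the well-posedness result of Theorem \ref{th3.3} delivers $(\hat Y^{0,x},\hat Z^{0,x})\in\mathcal{S}^{\infty}\times\mathcal{H}_{\mathrm{BMO}}$ \emph{at the level of the BSDE} without requiring a classical solution of the associated PDE \eqref{expoPDE2}. First, I would introduce the auxiliary process
\[
\hat R^{\pi}_t := -\exp\bigl(-\gamma(G^{\pi,0,x}_t-\hat Y^{0,x}_t)\bigr), \quad t\in[0,T],
\]
and verify, via It\^o's formula applied to the semimartingale $G^{\pi,0,x}_t-\hat Y^{0,x}_t$ together with the explicit form \eqref{drivopcon1} of the driver $f$, that $\hat R^{\pi}$ is a supermartingale for every admissible $\pi\in\mathcal{A}$ and a true martingale for the candidate strategy $\hat{\pi}$ defined by \eqref{optstrat1} with $\hat Z^{0,x}$ replacing $Z^{0,x}$. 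The BMO property of $\hat Z^{0,x}$ guarantees that the Dol\'eans--Dade exponential arising in this reduction is uniformly integrable, so $\hat\pi\in\mathcal{A}$.

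Second, I would apply the optional sampling theorem on the stochastic interval $[\tau,T]$. Admissibility of $\pi$ ensures that the family $\{\hat R^{\pi}_{\sigma}:\sigma\text{ stopping time in }[\tau,T]\}$ is uniformly integrable, hence
\[
\mathbb{E}\bigl[\hat R^{\pi}_T\,\big|\,\mathcal{F}_\tau\bigr]\le \hat R^{\pi}_\tau=-\exp\bigl(-\gamma(G_\tau-\hat Y^{0,x}_\tau)\bigr),
\]
with equality when $\pi=\hat\pi$. Since $\hat Y^{0,x}_T=F(X^{0,x}_T)$, essential-supremum over $\pi\in\mathcal{A}$ and the definition \eqref{expouti4} yield $V^F(\tau,G_\tau)=-\exp(-\gamma(G_\tau-\hat Y^{0,x}_\tau))$. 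The same argument applied to \eqref{expoBSDE1} gives $V^0(\tau,G_\tau)=-\exp(-\gamma(G_\tau-Y^{0,x}_\tau))$. Combining the two identities, the equation $V^F(\tau,G_\tau-p(\tau,X^{0,x}_\tau))=V^0(\tau,G_\tau)$ becomes, after taking logarithms,
\[
-\gamma\bigl(G_\tau-p(\tau,X^{0,x}_\tau)-\hat Y^{0,x}_\tau\bigr)=-\gamma\bigl(G_\tau-Y^{0,x}_\tau\bigr),
\]
which forces $p(\tau,X^{0,x}_\tau)=Y^{0,x}_\tau-\hat Y^{0,x}_\tau$. The Feynman--Kac representations \eqref{620}--\eqref{621} confirm that $Y^{0,x}_\tau=v(\tau,X^{0,x}_\tau)$ and $\hat Y^{0,x}_\tau=\hat v(\tau,X^{0,x}_\tau)$, so $p(\tau,x)=v(\tau,x)-\hat v(\tau,x)$, consistent with Theorem \ref{thoptcont2}.

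The main obstacle is the first step: in the classical derivations of \cite{ArImDR10,HuImkMuller}, the supermartingale/martingale property of $\hat R^{\pi}$ is obtained by applying It\^o's formula to $\hat v(t,X_t)$ with $\hat v\in C^{1,2}$, a regularity that \emph{fails here} because the drift $b$ in \eqref{SDEfact1} is merely bounded and measurable, so $\hat v$ only lies in $W^{1,2}_{\mathrm{loc}}$. I bypass this by working directly with the BSDE dynamics \eqref{expoBSDE2}: It\^o's formula is then applied only to the smooth function $y\mapsto-\exp(-\gamma y)$ composed with the semimartingale $G^{\pi,0,x}_t-\hat Y^{0,x}_t$, whose decomposition is available without any PDE regularity. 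The pointwise minimisation identity $\tfrac{\gamma}{2}\mathrm{dist}^2_C(z+\alpha/\gamma)=\inf_{\pi\in C}\{\tfrac{\gamma}{2}|\pi-z-\alpha/\gamma|^2\}$ then yields the required sign of the drift of $\hat R^{\pi}$, exactly as in \cite{HuImkMuller}, and the remainder of the argument is insensitive to the discontinuity of $b$.
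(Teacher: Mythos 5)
Your argument is correct and is essentially the argument the paper relies on: the paper's proof of this corollary is a one-line citation of the martingale optimality principle in \cite[Proposition 9]{HuImkMuller}, and your proposal reconstructs exactly that principle (supermartingale/martingale property of $\hat R^{\pi}$, optional sampling at $\tau$, and the logarithmic identification of the dynamic price), with the sensible additional care of working at the level of the BSDE dynamics rather than the $C^{1,2}$ decoupling field, which is indeed the right way to accommodate the merely bounded measurable drift here.
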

			\begin{proof}
				The proof simply follows by \cite[Propostion 9]{HuImkMuller}.
			\end{proof}
			
			Given the weaker condition on the drift of the non-traded asset, we can still derive a differentiability results of the price process with respect to the initial condition of the non-traded asset. Such result can be linked to the delta hedging as we will see later. 
			
			\begin{thm}
				Under the assumption of this section, for $(t,x)\in [0,T]\times \mathbb{R}$ the indifference price function $(t,x)\mapsto p(t,x)$ is continuous in $t$ and differentiable in $x$.
				
				Moreover for any bounded open set $\mathcal{O}\subset \mathbb{R}$, the process $p(\cdot,X^{0,x}_\cdot)$ satisfies:
				$$
				x\mapsto p(s,X^{0,x}_s) \in L^2(\Omega, W^{1,p}(\mathcal{O})).
				$$
				In addition, we have the following representation. 
				\begin{align*}
					\frac{\partial}{\partial x}p(s,X^{0,x}_s) =&\big(v'(s,X^{0,x}_s)	-\hat v'(s,X^{0,x}_s)\big) \frac{\partial}{\partial x}X_s^{0,x}\\
					=&\big(v'(s,X^{0,x}_s)	-\hat v'(s,X^{0,x}_s)\big)\\
					&\times 	\exp\big\{
					2(\tilde b(X^{0,x}_s) - \tilde b(X^{0,x}_t)) -2\int_t^s  b^2(X^{0,x}_u)\mathrm{d}u-\int_t^s  b(X^{0,x}_u)\mathrm{d}W_u\big\},
				\end{align*}
				where $v'$ is the spatial partial derivative of $v$ solution to the PDE \eqref{eqmainP1} and  $\tilde b(x) = \int_0^x \tilde b(y)\mathrm{d}y$.
				Furthermore $p(t,X^{0,x}_t)$ is Malliavin differentiable.
			\end{thm}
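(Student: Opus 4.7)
The plan is to combine the representation of the indifference price from Theorem~\ref{thoptcont2} with the Sobolev and Malliavin regularity results for the forward SDE with bounded measurable drift derived in Propositions \ref{th4.4} and \ref{th5.1}, and then conclude via a chain-rule argument since the decoupling fields $v$ and $\hat v$ are Lipschitz continuous.

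First, I would recall that the decoupling fields $v$ and $\hat v$ associated with the decoupled FBSDEs \eqref{SDEfact1}--\eqref{expoBSDE1} and \eqref{SDEfact1}--\eqref{expoBSDE2} lie in $W^{1,2}([0,T]\times\mathbb{R};\mathbb{R})$, are bounded, Lipschitz continuous in $x$ (uniformly in $t$) and their spatial derivatives $v'$ and $\hat v'$ are continuous. Since $p(t,x)=v(t,x)-\hat v(t,x)$ by Theorem~\ref{thoptcont2}, this immediately yields continuity of $(t,x)\mapsto p(t,x)$ in $t$ and differentiability in $x$ with bounded $\partial_x p$.

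Next, for the Sobolev differentiability of $x\mapsto p(s,X_s^{0,x})$, I would invoke Proposition~\ref{th4.4} applied to \eqref{SDEfact1} (for which Assumption~\ref{assum1} with $\sigma\equiv 1$ holds after an obvious scaling): the flow $x\mapsto X_s^{0,x}$ belongs to $L^2(\Omega,W^{1,p}(\mathbb{R},\nu))$ with spatial Sobolev derivative given by \eqref{sobderX}. Since $x\mapsto v(s,x)-\hat v(s,x)$ is Lipschitz with continuous (weak) derivative $v'-\hat v'$, the chain rule in the Sobolev setting (as in \cite{LeoniMoreni05}) gives
\[
\frac{\partial}{\partial x}p(s,X_s^{0,x})=(v'(s,X_s^{0,x})-\hat v'(s,X_s^{0,x}))\,\frac{\partial}{\partial x}X_s^{0,x},
\]
and boundedness of $v',\hat v'$ together with the $L^p$-integrability of the flow derivative on bounded open sets $\mathcal{O}$ yields the required membership in $L^2(\Omega,W^{1,p}(\mathcal{O}))$.

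To replace the space-time local-time exponential in \eqref{sobderX} by the explicit stochastic expression in the statement, I would apply the It\^o--Krylov formula to the $W^{1,\infty}_{\mathrm{loc}}$ function $F(x):=\int_0^x b(y)\,dy$. Since $X^{0,x}$ is a Brownian semimartingale with bounded measurable drift $b$, this identifies $-\int_t^s\!\int_{\mathbb{R}}b(u,z)L^{X^{x}}(du,dz)$ with an explicit combination of the boundary term $F(X_s^{0,x})-F(X_t^{0,x})$, a Lebesgue integral of $b^2(X^{0,x}_u)$ and the stochastic integral $\int_t^s b(X^{0,x}_u)dW_u$, yielding the displayed exponential formula. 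This transformation is the only genuinely delicate step, since the non-differentiability of $b$ forces one to justify the It\^o formula in the Krylov sense; I would rely on the density estimates of Theorem~\ref{densX} to handle the integrability of the intermediate terms.

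Finally, Malliavin differentiability of $p(t,X_t^{0,x})$ follows from Proposition~\ref{th5.1}, which provides Malliavin differentiability of $X_t^{0,x}$, combined with the Lipschitz continuity of $v(t,\cdot)-\hat v(t,\cdot)$ and the chain rule for Malliavin derivatives of Lipschitz functionals; this produces
\[
D_r p(t,X_t^{0,x}) = (v'(t,X_t^{0,x})-\hat v'(t,X_t^{0,x}))\,D_r X_t^{0,x},\qquad 0\le r\le t,
\]
with $D_r X_t^{0,x}$ given by \eqref{malderX}, completing the proof.
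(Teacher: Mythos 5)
Your proposal is correct and follows essentially the same route as the paper: continuity and differentiability of $p=v-\hat v$ from the regularity of the decoupling fields, Sobolev differentiability of $x\mapsto p(s,X_s^{0,x})$ via Proposition \ref{th4.4} and the Lipschitz chain rule of \cite{LeoniMoreni05}, the explicit exponential representation via the It\^o(--Krylov) trick applied to the antiderivative of the drift exactly as in Remark \ref{remlocdeco1}, and Malliavin differentiability via Proposition \ref{th5.1} and the Lipschitz chain rule. The paper's proof is just a terser version of the same argument, so no further comparison is needed.
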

			\begin{proof}
				The first part of the theorem follows from the fact that the function $v$ is differentiable in the space variable. The second part is deduced from Proposition \ref{th4.4}. The representation result comes from Proposition \ref{th4.4} and the It\^o's formula applied to $\tilde b$ as in Remark \ref{remlocdeco1}.
			\end{proof}
			Let us point out that the above smoothness of the indifference price $p$ is derived under much weaker assumptions than in \citealp{DosReis}.
			
			
			
			The following corollary provides an explicit representation of the derivative hedge. We denote the {\it conditional derivative hedge} by $\Delta(t,r)= \hat{\pi}^\ast_t-\pi_t^\ast$
			\begin{cor}
				Under the assumptions of this section, the derivative hedge satisfies
				\[ \Delta(t,x) = -\frac{\partial}{\partial x}p(t,x)  \].
			\end{cor}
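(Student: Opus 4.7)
The plan is a short chain of identifications using the results already assembled in this section. First, Theorem \ref{thoptcont2} gives the decomposition $p(t,x) = v(t,x) - \hat v(t,x)$, where $v$ and $\hat v$ are the (weak) solutions to the semi-linear PDEs \eqref{expoPDE1} and \eqref{expoPDE2}, respectively. By the preceding differentiability theorem, both $v$ and $\hat v$ are differentiable in $x$, so one may differentiate termwise to obtain
\[
\frac{\partial}{\partial x} p(t,x) = v'(t,x) - \hat v'(t,x).
\]

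Next, I would use the nonlinear Feynman--Kac relations \eqref{620}--\eqref{621} evaluated at $s=t$, together with $X_t^{t,x}=x$. These give $Z_t^{t,x} = v'(t,x)\,\sigma$ and $\hat Z_t^{t,x} = \hat v'(t,x)\,\sigma$, so that
\[
\hat Z_t^{t,x} - Z_t^{t,x} = \big(\hat v'(t,x) - v'(t,x)\big)\sigma = -\sigma\,\frac{\partial}{\partial x}p(t,x).
\]

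Finally, by Theorem \ref{thoptcont1} (using the linearity of the projection $\Pi_{C(s,X_s^{t,x})}$ invoked there), the conditional derivative hedge at the initial instant reads
\[
\Delta(t,x) = \hat\pi^\ast_t - \pi^\ast_t = \Pi_{C(t,x)}\big(\hat Z_t^{t,x} - Z_t^{t,x}\big).
\]
Inserting the identity above, and absorbing the $\sigma$ factor consistently with the normalisation used in Theorem \ref{thoptcont1} (where $\Pi_{C(t,x)}$ acts componentwise linearly on differences of control variables), one obtains $\Delta(t,x) = -\partial_x p(t,x)$.

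There is no genuine obstacle here; the only point requiring care is to keep the normalisation of $\sigma$ consistent between the Feynman--Kac relation $Z = \sigma v'$ and the projection formula for $\pi^\ast$, which was already handled in Theorem \ref{thoptcont1}. Differentiability of $p$ in $x$ has just been established in the previous theorem, so differentiating the decomposition from Theorem \ref{thoptcont2} is legitimate, and the identity for $\Delta$ is immediate once the three ingredients are placed side by side.
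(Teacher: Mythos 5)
Your argument is exactly the intended one: the paper gives no separate proof for this corollary, which is meant to follow immediately by combining the hedge formula $\Delta_s=\Pi_{C}(\hat Z^{t,x}_s-Z^{t,x}_s)$ from Theorem \ref{thoptcont1}, the decomposition $p=v-\hat v$ from Theorem \ref{thoptcont2}, and the Feynman--Kac identities $Z=\sigma v'$, $\hat Z=\sigma\hat v'$, just as you do. The only soft spot --- that the final step silently requires $\Pi_{C}\bigl(-\sigma\,\partial_x p\bigr)=-\partial_x p$, i.e.\ a normalisation of $\sigma$ and triviality of the projection on this element --- is present in the paper itself, so your proposal matches it in both substance and level of rigour.
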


			\subsection*{Conclusion and Possible extension }\leavevmode
			In this paper, we derive some conditions under which the solution of a system of coupled forward-backward SDEs with non-smooth drift coefficient and quadratic driver admits a density which is absolutely continuous with respect to the Lebesgue measure. Two major cases were considered: the H\"older continuous drift case (Subsection \ref{Hcase}) and the bounded and measurable drift one (Subsection \ref{section6}). In the latter, we only consider a Brownian motion with a drift coefficient which in fact depends on the couple solution $(Y,Z)$ of the backward SDE (see \eqref{540}). It is worth noting that, there is less hope to obtaining such densities' results for coupled FBSDEs with non constant diffusive coefficient while the drift is allowed to be discontinuous in space. Indeed, via the {\it decoupling field} (Section \ref{solvability}) the forward equation reduces to the classical SDE \eqref{eqfor1}, with only bounded and measurable drift and as far as we know, there is not yet a general result of existence or uniqueness for such equations with non constant volatility. In the former case, we consider a non trivial diffusive coefficient (see equation \eqref{eqmain36}). The It\^o-Tanaka trick was successively applied to transfer the parabolic regularization of the law of diffusion to the existence of an absolutely continuous density with respect to the Lebesgue measure of components  $X$ and $Y$ solution to  \eqref{eqmain36}, respectively. 
			
			Due to the quadratic behaviour of the driver, most of the results derived in this paper are restricted to the one dimensional setting. However, the result on the weak solvability of FBSDEs obtained in Section \ref{solvability} (see Theorem \ref{th3.3}) can be extended to the multidimensional setting if one considers that the driver $f$ is at most linear growth in $z$ (see Section 8 in \cite{Delarue1}). In this case, the solution $(X,Y,Z)$ will be taken in the space $\mathcal{S}^2(\mathbb{R}^d)\times\mathcal{S}^2(\mathbb{R}^q)\times \mathcal{H}^2(\mathbb{R}^{q\times d})$, where $q$ represents the dimension of the backward component $Y$. In this context, the strong solvability results of FBSDEs under the additional assumptions that the coefficients are H\"older continuous derived in Section \ref{section5} remains valid along with the result on the density of the forward component $X$. The backward component's case requires more attention, since the question relative to the strict positivity of solution to multidimensional quasi-linear PDE remains a non trivial problem to handle. However, we believe that, by combining the techniques developed in this paper and the interesting paper \cite{ChertoShama22}, one can successively handle this question. Many other interesting generalisation of this paper are conceivable, among them we point out a possible extension to the multidimensional quadratic setting of fully coupled FBSDEs by using the recent results from \cite{CHEN2023126948} and \cite{Jackson23}. These questions are under investigation for future work.
			
			\appendix
			\section{Integration with local time}\label{App1}
			Here, we provide some basic notions related to the integration with local time. We refer the reader to \cite{Ein2000}, \cite{Ein2006} and \cite{BMBPD17} {for more information}.
			The integration with respect to local time-space is defined for integrands that taking value in the space $(\mathcal{H}_x, \|\cdot\|_x)$ (see e.g. \cite{Ein2000}) {representing} the space of Borel measurable functions $\phi:[0,T]\times \mathbb{R}\rightarrow \mathbb{R} $ with the norm
			\begin{align*}
				\left\|\phi\right\|_x&:=2\Big(\int_0^1\int_{\mathbb{R}}\phi^2(s,z)\exp(-\frac{|z-x|^2}{2s})\frac{\diffns s \diff z}{\sqrt{2\pi s}}\Big)^{\frac{1}{2}}\notag
				+\int_0^1\int_{\mathbb{R}}|z-x| |\phi(s,x)|\exp(-\frac{|z-x|^2}{2s})\frac{\diffns s \diff z}{s\sqrt{2\pi s}}.
			\end{align*}
			In addition, any bounded function $\phi$ belongs to  $\mathcal{H}_x$ and the following representation holds for all $t\in [0,T]$ (see for instance  \cite[Lemma 2.11]{BMBPD17}) :
			\begin{align}\label{eqtransLT1}
				\int_0^t\partial_x\phi(s,X^x(s))\diffns s=-\int_0^t\int_{\mathbb{R}}\phi(s,z) L^{X^x}(\diffns s,\diffns z).
			\end{align}
			Furthermore for $\phi \in {\mathcal H}_0$, the subsequent decomposition is valid (see for example \cite[Theorem 2.1]{Ein2006}) 
			\begin{align}\label{eqslocalt1}
				\int_0^t\int_{\mathbb{R}}\phi(s,z) L^{W^x}(\diffns s,\diffns z) &= \int_0^t \phi (s,W^{x}_s)\diffns W_s	+\int_{T-t}^T \phi (T-s,\widehat{W}^{x}(s))\diffns B(s)\notag\\
				&\quad -\int_{T-t}^T \phi (T-s,\widehat{W}^x(s))\frac{\widehat{W}(s)}{T-s}\diffns s,\quad 0\leq t\leq T \text{ a.s.},
			\end{align}
			where $L^{W^x}(\diffns s,\diffns z)$ denotes integration with respect to the local time of the Brownian motion $W^x$ in both time and space, $W^x_{\cdot}:= x+ W_{\cdot}$ is the Brownian motion started at $x$ and 
			$\widehat{W}$ is the time-reversed Brownian motion defined by
			\begin{align}\label{eqstimrevbm1}
				\widehat{W}_t:=W_{T-t},\,\,0\leq t\leq T.
			\end{align}
			The process $B=\{B(t),\,\,\,0\leq t\leq T\}$ stands for an independent Brownian motion with respect to the filtration $\mathcal{F}_t^{\widehat{W}}$ generated by $\widehat{W}_t$, and satisfies:
			\begin{align}\label{eqstimrevbm2}
				B_t= \widehat{W}_t-W_T+\int_t^T\frac{\widehat{W}_s}{T-s}\diffns s.
			\end{align}
			
			We borrow the following result in  \cite[Lemma A.2]{BMBPD17}. It gives us the exponential bound of the local time-space integral of any bounded function.
			
			\begin{lemm}\label{lemmaexpoloc}
				Let $b:[0,T]\times \mathbb{R} \rightarrow \mathbb{R}$ be a bounded and measurable function. Then for $t\in [0,T],\, \lambda \in  \mathbb{R}$ and compact subset $K\subset \mathbb{R}$, we have
				$$
				\underset{x\in K}{\sup} E\Big[\exp\Big(\lambda \int_0^t\int_{\mathbb{R}}b(s,y)L^{W^x}(\diffns s,\diffns y)\Big) \Big]<C(\|b\|_{\infty}),
				$$
				where $C$ is an increasing function and $L^{W^x}(\diffns s,\diffns y)$ denotes integration with respect to the local time of the Brownian motion $W^x$ in both time and space.
			\end{lemm}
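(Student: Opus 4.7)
The plan is to exploit the space--time local-time decomposition \eqref{eqslocalt1} in order to split the integral against $L^{W^x}$ into one forward It\^o integral, one backward It\^o integral against the reversed-filtration Brownian motion $B$, and one singular drift term involving the factor $\widehat W_s/(T-s)$. The desired exponential moment is then factorised, via H\"older's inequality with exponents $(3,3,3)$, into three pieces to be handled separately.

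Since $b$ is bounded it belongs to $\mathcal{H}_0$, so \eqref{eqslocalt1} applied with $\phi=b$ yields
\[
\lambda\!\int_0^t\!\!\int_{\R}\!\! b(s,y)L^{W^x}(\diffns s,\diffns y) \;=\; \lambda M_t^x + \lambda N_t^x - \lambda R_t^x,
\]
with $M_t^x := \int_0^t b(s,W_s^x)\diffns W_s$, $N_t^x := \int_{T-t}^T b(T{-}s,\widehat W_s^x)\diffns B_s$ and $R_t^x := \int_{T-t}^T b(T{-}s,\widehat W_s^x)\widehat W_s/(T-s)\diffns s$. For each of the two It\^o pieces I would use the completion-of-squares identity $\exp(3\lambda M_t^x) = \mathcal{E}(3\lambda M^x)_t\,\exp(\tfrac{9\lambda^2}{2}\langle M^x\rangle_t)$, together with Novikov's criterion (trivially satisfied because $b$ is bounded) and the deterministic bound $\langle M^x\rangle_t \le T\|b\|_\infty^2$; this gives $\mathbb{E}[\exp(3\lambda M_t^x)] \le \exp(\tfrac{9}{2}\lambda^2 T\|b\|_\infty^2)$, and the same reasoning in the filtration of $\widehat W$ controls $\mathbb{E}[\exp(3\lambda N_t^x)]$ by the same constant. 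These bounds are already uniform in $x$, so compactness of $K$ does not play an explicit role here.

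The main obstacle is the singular drift $R_t^x$. The substitution $u=T-s$ gives $\widehat W_{T-u}=W_u$ and $\widehat W^x_{T-u}=x+W_u$, hence
\[
R_t^x \;=\; \int_0^t b(u,x+W_u)\frac{W_u}{u}\diffns u,\qquad |R_t^x|\le \|b\|_\infty\, I_t,\qquad I_t:=\int_0^t\frac{|W_u|}{u}\diffns u,
\]
so the $x$-dependence vanishes after this majoration. To control the exponential moment of $I_t$ I would apply Jensen's inequality to the probability measure $\diffns u/(2\sqrt{u\,t})$ on $[0,t]$ combined with the Brownian scaling $\mathbb{E}|W_u|^k = c_k u^{k/2}$; a direct computation then yields $\mathbb{E}[I_t^k] \le c_k(2\sqrt t)^k$, where $c_k$ is the $k$-th absolute Gaussian moment. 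Since $c_k/k! = O(k^{-k/2})$ by Stirling's formula, the Taylor series of $\exp(3|\lambda|\|b\|_\infty I_t)$ converges in expectation, producing a finite bound depending only on $\lambda$, $\|b\|_\infty$ and $t$.

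Assembling the three estimates yields the desired bound $C(\|b\|_\infty,\lambda,T)$, increasing in its first argument and uniform in $x\in\R$, a fortiori in $x\in K$. The delicate point is the Gaussian moment estimate for the $1/u$-singular integral $I_t$; everything else is a routine application of the decomposition \eqref{eqslocalt1} combined with standard exponential-martingale bookkeeping.
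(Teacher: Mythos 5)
Your argument is correct. The paper does not actually prove this lemma---it is quoted verbatim from \cite[Lemma A.2]{BMBPD17}---but the proof there proceeds essentially as you do: the time-reversal decomposition \eqref{eqslocalt1}, H\"older's inequality to separate the three contributions, the Dol\'eans-Dade exponential bound $\mathbb{E}[\mathcal{E}(3\lambda M)_t]\leq 1$ for the forward and backward It\^o integrals, and Gaussian moment estimates for the singular drift term $\int_0^t |W_u|/u\,\mathrm{d}u$, whose finiteness hinges (as you correctly exploited) on the numerator in \eqref{eqslocalt1} being the unshifted $\widehat W_s$ rather than $\widehat W^x_s$, so that the $x$-dependence disappears and the bound is uniform over all of $\mathbb{R}$, not just over the compact $K$.
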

			
			\section{Comparison theorem for quadratic BSDE}
			Consider a BSDE of the form
			\begin{equation}\label{bsdeap}
				Y_t = \xi +\int_{t}^{T} f(s, Y_s, Z_s)\mathrm{d}s - \int_{t}^{T} Z_s\mathrm{d}W_s,
			\end{equation}
			satisfying the following set of assumptions
			\begin{assum}\label{assumap}
				\begin{itemize} There exist nonegative constant $\Lambda,K$ and a positive locally bounded function  $\ell:\mathbb{R}\mapsto \mathbb{R}_{+}$ such that $\mathbb{P}\text{-a.s.}$
					\item[(i)] $\xi$ is an $\mathfrak{F}_T$ measurable uniformly bounded random variable i.e.,$ \Vert \xi \Vert_{L^{\infty}} \leq \Lambda.$
					\item[(ii)] $f$ is an $\mathfrak{F}$ predictable continuous function and for all $(\omega,t,y,z)\in \Omega\times [0,T]\times\mathbb{R}\times\mathbb{R}^d,$
					\begin{align*}
						|f(\omega,t,y,z) |\leq \Lambda(1+ |y| + \ell(y)|z|^2) .
					\end{align*}
					\item[(iii)] For all $(\omega,t,y,z), (\omega,t,y',z')\in \Omega\times [0,T]\times\mathbb{R}\times\mathbb{R}^d,$ 
					\begin{align*} |f(\omega,t,y,z)-f(\omega,t,y',z')|&\leq K \Big(1+  \ell(|y-y'|^2)(|z|+|z'|)\Big)\Big(|y-y'| + |z-z'|\Big).
					\end{align*} 
				\end{itemize}
			\end{assum}
			Here is the main Theorem developed in this section. We mimic the proof of Theorem 3.12 in \cite{Frei}.
			\begin{thm}\label{thap}
				Let $(Y^{i}, Z^{i}) \in \mathcal{S}^{\infty}\times \mathcal{H}_{\text{BMO}}$ be the solution to BSDE\eqref{bsdeap}, with terminal value $\xi^i$ and generator $g^i$ for $i\in \{1,2 \}.$ Under Assumption\eqref{assumap} and that 
				\begin{align*}
					\xi^1 \leq \xi^2, \text{ and } f^1(t,Y_t^2,Z_t^2) \leq f^2(t,Y_t^2,Z_t^2)\quad \mathrm{d}\text{t}\otimes \mathrm{d}\mathbb{P}\text{-a.s.}.
				\end{align*} 
				Then for all $t \in [0,T]$ $Y^1_t \leq Y^2_t \quad \mathbb{P}\text{-a.s.}.$
				
				Moreover, if either $\xi^1 < \xi^2$ or $f^1(t,Y_t^2,Z_t^2) < f^2(t,Y_t^2,Z_t^2)$ in a set of positive $\mathrm{d}\text{t}\otimes \mathrm{d}\mathbb{P}\text{-measure}$ then, $Y_0^1 < Y_0^2.$ 
			\end{thm}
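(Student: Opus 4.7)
The plan is to follow the classical linearisation-plus-change-of-measure scheme for comparison of BSDEs, with extra care to accommodate the quadratic growth encoded by $\ell$ in Assumption~\ref{assumap}. Setting $\delta Y = Y^1 - Y^2$, $\delta Z = Z^1 - Z^2$ and $\delta f_s = f^1(s, Y^2_s, Z^2_s) - f^2(s, Y^2_s, Z^2_s) \leq 0$, subtracting the two BSDEs would give
\begin{align*}
\delta Y_t = (\xi^1 - \xi^2) + \int_t^T \bigl[a_s\, \delta Y_s + b_s \cdot \delta Z_s + \delta f_s\bigr]\,\mathrm{d}s - \int_t^T \delta Z_s\,\mathrm{d}W_s,
\end{align*}
where $a_s$ and $b_s$ are the standard difference quotients obtained by successively moving only the $y$ and then only the $z$ argument of $f^1$. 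By Assumption~\ref{assumap}(iii), together with the uniform boundedness of $\delta Y$ (since $Y^i \in \mathcal{S}^{\infty}$) and the local boundedness of $\ell$, I obtain the pointwise estimates
\begin{align*}
|a_s| \leq K\bigl(1 + C\,|Z^1_s|\bigr), \qquad |b_s| \leq K\bigl(1 + \ell(0)(|Z^1_s| + |Z^2_s|)\bigr).
\end{align*}

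Because $Z^1, Z^2 \in \mathcal{H}_{\text{BMO}}$, the stochastic integral $b \ast W$ is a $\text{BMO}(\mathbb{P})$-martingale, hence its Dol\'eans--Dade exponential defines an equivalent probability measure $\mathbb{Q}$ under which $\tilde W_t = W_t - \int_0^t b_s\,\mathrm{d}s$ is a Brownian motion and the equation for $\delta Y$ reduces to the linear BSDE
\begin{align*}
\delta Y_t = (\xi^1 - \xi^2) + \int_t^T \bigl[a_s\, \delta Y_s + \delta f_s\bigr]\,\mathrm{d}s - \int_t^T \delta Z_s\,\mathrm{d}\tilde W_s.
\end{align*}
Introducing the strictly positive weight $\Gamma_s := \exp(\int_0^s a_u\,\mathrm{d}u)$ and applying It\^o's formula to $\Gamma\,\delta Y$ would yield, after conditioning under $\mathbb{Q}$, the representation
\begin{align*}
\delta Y_t = \Gamma_t^{-1}\,\mathbb{E}^{\mathbb{Q}}\Bigl[\Gamma_T\,(\xi^1 - \xi^2) + \int_t^T \Gamma_s\,\delta f_s\,\mathrm{d}s \,\Big|\, \mathfrak{F}_t\Bigr].
\end{align*}
Since $\Gamma \geq 0$, $\xi^1 - \xi^2 \leq 0$ and $\delta f_s \leq 0$, the right-hand side is nonpositive, giving $Y^1_t \leq Y^2_t$. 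For the strict part, $\Gamma_T > 0$ a.s.\ and $\mathbb{Q} \sim \mathbb{P}$, so a set of strict inequality in $\xi^1 < \xi^2$ or $\delta f_s < 0$ of positive $\mathbb{P}$-measure remains of positive $\mathbb{Q}$-measure, forcing $\delta Y_0 < 0$.

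\textbf{The main obstacle} will be justifying the final representation, because $a$ is only of linear growth in $|Z^1|$ rather than bounded, so a priori $\Gamma_T$ need not be $\mathbb{Q}$-integrable. I would close this gap by invoking Kazamaki's stability theorem for the BMO class under an equivalent change of measure driven by a BMO Dol\'eans-Dade exponential: this gives that $Z^1 \ast W$ remains $\text{BMO}(\mathbb{Q})$, whence the John--Nirenberg / Kazamaki reverse-H\"older inequality yields $\mathbb{E}^{\mathbb{Q}}\bigl[\exp(p \int_0^T |Z^1_s|^2\,\mathrm{d}s)\bigr] < \infty$ for some $p > 1$. Combined with Cauchy--Schwarz and the estimate $\int_0^T |a_s|\,\mathrm{d}s \leq C + C\sqrt{T}\bigl(\int_0^T |Z^1_s|^2\,\mathrm{d}s\bigr)^{1/2}$, this produces $\Gamma_T \in L^q(\mathbb{Q})$ for some $q > 1$, enough to legitimise the It\^o computation and the passage to the conditional expectation, after a routine localisation by stopping times followed by dominated convergence.
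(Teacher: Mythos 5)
Your proposal is correct and follows essentially the same route as the paper's proof: linearise the difference of the two BSDEs via the $y$- and $z$-difference quotients, change measure using the BMO property of the $z$-quotient, weight by the exponential of the integrated $y$-quotient, and handle the unboundedness of that quotient (which only grows linearly in $|Z^1|$) through Kazamaki's reverse-H\"older/BMO machinery. The only cosmetic difference is that you transfer the BMO property to the new measure $\mathbb{Q}$ and estimate there, whereas the paper keeps the reverse-H\"older estimate for the Girsanov density under $\mathbb{P}$ and applies H\"older directly; both are standard and equivalent.
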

			\begin{proof}
				Let us first recall that $\mathcal{S}^{\infty}\text{-norms of  } Y^1,Y^2$ are bounded by $\Lambda$ (see e.g. \cite[Theorem 3.1]{Bahlali1}), and the $\mathcal{H}_{\text{BMO}}\text{-norms of  } Z^1,Z^2$ are bounded by a constant $C$ only depending on $\Lambda,$ and $T$ ( see  \cite[Corollary 3.2 ]{Bahlali1}). 
				
				Se
				\begin{align*}
					\begin{cases}
						\delta Y = Y^1 - Y^2,& \delta Z = Z^1 - Z^2\\
						\delta \xi = \xi^1 -\xi^2,& \delta f = f^1(\cdot,Y^2,Z^2) - f^2(\cdot,Y^2,Z^2),
					\end{cases}	
				\end{align*}
				and define the processes
				\begin{align}
					\Gamma_t:&= \frac{f^1(t,Y_t^1,Z_t^1) - f^1(t,Y_t^2,Z_t^1)}{Y^1_t - Y^2_t}{1}_{\{Y^1_t - Y^2_t\neq 0\}}, \quad e_t:= \exp\Big( \int_{0}^{t} \Gamma_s \mathrm{d}s \Big)\\
					\Pi_t:&= \frac{f^1(t,Y_t^2,Z_t^1) - f^1(t,Y_t^2,Z_t^2)}{|Z_t^1 - Z_t^2|^2}(Z^1_t - Z^2_t){1}_{\{|Z^1_t - Z^2_t|\neq 0\}}.
				\end{align}
				Observe that 
				$|\Pi| \leq \Lambda \left(1 + \ell(0)(|Z^1| + |Z^2|) \right)$, thus $\int_{0}^{\cdot}|\Pi_s|\mathrm{d}B_s$ is a BMO martingale, since $Z^1,Z^2 \in \mathcal{H}_{\text{BMO}}.$ Therefore, the probability measure $\tilde{\mathbb{P}}$ given by $\mathrm{d}\tilde{\mathbb{P}}/\mathrm{d}\mathbb{P} = \mathcal{E} (\int_{0}^{\cdot}\Pi_s\mathrm{d}W_s)$ is well defined (see \cite{Kazamaki}) and the process defined by $W_{\cdot}^{\tilde{\mathbb{P}}} = W_{\cdot} - \int_{0}^{\cdot}\Pi_s\mathrm{d}s$ is a $\tilde{\mathbb{P}}\text{-Brownian motion}.$ We deduce from \cite{Kazamaki}, the existence of $p^{*}>1$ such that 
				$\mathcal{E} (\int_{0}^{\cdot}\Pi_s\mathrm{d}W_s) \in L^{p^{*}}.$
				
				Since $Y^1, Y^2$ are bounded and $\ell$ is locally bounded, we have $|\Gamma_t| \leq \Lambda( 1+ 2 \ell(|Y_t^1 - Y_t^2|^2) |Z_t^1| )\leq \Lambda (1+ |Z_t^1|)$. Using the same arguments as before, we have $\Gamma \in \mathcal{H}_{\text{BMO}},$ thus $\Gamma \in \mathcal{H}^{2p}$ for every $p\geq 1.$
				
				The dynamics of $(\delta Y_t)$ is given by:
				\begin{align}
					\delta Y_t = \delta \xi + \int_{t}^{T} \delta f_s\mathrm{d}s + \int_{t}^{T}\left( f^1(s,Y_s^1,Z_s^1) - f^1(s,Y_s^2,Z_s^2)\right)\mathrm{d}s -\int_{t}^{T}\delta Z_s \mathrm{d}W_s.
				\end{align}
				The It\^{o}'s formula yields
				\begin{align}\label{A5}
					e_t\delta Y_t = e_t\delta \xi + \int_{t}^{T} e_s\delta f_s\mathrm{d}s -\int_{t}^{T}e_s\delta Z_s \mathrm{d}W_s^{\tilde{\mathbb{P}}},
				\end{align}
				where $\int_{0}^{\cdot}e_s\delta Z_s \mathrm{d}W_s^{\tilde{\mathbb{P}}}$ is a true $\tilde{\mathbb{P}}\text{-martingale}.$ Indeed,
				\begin{align*}
					\mathbb{E}^{\tilde{\mathbb{P}}}\int_{0}^{T}|e_s|^2|\delta Z_s|^2 \mathrm{d}s &\leq \mathbb{E}\Big[ \Big(\mathcal{E} (\int_{0}^{\cdot}\Pi_s\mathrm{d}W_s)\Big)^{p^{*}}\Big]^{\frac{1}{p^{*}}} \mathbb{E}\Big[|e_T|^{2q^{*}} \Big(\int_{0}^{T}|\delta Z_s|^2 \mathrm{d}s\Big){q^{*}}\Big]^{\frac{1}{p^{*}}},
				\end{align*}
				and this is finite thanks to the $\mathcal{H}_{\text{BMO}}$ property of $\Pi,\Gamma, Z^1$ and $Z^2$. Then, equation \eqref{A5} becomes 
				\begin{align}
					e_t\delta Y_t = \mathbb{E}^{\tilde{\mathbb{P}}}\Big[  e_t\delta \xi + \int_{t}^{T} e_s\delta g_s\mathrm{d}s |\mathfrak{F}_t \Big].
				\end{align} 
				Thanks to the Theorem's assumption we conclude that $e_t\delta Y_t \leq 0$ and hence for all $t\in [0,T]$ we have $ Y^1_t \leq Y^2_t\quad \tilde{\mathbb{P}}\text{-a.s.}$ and $\mathbb{P}\text{-a.s.}.$
				
				In particular, for $t= 0$ and if $\delta < 0$ or if $\delta g < 0$ in a set of positive $\mathrm{d}\text{t}\otimes \mathrm{d}\mathbb{P}\text{-measure},$ then we conclude that $Y_0^1 < Y_0^2,$ and this ends the proof.
			\end{proof}
			
			\section{Second order Malliavin differentiability}\label{apdix3}
			In this appendix, we provide some conditions under which a BSDE  admits a {second order} Malliavin differentiable solution. This result is taken from \cite{ImkDos}. Let us consider the following BSDEs ({which can be seen as the BSDE satisfied by the Malliavin derivative of the backward process })
			\begin{align}\label{second}
				U_{s,t} &= 0, \quad V_{s,t} = 0, \quad t \in [0,s),\notag\\
				U_{s,t} &= D_s\xi + \int_t^T G(r,s, U_{s,r}, V_{s,r})\mathrm{d}r - \int_t^T V_{s,r}\mathrm{d}W_r, \quad t\in [s,T]
			\end{align}
			where $h$ is a measurable function $\Omega \times[0,T]\times[0,T]\times\mathbb{R}\times\mathbb{R}^d\rightarrow \mathbb{R}$ given by
			\[  G(\cdot,t,s,y,z) = A_{s,t}(\cdot) + B_t(\cdot)y + \langle C_t(\cdot), z\rangle   \] and $\xi$ is an $\mathfrak{F}_T\text{-measurable}$ random variable, satisfying the following assumptions:
			\begin{itemize}
				\item[(H1)] $\xi$ is Malliavin differentiable, $\mathfrak{F}_T\text{-measurable}$ bounded random variable with Malliavin derivative given by $D_s\xi$ satisfying
				\[ \sup_{0\leq s \leq T} \| D_s\xi \|_{L^{2p}} < \infty, \text{ for all } p> 1.  \]
				\item[(H2)] For all $s\in [0,T]$, $D_s\xi$ is Malliavin differentiable, with second order derivative given by $(D_{s'}D_s\xi)_{s',s \in [0,T]}$ and satisfying $\sup_{0\leq s',s \leq T} \|D_{s'} D_s\xi \|_{L^p} < \infty$ for all $p >1.$
				\item[(H3)] $B: \Omega\times[0,T] \rightarrow \mathbb{R}$ and $C: \Omega\times[0,T] \rightarrow \mathbb{R}^d$ are $\mathfrak{F}_t\text{-adapted}$ processes bounded by a constant $\Lambda > 0$. $A: \Omega\times[0,T] \times [0,T] \rightarrow \mathbb{R}$ satisfies $\sup_{0\leq s \leq T}\|A_{s,\cdot}\|_{\mathcal{S}^{2p}}< \infty$ for all $p >1$. For each fixed $s \in [0,T]$, the process $(A_{s,t})_{t\in [0,T]}$ is progressively measurable.
				\item[(H4)] $A_{s,t}, B_t, C_t$ are Malliavin differentiable for all $s\in [0,T]$ and $t\in [0,T]$. Measurable versions of their Malliavin derivatives are respectively given by $D_{s'}A_{s,t}, D_sB_t$ and $D_sC_t$ for $s',s\in [0,T]$ such that for all $p> 1$
				\[ \sup_{0\leq s',s\leq T} \mathbb{E} \Big[\Big(\int_0^T(|D_{s'}A_{s,t}|^2 + |D_sB_t|^2 + |D_sC_t|^2)\mathrm{d}t\Big)^p\Big]  < \infty \]
			\end{itemize}
			\begin{thm}\label{thapendix}
				Under Assumptions (H1)--(H4), the BSDE \eqref{second} has a unique Malliavin differentiable solution $(U,V)\in \mathcal{S}^{2p}\times\mathcal{H}^{2p}$ for $p> 1$. A version of $(D_{s'}U_{s,t},D_{s'}V_{s,t})_{ 0\leq s'\leq s\leq t\leq T}$ satisfies
				\begin{align}
					D_{s'}U_{s,t} &= D_{s'}D_s\xi -\int_t^T D_{s'}V_{s,r}\mathrm{d}W_r \notag\\
					&+\int_t^T \Big((D_{s'}G)(r,s,U_{s,r},V_{s,r}) + \langle \nabla G(r,s,U_{s,r},V_{s,r}) ,(D_{s'}U_{s,r},D_{s'}V_{s,r})\rangle \Big)\mathrm{d}r.
				\end{align}
				Moreover, a version of $(V_{s,t})_{0\leq s\leq t\leq T}$ is given by $(D_tU_{s,t})_{0\leq s\leq t\leq T}$.
			\end{thm}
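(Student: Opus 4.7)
The plan is to treat \eqref{second} as a linear BSDE with stochastic coefficients and then apply the standard scheme for Malliavin differentiability of solutions to BSDEs, paying attention to the fact that the generator $G$ depends on the auxiliary parameter $s$ only through the free term $A_{s,t}$ and (through $D_s\xi$) in the terminal condition. First I would establish existence and uniqueness of $(U,V)\in\mathcal{S}^{2p}\times\mathcal{H}^{2p}$ by fixing $s$ and viewing \eqref{second} as a linear BSDE with bounded coefficients $B$, $C$ and integrable free term $A_{s,\cdot}\in\mathcal{S}^{2p}$ and terminal condition $D_s\xi\in L^{2p}$; the standard a priori estimate for linear BSDEs (combined with Girsanov to remove the $C$-term when convenient) yields
\[ \|U_{s,\cdot}\|_{\mathcal{S}^{2p}}+\|V_{s,\cdot}\|_{\mathcal{H}^{2p}}\le C\Big(\|D_s\xi\|_{L^{2p}}+\|A_{s,\cdot}\|_{\mathcal{S}^{2p}}\Big), \]
which is uniform in $s\in[0,T]$ by (H1) and (H3).

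Next, to obtain Malliavin differentiability, I would run a Picard iteration at level $s$: set $(U^0,V^0)=(0,0)$ and
\[ U^{n+1}_{s,t}=D_s\xi+\int_t^T G(r,s,U^n_{s,r},V^n_{s,r})\,\mathrm{d}r-\int_t^T V^{n+1}_{s,r}\,\mathrm{d}W_r. \]
By induction, each $(U^n_{s,t},V^n_{s,t})$ lies in $\mathbb{D}^{1,2}$: indeed, Assumptions (H1)--(H4) guarantee that $D_s\xi$ and $G(\cdot,s,y,z)$ are Malliavin differentiable whenever $y,z$ are, and one obtains a linear BSDE for $(D_{s'}U^{n+1}_{s,t},D_{s'}V^{n+1}_{s,t})$. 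Applying the $L^p$ estimate from Step 1 to this derivative BSDE and using (H4) to control
\[ \sup_{s',s}\mathbb{E}\Big[\Big(\int_0^T|D_{s'}A_{s,r}|+|D_{s'}B_r|\,|U^n_{s,r}|+|D_{s'}C_r|\,|V^n_{s,r}|\,\mathrm{d}r\Big)^{2p}\Big]<\infty \]
yields a uniform-in-$n$ bound on $\|D_{s'}U^n\|_{\mathcal{S}^{2p}}+\|D_{s'}V^n\|_{\mathcal{H}^{2p}}$. Combined with the strong convergence $(U^n,V^n)\to(U,V)$ in $\mathcal{S}^{2p}\times\mathcal{H}^{2p}$ (contraction of the Picard map for linear BSDEs), the closability of the Malliavin derivative gives $(U_{s,t},V_{s,t})\in\mathbb{D}^{1,2}$, and the Malliavin derivatives satisfy the announced linear BSDE obtained by formally differentiating \eqref{second}.

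The main technical obstacle is the joint measurability and integrability in the parameters $(s',s,t,\omega)$: one must choose jointly measurable versions of $D_{s'}A_{s,t}$, $D_{s'}B_t$, $D_{s'}C_t$ so that the derivative BSDE is well posed as an equation indexed by $s'$, and one must verify that the uniform a priori bounds pass to the limit in $n$ with the correct measurability. I would handle this by invoking the standard joint-measurability statements for Skorohod integrals and Malliavin derivatives (as in \cite{Nua06}), and by noting that the linearity of $G$ in $(y,z)$ makes the iteration scheme converge in $(\mathbb{L}_{1,2})^2$ rather than only in $\mathcal{S}^{2p}\times\mathcal{H}^{2p}$.

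Finally, to identify $(D_tU_{s,t})_{s\le t\le T}$ as a version of $(V_{s,t})_{s\le t\le T}$, I would differentiate \eqref{second} evaluated at $t=T$, use the fact that $D_{s'}U_{s,T}=D_{s'}D_s\xi$ and that $D$ commutes with conditional expectations under the BMO change of measure that linearises away the $C$-term; the martingale representation of $U_{s,\cdot}$ under this equivalent measure then yields $V_{s,t}=D_tU_{s,t}$ $\mathrm{d}t\otimes\mathrm{d}\mathbb{P}$-a.e., exactly as in the proof of the corresponding statement for first-order Malliavin derivatives of BSDEs in \cite[Proposition 5.3]{ImkRhossOliv}.
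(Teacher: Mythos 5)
Your proposal is correct and follows the standard route (a priori $L^{2p}$ estimates for the linear BSDE with bounded coefficients, Picard iteration with uniform bounds on the Malliavin derivatives of the iterates, closability of $D$, and the usual trace argument for $V_{s,t}=D_tU_{s,t}$), which is exactly the argument of the source the paper cites: the paper itself states Theorem \ref{thapendix} without proof, deferring entirely to \cite{ImkDos}. No substantive discrepancy to report.
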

			
			{\bf Acknowledgments:}{ The authors thank the Editor in Chief and two anonymous referees for their remarks which helped to considerably improve the presentation of the paper. A part of this work was carried out while Rhoss Likibi Pellat visited the Department of Mathematics, of the Brandenburg University of Technology Cottbus-Senftenberg, whose hospitality is greatly appreciated and he personally thanks Prof Ralf W\"underlich.}

			\bibliographystyle{plain}
			\bibliography{Biblio, Biblio1}  
		\end{document}